% !TeX spellcheck = en_Vasco

\documentclass{lmcs}
\pdfoutput=1

% LMCS Layouting Macros
\usepackage{lastpage}

\lmcsheading{}{1--\pageref{LastPage}}{}{}%
{Feb.~15,~2018}{Aug.~31,~2018}{}

\usepackage{hyperref}

\usepackage{amsmath}
\usepackage{amssymb}
\usepackage{amsthm}

\usepackage{tikz}

\usepackage{catchfile}
\CatchFileEdef\user{"|kpsewhich -var-value USERNAME"}{\endlinechar=-1 }
% assigns Windows username to \user

\title{A Galois connection between Turing jumps and limits}

\author{Vasco Brattka}
\address{Faculty of Computer Science, Universit\"at der Bundeswehr M\"unchen, Germany and
Department of Mathematics and Applied Mathematics, University of Cape Town, South Africa}
\email{Vasco.Brattka@cca-net.de}

\begin{document} 

% Abbreviations

% Calligraphical font

\def\AA{{\mathcal A}}
\def\BB{{\mathcal B}}
\def\CC{{\mathcal C}}
\def\DD{{\mathcal D}}
\def\EE{{\mathcal E}}
\def\FF{{\mathcal F}}
\def\GG{{\mathcal G}}
\def\HH{{\mathcal H}}
\def\II{{\mathcal I}}
\def\JJ{{\mathcal J}}
\def\KK{{\mathcal K}}
\def\LL{{\mathcal L}}
\def\MM{{\mathcal M}}
\def\NN{{\mathcal N}}
\def\OO{{\mathcal O}}
\def\PP{{\mathcal P}}
\def\QQ{{\mathcal Q}}
\def\RR{{\mathcal R}}
\def\SS{{\mathcal S}}
\def\TT{{\mathcal T}}
\def\UU{{\mathcal U}}
\def\VV{{\mathcal V}}
\def\WW{{\mathcal W}}
\def\XX{{\mathcal X}}
\def\YY{{\mathcal Y}}
\def\ZZ{{\mathcal Z}}

% Bold font

\def\bA{{\mathbf A}}
\def\bB{{\mathbf B}}
\def\bC{{\mathbf C}}
\def\bD{{\mathbf D}}
\def\bE{{\mathbf E}}
\def\bF{{\mathbf F}}
\def\bG{{\mathbf G}}
\def\bH{{\mathbf H}}
\def\bI{{\mathbf I}}
\def\bJ{{\mathbf J}}
\def\bK{{\mathbf K}}
\def\bL{{\mathbf L}}
\def\bM{{\mathbf M}}
\def\bN{{\mathbf N}}
\def\bO{{\mathbf O}}
\def\bP{{\mathbf P}}
\def\bQ{{\mathbf Q}}
\def\bR{{\mathbf R}}
\def\bS{{\mathbf S}}
\def\bT{{\mathbf T}}
\def\bU{{\mathbf U}}
\def\bV{{\mathbf V}}
\def\bW{{\mathbf W}}
\def\bX{{\mathbf X}}
\def\bY{{\mathbf Y}}
\def\bZ{{\mathbf Z}}

% Blackboard font

\def\IB{{\Bbb{B}}}
\def\IC{{\Bbb{C}}}
\def\IF{{\Bbb{F}}}
\def\IN{{\Bbb{N}}}
\def\IP{{\Bbb{P}}}
\def\IQ{{\Bbb{Q}}}
\def\IR{{\Bbb{R}}}
\def\IS{{\Bbb{S}}}
\def\IT{{\Bbb{T}}}
\def\IZ{{\Bbb{Z}}}

\def\IIB{{\Bbb{\mathbf B}}}
\def\IIC{{\Bbb{\mathbf C}}}
\def\IIN{{\Bbb{\mathbf N}}}
\def\IIQ{{\Bbb{\mathbf Q}}}
\def\IIR{{\Bbb{\mathbf R}}}
\def\IIZ{{\Bbb{\mathbf Z}}}

% Definitions

%\def\IF{\quad\mbox{if}\quad}
\def\ELSE{\quad\mbox{else}\quad}
\def\WITH{\quad\mbox{with}\quad}
\def\FOR{\quad\mbox{for}\quad}
\def\AND{\;\mbox{and}\;}
\def\OR{\;\mbox{or}\;}

\def\To{\longrightarrow}
\def\TO{\Longrightarrow}
\def\In{\subseteq}
\def\sm{\setminus}
\def\Inneq{\In_{\!\!\!\!/}}
\def\dmin{\mathop{\dot{-}}}
\def\splus{\oplus}
\def\SEQ{\triangle}
\def\DIV{\uparrow}
\def\INV{\leftrightarrow}
\def\SET{\Diamond}

\def\kto{\equiv\!\equiv\!>}
\def\kin{\subset\!\subset}
\def\pto{\leadsto}
\def\into{\hookrightarrow}
\def\onto{\to\!\!\!\!\!\to}
\def\prefix{\sqsubseteq}
\def\rel{\leftrightarrow}
\def\mto{\rightrightarrows}

\def\E{{\mathsf{{E}}}}
\def\B{{\mathsf{{B}}}}
\def\D{{\mathsf{{D}}}}
\def\J{{\mathsf{{J}}}}
\def\K{{\mathsf{{K}}}}
\def\L{{\mathsf{{L}}}}
\def\R{{\mathsf{{R}}}}
\def\T{{\mathsf{{T}}}}
\def\U{{\mathsf{{U}}}}
\def\W{{\mathsf{{W}}}}
\def\Z{{\mathsf{{Z}}}}
\def\w{{\mathsf{{w}}}}
\def\HP{{\mathsf{{H}}}}
\def\C{{\mathsf{{C}}}}
\def\Tot{{\mathsf{{Tot}}}}
\def\Fin{{\mathsf{{Fin}}}}
\def\Cof{{\mathsf{{Cof}}}}
\def\Cor{{\mathsf{{Cor}}}}
\def\Equ{{\mathsf{{Equ}}}}
\def\Com{{\mathsf{{Com}}}}
\def\Inf{{\mathsf{{Inf}}}}

\def\Tr{{\mathrm Tr}}
\def\Sierp{{\mathrm Sierpi{\'n}ski}}
\def\psisierp{{\psi^{\mbox{\scriptsize\Sierp}}}}
\def\cl{{\mathrm{{cl}}}}
\def\Haus{{\mathrm{{H}}}}
\def\Ls{{\mathrm{{Ls}}}}
\def\Li{{\mathrm{{Li}}}}

\def\LPO{\mathsf{LPO}}
\def\LLPO{\mathsf{LLPO}}
\def\WKL{\mathsf{WKL}}
\def\RCA{\mathsf{RCA}}
\def\ACA{\mathsf{ACA}}
\def\SEP{\mathsf{SEP}}
\def\BCT{\mathsf{BCT}}
\def\IVT{\mathsf{IVT}}
\def\IMT{\mathsf{IMT}}
\def\OMT{\mathsf{OMT}}
\def\CGT{\mathsf{CGT}}
\def\UBT{\mathsf{UBT}}
\def\BWT{\mathsf{BWT}}
\def\HBT{\mathsf{HBT}}
\def\BFT{\mathsf{BFT}}
\def\FPT{\mathsf{FPT}}
\def\WAT{\mathsf{WAT}}
\def\LIN{\mathsf{LIN}}
\def\B{\mathsf{B}}
\def\BF{\mathsf{B_\mathsf{F}}}
\def\BI{\mathsf{B_\mathsf{I}}}
\def\C{\mathsf{C}}
\def\CF{\mathsf{C_\mathsf{F}}}
\def\CN{\mathsf{C_{\IN}}}
\def\CI{\mathsf{C_\mathsf{I}}}
\def\CK{\mathsf{C_\mathsf{K}}}
\def\CA{\mathsf{C_\mathsf{A}}}
\def\WPO{\mathsf{WPO}}
\def\WLPO{\mathsf{WLPO}}
\def\MP{\mathsf{MP}}
\def\BD{\mathsf{BD}}
\def\Fix{\mathsf{Fix}}
\def\Mod{\mathsf{Mod}}

\def\w{\mathsf{w}}

\def\leqm{\mathop{\leq_{\mathrm{m}}}}
\def\equivm{\mathop{\equiv_{\mathrm{m}}}}
\def\leqT{\mathop{\leq_{\mathrm{T}}}}
\def\lT{\mathop{<_{\mathrm{T}}}}
\def\nleqT{\mathop{\not\leq_{\mathrm{T}}}}
\def\equivT{\mathop{\equiv_{\mathrm{T}}}}
\def\nequivT{\mathop{\not\equiv_{\mathrm{T}}}}
\def\leqwtt{\mathop{\leq_{\mathrm{wtt}}}}
\def\equiPT{\mathop{\equiv_{\P\mathrm{T}}}}
\def\leqW{\mathop{\leq_{\mathrm{W}}}}
\def\equivW{\mathop{\equiv_{\mathrm{W}}}}
\def\leqtW{\mathop{\leq_{\mathrm{tW}}}}
\def\leqSW{\mathop{\leq_{\mathrm{sW}}}}
\def\equivSW{\mathop{\equiv_{\mathrm{sW}}}}
\def\leqPW{\mathop{\leq_{\widehat{\mathrm{W}}}}}
\def\equivPW{\mathop{\equiv_{\widehat{\mathrm{W}}}}}
\def\leqFPW{\mathop{\leq_{\mathrm{W}^*}}}
\def\equivFPW{\mathop{\equiv_{\mathrm{W}^*}}}
\def\leqWW{\mathop{\leq_{\overline{\mathrm{W}}}}}
\def\nleqW{\mathop{\not\leq_{\mathrm{W}}}}
\def\nleqSW{\mathop{\not\leq_{\mathrm{sW}}}}
\def\lW{\mathop{<_{\mathrm{W}}}}
\def\lSW{\mathop{<_{\mathrm{sW}}}}
\def\nW{\mathop{|_{\mathrm{W}}}}
\def\nSW{\mathop{|_{\mathrm{sW}}}}
\def\leqt{\mathop{\leq_{\mathrm{t}}}}
\def\equivt{\mathop{\equiv_{\mathrm{t}}}}
\def\leqtop{\mathop{\leq_\mathrm{t}}}
\def\equivtop{\mathop{\equiv_\mathrm{t}}}

\def\bigtimes{\mathop{\mathsf{X}}}

\def\leqm{\mathop{\leq_{\mathrm{m}}}}
\def\equivm{\mathop{\equiv_{\mathrm{m}}}}
\def\leqT{\mathop{\leq_{\mathrm{T}}}}
\def\leqM{\mathop{\leq_{\mathrm{M}}}}
\def\equivT{\mathop{\equiv_{\mathrm{T}}}}
\def\equiPT{\mathop{\equiv_{\P\mathrm{T}}}}
\def\leqW{\mathop{\leq_{\mathrm{W}}}}
\def\equivW{\mathop{\equiv_{\mathrm{W}}}}
\def\nequivW{\mathop{\not\equiv_{\mathrm{W}}}}
\def\leqSW{\mathop{\leq_{\mathrm{sW}}}}
\def\equivSW{\mathop{\equiv_{\mathrm{sW}}}}
\def\leqPW{\mathop{\leq_{\widehat{\mathrm{W}}}}}
\def\equivPW{\mathop{\equiv_{\widehat{\mathrm{W}}}}}
\def\nleqW{\mathop{\not\leq_{\mathrm{W}}}}
\def\nleqSW{\mathop{\not\leq_{\mathrm{sW}}}}
\def\lW{\mathop{<_{\mathrm{W}}}}
\def\lSW{\mathop{<_{\mathrm{sW}}}}
\def\nW{\mathop{|_{\mathrm{W}}}}
\def\nSW{\mathop{|_{\mathrm{sW}}}}

\def\botW{\mathbf{0}}
\def\midW{\mathbf{1}}
\def\topW{\mathbf{\infty}}

\def\pol{{\leq_{\mathrm{pol}}}}
\def\rem{{\mathop{\mathrm{rm}}}}

\def\cc{{\mathrm{c}}}
\def\d{{\,\mathrm{d}}}
\def\e{{\mathrm{e}}}
\def\ii{{\mathrm{i}}}

\def\Cf{C\!f}
\def\id{{\mathrm{id}}}
\def\pr{{\mathrm{pr}}}
\def\inj{{\mathrm{inj}}}
\def\cf{{\mathrm{cf}}}
\def\dom{{\mathrm{dom}}}
\def\range{{\mathrm{range}}}
\def\graph{{\mathrm{graph}}}
\def\Graph{{\mathrm{Graph}}}
\def\epi{{\mathrm{epi}}}
\def\hypo{{\mathrm{hypo}}}
\def\Lim{{\mathrm{Lim}}}
\def\diam{{\mathrm{diam}}}
\def\dist{{\mathrm{dist}}}
\def\supp{{\mathrm{supp}}}
\def\union{{\mathrm{union}}}
\def\fiber{{\mathrm{fiber}}}
\def\ev{{\mathrm{ev}}}
\def\mod{{\mathrm{mod}}}
\def\sat{{\mathrm{sat}}}
\def\seq{{\mathrm{seq}}}
\def\lev{{\mathrm{lev}}}
\def\mind{{\mathrm{mind}}}
\def\arccot{{\mathrm{arccot}}}

\def\Add{{\mathrm{Add}}}
\def\Mul{{\mathrm{Mul}}}
\def\SMul{{\mathrm{SMul}}}
\def\Neg{{\mathrm{Neg}}}
\def\Inv{{\mathrm{Inv}}}
\def\Ord{{\mathrm{Ord}}}
\def\Sqrt{{\mathrm{Sqrt}}}
\def\Re{{\mathrm{Re}}}
\def\Im{{\mathrm{Im}}}
\def\Sup{{\mathrm{Sup}}}

\def\LSC{{\mathcal LSC}}
\def\USC{{\mathcal USC}}

\def\CE{{\mathcal{E}}}
\def\Pref{{\mathrm{Pref}}}

\def\Baire{\IN^\IN}

\def\TRUE{{\mathrm{TRUE}}}
\def\FALSE{{\mathrm{FALSE}}}

\def\co{{\mathrm{co}}}

\def\BBB{{\tt B}}

\newcommand{\SO}[1]{{{\mathbf\Sigma}^0_{#1}}}
\newcommand{\SI}[1]{{{\mathbf\Sigma}^1_{#1}}}
\newcommand{\PO}[1]{{{\mathbf\Pi}^0_{#1}}}
\newcommand{\PI}[1]{{{\mathbf\Pi}^1_{#1}}}
\newcommand{\DO}[1]{{{\mathbf\Delta}^0_{#1}}}
\newcommand{\DI}[1]{{{\mathbf\Delta}^1_{#1}}}
\newcommand{\sO}[1]{{\Sigma^0_{#1}}}
\newcommand{\sI}[1]{{\Sigma^1_{#1}}}
\newcommand{\pO}[1]{{\Pi^0_{#1}}}
\newcommand{\pI}[1]{{\Pi^1_{#1}}}
\newcommand{\dO}[1]{{{\Delta}^0_{#1}}}
\newcommand{\dI}[1]{{{\Delta}^1_{#1}}}
\newcommand{\sP}[1]{{\Sigma^\P_{#1}}}
\newcommand{\pP}[1]{{\Pi^\P_{#1}}}
\newcommand{\dP}[1]{{{\Delta}^\P_{#1}}}
\newcommand{\sE}[1]{{\Sigma^{-1}_{#1}}}
\newcommand{\pE}[1]{{\Pi^{-1}_{#1}}}
\newcommand{\dE}[1]{{\Delta^{-1}_{#1}}}

\def\QED{$\hspace*{\fill}\Box$}
\def\rand#1{\marginpar{\rule[-#1 mm]{1mm}{#1mm}}}

\def\BL{\BB}

% Commands

\newcommand{\bra}[1]{\langle#1|}
\newcommand{\ket}[1]{|#1\rangle}
\newcommand{\braket}[2]{\langle#1|#2\rangle}

\newcommand{\ind}[1]{{\em #1}\index{#1}}
\newcommand{\mathbox}[1]{\[\fbox{\rule[-4mm]{0cm}{1cm}$\quad#1$\quad}\]}

% Environments

%\newenvironment{proof}{{\mathbf Proof.}\begin{small}}{\end{small}\QED\\}
%\newenvironment{cases}{\left\{\begin{array}{ll}}{\end{array}\right.}
\newenvironment{eqcase}{\left\{\begin{array}{lcl}}{\end{array}\right.}

%%% Local Variables:
%%% mode: latex
%%% TeX-master: "galois.tex"
%%% End:

\theoremstyle{plain}
\newtheorem{theorem}[thm]{Theorem}
\newtheorem{corollary}[thm]{Corollary}
\newtheorem{proposition}[thm]{Proposition}
\newtheorem{lemma}[thm]{Lemma}
\newtheorem{observation}[thm]{Observation}
\newtheorem{facts}[thm]{Fact}
\theoremstyle{definition}
\newtheorem{definition}[thm]{Definition}
\newtheorem{problem}[thm]{Problem}
\newtheorem{assumption}[thm]{Assumption}
\newtheorem{question}[thm]{Question}
\newtheorem{example}[thm]{Example}
\newtheorem{convention}[thm]{Convention}

\keywords{Computable analysis, limit computability, computability relative to the halting problem, lowness, 1--genericity, represented spaces, Galois connections.}
\subjclass{[{\bf Theory of computation}]:  Logic; [{\bf Mathematics of computing}]: Continuous mathematics.}

\begin{abstract}
Limit computable functions can be characterized by 
Turing jumps on the input side or limits on the output side.
As a monad of this pair of adjoint operations we obtain a problem
that characterizes the low functions and dually to this another problem
that characterizes the functions that are computable relative to 
the halting problem. Correspondingly, these two classes are the
largest classes of functions that can be pre or post composed to
limit computable functions without leaving the class of limit computable
functions. We transfer these observations to the lattice of represented
spaces where it leads to a formal Galois connection. 
We also formulate a version of this result for computable metric spaces.
Limit computability and computability relative to the halting problem
are notions that coincide for points and sequences, but even restricted to continuous functions
the former class is strictly larger than the latter. 
On computable metric spaces we can characterize the functions that are
computable relative to the halting problem as those functions that are
limit computable with a modulus of continuity that is computable relative to the halting problem.
As a consequence of this result we obtain, for instance, that Lipschitz continuous functions that
are limit computable are automatically computable relative to the halting problem.
We also discuss 1--generic points as the canonical points of continuity of
limit computable functions, and we prove that restricted to these points
limit computable functions are computable relative to the halting problem.
Finally, we demonstrate how these results can be applied in computable analysis.
\end{abstract}

\maketitle

\section{Introduction}

Limit computable functions have been studied for a long time. In computability theory
limits appear, for instance, in the form of Shoenfield's limit lemma~\cite{Sho59}.
In algorithmic learning theory they have been introduced by Gold~\cite{Gol65}.
Later, limit computable functions have been studied by  Wagner~\cite{Wag76,Wag77}, who
calls them \emph{Turing operators of the first kind} (and he attributes this class to Fre{\u\i}vald~\cite{Fre74}). 
Wagner discusses composition and also normal form theorems.
In computable analysis limit computations were studied by Freund~\cite{Fre83}
who introduced Turing machines that can revise their output as well as non-deterministic Turing machines.
Later, similar machines were systematically studied by Ziegler~\cite{Zie07,Zie07a}.
Limit computable real numbers were analyzed by Freund and Staiger~\cite{FS96} and by
Zheng and Weihrauch~\cite{ZW01} and others.
On the other hand, Ho~\cite{Ho96,Ho99} studied functions in analysis that are 
computable relative to the halting problem.

Many of these results are scattered in the literature, and it remains somewhat unclear 
how all these results are related. 
Some clarity can be brought into the picture if one starts with the notion of a limit
computable function and develops it systematically. 
It turns out that at the heart of such a development there is an adjoint situation
 that is related to the fact that limit computations can either
be described with limits on the output side or with Turing jumps on the input side.
Shoenfield's limit lemma is the non-uniform correlate of this observation.
From this perspective functions computable relative to the halting problem
and low functions are natural notions that should be studied alongside
with limit computable functions, since they constitute the largest classes
of functions that can be post or pre composed with limit computable functions
without leaving the class of limit computable functions.
We will develop these notions systematically in sections~\ref{sec:limit-computability}
and \ref{sec:computability-halting-problem}.\footnote{We note that an unpublished draft of some of the material presented here
was circulated since 2007 and it has influenced some of the
references, such as joint work by the author with de Brecht and Pauly~\cite{BBP12}.
The material presented in section~\ref{sec:computability-halting-problem}
and everything based on it has only been developed recently.}
Together with these notions we also study $1$--generic points, as these
are the canonical points of continuity of limit computable functions.

In section~\ref{sec:represented-spaces} we transfer these results to the setting 
of represented spaces and hence to other data types. The crucial notion here is the
notion of a jump of a representation that was introduced by Ziegler~\cite{Zie07,Zie07a}
and further generalized by de Brecht~\cite{dBre14}.
On the level of represented spaces the adjoint situation between limits and 
Turing jumps can formally be expressed as a Galois connection. 
We also study how the jump of a represented space interacts with 
other constructions on represented spaces such as building products and
exponentials. 

In section~\ref{sec:metric} we transfer our results to the setting of computable
metric spaces. The limit normal form for limit computable functions can 
easily be expressed using limits in computable metric spaces and for the jump
normal form one can use a general jump operation on computable metric spaces.
We also transfer a characterization of $1$--generic points to 
the level of computable metric spaces. 

In section~\ref{sec:limit-halting} we continue to study mostly
computable metric spaces and specifically the relation between
limit computable functions and functions computable relative to the 
halting problem. One characterization shows that functions that 
are computable relative to the halting problem are exactly those
continuous limit computable functions that admit a global modulus
of continuity that is computable relative to the halting problem.
We also provide several examples of functions of different type that show that
not all continuous limit computable functions are computable
relative to the halting problem. 
However, we can provide oracle classes that are sufficient to compute
limit computable functions that are continuous, uniformly continuous
and Lipschitz continuous, respectively. 
In particular for Lipschitz continuous functions limit computability
and computability with respect to the halting problem coincide.

We close this article with section~\ref{sec:applications} 
in which we demonstrate how some known results can 
easily be derived using the techniques provided in this article.
We also include some simple new applications.
The discussed examples also highlight
connections between results that are seemingly unrelated or
scattered in other sources.

\section{Limit computability}
\label{sec:limit-computability}

In this section we are going to characterize limit computable functions in different ways.
We start with recalling the definition. Limit computable functions are defined on Turing
machines that have a two-way output tape, and these machines are allowed to revise their output.
 
\begin{definition}[Limit computable functions]
A function $F:\In\IN^\IN\to\IN^\IN$ is called \emph{limit computable},
if there exists a Turing machine $M$ that upon any input $p\in\dom(F)$ computes 
$F(p)$ on its two-way output tape in the long run
such that on any output position $n\in\IN$ after a finite number of steps the output is 
$F(p)(n)$ and will not be changed anymore. 
\end{definition}

If the Turing machine does only change the entire output (over all $n\in\IN$) finitely many times
for each fixed input $p$, then $F$ is called \emph{computable with finitely many mind changes}.
This is a strengthening of the notion of limit computability.

It is clear that all computable maps are limit computable and limit 
computable maps are obviously closed under restriction.
The additional flexibility to revise the output tape gives
additional power to limit machines that ordinary Turing machines
do not have. As examples we mention some discontinuous functions that
are not computable, but limit computable.
For the definition we use tupling functions. We define
$\langle n,k\rangle:=\frac{1}{2}(n+k)(n+k+1)+k$,
$\langle p,q\rangle(2n):=p(n)$ and $\langle p,q\rangle(2n+1):=q(n)$,
$\langle p_0,p_1,p_2,....\rangle\langle n,k\rangle:=p_n(k)$, and
$\langle n,p\rangle:= np$
for all $p,q,p_i\in\IN^\IN$ and $n,k\in\IN$.
By $\widehat{n}=nnn...\in\IN^\IN$ we denote the constant sequence with value $n\in\IN$.

\begin{example}[Limit computable maps]
\label{example:limit-computable}
The following maps are
limit computable but not continuous and hence not computable:
\begin{enumerate}
\item The \emph{equality test for zero}
\[\E:\IN^\IN\to\IN^\IN,p\mapsto\left\{\begin{array}{ll}
  \widehat{1} & \mbox{if $p=\widehat{0}$}\\
  \widehat{0} & \mbox{otherwise}
\end{array}\right.\]
\item The \emph{limit map} 
\[\lim:\In\IN^\IN\to\IN^\IN,\langle p_0,p_1,p_2,...\rangle\mapsto\lim_{i\to\infty}p_i.\]
\item The \emph{Turing jump} $\J:\IN^\IN\to\IN^\IN$ by
\[\J(p)(i):=\left\{\begin{array}{ll}
1 & \mbox{if Turing machine $i$ halts upon input $p$}\\
0 & \mbox{otherwise}
\end{array}\right.
\]
for all $p\in\IN^\IN$ and $i\in\IN$.
\end{enumerate}
The function $\E$ is even computable with finitely many mind changes.
\end{example}

By $\lim_\Delta$ we denote the restriction of $\lim$ to convergent sequences with respect to the discrete topology on $\IN^\IN$.
In some sense one can say that the limit map and $\J$ are not only 
some limit computable maps, but prototypes of limit computable maps.
Before we can prove a formal version of this statement, we first discuss compositions of computable
and limit computable maps that turn out to be limit computable.

\begin{proposition}[Composition]
\label{prop:composition-limit}
If $F:\In\IN^\IN\to\IN^\IN$ is limit computable and $G:\In\IN^\IN\to\IN^\IN$ 
is computable, then $F\circ G$ and $G\circ F$ are both limit computable. 
\end{proposition}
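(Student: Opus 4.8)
The plan is to argue directly with Turing machines, using the elementary fact that at any finite stage a (limit) Turing machine has read only finitely much of its input. Fix a limit machine $M_F$ computing $F$ and an ordinary Turing machine $M_G$ computing $G$; by the definition of computability we may assume $M_G$ has a one--way (write--only) output tape, so it never revises an output cell.

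For $F\circ G$ I would build a limit machine $M$ that on input $p\in\dom(F\circ G)$ simulates $M_F$, and whenever $M_F$ queries the $n$--th cell of its input tape, $M$ runs $M_G$ on $p$ until the latter has produced at least $n+1$ output symbols and returns the $n$--th one. Since $\dom(F\circ G)\In\dom(G)$, the sequence $G(p)$ is infinite, so every such subroutine call terminates; hence $M_F$ is effectively executed on the input $G(p)\in\dom(F)$, its output tape converges to $F(G(p))$, and $M$ inherits exactly this convergent two--way output behaviour. Thus $F\circ G$ is limit computable.

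For $G\circ F$ I would run $M_F$ on $p\in\dom(G\circ F)$ and, at stage $s$, let $q_s\in\IN^\IN$ be the current content of the output tape of $M_F$ with all cells not yet written filled by $0$; then simulate $M_G$ on input $q_s$ for $s$ steps and copy whatever finite output it has produced onto the output tape of the new machine $M$, overwriting the previous content. I claim the output of $M$ converges to $G(F(p))$ pointwise. Fix an output cell $m$. On the genuine input $F(p)\in\dom(G)$ the machine $M_G$ writes $G(F(p))(m)$ into cell $m$ after some finite number $T$ of steps, having read only the first $\ell$ cells of its input (both $T$ and $\ell$ depending on $m$). Because the output of $M_F$ converges to $F(p)$ pointwise, there is a stage $s_0$ from which on $q_s$ agrees with $F(p)$ on its first $\ell$ cells; since $M_G$ is deterministic and reads only these cells before step $T$, for every $s\geq\max(s_0,T)$ the simulation of $M_G$ on $q_s$ has written $G(F(p))(m)=(G\circ F)(p)(m)$ into cell $m$ by step $s$ and, the output tape of $M_G$ being one--way, does not revise it. Hence cell $m$ of the output of $M$ is eventually constant with the correct value, which is exactly what limit computability of $G\circ F$ demands.

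The only point that really needs care is this last convergence argument, because the input that is fed to the simulated $M_G$ is a moving target $q_s$: one must combine the finite--use property of $M_G$ (only finitely many input cells are inspected before any given output cell is produced) with the eventual stabilization of the output of $M_F$ on each fixed finite prefix. The padding of not--yet--written cells of $M_F$ by $0$ is harmless precisely because we only take a limit in the end, and the padding values lie below the threshold stage $s_0$ for every cell that matters. Everything else is routine bookkeeping with step counts.
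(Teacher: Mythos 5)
Your proof is correct and follows essentially the same route as the paper: a direct machine-level composition, with the straightforward chaining for $F\circ G$ and, for $G\circ F$, an argument combining the finite use of $M_G$ with the eventual stabilization of each finite prefix of $M_F$'s output. The only (immaterial) difference is that you re-simulate $M_G$ from scratch on the current snapshot $q_s$ at every stage, whereas the paper keeps a single run of $M_G$ and rolls it back to step $n$ whenever $M_F$ revises cell $n$; both devices yield the same convergence argument.
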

\begin{proof}
Let $M_F$ be a limit machine computing $F$ and $M_G$ an ordinary Turing machine computing $G$.
Then a limit machine $M$ computing $F\circ G$ can just be obtained by composing the two machines $M_F$ and $M_G$
in the straightforward way.

A limit machine $M'$ for $G\circ F$ can also be constructed by composing the machines $M_F$ and $M_G$.
However, the composition has to be done such that if $M_F$ changes the content of some output cell $n$,
then the computation of $M_G$ has to be restarted at time step $n$ of its program (which is sufficient to guarantee that $M_G$ has
not yet seen the content of cell $n$). Since the output of $M_F$ of any finite length eventually stabilizes and
$M_G$ uses a one-way output tape, it follows that $M'$ produces a converging output
in this way for any input $p\in\dom(G\circ F)$. 
\end{proof}

It is easy to see that the first statement of the proposition can even be strengthened 
in the following way.

\begin{proposition}[Composition with finitely many mind changes]
\label{prop:composition-finite}
If $F:\In\IN^\IN\to\IN^\IN$ is limit computable and $G:\In\IN^\IN\to\IN^\IN$ is computable with finitely many mind changes,
then $F\circ G$ is limit computable.
\end{proposition}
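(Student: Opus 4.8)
The plan is to reuse the machine composition underlying the first part of Proposition~\ref{prop:composition-limit} and to guard it against the finitely many output revisions that $G$ is now allowed to make. Let $M_F$ be a limit machine computing $F$ and let $M_G$ be a machine witnessing that $G$ is computable with finitely many mind changes. I would fix an input $p\in\dom(F\circ G)$, so $G(p)$ is defined and $G(p)\in\dom(F)$. The key observation to exploit is that, since $M_G$ revises its output only finitely often on $p$, there is a finite time after which $M_G$ never again changes an already written cell and simply extends its output monotonically to $G(p)$, exactly as an ordinary one-way Turing machine would.

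First I would construct a limit machine $M$ for $F\circ G$ by running $M_F$ on the current output of $M_G$, just as in Proposition~\ref{prop:composition-limit}, but with one addition: whenever $M_G$ performs a mind change, the computation of $M_F$ is discarded and restarted from scratch on the updated output of $M_G$. Because $G$ is computable with finitely many mind changes, only finitely many restarts occur. I would then check correctness by focusing on what happens after the last restart: from that point on $M_F$ is fed the monotonically produced, never-again-revised stream $G(p)$, which places us precisely in the setting of the first part of Proposition~\ref{prop:composition-limit}. Hence $M_F$ limit computes $F(G(p))$ and each of its output cells stabilizes after finitely many steps.

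Finally I would argue that $M$ is a limit machine by tracking a single output position $n$. All restarts before the last one occur at finitely many finite times, so cell $n$ can be altered only finitely often before the last restart; after the last restart it changes only when $M_F$ changes it, which by the previous step happens finitely often. Thus cell $n$ stabilizes to $F(G(p))(n)$ and $M$ limit computes $F\circ G$. The one point requiring care---and the reason the plain composition of Proposition~\ref{prop:composition-limit} does not suffice verbatim---is that without the restarts $M_F$ could commit output based on a prefix of $M_G$'s output that $M_G$ later revises; the restart discipline ensures that the only run of $M_F$ surviving into the limit is the one performed on the correct, stable input $G(p)$. This bookkeeping is the main (though modest) obstacle, consistent with the claim that the strengthening is easy to see.
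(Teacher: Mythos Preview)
Your argument is correct and is exactly the natural elaboration the paper has in mind. The paper gives no proof for this proposition beyond the remark that it is ``easy to see'' as a strengthening of the first half of Proposition~\ref{prop:composition-limit}; your restart-on-mind-change discipline is precisely the obvious way to carry out that strengthening, and your bookkeeping for why each output cell stabilizes is sound.
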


For the composition $G\circ F$ of functions as in Proposition~\ref{prop:composition-finite} the 
strategy of the corresponding proof of Proposition~\ref{prop:composition-limit} does not work in general.
In fact, it is not too difficult to construct a counterexample. 

\begin{example}[Composition]
\label{ex:composition}
The equality test $\E$ is computable with finitely many mind changes, and the limit map $\lim$ is limit computable, 
but $\E\circ\lim$ is not limit computable.
\end{example}

We leave the proof to the reader.
Now we provide a very useful characterization of limit computable functions. 
This result is also implicit in the work of Wagner~\cite{Wag76}.

\begin{theorem}[Limit normal form]
\label{thm:limit}
A function $G:\In\IN^\IN\to\IN^\IN$ is limit computable if and only
if there exists a computable function $F:\In\IN^\IN\to\IN^\IN$, such that
$G=\lim\circ\; F$.
\end{theorem}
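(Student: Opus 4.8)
The plan is to prove the two directions separately, both of which follow fairly directly from what has already been set up. For the easy direction, suppose $G=\lim\circ\; F$ with $F$ computable. Since $\lim$ is limit computable (it occurs in Example~\ref{example:limit-computable}) and $F$ is computable, Proposition~\ref{prop:composition-limit} yields that $\lim\circ\; F$ is limit computable, and hence so is $G$.

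For the converse, let $M$ be a limit machine witnessing that $G$ is limit computable. The idea is to let $F$ record the successive snapshots of the output tape of $M$. Concretely, I would define $F:\IN^\IN\to\IN^\IN$ by $F(p):=\langle q_0,q_1,q_2,\dots\rangle$, where $q_s(n)$ is the symbol in cell $n$ of the output tape of $M$ after $s$ computation steps on input $p$ (and, say, $q_s(n)=0$ if that cell has not yet been written). Unfolding the pairing conventions, this just says $F(p)\langle s,n\rangle$ is the content of cell $n$ after step $s$; to produce a single such value one only has to simulate $M$ on $p$ for the finitely many steps $s$ and read off cell $n$, so $F$ is (total and) computable.

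It then remains to check that $\lim\circ\; F$ agrees with $G$. Fix $p\in\dom(G)$. By the definition of limit computability, for every output position $n$ there is a stage after which the content of cell $n$ equals $G(p)(n)$ and is never changed again; that is, for every $n$ there is an $N$ with $q_s(n)=G(p)(n)$ for all $s\ge N$. This is precisely the statement that the sequence $(q_s)_{s\in\IN}$ converges to $G(p)$ in $\IN^\IN$, so $F(p)\in\dom(\lim)$ and $\lim(F(p))=G(p)$. To obtain equality of partial functions on the nose, rather than merely $G(p)=\lim(F(p))$ for $p\in\dom(G)$, I would replace $F$ by its restriction to $\dom(G)$, which is again computable since computable maps are closed under restriction; for this restricted $F$ one then has $\dom(\lim\circ\; F)=\dom(G)$ and $\lim\circ\; F=G$.

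I do not expect a genuine obstacle here: the theorem is essentially the observation that ``being allowed to revise the output tape'' amounts to ``listing finite approximations whose coordinatewise limit is the answer''. The only points needing a little care are the bookkeeping about domains — one must restrict $F$ to $\dom(G)$, since $M$ might a priori produce a convergent snapshot sequence on some inputs outside $\dom(G)$ as well — and the observation that the snapshot encoding $p\mapsto\langle q_0,q_1,\dots\rangle$ is computable, which holds because inspecting the tape after a fixed finite number of steps is a finite computation.
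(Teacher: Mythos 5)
Your proof is correct and follows essentially the same route as the paper: the easy direction via Example~\ref{example:limit-computable} and Proposition~\ref{prop:composition-limit}, and the converse by recording successive snapshots of the limit machine's output tape and observing that they converge coordinatewise to $G(p)$. The only difference is cosmetic bookkeeping — the paper schedules its snapshots by simulating $i$ steps beyond the first filling of position $j$ rather than taking the tape content after $s$ raw steps with a default value, and both versions handle the domain issue by restricting $F$ to $\dom(G)$.
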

\begin{proof}
By Example~\ref{example:limit-computable} $\lim$ is limit computable.
It follows from Proposition~\ref{prop:composition-limit} that if $F$ is computable,
then $\lim\circ\; F$ is limit computable.

Conversely, if $G$ is limit computable, then we construct a one-way
output Turing machine $M_F$ that computes a function $F:\In\IN^\IN\to\IN^\IN$.
This machine $M_F$ internally simulates a limit machine $M_G$ for $G$ on input $p$. The machine $M_F$ successively produces an output $\langle q_0,q_1,...\rangle$ in
steps $\langle i,j\rangle=0,1,2,...$. 
In step $\langle i,j\rangle$ machine $M_F$ simulates $M_G$ for $i$ time steps beyond the point where $M_G$ has filled its $j$--th output position 
for the first time. Machine $M_F$ writes the resulting content of the simulated output of $M_G$ in the $j$--th position into its own output $q_i(j)$.  
If the simulated $j$--th position eventually stabilizes, then $\lim_{i\to\infty}q_i(j)$
exists and coincides with the final value of the $j$--th position $G(p)(j)$.
Thus the machine $M_F$ computes a function $F:\In\IN^\IN\to\IN^\IN$
with $G=\lim\circ\; F$. Here $F$ is restricted to $\dom(G)$.
\end{proof}

We note that the limit normal form theorem allows us to study limit computable
functions in terms of ordinary computable functions and without considering limit machines. 
The particular output behavior of limit machines can be expressed directly using the limit map.
We also obtain the following characterization of limit computable functions as pointwise limits
of a sequence of computable functions.

\begin{corollary}[Pointwise limit]
\label{cor:pointwise}
A function $G:\In \IN^\IN\to \IN^\IN$ is limit computable if and only if there is a computable sequence $(F_n)_{n\in\IN}$
of computable functions $F_n:\In\IN^\IN\to\IN^\IN$ with $\dom(G)\In\dom(F_n)$ and such that $\lim_{n\to\infty}F_n(p)=G(p)$
for all $p\in\dom(G)$.
\end{corollary}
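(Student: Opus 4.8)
The plan is to derive this corollary directly from the limit normal form theorem (Theorem~\ref{thm:limit}) by unpacking what a computable function $F$ with $G = \lim \circ\, F$ actually encodes. The key observation is that a single computable function $F:\In\IN^\IN\to\IN^\IN$ producing $\langle p_0,p_1,p_2,\ldots\rangle$ is essentially the same datum as a \emph{computable sequence} of computable functions $(F_n)_{n\in\IN}$, where $F_n(p) = p_n$ is the $n$-th ``row'' of the output. So the corollary is a reformulation of Theorem~\ref{thm:limit} that trades the packaging $\langle\cdot\rangle$ for an explicit uniform sequence.

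First I would prove the direction from right to left. Given a computable sequence $(F_n)_{n\in\IN}$ with $\dom(G)\In\dom(F_n)$ and $\lim_{n\to\infty}F_n(p)=G(p)$ for all $p\in\dom(G)$, I define $F:\In\IN^\IN\to\IN^\IN$ on domain $\dom(G)$ by $F(p):=\langle F_0(p),F_1(p),F_2(p),\ldots\rangle$, i.e.\ $F(p)\langle n,k\rangle = F_n(p)(k)$. Since the sequence $(F_n)$ is computable (uniformly in $n$), $F$ is computable: to fill output position $\langle n,k\rangle$ one simulates the algorithm for $F_n$ on input $p$ and reads off coordinate $k$. By hypothesis $\lim_{n\to\infty}F_n(p)=G(p)$ in the discrete topology on $\IN^\IN$ (pointwise convergence of sequences in $\IN^\IN$ means exactly this: for each $k$ the value $F_n(p)(k)$ stabilizes to $G(p)(k)$), hence $\lim\circ\,F = G$, and Theorem~\ref{thm:limit} gives that $G$ is limit computable. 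Conversely, if $G$ is limit computable, Theorem~\ref{thm:limit} supplies a computable $F$ with $G=\lim\circ\,F$; write $F(p)=\langle p_0,p_1,p_2,\ldots\rangle$ and set $F_n(p):=p_n$, that is $F_n(p)(k):=F(p)\langle n,k\rangle$. Each $F_n$ is computable and the family is computable uniformly in $n$ (one algorithm with $n$ as a parameter); also $\dom(G)=\dom(F)\In\dom(F_n)$. Since $G(p)=\lim(F(p))=\lim_{n\to\infty}p_n=\lim_{n\to\infty}F_n(p)$ for all $p\in\dom(G)$, the required sequence is exhibited.

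I do not expect any real obstacle here: both directions are bookkeeping with the tupling function $\langle\cdot\rangle$ on $\IN^\IN$, and the only point worth a sentence is the translation between the definition of $\lim$ on $\In\IN^\IN\to\IN^\IN$ (convergence in the discrete topology, which is pointwise eventual stabilization on each coordinate) and the phrase ``$\lim_{n\to\infty}F_n(p)=G(p)$'' in the statement, which means the same thing. The mild care needed is to make sure ``computable sequence of computable functions'' is read as a single algorithm with a parameter $n$, so that computability of $F$ in the first direction is immediate; this matches the standard convention and is exactly what is needed for the construction of $F$ to go through.
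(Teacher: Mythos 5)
Your argument is correct and is exactly the intended derivation: the paper states this corollary without a separate proof, as an immediate consequence of Theorem~\ref{thm:limit} obtained by the same tupling/untupling of $F(p)=\langle F_0(p),F_1(p),F_2(p),\ldots\rangle$ that you carry out in both directions. One terminological nit: the convergence $\lim_{n\to\infty}F_n(p)=G(p)$ is convergence in the Baire space (product) topology, not the \emph{discrete} topology on $\IN^\IN$ (which the paper reserves for $\lim_\Delta$ and which would mean the sequence $F_n(p)$ is eventually constant); your parenthetical gloss --- coordinatewise eventual stabilization --- is the correct reading of the paper's $\lim$, so the argument itself is unaffected.
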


Theorem~\ref{thm:limit} together with Proposition~\ref{prop:composition-limit} yield the following corollary.

\begin{corollary}
\label{cor:completeness-lim}
For any computable function $F:\In\IN^\IN\to\IN^\IN$ there exists a computable function
$G:\In\IN^\IN\to\IN^\IN$ such that $\lim\circ\, G=F\circ\lim$.
\end{corollary}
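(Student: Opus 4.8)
The plan is to apply the limit normal form theorem (Theorem~\ref{thm:limit}) to the limit computable function $F \circ \lim$. First I would observe that, since $F$ is computable and $\lim$ is limit computable, Proposition~\ref{prop:composition-limit} tells us that $F \circ \lim$ is limit computable. Then Theorem~\ref{thm:limit} applied to the function $F \circ \lim$ yields a computable function $G : \In \IN^\IN \to \IN^\IN$ with $F \circ \lim = \lim \circ\, G$, which is exactly the desired conclusion.

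The only point that requires a moment's care is that Theorem~\ref{thm:limit} is stated for functions $G : \In \IN^\IN \to \IN^\IN$ in full generality, with no restriction on the domain beyond being a subset of $\IN^\IN$; in particular it applies to the function $F \circ \lim$ whose domain is $\{\langle p_0, p_1, \dots\rangle : (p_i) \text{ converges and } \lim_i p_i \in \dom(F)\}$. So no adjustment of the hypotheses is needed, and the composition $F \circ \lim$ is a legitimate instance of the "limit computable $G$" in the statement of the theorem.

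I expect there to be no real obstacle here: this corollary is a direct consequence of combining Theorem~\ref{thm:limit} with Proposition~\ref{prop:composition-limit}, essentially reading the limit normal form theorem in the special case where the target limit computable function happens to factor through $\lim$ on the right. The one thing worth double-checking in writing it up is that the equation $F \circ \lim = \lim \circ\, G$ holds on the correct (restricted) domain, i.e.\ that $G$ produced by the theorem is understood as restricted to $\dom(F \circ \lim)$, exactly as in the final sentence of the proof of Theorem~\ref{thm:limit}.
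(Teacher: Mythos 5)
Your proof is correct and is exactly the paper's argument: the paper derives this corollary by noting that $F\circ\lim$ is limit computable (Proposition~\ref{prop:composition-limit}) and then applying the limit normal form theorem (Theorem~\ref{thm:limit}) to it. Your remark about restricting $G$ to the appropriate domain matches the convention already built into the proof of Theorem~\ref{thm:limit}.
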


Corollary~\ref{cor:completeness-lim} also holds with continuity instead of computability in both occurrences. 
We note that the computable functions $F$ do not constitute the largest class of functions for which
there is a computable $G$ with $\lim\circ G=F\circ\lim$. 
In Corollary~\ref{cor:computability-halting-problem} we extend this result to functions that are computable relative to the halting problem.
Maps that satisfy the same property as the limit map in Corollary~\ref{cor:completeness-lim} have been called
\emph{jump operators} \cite{BBP12,dBre14} or \emph{transparent} \cite{BGM12}. 

\begin{definition}[Transparency]
A function $T:\In\IN^\IN\to\IN^\IN$ is called \emph{transparent} if for every computable $F:\In\IN^\IN\to\IN^\IN$
there exists a computable function $G:\In\IN^\IN\to\IN^\IN$ such that $T\circ G=F\circ T$.
\end{definition}

It is easy to see that not all functions are transparent, but the class of transparent functions is reasonably large.

\begin{proposition}[Transparency]
The class of transparent functions $T:\In\IN^\IN\to\IN^\IN$ forms a monoid with respect to composition,
i.e., the identity is transparent, and transparent functions are closed under composition.
\end{proposition}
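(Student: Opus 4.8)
The plan is to verify the two monoid axioms directly from the definition of transparency; both reduce to a short diagram chase with partial functions. Before doing so I would fix the convention that for partial $f,g:\In\IN^\IN\to\IN^\IN$ the composition $g\circ f$ is defined at $p$ exactly when $p\in\dom(f)$ and $f(p)\in\dom(g)$, and recall that composition of partial functions is associative. With this in place, for the identity, given an arbitrary computable $F:\In\IN^\IN\to\IN^\IN$ I would simply take the witness $G:=F$. Then $\id\circ F=F=F\circ\id$ as partial functions (both sides have domain $\dom(F)$), so $\id$ is transparent.

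For closure under composition, let $T_1,T_2:\In\IN^\IN\to\IN^\IN$ be transparent and fix a computable $F:\In\IN^\IN\to\IN^\IN$. First I would apply transparency of $T_2$ to $F$, obtaining a computable $G_2:\In\IN^\IN\to\IN^\IN$ with $T_2\circ G_2=F\circ T_2$. Since $G_2$ is computable, I would then apply transparency of $T_1$ to $G_2$, obtaining a computable $G_1:\In\IN^\IN\to\IN^\IN$ with $T_1\circ G_1=G_2\circ T_1$. Chaining these identities and using associativity,
\begin{align*}
(T_2\circ T_1)\circ G_1&=T_2\circ(T_1\circ G_1)=T_2\circ(G_2\circ T_1)=(T_2\circ G_2)\circ T_1\\
&=(F\circ T_2)\circ T_1=F\circ(T_2\circ T_1),
\end{align*}
so $G_1$ is a computable witness for the transparency of $T_2\circ T_1$.

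I do not expect a genuine obstacle here: the statement is essentially a formal consequence of the definition, in the same spirit as the fact that adjoint pairs compose. The only thing that needs care is domain bookkeeping — in particular that each equality above is an equality of \emph{partial} functions, including agreement of domains — which is why I would fix the composition convention at the outset and invoke associativity of partial-function composition explicitly rather than leave it implicit.
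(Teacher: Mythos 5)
Your proof is correct and is exactly the straightforward diagram chase the paper has in mind (the paper states this proposition without proof, treating it as immediate from the definition). Taking $G:=F$ for the identity and chaining the two witnesses $G_2$, $G_1$ for $T_2\circ T_1$ is the intended argument, and your attention to equality of partial functions is appropriate.
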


It is clear that limit computable maps do not necessarily map computable inputs to computable outputs.
However, they map computable inputs to limit computable outputs.

\begin{definition}[Limit computable points]
A point $p\in\IN^\IN$ is called \emph{limit computable}, if there exists
a computable sequence $(p_i)_{i\in\IN}$ in $\IN^\IN$ such that $p=\lim_{i\to\infty}p_i$.
\end{definition}

It follows directly from the limit normal form theorem (Theorem~\ref{thm:limit}) and 
the fact that computable maps map computable inputs to computable outputs 
that limit computable maps map computable inputs to limit computable outputs.

\begin{corollary}
If $F:\In\IN^\IN\to\IN^\IN$ is limit computable and $p\in\dom(F)$ is computable,
then $F(p)$ is limit computable.
\end{corollary}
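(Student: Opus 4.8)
The plan is to derive this immediately from the limit normal form theorem together with the elementary fact that ordinary computable functions preserve computability of points.

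First I would invoke Theorem~\ref{thm:limit}: since $F$ is limit computable, there is a computable function $H:\In\IN^\IN\to\IN^\IN$ with $F=\lim\circ\,H$ (and $\dom(H)=\dom(F)$). Now let $p\in\dom(F)$ be computable. Since $H$ is an ordinary computable function and $p$ is a computable input in $\dom(H)$, the output $q:=H(p)$ is a computable point of $\IN^\IN$; this is the standard fact that Turing machines map computable sequences to computable sequences. Writing $q=\langle q_0,q_1,q_2,\ldots\rangle$, the sequence $(q_i)_{i\in\IN}$ is then a computable sequence in $\IN^\IN$ (uniformly, from a machine computing $q$), and by the definition of $\lim$ we have $F(p)=\lim\circ\,H(p)=\lim\langle q_0,q_1,q_2,\ldots\rangle=\lim_{i\to\infty}q_i$. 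Hence $F(p)$ is the limit of a computable sequence, which is exactly the definition of a limit computable point.

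There is no real obstacle here: the only thing worth a line of care is checking that $p\in\dom(F)$ indeed lies in $\dom(H)$ so that $H(p)$ is defined, which is guaranteed by the construction in Theorem~\ref{thm:limit} where $F$ is restricted to $\dom(H)$ and these domains agree. Everything else is the composition of two well-known preservation facts — computable maps preserve computable points, and $\lim$ turns a computable sequence of points into a limit computable point — so the corollary follows at once. As the excerpt itself remarks just before the statement, this is a direct consequence of the limit normal form theorem, so the proof is expected to be a two-sentence argument rather than anything requiring new ideas.
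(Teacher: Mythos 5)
Your proof is correct and matches the paper's own argument, which is exactly the one-sentence remark preceding the corollary: apply the limit normal form theorem to write $F=\lim\circ\,H$ with $H$ computable, use that computable maps send computable points to computable points, and read off $F(p)$ as the limit of the resulting computable sequence. Nothing further is needed.
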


It also follows from the limit normal form theorem (Theorem~\ref{thm:limit}) that a point $p\in\IN^\IN$
is limit computable if and only if it is the value of a constant limit computable
map $c:\IN^\IN\to\IN^\IN$. 

We now study the Turing jump $\J$, and we first prove that its inverse is computable.
Even though $\J$ is not continuous, it is injective and has a computable inverse.

\begin{proposition}[Jump inversion]
\label{prop:jump-inversion}
The Turing jump operator $\J$ is injective, and its partial inverse $\J^{-1}:\In\IN^\IN\to\IN^\IN$ is computable.
\end{proposition}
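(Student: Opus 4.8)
The plan is to exhibit an explicit algorithm that, given $\J(p)$, recovers $p$ position by position, and to observe injectivity along the way. The key observation is that the bit $\J(p)(i)$ tells us whether Turing machine $i$ halts on input $p$, and among the machines $i$ we can find, for each $n\in\IN$, a machine that on input $q$ simply reads $q(n)+1$ cells of $q$ (thereby certainly halting) and then halts; more usefully, we want machines whose halting behaviour on $p$ encodes a single bit of information about $p$. So first I would fix, for each $n$ and each $k$, an index $e(n,k)$ of the Turing machine that on input $q\in\IN^\IN$ computes $q(n)$, halts if $q(n)=k$, and diverges otherwise. The map $(n,k)\mapsto e(n,k)$ is computable by the s-m-n theorem. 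Then for any $p$ we have $\J(p)(e(n,p(n)))=1$ and $\J(p)(e(n,k))=0$ for all $k\ne p(n)$, so from $\J(p)$ we read off $p(n)$ as the unique $k$ with $\J(p)(e(n,k))=1$.

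From this the computability of $\J^{-1}$ follows: the machine computing $\J^{-1}$, on input $r$ (intended to be $\J(p)$) and output position $n$, computes the finite list $e(n,0),e(n,1),e(n,2),\dots$ in order, reads $r(e(n,0)),r(e(n,1)),\dots$, and outputs the first $k$ for which $r(e(n,k))=1$. This search terminates whenever $r$ is actually in the range of $\J$, since then exactly one such $k$ exists, so the procedure computes a function $\J^{-1}:\In\IN^\IN\to\IN^\IN$ with $\J^{-1}(\J(p))=p$ for all $p$. Injectivity of $\J$ is then immediate: if $\J(p)=\J(q)$ then $p=\J^{-1}(\J(p))=\J^{-1}(\J(q))=q$; alternatively one can argue directly that $p(n)$ is determined by $\J(p)$ via the recipe above, so $p$ is determined by $\J(p)$.

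The only mild subtlety — and the one point I would be careful about — is ensuring that the machines indexed by $e(n,k)$ halt \emph{precisely} when $q(n)=k$ and not merely when $q(n)=k$ holds eventually or after some unbounded search; since deciding $q(n)=k$ requires reading only the single cell $q(n)$, this is a genuine decision and the machine can halt or enter an explicit loop accordingly, so there is no real obstacle here. I would also note in passing that $\J$ is far from surjective (e.g. no $r$ in its range can have $r(e(n,k))=1$ for two distinct $k$), which is why $\J^{-1}$ is genuinely partial; the proof does not need this, but it explains the restriction of the domain of $\J^{-1}$ to $\range(\J)$.
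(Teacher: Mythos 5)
Your proof is correct and follows essentially the same route as the paper: the paper likewise fixes a computable $r$ with machine $r\langle n,k\rangle$ halting on $p$ iff $p(n)=k$ (your $e(n,k)$), defines $F(q)(n):=\min\{k:qr\langle n,k\rangle=1\}$ as the search you describe, and concludes $F\circ\J=\id$, hence injectivity and computability of $\J^{-1}=F|_{\range(\J)}$. No substantive differences.
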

\begin{proof}
There exists a computable function $r:\IN\to\IN$ such that the Turing machine with code $r\langle n,k\rangle$
halts upon input $p\in\IN^\IN$ if and only if $p(n)=k$. The function
$F:\In\IN^\IN\to\IN^\IN$ defined by
\[F(q)(n):=\min\{k\in\IN:qr\langle n,k\rangle=1\}\]
for all $q\in\IN^\IN$, $n\in\IN$ and $\dom(F):=\{q\in\IN^\IN:(\forall n)(\exists k)\;qr\langle n,k\rangle=1\}$
is computable. We obtain
\[F\circ\J(p)(n)=\min\{k\in\IN:\J(p)(r\langle n,k\rangle)=1\}=\min\{k\in\IN: p(n)=k\}=p(n)\]
for all $p\in\IN^\IN$ and $n\in\IN$, i.e., $F\circ\J=\id$. 
This implies that $\J$ is injective and $\J^{-1}=F|_{\range(\J)}$ is computable.
\end{proof}

We also write $p':=\J(p)$, and we call $p'$ the \emph{Turing jump} of $p$, and we call $0':=\J(\widehat{0})$ the \emph{halting problem}.
We note that the Turing jump operator considered as a map on Turing degrees is not injective.\footnote{There are $p\nequivT q$ such that $p'\equivT q'$.}
We mention that Proposition~\ref{prop:jump-inversion} also implies that any point can be reduced to its Turing jump,
i.e., $p\leqT p'$ (of course, this reduction is known to be strict, which can be proved by an easy diagonalization argument).

It is perhaps surprising that there is a dual version of the limit normal form theorem
that characterizes limit computation by an input modification with the help of Turing jumps
instead of an output modification with limits. 
This shows that in some sense Turing jumps and topological limits are adjoint to each other.
An analogous result can be found in the work of Wagner~\cite{Wag76}.

\begin{theorem}[Jump normal form]
\label{thm:jump}
A function $F:\In\IN^\IN\to\IN^\IN$ is limit computable, 
if and only if there exists a computable function $G:\In\IN^\IN\to\IN^\IN$
such that 
$F=G\circ\J$.
\end{theorem}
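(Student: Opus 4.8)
The plan is to prove both directions by reducing everything to the limit normal form (Theorem~\ref{thm:limit}), using the jump inversion result (Proposition~\ref{prop:jump-inversion}) as the technical engine. The easy direction is the "if" direction: suppose $G$ is computable and $F = G \circ \J$. I want to show $F$ is limit computable. Since $\J$ is limit computable (Example~\ref{example:limit-computable}(3)) and $G$ is computable, Proposition~\ref{prop:composition-limit} applied to the composition $G \circ \J$ immediately gives that $F = G \circ \J$ is limit computable. So this direction is essentially a one-line appeal to earlier results.

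The "only if" direction is where the real work lies. Suppose $F$ is limit computable. By the limit normal form theorem there is a computable $H : \In \IN^\IN \to \IN^\IN$ with $F = \lim \circ\, H$. The idea is to absorb the $\lim$ on the output side into a $\J$ on the input side. Concretely, I would observe that computing $\lim$ of a sequence $\langle p_0, p_1, \dots\rangle$ is the kind of task a single Turing jump query can resolve: to know $\lim_i p_i(j)$, one asks whether the sequence $i \mapsto p_i(j)$ eventually stabilizes to a given value, which is a halting-style question relative to the input. More precisely, I want to build a computable functional that, given the Turing jump $p'$ of an input $p$, can read off $\lim \circ\, H(p)$. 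One clean way: let $e$ be a code such that Turing machine $e$ on input $p$ first simulates $H$ on $p$ to produce the sequence $\langle p_0, p_1, \dots\rangle$ and then, parametrized by $\langle j, k, m\rangle$, halts iff for all $i \geq m$ we have $p_i(j) = k$ (i.e. the $j$th coordinate has stabilized to value $k$ from stage $m$ on). Then from $p'$ — which records, via a computable index function $r$ as in Proposition~\ref{prop:jump-inversion}, the halting behavior of all such machines on $p$ — a computable $G$ can search for the least $\langle k, m\rangle$ witnessing stabilization of coordinate $j$ and output $k$ as $G(p')(j)$. This defines a computable $G$ with $G \circ \J = \lim \circ\, H = F$ on $\dom(F)$.

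The main obstacle I expect is bookkeeping around partiality and the precise form of the jump: one must ensure the constructed $G$ is genuinely computable (the search through $p'$ for a stabilization witness is unbounded but, on $\dom(F)$, guaranteed to terminate since the limit exists), and one must restrict $G$'s domain appropriately so that $G \circ \J = F$ holds exactly, including the domain equality $\dom(G \circ \J) = \dom(F)$. Care is also needed because $\J$ is defined on all of $\IN^\IN$ while $H$ (hence $F$) may be partial; since $\J$ is total and injective with computable inverse, one can first recover $p$ from $p' = \J(p)$ computably via $\J^{-1}$, then check $p \in \dom(H)$ is handled implicitly by letting $H$ be partial. An alternative, slicker route avoiding some of this: note $\lim \equivW \J$ in the sense that $\lim = G_0 \circ \J$ for a fixed computable $G_0$ (this is essentially Shoenfield's limit lemma in uniform form), and then chase the identity $F = \lim \circ\, H = G_0 \circ \J \circ H$; but here $\J \circ H$ is not of the form $(\text{computable}) \circ \J$ directly, so one still needs transparency of $\J$ (that $\J$ is a jump operator / transparent map) to commute things into the required normal form. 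Either way, the crux is the uniform translation between "limit on the output" and "one jump on the input", and the domain-tracking that makes the two composites literally equal rather than merely equivalent.
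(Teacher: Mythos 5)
Your proposal is correct in substance and follows essentially the same route as the paper: use the computable inverse $\J^{-1}$ from Proposition~\ref{prop:jump-inversion} to recover $p$ from $\J(p)$, encode stabilization questions about the output of a limit computation via a computable index function $r$, and let $G$ read the answers off the jump. (The paper works directly with the limit machine $M_F$ rather than first factoring $F=\lim\circ\,H$ through Theorem~\ref{thm:limit}, but this is cosmetic, since the limit normal form produces exactly the data you use.)

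One step, as literally written, would fail: you ask for a machine that \emph{halts} iff ``for all $i\geq m$ we have $p_i(j)=k$''. That is a $\pO{1}$ condition relative to $p$, and the halting set of a machine on input $p$ is $\sO{1}$ in $p$, so no such code $e$ exists. The repair is the standard polarity flip, and it is what the paper does: define $r\langle j,k,m\rangle$ (resp.\ $r\langle n,t\rangle$ in the paper) to be a machine that halts iff \emph{some} later stage violates stabilization --- a $\sO{1}$ event --- and then have $G$ search for an index at which the jump value is $0$. Since $\J(p)$ is the total characteristic function of the halting behaviour, its $0$--entries supply exactly the $\pO{1}$ information you need, and on $\dom(F)$ the search terminates because the limit exists. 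With that one negation inserted, your domain-tracking remarks (restricting $G$ using injectivity of $\J$ so that $G\circ\J=F$ holds exactly) match the paper's conclusion.
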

\begin{proof}
By Example~\ref{example:limit-computable} the Turing jump operator $\J:\IN^\IN\to\IN^\IN$
is limit computable. Hence for any computable $G:\In\IN^\IN\to\IN^\IN$ the composition
$G\circ\J$ is limit computable by Proposition~\ref{prop:composition-limit}.

Let now $M_F$ be a limit Turing machine that computes $F:\In\IN^\IN\to\IN^\IN$.
We describe a Turing machine $M_G$ that computes a function $G:\In\IN^\IN\to\IN^\IN$
with $F(p)=G\circ\J(p)$ for all $p\in\dom(F)$.
There exists a computable function $r:\IN\to\IN$ such that the Turing machine with code $r\langle n,t\rangle$ halts 
upon input $p$ if and only if $M_F$ on input $p$ changes the output cell $n$ after more than $t$ steps.
Now $M_G$ on input $q$ works as follows: it simulates the machine $M_F$ on input $p=\J^{-1}(q)$, where
$\J^{-1}$ is the partial inverse of the Turing jump operator that is computable by Proposition~\ref{prop:jump-inversion},
and writes the output to some working tape. After simulating $M_F$ for $t$ steps at most the first $k$
cells on this working tape have been used for some $k\in\IN$. In this situation $M_G$ 
checks $qr\langle n,t\rangle$ for all $n=0,...,k$ and when the result is $0$ for 
some initial segment $n=0,...,j$, then $M_G$ copies the content of those cells $n=0,...,j$ to the output tape
that have not yet been written to the output.
This algorithm ensures that only those output cells which have already
stabilized are copied to the output. 
Thus, $M_G$ operates with a one-way output tape and computes a function $G:\In\IN^\IN\to\IN^\IN$ 
with $F(p)=G\circ\J(p)$ for all $p\in\dom(F)$. Since $\J$ is injective, one can restrict $G$ such that
one obtains $F=G\circ\J$.
\end{proof}

We obtain the following corollary, which is somehow dual to the statement of Corollary~\ref{cor:completeness-lim}.

\begin{corollary}
\label{cor:completeness-jump}
For any computable function $F:\In\IN^\IN\to\IN^\IN$ there exists a computable function
$G:\In\IN^\IN\to\IN^\IN$ such that $\J\circ F=G\circ\J$.
\end{corollary}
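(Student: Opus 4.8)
The plan is to recognize this corollary as the exact dual of Corollary~\ref{cor:completeness-lim}, and to prove it by the same two-step pattern, with the Jump normal form (Theorem~\ref{thm:jump}) now playing the role that the Limit normal form played there. I want a computable $G$ satisfying $\J\circ F=G\circ\J$, and the guiding observation is that the left-hand side $\J\circ F$ is already a limit computable function, so it must admit a jump normal form of precisely the desired shape $G\circ\J$.

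First I would fix an arbitrary computable $F:\In\IN^\IN\to\IN^\IN$ and form the composite $\J\circ F$, in which $\J$ is applied \emph{after} $F$. By Example~\ref{example:limit-computable} the Turing jump operator $\J$ is limit computable, while $F$ is computable by hypothesis. Proposition~\ref{prop:composition-limit} guarantees that composing a limit computable function with a computable one in either order stays inside the limit computable class; applied with the limit computable function $\J$ and the computable function $F$, it shows that $\J\circ F$ is limit computable.

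Second, I would invoke the Jump normal form theorem (Theorem~\ref{thm:jump}), whose forward direction states that every limit computable function can be written as $G\circ\J$ for some computable $G$. Since $\J\circ F$ was just shown to be limit computable, Theorem~\ref{thm:jump} immediately yields a computable $G:\In\IN^\IN\to\IN^\IN$ with $\J\circ F=G\circ\J$, which is exactly the assertion.

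I do not anticipate any genuine obstacle, since all of the real content is already packaged into Proposition~\ref{prop:composition-limit} and Theorem~\ref{thm:jump}; the argument is a faithful mirror image of the proof of Corollary~\ref{cor:completeness-lim}, trading the output-side limit characterization for the input-side jump characterization. The one point deserving a moment of care is the order of composition: the jump is applied after $F$, so it is $\J$ (not $F$) that occupies the outer position, and it is therefore $\J$ that must be the limit computable factor when one applies the composition proposition and afterwards the jump normal form.
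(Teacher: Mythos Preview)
Your argument is correct and is exactly the approach the paper intends: the corollary is stated immediately after Theorem~\ref{thm:jump} as its direct consequence, obtained by noting that $\J\circ F$ is limit computable (via Proposition~\ref{prop:composition-limit} and Example~\ref{example:limit-computable}) and then applying the jump normal form. This mirrors precisely how the paper derives Corollary~\ref{cor:completeness-lim} from Theorem~\ref{thm:limit}.
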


It is clear that $\J$ and $\lim$ cannot be swapped in Corollaries~\ref{cor:completeness-lim} and \ref{cor:completeness-jump}.
In particular, $\J$ is not transparent, since it has no computable values in its range.
However, it is not too difficult to see that $\J^{-1}$ is transparent.

It is a consequence of Corollary~\ref{cor:completeness-jump} that the Turing jump operator is monotone
with respect to Turing reducibility.

\begin{corollary}[Monotonicity of Turing jumps]
\label{cor:jump-monotone}
$q\leqT p\TO q'\leqT p'$ for all $p,q\in\IN^\IN$.
\end{corollary}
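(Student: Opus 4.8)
The plan is to reduce the statement directly to Corollary~\ref{cor:completeness-jump}. Assume $q\leqT p$; unfolding the definition, this means there is a computable function $F:\In\IN^\IN\to\IN^\IN$ with $p\in\dom(F)$ and $F(p)=q$. First I would apply Corollary~\ref{cor:completeness-jump} to this $F$ to obtain a computable function $G:\In\IN^\IN\to\IN^\IN$ satisfying $\J\circ F=G\circ\J$ as partial functions.

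Next I would trace the point $p$ through this identity. Since $\J$ is total and $p\in\dom(F)$, the composition $\J\circ F$ is defined at $p$ with value $\J(F(p))=\J(q)=q'$. Because the equation $\J\circ F=G\circ\J$ is an equality of partial functions, $G\circ\J$ is then also defined at $p$, which forces $p'=\J(p)\in\dom(G)$ and $G(p')=q'$. Hence $q'$ is the image of $p'$ under the computable function $G$, i.e., $q'\leqT p'$, as desired.

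The only point requiring a little attention is the bookkeeping of partial domains: one has to rely on the fact that Corollary~\ref{cor:completeness-jump} really delivers an equality of partial functions (equal domains on both sides), so that definedness of the left-hand side at $p$ propagates to the right-hand side and yields $p'\in\dom(G)$. This is exactly what the corollary provides, so I do not expect any genuine obstacle here; the result is an essentially immediate corollary of the jump normal form machinery, in the same spirit in which monotonicity of other reducibilities follows from the corresponding completeness or cylinder properties.
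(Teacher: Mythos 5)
Your proposal is correct and matches the paper's intended argument: the paper derives Corollary~\ref{cor:jump-monotone} precisely as an immediate consequence of Corollary~\ref{cor:completeness-jump}, by taking a computable $F$ witnessing $q\leqT p$ and evaluating the identity $\J\circ F=G\circ\J$ at $p$ to obtain $q'=G(p')$. Your remark about the domain bookkeeping is sound and does not introduce any deviation from the paper's route.
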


We also mention that the jump normal form theorem (Theorem~\ref{thm:jump}) yields as a non-uniform corollary a version of 
Shoenfield's limit lemma~\cite{Sho59}.

\begin{corollary}[Shoenfield's limit lemma 1959]
\label{cor:shoenfield}
A point $p\in\IN^\IN$ is limit computable if and only if $p\leqT0'$.
\end{corollary}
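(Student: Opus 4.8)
The plan is to derive Corollary~\ref{cor:shoenfield} from the jump normal form theorem (Theorem~\ref{thm:jump}) by the standard device of representing a point as a constant map, combined with the remark already made in the text that a point $p$ is limit computable if and only if it is the value of a constant limit computable map. First I would fix $p\in\IN^\IN$ and consider the constant map $c_p:\IN^\IN\to\IN^\IN$, $q\mapsto p$. Then I observe that $p$ is limit computable exactly when $c_p$ is limit computable.

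For the direction from limit computability to $p\leqT 0'$, I would apply Theorem~\ref{thm:jump} to $c_p$: there is a computable $G:\In\IN^\IN\to\IN^\IN$ with $c_p=G\circ\J$. Evaluating at the computable input $\widehat{0}$ gives $p=c_p(\widehat{0})=G(\J(\widehat{0}))=G(0')$. Since $G$ is computable and $0'$ is the halting problem, a Turing machine with oracle $0'$ can compute $p$, i.e.\ $p\leqT 0'$. (More precisely, $G$ being computable as a partial function on $\Baire$ means it is computed by some oracle Turing machine, and feeding in $0'$ on the oracle tape produces $p$; this is the routine translation between computable functions on $\Baire$ and oracle computations.)

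For the converse, suppose $p\leqT 0'$. Then there is an oracle Turing machine computing $p$ from $0'$; turning this into a function on $\Baire$, there is a computable $G:\In\IN^\IN\to\IN^\IN$ with $\widehat{0}\in\dom(G)$ and $G(0')=p$. Now precompose with $\J$: the map $q\mapsto G(\J(q))$, suitably restricted, is limit computable by Theorem~\ref{thm:jump} (or directly by Proposition~\ref{prop:composition-limit}, since $\J$ is limit computable by Example~\ref{example:limit-computable} and $G$ is computable). Restricting this composition to the single input $\widehat{0}$ yields the constant map with value $G(\J(\widehat{0}))=G(0')=p$, which is therefore limit computable; hence $p$ is a limit computable point. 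Alternatively, and perhaps more cleanly, one can use Corollary~\ref{cor:pointwise}: an $0'$-computation of $p$ can be approximated by using finite initial segments of $0'$ obtained from the standard computable enumeration of the halting set, yielding a computable sequence of (eventually correct) approximations to $p$.

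I do not expect a genuine obstacle here; the only point requiring a little care is the bookkeeping in translating between ``computable function $G:\In\IN^\IN\to\IN^\IN$'' and ``oracle Turing machine computation'', and making sure the constant map $c_p$ has the computable point $\widehat{0}$ in its domain after the restrictions imposed in Theorem~\ref{thm:jump}. Both are handled by noting that $c_p$ is total on $\Baire$, so $\widehat{0}\in\dom(c_p)$, and by the elementary fact that $p\leqT r$ iff $p=G(r)$ for some computable partial $G$ with $r\in\dom(G)$. This is why the corollary is fairly labelled a ``non-uniform corollary'' of Theorem~\ref{thm:jump}: one instantiates the uniform normal form at a single oracle and reads off the classical equivalence.
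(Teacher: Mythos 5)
Your proof is correct and matches the paper's intended derivation: the paper explicitly presents this corollary as a non-uniform consequence of the jump normal form theorem (Theorem~\ref{thm:jump}), obtained exactly as you do by passing to the constant map $c_p$ (using the remark that a point is limit computable iff it is the value of a constant limit computable map) and evaluating $c_p=G\circ\J$ at $\widehat{0}$ to get $p=G(0')$, with the converse following from Proposition~\ref{prop:composition-limit} applied to $G\circ\J$. The bookkeeping points you flag (totality of $c_p$, the translation between computable partial functions on $\Baire$ and oracle computations) are handled correctly.
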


One can prove relativized forms of the jump normal form theorem (Theorem~\ref{thm:jump}) and
the limit normal form theorem (Theorem~\ref{thm:limit}). However, the exact relativization needs
some care.  
For every $q\in\IN^\IN$ we define the \emph{relativized jump operator} $\J_q:\IN^\IN\to\IN^\IN$
by $\J_q(p):=\J\langle q,p\rangle$. 
We say that $F:\In\IN^\IN\to\IN^\IN$ is \emph{limit computable relative to} $q\in\IN^\IN$ if
there exists a limit computable $G:\In\IN^\IN\to\IN^\IN$ such that $F(p)=G\langle q,p\rangle$ for all $p\in\dom(F)$.
Now the following theorem can be proved along the lines of the above results.

\begin{theorem}[Relativized limit computability]
\label{thm:relativized-limit-computability}
The following are equivalent for $q\in\IN^\IN$:
\begin{enumerate}
\item $F$ is limit computable relative to $q$,
\item $F=\lim\circ\,G$ for some $G$ that is computable relative to $q$,
\item $F=G\circ\J_q$ for some $G$ that is computable.
\end{enumerate}
\end{theorem}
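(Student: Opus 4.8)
The plan is to prove the three-way equivalence of Theorem~\ref{thm:relativized-limit-computability} by establishing the cycle $(1)\Rightarrow(3)\Rightarrow(2)\Rightarrow(1)$, reducing each implication to its unrelativized counterpart (Theorems~\ref{thm:limit} and~\ref{thm:jump}) applied to the auxiliary function $G$ witnessing relativized limit computability. The one subtlety to be careful about, as the text warns, is that the ``correct'' relativized jump is $\J_q(p)=\J\langle q,p\rangle$ rather than, say, $\langle\J(q),\J(p)\rangle$ or $\J(q\oplus p)$ componentwise; the whole argument is arranged so that the oracle $q$ is fed to the \emph{same} Turing machine that computes $G$, which is exactly what $\J\langle q,p\rangle$ encodes.

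First I would prove $(1)\Rightarrow(3)$. Assume $F$ is limit computable relative to $q$, so there is a limit computable $H:\In\IN^\IN\to\IN^\IN$ with $F(p)=H\langle q,p\rangle$ for all $p\in\dom(F)$. Apply the jump normal form theorem (Theorem~\ref{thm:jump}) to $H$: there is a computable $G_0:\In\IN^\IN\to\IN^\IN$ with $H=G_0\circ\J$. Then $F(p)=G_0(\J\langle q,p\rangle)=G_0(\J_q(p))$, so setting $G:=G_0$ gives $F=G\circ\J_q$ with $G$ computable, after restricting $G$ to the range of $\J_q$ (using that $\J$, hence $\J_q$, is injective by Proposition~\ref{prop:jump-inversion}).

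Next, $(3)\Rightarrow(2)$. Assume $F=G\circ\J_q$ with $G$ computable. The point is that $\J_q$ is itself limit computable relative to $q$ and, more precisely, admits a limit normal form whose computable part depends on $q$ only through a fixed recursion. Concretely, $\J\langle q,p\rangle(i)=1$ iff machine $i$ halts on $\langle q,p\rangle$; there is a single computable $E:\IN^\IN\to\IN^\IN$ with $\J=\lim\circ\,E$ obtained by running all machines in a dovetailed fashion and outputting the current halting-status vector, and feeding it the input with $q$ prepended yields a function $E_q(p):=E\langle q,p\rangle$ that is computable relative to $q$ and satisfies $\J_q=\lim\circ\,E_q$. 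Composing, $F=G\circ\lim\circ\,E_q$; by Corollary~\ref{cor:completeness-lim} there is a computable $G'$ with $G\circ\lim=\lim\circ\,G'$, so $F=\lim\circ\,(G'\circ E_q)$, and $G'\circ E_q$ is computable relative to $q$, which is~(2).

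Finally, $(2)\Rightarrow(1)$. Assume $F=\lim\circ\,G'$ with $G'$ computable relative to $q$, say $G'(p)=G''\langle q,p\rangle$ for a computable $G''$. Then $\lim\circ\,G''$ is limit computable by Proposition~\ref{prop:composition-limit}, and $F(p)=\lim(G''\langle q,p\rangle)=(\lim\circ\,G'')\langle q,p\rangle$, so $F$ is limit computable relative to $q$ with witness $\lim\circ\,G''$. The main obstacle, such as it is, is bookkeeping in step $(3)\Rightarrow(2)$: one must exhibit the limit normal form of the relativized jump with a computable part that absorbs $q$ cleanly as a prefix, so that ``computable relative to $q$'' is preserved under the composition; everything else is a direct transcription of the unrelativized proofs with $\langle q,p\rangle$ in place of $p$.
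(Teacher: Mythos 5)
Your proof is correct, and it is exactly the argument the paper intends: the paper omits the proof, remarking only that the theorem ``can be proved along the lines of the above results,'' and your cycle $(1)\Rightarrow(3)\Rightarrow(2)\Rightarrow(1)$ carries this out by applying the jump normal form (Theorem~\ref{thm:jump}) to the limit computable witness $H$ with $F(p)=H\langle q,p\rangle$, the limit normal form of $\J$ together with Corollary~\ref{cor:completeness-lim} to absorb $q$ into the computable part, and Proposition~\ref{prop:composition-limit} for the closing implication. The domain bookkeeping via injectivity of $\J_q$ and the observation that $G'\circ E_q(p)=(G'\circ E)\langle q,p\rangle$ witnesses computability relative to $q$ are both handled correctly.
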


The symmetry between the two normal forms does not fully extend to the relativized case since oracles act on the input side, and hence 
we need to use either functions that are computable relative to $q$ or the relativized jump $\J_q$. 
Related to this observation the equivalence expressed in the two normal form theorems is not fully uniform and, 
in fact, $\J^{-1}$ is not transparent in a topological sense (see~\cite[Proposition~9.13]{BBP12}).

It is rarely mentioned that $1$--generic points as they are used in computability theory \cite{Soa16}
can be characterized as points of continuity of the Turing jump $\J$. This was noticed in~\cite[Lemma~9.3]{BBP12}.

\begin{definition}[$1$--generic]
A point $p\in\IN^\IN$ is called \emph{$1$--generic} if and only if $\J$ is continuous at $p$.
\end{definition}

It is easy to see that $1$--generic points cannot be computable.
The following characterization of $1$--generic points is a consequence of Theorem~\ref{thm:jump}.
The $1$--generic points are exactly those at which every limit computable function is continuous.

\begin{corollary}[$1$--generic points]
\label{cor:1-generic}
A point $p\in\IN^\IN$ is $1$--generic if and only if every limit computable function $F:\In\IN^\IN\to\IN^\IN$
with $p\in\dom(F)$ is continuous at $p$.
\end{corollary}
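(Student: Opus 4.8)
The plan is to prove both directions using the jump normal form theorem (Theorem~\ref{thm:jump}) together with the definition of $1$--generic as a point of continuity of $\J$. For the direction from right to left, I would argue contrapositively: if $p$ is \emph{not} $1$--generic, then $\J$ is discontinuous at $p$; but $\J$ is itself limit computable (Example~\ref{example:limit-computable}) and has $p$ in its domain, so it is a limit computable function that fails to be continuous at $p$, witnessing the failure of the right-hand side. This direction is essentially immediate.

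For the direction from left to right, suppose $p$ is $1$--generic, i.e.\ $\J$ is continuous at $p$, and let $F:\In\IN^\IN\to\IN^\IN$ be any limit computable function with $p\in\dom(F)$. By Theorem~\ref{thm:jump} there is a computable $G:\In\IN^\IN\to\IN^\IN$ with $F=G\circ\J$. Now $G$, being computable, is continuous on its domain, and $\J(p)\in\dom(G)$ since $p\in\dom(F)=\dom(G\circ\J)$. I would then conclude that $F=G\circ\J$ is continuous at $p$ as a composition of a function continuous at $p$ (namely $\J$) with a function continuous at the image point $\J(p)$ (namely $G$). The one routine point to check is that the usual ``composition of functions continuous at a point'' argument goes through for \emph{partial} functions on $\IN^\IN$ with the subspace topology on the domain: given a neighbourhood $V$ of $F(p)$, continuity of $G$ at $\J(p)$ relative to $\dom(G)$ yields a neighbourhood $W$ of $\J(p)$ with $G(W\cap\dom(G))\In V$, and continuity of $\J$ at $p$ yields a neighbourhood $U$ of $p$ with $\J(U)\In W$; since $\J$ is total, $\J(U\cap\dom(F))\In W\cap\dom(G)$, hence $F(U\cap\dom(F))\In V$.

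I do not expect a serious obstacle here; the content is entirely carried by Theorem~\ref{thm:jump}, and the remaining work is the bookkeeping for partial continuous functions just described. If anything, the only subtlety worth a sentence is that totality of $\J$ is what makes the composition argument clean, whereas for a general transparent map one would need to be slightly more careful about domains; since $\J:\IN^\IN\to\IN^\IN$ is total this issue does not arise.
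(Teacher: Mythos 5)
Your proposal is correct and follows exactly the paper's intended argument: the forward direction via the jump normal form $F=G\circ\J$ with $G$ computable (hence continuous) composed with $\J$ continuous at $p$, and the converse by applying the hypothesis to $F=\J$ itself. The extra care you take with continuity of compositions of partial functions is sound and only makes explicit what the paper leaves implicit.
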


While the ``only if'' direction is a consequence of Theorem~\ref{thm:jump}, one obtains the ``if'' direction
by applying the statement to $F=\J$.
We note that there are limit computable functions (such as the limit map $\lim$) that do not have
any $1$--generics in their domain, simply because they are not continuous at any point.

It is easy to see that restricted to $1$--generics the Turing jump operator is continuous and computable relative to the halting problem.

\begin{proposition}[Jump on $1$--generics]
\label{prop:jump-1-generic}
The Turing jump operator $\J:\IN^\IN\to\IN^\IN,p\mapsto p'$ restricted to the set 
of $1$--generics is computable relative to the halting problem.
\end{proposition}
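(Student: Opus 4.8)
The plan is to use the jump normal form theorem (Theorem~\ref{thm:jump}) together with Proposition~\ref{prop:jump-inversion} to reduce the claim to a statement about continuity. Recall that we must show that $\J$ restricted to the $1$--generic points is computable relative to $0'$. The starting point is the observation, recorded right before the statement, that a point $p$ is $1$--generic if and only if $\J$ is continuous at $p$; and by Corollary~\ref{cor:1-generic} every limit computable function is continuous at $p$ whenever $p$ is $1$--generic.

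First I would set up the computation relative to $0'$ as follows. Given an oracle for $0'$ and an input $p$ that is assumed to be $1$--generic, we want to compute $\J(p)=p'$. By Proposition~\ref{prop:jump-inversion}, $\J$ is injective with computable partial inverse $\J^{-1}$, so it is enough to produce, for each index $i$, the correct bit $\J(p)(i)$, i.e.\ to decide whether Turing machine $i$ halts on input $p$. The idea is a standard ``use principle plus $1$--genericity'' argument: simulate machine $i$ on $p$; if it halts, say after reading the initial segment $p\restriction n$, then it halts on \emph{every} extension of $p\restriction n$, and we may output $\J(p)(i)=1$. The delicate case is when machine $i$ does not halt on $p$: we must certify this with finite information. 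Here $1$--genericity is exactly what is needed — since $p$ is $1$--generic, for each $i$ there is an initial segment $p\restriction n$ such that either some extension of $p\restriction n$ forces machine $i$ to halt (in which case $p$ already does, by the continuity of $\J$ at $p$) or \emph{no} extension of $p\restriction n$ makes machine $i$ halt. The second alternative is a $\Pi^0_1$ statement about the string $p\restriction n$, hence decidable with the oracle $0'$.

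So the algorithm relative to $0'$ is: on input $p$, to compute $\J(p)(i)$, search through the initial segments $w=p\restriction 0, p\restriction 1, p\restriction 2,\dots$ and, using the $0'$ oracle, test for each $w$ the $\Sigma^0_1$ condition ``some extension of $w$ makes machine $i$ halt'' versus its negation; by $1$--genericity of $p$ this search terminates with a witness $w=p\restriction n$ settling the question, and in either case the answer determines $\J(p)(i)$ correctly — if some extension of $w=p\restriction n$ halts then machine $i$ halts on $p$ itself (its computation only reads within such an extension, and $p$ is one), and if no extension halts then machine $i$ certainly does not halt on $p$. Doing this uniformly in $i$ produces $\J(p)$, and continuity of $\J$ on the $1$--generics follows because on these points the above is a continuous procedure in $p$ (the only use of $0'$ is on the finite strings $w$, not on unbounded search in $p$).

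The main obstacle is making precise the ``forcing'' step: one must phrase correctly the statement ``$\exists$ extension $\sigma\supseteq w$ such that machine $i$ halts on $\sigma$'' as an honestly $\Sigma^0_1$ predicate of $\langle i,w\rangle$ (it is — one existentially quantifies over finite extensions and halting times) and verify that its negation, which is $\Pi^0_1$, is decidable from $0'$, and then check that $1$--genericity of $p$ guarantees the search over $w=p\restriction n$ meets one of the two cases. This is precisely the standard unwinding of the definition of $1$--genericity (``for every $\Sigma^0_1$ set of strings, $p$ either enters it or has an initial segment with no extension in it''), here applied to the set of strings forcing machine $i$ to halt; once that is in place the rest is routine bookkeeping, and one may alternatively obtain the result directly from Theorem~\ref{thm:jump} by noting that $\J$ itself is limit computable and hence of the form $G\circ\J$, then combining continuity at $1$--generics with Shoenfield's limit lemma (Corollary~\ref{cor:shoenfield}) relativized along the continuous section.
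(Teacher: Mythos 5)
Your overall architecture matches the paper's proof --- search through initial segments $w\prefix p$, use $0'$ to decide a $\Sigma^0_1/\Pi^0_1$ dichotomy depending only on $(w,i)$, and invoke $1$--genericity to guarantee that some $w$ settles the value of $\J(p)(i)$ --- but the dichotomy you actually test is the wrong one, and as written the algorithm fails. You test, for each $w=p|_n$, the condition ``some extension of $w$ makes machine $i$ halt'' against its negation. These two conditions are complementary, so the search already ``terminates'' at $w=\varepsilon$: if machine $i$ halts on \emph{any} input whatsoever, your procedure outputs $\J(p)(i)=1$, which is wrong whenever machine $i$ happens not to halt on $p$. The justification you offer for the positive branch (``its computation only reads within such an extension, and $p$ is one'') is a non sequitur: the halting computation lives on some particular extension $\sigma\supseteq w$, and $p$ extends $w$ but need not extend $\sigma$. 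The appeal to continuity of $\J$ at $p$ does not repair this, because continuity only guarantees that \emph{some} $w\prefix p$ makes $\J(\cdot)(i)$ constant on the cylinder $w\IN^\IN$; your search cannot recognize that particular $w$ and will act on earlier initial segments where the positive condition holds vacuously. Relatedly, your ``standard unwinding'' of $1$--genericity garbles the first disjunct: meeting a c.e.\ set of strings means an initial segment of $p$ \emph{itself} lies in the set, not that some extension of an initial segment does.

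The correct, non-exhaustive dichotomy --- which is what the paper uses, and which you in fact state correctly in your second paragraph before abandoning it --- is: either machine $i$ halts on \emph{all} extensions of $w$ (equivalently, halts on $p$ with use inside $w$, a condition on $(w,i)$ alone), or it halts on \emph{no} extension of $w$. Both conditions are decidable from $0'$, neither need hold for short $w$, and $1$--genericity (continuity of $\J$ at $p$) is exactly the statement that for each $i$ some $w\prefix p$ falls into one of the two cases; each case then determines $\J(p)(i)$ correctly. The fix to your argument is therefore local: replace the positive test by ``machine $i$ halts reading only $w$'' (checked by direct simulation, no oracle needed) run alongside the $0'$-query for the negative case. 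Finally, the alternative route you sketch at the end --- writing $\J=G\circ\J$ via Theorem~\ref{thm:jump} and ``relativizing Shoenfield along the continuous section'' --- is vacuous ($G=\id$ satisfies that equation) and does not yield the statement.
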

\begin{proof}
The sets
\begin{enumerate}
\item $A:=\{(w,i)\in\IN^*\times\IN:$ the $i$--th Turing machine halts on all extensions of $w\}$,
\item $B:=\{(w,i)\in\IN^*\times\IN:$ the $i$--th Turing machine halts on no extension of $w\}$
\end{enumerate}
are both computable relative to $0'$. Given a $1$--generic $p\in\IN^\IN$ it is clear that for each $i\in\IN$ there
is a $w\prefix p$ such that $(w,i)\in A$ or $(w,i)\in B$. Depending on the answer we know that $\J(p)(i)=1$ or $\J(p)(i)=0$.
Hence, $\J$ restricted to $1$--generics is computable relative to the halting problem.
\end{proof}

As a consequence of the jump normal form (Theorem~\ref{thm:jump}) we obtain that every
limit computable function is computable relative to the halting problem, when restricted to the $1$--generic inputs.

\begin{corollary}[Limit computability on $1$--generics]
\label{cor:jump-1-generic}
Restricted to $1$--generics every limit computable $F:\In\IN^\IN\to\IN^\IN$ is computable relative to the halting problem.
\end{corollary}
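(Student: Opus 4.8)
The plan is to combine the jump normal form (Theorem~\ref{thm:jump}) with Proposition~\ref{prop:jump-1-generic}. Let $F:\In\IN^\IN\to\IN^\IN$ be limit computable. By Theorem~\ref{thm:jump} there is a computable $G:\In\IN^\IN\to\IN^\IN$ with $F=G\circ\J$. Restricting to the set of $1$--generics, we can factor $F|_{1\text{-gen}}$ as $G\circ(\J|_{1\text{-gen}})$. By Proposition~\ref{prop:jump-1-generic}, $\J|_{1\text{-gen}}$ is computable relative to the halting problem $0'$; and $G$ is computable, hence in particular computable relative to $0'$. Since the composition of a map computable relative to $0'$ with a computable map is again computable relative to $0'$ (the relativized analogue of the trivial half of Proposition~\ref{prop:composition-limit}, or just the fact that functions computable relative to a fixed oracle are closed under composition with computable functions), we conclude that $F|_{1\text{-gen}}$ is computable relative to the halting problem.

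Concretely, I would spell this out as: given a $1$--generic input $p$, the oracle machine first uses its $0'$ oracle to compute $q:=\J(p)=p'$ — which is legitimate precisely because, as shown in the proof of Proposition~\ref{prop:jump-1-generic}, on a $1$--generic $p$ each bit $\J(p)(i)$ can be decided by searching for a prefix $w\prefix p$ landing in the $0'$--computable set $A$ or $B$ — and then runs the ordinary Turing machine for $G$ on $q$. The output is $G(\J(p))=F(p)$. One minor bookkeeping point: $G$ in Theorem~\ref{thm:jump} is only guaranteed correct on $\range(\J)$, but that is exactly where $\J(p)$ lives, so no issue arises; and the domain condition $p\in\dom(F)$ carries through since $\dom(F)=\dom(G\circ\J)$ by construction.

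I do not expect any real obstacle here — the corollary is essentially an immediate juxtaposition of the two preceding results. The only thing requiring a word of care is making sure that "computable relative to the halting problem" is genuinely closed under precomposition with the $0'$--computable map $\J|_{1\text{-gen}}$ and postcomposition with the computable map $G$; since all the machines involved use the single oracle $0'$ (and a computable map needs no oracle), this is just the standard fact that relative computability is transitive under composition with plain computable maps, so the argument goes through without further hypotheses.
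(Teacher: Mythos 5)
Your argument is exactly the one the paper intends: it states the corollary as a consequence of the jump normal form (Theorem~\ref{thm:jump}) combined with Proposition~\ref{prop:jump-1-generic}, i.e.\ writing $F=G\circ\J$ with $G$ computable and using that $\J$ restricted to $1$--generics is computable relative to $0'$. The proposal is correct and matches the paper's approach, including the minor bookkeeping about $\range(\J)$ and domains.
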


As a non-uniform corollary of Proposition~\ref{prop:jump-1-generic}
we obtain that all $1$--generics are \emph{generalized low},
a property that is made more precise in the next well-known corollary~\cite[Lemma~2]{Joc77}.

\begin{corollary}[Jockusch 1977]
\label{cor:Jockusch}
$p'\equivT\langle p,0'\rangle$ for all $1$--generics $p\in\IN^\IN$.
\end{corollary}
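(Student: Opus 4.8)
The plan is to establish the two Turing reductions $\langle p,0'\rangle\leqT p'$ and $p'\leqT\langle p,0'\rangle$ separately and combine them. It is worth noting at the outset that only the second reduction uses $1$--genericity of $p$; the first one holds for every $p\in\IN^\IN$.

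For $\langle p,0'\rangle\leqT p'$ it suffices to reduce each of the two components to $p'$. On the one hand $p\leqT p'$ by the remark following Proposition~\ref{prop:jump-inversion}. On the other hand $\widehat{0}\leqT p$ trivially, since $\widehat{0}$ is computable, so monotonicity of the Turing jump (Corollary~\ref{cor:jump-monotone}) yields $0'=\J(\widehat{0})\leqT\J(p)=p'$. Hence the join $\langle p,0'\rangle$ reduces to $p'$.

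For $p'\leqT\langle p,0'\rangle$ I would simply unwind Proposition~\ref{prop:jump-1-generic}: it provides an oracle Turing machine $M$ with oracle $0'$ that, on any $1$--generic input, outputs its Turing jump. Running $M$ on the $1$--generic $p$ with oracle $0'$ therefore produces $\J(p)=p'$, and since $M$ uses only $p$ on its input tape and $0'$ as oracle, this is precisely the statement $p'\leqT\langle p,0'\rangle$. Concretely, for each $i$ the machine searches along $p$ for a prefix $w\prefix p$ with $(w,i)\in A$ or $(w,i)\in B$, where $A,B$ are the two $0'$--computable sets from the proof of Proposition~\ref{prop:jump-1-generic}; such a $w$ exists exactly because $p$ is $1$--generic, and the outcome determines $\J(p)(i)$. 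Together with the first reduction this gives $p'\equivT\langle p,0'\rangle$.

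I do not expect any genuine obstacle: the corollary is bookkeeping on top of Propositions~\ref{prop:jump-inversion} and~\ref{prop:jump-1-generic} and Corollary~\ref{cor:jump-monotone}. The only point worth emphasizing is that $1$--genericity enters solely through Proposition~\ref{prop:jump-1-generic} and is genuinely needed for the nontrivial inequality — in general $p'\nleqT\langle p,0'\rangle$, for instance when $p\equivT0'$, in which case $\langle p,0'\rangle\equivT0'$ while $p'\equivT0''$.
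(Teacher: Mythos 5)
Your proposal is correct and follows exactly the route the paper indicates: the nontrivial direction $p'\leqT\langle p,0'\rangle$ from Proposition~\ref{prop:jump-1-generic}, and the converse for arbitrary $p$ via $p\leqT p'$ and monotonicity of the jump. The extra detail you supply for the easy direction and the closing counterexample are both accurate.
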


Here the reduction $p'\leqT\langle p,0'\rangle$ follows directly from Proposition~\ref{prop:jump-1-generic},
and the inverse reduction even holds for arbitrary $p\in\IN^\IN$.

The points whose jump is below $0'$ also have a special name, they are called \emph{low}.

\begin{definition}[Low points]
\label{def:low}
A point $p\in\IN^\IN$ is called \emph{low}, if its Turing jump $p'$ is limit computable.
\end{definition}

It is clear that all computable points are low, and all limit computable $1$--generics are low by Corollary~\ref{cor:Jockusch}.
One can use this observation to show that the class of low points also contains non-computable points.
Using the \emph{low map} $\L:=\J^{-1}\circ\lim$  we obtain the following characterization of low points~\cite[Lemma~8.2]{BBP12}.

\begin{corollary}[Low points]
A point $p\in\IN^\IN$ is low if and only if there is a computable $q\in\IN^\IN$
such that $\L(q)=p$.
\end{corollary}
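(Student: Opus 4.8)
The plan is to unfold the definitions of $\L$ and of ``low'' on both sides and to glue them together using the injectivity of $\J$ and Proposition~\ref{prop:jump-inversion}. Recall that $\L=\J^{-1}\circ\lim$, so the equation $\L(q)=p$ means precisely that $q\in\dom(\lim)$, that $\lim(q)\in\range(\J)$, and that $\J(p)=\lim(q)$. Thus the whole statement will reduce to the observation that $p'=\J(p)$ is a limit computable point exactly when $\J(p)$ can be written as $\lim(q)$ for some computable $q$.

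For the ``if'' direction I would start from a computable $q\in\IN^\IN$ with $\L(q)=p$. Writing $q=\langle q_0,q_1,q_2,\dots\rangle$, the sequence $(q_i)_{i\in\IN}$ is computable and, by the remark above, converges to $\lim(q)=\J(p)=p'$. Hence $p'$ is a limit computable point by the definition of limit computable points, i.e.\ $p$ is low.

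For the ``only if'' direction I would start from the assumption that $p$ is low, so that $p'$ is limit computable. By definition there is a computable sequence $(q_i)_{i\in\IN}$ in $\IN^\IN$ with $\lim_{i\to\infty}q_i=p'$. Put $q:=\langle q_0,q_1,q_2,\dots\rangle$, which is again computable. Then $\lim(q)=p'=\J(p)\in\range(\J)$, and since by Proposition~\ref{prop:jump-inversion} the map $\J$ is injective and $\J^{-1}$ is its partial inverse, we get $\L(q)=\J^{-1}(\lim(q))=\J^{-1}(\J(p))=p$, as required.

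There is no genuine obstacle in this argument; the only point that deserves a moment's care is to verify that $\lim(q)$ really lands in $\range(\J)$ so that $\J^{-1}$ is applicable, and this is immediate from $\lim(q)=\J(p)$. Everything else is a direct translation between the pairing notation for sequences and the pointwise limit.
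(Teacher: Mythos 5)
Your proof is correct and is exactly the definitional unfolding the paper intends: the corollary is stated without proof as an immediate consequence of the definitions of $\L=\J^{-1}\circ\lim$ and of low points, and your argument supplies precisely that translation (identifying a computable sequence $(q_i)_{i\in\IN}$ with the computable point $\langle q_0,q_1,\dots\rangle$ and using the injectivity of $\J$ from Proposition~\ref{prop:jump-inversion}). No further comment is needed.
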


It follows from Corollaries~\ref{cor:jump-monotone} and \ref{cor:shoenfield}
that computable functions map low inputs to low outputs.
In \cite{BBP12} low functions were introduced that were further studied in \cite{BGM12,BHK17a}.

\begin{definition}[Low functions]
A function $F:\In\IN^\IN\to\IN^\IN$ is called \emph{low} if there is a computable $G:\In\IN^\IN\to\IN^\IN$
such that $F=\L\circ G$.
\end{definition}

This definition captures the idea that the result of $F$ is computed as a low point.
It is clear that low functions map computable inputs to low outputs. 
We obtain the following closure properties under composition.

\begin{proposition}[Composition with low functions]
\label{prop:composition-low}
Let $F,G:\In\IN^\IN\to\IN^\IN$ be functions.
If $F$ and $G$ are low then so is $G\circ F$. If $G$ is limit computable
and $F$ is low, then $G\circ F$ is limit computable.
\end{proposition}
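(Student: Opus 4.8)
The plan is to establish the second assertion first and then deduce the first one from it. Assume $G$ is limit computable and $F$ is low. By the jump normal form (Theorem~\ref{thm:jump}) we may write $G=G_2\circ\J$ with $G_2$ computable, and by the definition of lowness together with $\L=\J^{-1}\circ\lim$ we may write $F=\L\circ F_1=\J^{-1}\circ\lim\circ F_1$ with $F_1$ computable. Composing, $G\circ F=G_2\circ\J\circ\J^{-1}\circ\lim\circ F_1$. Since $\J$ is injective with computable partial inverse (Proposition~\ref{prop:jump-inversion}), $\J\circ\J^{-1}$ is the identity on $\range(\J)=\dom(\J^{-1})$, so $G\circ F$ is a restriction of $G_2\circ\lim\circ F_1$. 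The function $\lim\circ F_1$ is limit computable and $G_2$ is computable, so $G_2\circ(\lim\circ F_1)$ is limit computable by Proposition~\ref{prop:composition-limit}; as limit computability is preserved under restriction, $G\circ F$ is limit computable.

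For the first assertion let $F$ and $G$ both be low, and write $G=\L\circ G_1=\J^{-1}\circ\lim\circ G_1$ with $G_1$ computable. Composing with $\J$ on the left and again cancelling $\J\circ\J^{-1}$ on $\range(\J)$, we see that $\J\circ G$ is a restriction of $\lim\circ G_1$, hence limit computable. Now $F$ is low, so by the second assertion just proved, applied with the limit computable function $\J\circ G$ in place of $G$, the composition $\J\circ(G\circ F)=(\J\circ G)\circ F$ is limit computable. By the limit normal form (Theorem~\ref{thm:limit}) there is a computable $H$ with $\J\circ G\circ F=\lim\circ H$, and composing with $\J^{-1}$ on the left (using $\J^{-1}\circ\J=\id$) gives $G\circ F=\J^{-1}\circ\lim\circ H=\L\circ H$ after the evident restriction of $H$ to $\dom(G\circ F)$. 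Thus $G\circ F$ is low.

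The only thing that needs attention is the bookkeeping of domains: the normal form theorems provide genuine equalities $G=G_2\circ\J$, $F=\L\circ F_1$, and so on, but after cancelling $\J\circ\J^{-1}$ one recovers the relevant composition only up to restriction to the domain of the function at hand. Since both limit computable functions and the low map are closed under restriction, this is harmless, and I do not expect a genuine obstacle; the key move is simply to pass through the jump normal form so that the leading $\J^{-1}$ in the low map is absorbed by a $\J$, reducing everything to Proposition~\ref{prop:composition-limit} (for the second assertion) and to Theorem~\ref{thm:limit} (for the first).
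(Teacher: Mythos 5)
Your proof is correct and rests on the same key mechanism as the paper's: apply the jump normal form (Theorem~\ref{thm:jump}) to the limit computable factor so that the leading $\J^{-1}$ in $\L=\J^{-1}\circ\lim$ is cancelled against a $\J$, reducing everything to Proposition~\ref{prop:composition-limit}. The only presentational difference is in the low--low case, where the paper establishes the prototype identity $\L\circ\L=\L\circ S$ via the transparency of $\lim$ (Corollary~\ref{cor:completeness-lim}) while you instead conjugate by $\J$ and invoke the limit normal form (Theorem~\ref{thm:limit}); both routes are sound, and your domain bookkeeping is fine since computable, limit computable and low functions are all closed under restriction.
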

\begin{proof}
Both observations are based on the jump normal form theorem (Theorem~\ref{thm:jump}) that
yields a computable $R:\In\IN^\IN\to\IN^\IN$ with $\lim=R\circ\J$. 
By Corollary~\ref{cor:completeness-lim} we also have a computable $S:\In\IN^\IN\to\IN^\IN$ with $R\circ\lim=\lim\circ S$.
This implies $\L\circ\L=\J^{-1}\circ\lim\circ\J^{-1}\circ\lim=\J^{-1}\circ R\circ\J\circ\J^{-1}\circ\lim=\L\circ S$
and $\lim\circ\L=R\circ\J\circ\J^{-1}\circ\lim=R\circ\lim$. The cases of the composition of general low and limit computable functions
can easily be derived from these observations.
\end{proof}

In fact, the low functions form the largest class of functions that can be composed with limit computable functions from the left
without leaving the class of limit computable functions  (see also \cite[Proposition~14.16]{BHK17a} for a related result).

\begin{corollary}[Low functions]
\label{cor:low-functions}
A function $F:\In\IN^\IN\to\IN^\IN$ is low if and only if $G\circ F$ is limit computable for every limit computable $G:\In\IN^\IN\to\IN^\IN$.
\end{corollary}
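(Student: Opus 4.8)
The plan is to prove the two implications of Corollary~\ref{cor:low-functions} separately. For the ``only if'' direction, assume $F$ is low and let $G$ be an arbitrary limit computable function. By the jump normal form theorem (Theorem~\ref{thm:jump}) write $G=H\circ\J$ for some computable $H$, and by definition of lowness write $F=\L\circ K=\J^{-1}\circ\lim\circ K$ for some computable $K$. Then $G\circ F=H\circ\J\circ\J^{-1}\circ\lim\circ K=H\circ\lim\circ K$ (using that $\J\circ\J^{-1}=\id$ on $\range(\J)$, and noting that $\lim\circ K$ lands in $\range(\J)$ because its values are $\J$ of something — or more carefully, one restricts appropriately). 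Since $H$ is computable and $K$ is computable, and $\lim$ is limit computable, Proposition~\ref{prop:composition-limit} gives that $H\circ\lim\circ K$ is limit computable. Alternatively and more cleanly, this direction is exactly the second statement of Proposition~\ref{prop:composition-low}, so one can simply cite it.

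For the ``if'' direction — which I expect to be the main obstacle — assume that $G\circ F$ is limit computable for every limit computable $G$. The natural move is to specialize $G$ to a single well-chosen witness and extract lowness of $F$ from the resulting normal form. The honest choice is $G=\J$: by hypothesis $\J\circ F$ is limit computable, so by the limit normal form (Theorem~\ref{thm:limit}) there is a computable $S$ with $\J\circ F=\lim\circ S$. Applying $\J^{-1}$ (computable by Proposition~\ref{prop:jump-inversion}) on the left, and using that $\J^{-1}\circ\J=\id$, we get $F=\J^{-1}\circ\lim\circ S=\L\circ S$ with $S$ computable — which is precisely the definition of $F$ being low. The one subtlety to check is that $\J\circ F$ is everywhere equal to $\lim\circ S$ on $\dom(F)$ so that the left-composition with $\J^{-1}$ is valid on the whole relevant domain, and that the domain bookkeeping (restricting $S$ to $\dom(F)$) goes through; this is the kind of routine domain chase that appears in the proofs of Theorems~\ref{thm:limit} and~\ref{thm:jump} and causes no real trouble.

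So the skeleton is: (1) ``only if'' from Proposition~\ref{prop:composition-low}; (2) ``if'' by taking $G=\J$, invoking Theorem~\ref{thm:limit} to get $\J\circ F=\lim\circ S$ with $S$ computable, then pre-composing with $\J^{-1}$ from Proposition~\ref{prop:jump-inversion} to obtain $F=\L\circ S$. The only place demanding care is verifying that $\J\circ F$ is genuinely limit computable (immediate from the hypothesis, since $\J$ is limit computable by Example~\ref{example:limit-computable}) and that the injectivity of $\J$ lets us cancel it on the left without losing any points of $\dom(F)$; this parallels the final sentence of the proof of Theorem~\ref{thm:jump} where injectivity of $\J$ is used to pass from an inequality of domains to genuine equality.
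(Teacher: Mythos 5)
Your proposal is correct and follows essentially the same route as the paper: the ``only if'' direction is cited from Proposition~\ref{prop:composition-low}, and the ``if'' direction specializes the hypothesis to $G=\J$, applies the limit normal form theorem to get $\J\circ F=\lim\circ S$ with $S$ computable, and cancels $\J$ on the left via its computable inverse to obtain $F=\L\circ S$. The domain and injectivity caveats you flag are exactly the routine checks the paper leaves implicit.
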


The ``only if'' direction is a direct consequence of Proposition~\ref{prop:composition-low}.
The ``if'' direction follows from the limit normal form theorem if one applies the assumption to $G=\J$.
As a consequence of Proposition~\ref{prop:composition-finite} we get the following corollary.

\begin{corollary}[Finite mind changes are low]
\label{cor:finite-low}
Every function $F:\In\IN^\IN\to\IN^\IN$ that is computable with finitely many mind changes is low.
\end{corollary}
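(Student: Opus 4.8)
The plan is to reduce Corollary~\ref{cor:finite-low} to the chain of earlier results, so the argument is essentially a one-line composition. Let $F:\In\IN^\IN\to\IN^\IN$ be computable with finitely many mind changes. I want a computable $G$ with $F=\L\circ G=\J^{-1}\circ\lim\circ\, G$. The natural route is to first locate $F$ inside the limit computable functions via a normal form, and then factor the limit map through $\L$ by absorbing its ``discreteness'' into the computable part.

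First I would observe that, by Proposition~\ref{prop:composition-finite} (with $F$ itself in the role of ``$G$ computable with finitely many mind changes'' and the identity in the role of the limit computable function), $F$ is limit computable; hence by the limit normal form (Theorem~\ref{thm:limit}) there is a computable $H$ with $F=\lim\circ\,H$. Moreover, because $F$ changes its \emph{entire} output only finitely often, one can arrange $H$ so that the sequence it produces is \emph{eventually constant}: the simulation in the proof of Theorem~\ref{thm:limit} outputs $\langle q_0,q_1,\dots\rangle$ where $q_i$ records the state of $M_F$ after longer and longer simulation, and if $M_F$ makes only finitely many total mind changes then past some stage every $q_i$ equals $F(p)$ exactly. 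So in fact $F=\lim_\Delta\circ\,H$ with $H$ computable, i.e.\ $H(p)$ is a $\widehat{F(p)}$-shaped stabilizing sequence.

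Next I would use jump inversion together with the jump normal form to rewrite $\lim_\Delta$ as $\L$ applied to something computable. Concretely: for a sequence $r=\langle r_0,r_1,\dots\rangle$ that is eventually constant with value $s$, the point $s$ is computable \emph{relative to} $r$ in a very controlled way — in fact there is a computable $K$ with $\J(s)=K(r)$ whenever $r$ stabilizes, because deciding whether machine $i$ halts on $s$ reduces to watching the stabilized value, which one can read off $r$ with the help of one $\J$-query layer; more cleanly, since $\J\circ(\text{stabilized value})$ is limit computable in $r$, Theorem~\ref{thm:jump} gives a computable $K$ with $\J(s)=K\circ\J(r')$ for a suitable recoding $r'$, and then $s=\J^{-1}(K(\dots))=\L(\dots)$ after unwinding. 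The upshot is a computable $G$ with $\lim_\Delta=\L\circ G'$ on stabilizing inputs for some computable $G'$, whence $F=\L\circ(G'\circ H)$ and $G'\circ H$ is computable.

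**The hard part will be** the middle step: making precise that the ``eventually constant'' limit map $\lim_\Delta$ factors as $\L$ composed with a computable map. The subtlety is that $\J$ is not transparent (it has no computable values in its range), so one cannot simply push $\J$ through a composition; one has to exploit that on stabilizing sequences the value is reached exactly after finitely many steps, so its jump is genuinely limit computable \emph{in the sequence} rather than merely in the limit point — this is exactly where ``finitely many mind changes'' (as opposed to arbitrary limit computability) is used, and it is why the statement is true for $\L$ and would fail if we tried to replace $\L$ by, say, $\J^{-1}\circ\lim$ with $\lim$ unrestricted. Alternatively, one can sidestep the bookkeeping entirely by invoking Corollary~\ref{cor:finite-low}'s companion results already in place: $F$ limit computable plus $F$ mapping computable inputs to \emph{low} outputs (which follows since a stabilizing computable sequence has a computable, hence low, limit) is not quite enough on its own, so the factorization through $\L$ really does need the argument above, and that is the step I expect to have to write out carefully.
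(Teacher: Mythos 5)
Your argument is correct in substance but takes a genuinely different route from the paper's. The paper obtains Corollary~\ref{cor:finite-low} directly from Proposition~\ref{prop:composition-finite} together with the ``if'' direction of Corollary~\ref{cor:low-functions}: taking $G=\J$ there, $\J\circ F$ is limit computable, hence $\J\circ F=\lim\circ\,H$ for some computable $H$ by Theorem~\ref{thm:limit}, and so $F=\J^{-1}\circ\lim\circ\,H=\L\circ H$. You instead factor $F$ through the discrete limit map as $F=\lim_\Delta\circ\,H$ (Theorem~\ref{thm:discrete-limit}) and then prove that $\lim_\Delta$ itself is low, after which $F=\L\circ(G'\circ H)$ follows by composition. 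Both arguments ultimately rest on the same computation --- the jump of the limit of an \emph{exactly} stabilizing sequence can be guessed in the limit from the sequence, because simulating machine $i$ on the current term for the current number of steps eventually becomes a simulation on the true limit --- and your version has the merit of isolating this as a statement about the single canonical map $\lim_\Delta$, a fact the paper itself records later (in Corollary~\ref{prop:order-operations}) by deriving it \emph{from} Corollary~\ref{cor:finite-low} rather than the other way around.

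Two corrections to your middle step are needed, though. First, the claim that there is a computable $K$ with $\J(s)=K(r)$ is false: the stabilized value $s$ is not even computable from $r$ (one cannot detect stabilization), let alone its jump; only your ``more cleanly'' formulation --- that $\J\circ\lim_\Delta$ is limit computable --- is correct. Second, to convert that fact into $\lim_\Delta=\L\circ G'$ you must apply the \emph{limit} normal form (Theorem~\ref{thm:limit}) to $\J\circ\lim_\Delta$, obtaining $\J\circ\lim_\Delta=\lim\circ\,G'$ with $G'$ computable, whence $\lim_\Delta=\J^{-1}\circ\lim\circ\,G'=\L\circ G'$ using $\J^{-1}\circ\J=\id$ from Proposition~\ref{prop:jump-inversion}. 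The jump normal form (Theorem~\ref{thm:jump}), as you invoke it, only yields $\J\circ\lim_\Delta=K\circ\J$, and $\J^{-1}\circ K\circ\J$ is not of the form $\L\circ(\mbox{computable})$ because $\J$ is not computable; ``unwinding'' does not repair this. With that substitution your proof goes through.
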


Hence, in contrast to low points $p\in\IN^\IN$, we have a lot of natural examples of low functions,
such as the equality test $\E$.

We close this section by mentioning briefly that there is a characterization of functions that are computable
with finitely many mind changes that is analogous to Theorem~\ref{thm:limit}.

\begin{theorem}[Discrete limit normal form]
\label{thm:discrete-limit}
A function $G:\In\IN^\IN\to\IN^\IN$ is computable with finitely many mind changes if and only
if there exists a computable function $F:\In\IN^\IN\to\IN^\IN$, such that
$G=\lim\nolimits_\Delta\circ\; F$.
\end{theorem}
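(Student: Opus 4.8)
The plan is to prove both directions in parallel with the proof of Theorem~\ref{thm:limit}, using throughout the observation that a sequence $\langle p_0,p_1,\ldots\rangle$ lies in $\dom(\lim_\Delta)$ exactly when it is eventually constant, in which case $\lim_\Delta\langle p_0,p_1,\ldots\rangle$ is that eventual value.

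For the direction from right to left, I would first show that $\lim_\Delta$ itself is computable with finitely many mind changes. A limit machine for $\lim_\Delta$ proceeds in stages: at stage $s$ it inspects the first $s$ symbols of $p_0,\ldots,p_s$, computes the least index $m\le s$ such that $p_m,p_{m+1},\ldots,p_s$ all agree on their first $s$ symbols (this $m$ exists since $m=s$ qualifies), and writes the first $s$ symbols of $p_m$ onto its output tape, revising the whole output only when this index $m$ changes. If the input has eventual value $q$ and $N$ is least with $p_i=q$ for all $i\ge N$, then for $s\ge N$ the index $N$ qualifies, so the computed index is $\le N$; and since $p_{N-1}\ne q$ whenever $N\ge1$, once $s$ exceeds the first position at which $p_{N-1}$ and $q$ disagree, the computed index must be $\ge N$ as well. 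Hence the index stabilizes at $N$ after finitely many stages, so only finitely many revisions of the whole output occur, and the output converges to $q$. Then, given $G=\lim_\Delta\circ F$ with $F$ computable, composing a one-way output machine for $F$ with this machine for $\lim_\Delta$ in the straightforward way of Proposition~\ref{prop:composition-limit} yields a limit machine for $G$ whose number of mind changes equals that of the $\lim_\Delta$-machine run on $F(p)$, which is finite for every $p\in\dom(G)$.

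For the direction from left to right, I would run verbatim the construction in the proof of Theorem~\ref{thm:limit}: from a limit machine $M_G$ for $G$ with finitely many mind changes, build the one-way output machine $M_F$ that in step $\langle i,j\rangle$ simulates $M_G$ for $i$ steps beyond the first time cell $j$ of $M_G$ is written and records that cell's content as $q_i(j)$, so that $M_F$ computes a function $F$ with $\lim\circ F=G$. The one extra observation is that, since $M_G$ has finitely many mind changes on input $p\in\dom(G)$, there is a step $t_0$ after which the entire output of $M_G$ equals $G(p)$; then for every $i\ge t_0$ and every $j$ the relevant simulation runs for at least $t_0$ steps, so $q_i(j)=G(p)(j)$, i.e.\ $q_i=G(p)$. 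Thus the sequence $\langle q_0,q_1,\ldots\rangle$ produced by $M_F$ is eventually constant, hence lies in $\dom(\lim_\Delta)$, and $\lim_\Delta\circ F=G$ after restricting $F$ to $\dom(G)$.

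The only real point requiring care is the claim that $\lim_\Delta$ is itself computable with finitely many mind changes: the naive strategy of always outputting the ``current'' term $p_s$ would change the whole output infinitely often, and one genuinely needs the ``longest currently stable tail'' guess described above to bound the number of revisions. Everything else is a routine adaptation of the proofs of Theorem~\ref{thm:limit} and Proposition~\ref{prop:composition-limit}.
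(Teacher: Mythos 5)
Your proof is correct, and it is exactly the ``routine adaptation of the proofs of Theorem~\ref{thm:limit} and Proposition~\ref{prop:composition-limit}'' that the paper has in mind when it leaves this proof to the reader (with a pointer to \cite{BBP12}); there is no separate argument in the paper to compare against. The one place where you add genuine content is the verification that $\lim_\Delta$ is itself computable with finitely many mind changes via the ``least index of the currently stable tail'' guess, and your stabilization argument for that index (bounded above by $N$ because $N$ always qualifies, and below by $N$ once the first disagreement between $p_{N-1}$ and $q$ is visible) is sound; likewise your observation that after the last mind change of $M_G$ the simulated outputs $q_i$ are all equal to $G(p)$, so the sequence lands in $\dom(\lim_\Delta)$, is the only modification needed to the construction from Theorem~\ref{thm:limit}.
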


We leave the simple proof to the reader (see \cite{BBP12}).

\section{Computability Relative to the Halting Problem}
\label{sec:computability-halting-problem}

Now we consider functions that are computable relative to the halting problem $0'$. 
It turns out that they play a dual r{\^o}le to the low functions that are characterized in terms of $\L=\J^{-1}\circ\lim$.
The functions that are computable relative to the halting problem can be characterized in terms of $\HP:=\J\circ\lim^{-1}:\IN^\IN\mto\IN^\IN$. 

The first result shows that $\HP$ is computable relative to the halting problem. 
In order to prove this we need to invert limits such that we obtain control on the jump of the resulting converging sequence.
Roughly speaking, this is possible since there are many sequences converging to a given point, and this
gives us enough freedom to choose a suitable one. We recall that the \emph{composition} of 
two multi-valued functions $f:\In X\mto Y$ and $g:\In Y\mto Z$ is defined by $(g\circ f)(x):=\{z\in Z:(\exists y\in f(x))\;z\in g(y)\}$
with $\dom(g\circ f):=\{x\in X:x\in\dom(f)$ and $f(x)\In\dom(g)\}$.
A multi-valued $f:\In\IN^\IN\mto\IN^\IN$ is called \emph{computable (relative to some oracle $q$)} if there is
a single-valued $F:\In\IN^\IN\to\IN^\IN$ with the corresponding property and such that 
$F(p)\In f(p)$ for all $p\in\dom(f)$ and $\dom(f)\In\dom(F)$.
We recall that a function $F:\In\IN^\IN\to\IN^\IN$ is called \emph{computable relative to some oracle $q\in\IN^\IN$} if there is some computable function $G:\In\IN^\IN\to\IN^\IN$ such that
$F(p)=G\langle q,p\rangle$ for all $p\in\dom(F)$. 
In terms of computability theory, the next proof uses the \emph{finite extension method}.

\begin{theorem}[Limit inversion]
\label{thm:limit-inversion}
$\HP=\J\circ\lim^{-1}$ is computable relative to the halting problem.
\end{theorem}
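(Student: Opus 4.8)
The plan is to build, relative to the oracle $0'$, a single-valued realizer for $\HP$: given $p$ in the domain of $\lim$, we must produce some $q \in \lim^{-1}(p)$, i.e.\ a sequence $q = \langle q_0, q_1, \ldots \rangle$ with $\lim_i q_i = \lim(p) =: x$, together with the value $\J(q)$; equivalently, by the jump normal form (Theorem~\ref{thm:jump}) and jump inversion (Proposition~\ref{prop:jump-inversion}), it suffices to produce $q$ together with a $0'$-computable enumeration of the halting behavior of all Turing machines on $q$. The key idea is to construct $q$ not by merely passing $p$ through, but by a finite extension construction in which $0'$ is consulted to decide, stage by stage, the halting questions about $q$ while we still have freedom to choose the not-yet-committed part of $q$.

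First I would fix, using $p$, a point $x = \lim(p) \in \IN^\IN$ that $q$ must converge to in the discrete sense: there is a computable monotone function that on input $p$ reads off longer and longer stable initial segments of $x$ (this is exactly the content of the simulation in Theorem~\ref{thm:limit}). Now I construct $q = \langle q_0, q_1, \ldots\rangle$ in stages $s = \langle i, n \rangle$. At each stage we maintain a finite condition $\sigma$ (a finite partial specification of $q$) consistent with the requirement "$q_i \to x$": more precisely, we commit $q_i \restriction n = x \restriction n$ for the indices $i$ and lengths $n$ that have come up, and the remaining coordinates of the $q_i$ are still free. For the $i$-th halting requirement we ask the oracle $0'$ whether the $i$-th Turing machine halts on some extension of the current condition $\sigma$ that is still compatible with all the convergence commitments we will ever need to make; if yes, we extend $\sigma$ to force that halting computation (choosing values only in coordinates that are not pinned down by convergence to $x$), and we record $\J(q)(i) = 1$; if no, we leave $\sigma$ as is and record $\J(q)(i) = 0$, knowing no future extension obeying the convergence constraints can make machine $i$ halt. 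The limit $q$ of the conditions is then well-defined, lies in $\lim^{-1}(p)$, and the recorded bits are exactly $\J(q)$, all produced $0'$-computably. Finally, by Theorem~\ref{thm:jump} applied to this $0'$-computable construction (or directly), $\HP$ is computable relative to $0'$; and one must note that a realizer in the multi-valued sense is all that is required, since $\HP$ is multi-valued.

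The main obstacle is the bookkeeping that keeps the "halts on some compatible extension" question decidable by $0'$: the set of extensions of $\sigma$ that remain compatible with convergence to $x$ is not itself a clopen set (it involves infinitely many future constraints $q_i \restriction n = x \restriction n$), so one cannot naively quantify over it. The fix is to interleave the construction so that at stage $\langle i, n\rangle$ only finitely many convergence constraints are active — namely $q_0 \restriction n = x\restriction n, \ldots, q_n \restriction n = x \restriction n$, say — and to dovetail so that every pair $(i,n)$ is eventually treated; then "machine $i$ halts on some extension of $\sigma$ respecting those finitely many constraints" is a genuine $\Sigma^0_1$ question, answerable by $0'$. One has to check that the finitely-many-constraints-at-a-time discipline still yields, in the limit, a genuine element of $\lim^{-1}(p)$ (each $q_i$ stabilizes to $x$ because for every $n$ the constraint $q_i\restriction n = x\restriction n$ is eventually imposed and never later violated), and that a requirement answered "no" at its stage stays "no" forever (because later conditions are extensions and the active convergence constraints on $q_i$ only grow). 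Once this is arranged, computing $x$ from $p$ is the easy part and the rest is the standard finite extension method relativized to $0'$.
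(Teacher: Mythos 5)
Your overall strategy is the paper's: a finite extension construction relative to $0'$, in which for each machine $i$ one asks a $\Sigma^0_1$ question (``does machine $i$ halt on some admissible extension of the current finite condition?'') that $0'$ can answer, forcing a halt in the ``yes'' case and observing that a ``no'' persists because the final object is among the extensions quantified over. You also correctly isolate the main danger, namely keeping the admissibility condition $\Sigma^0_1$ by having only finitely many constraints active at a time. But the way you combine the convergence requirement with the forcing does not work. You schedule the constraint $q_i\restriction n = x\restriction n$ to be imposed for \emph{every} pair $(i,n)$ (``dovetail so that every pair $(i,n)$ is eventually treated''; ``for every $n$ the constraint $q_i\restriction n=x\restriction n$ is eventually imposed and never later violated''). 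Since each $q_i$ is a single fixed element of $\IN^\IN$, this forces $q_i=x$ for every $i$, i.e.\ $q=\langle x,x,\dots\rangle$: there are no coordinates ``not pinned down by convergence to $x$'', so the ``yes'' step has nothing free to extend into, and any value you do commit to force a halt must later either collide with a scheduled convergence constraint or cause that constraint to be skipped, breaking your own verification of convergence. (With no freedom left, $\J(q)$ is essentially $x'$, which is certainly not computable from $x$ and $0'$.) The point is that $\lim_i q_i=x$ only requires that for each $n$ \emph{cofinitely many} $q_i$ agree with $x$ up to $n$. The correct bookkeeping --- which is what the paper's proof does --- fixes for each element $q_j$, once and for all at the moment it is committed, a single constraint of the form ``$q_j$ begins with $p|_{n_j}$'' with $n_j\to\infty$, and leaves the tail of each $q_j$ permanently free (concretely $q_j=p|_{n_j}w_j\widehat{0}$ with $w_j$ a finite word at the construction's disposal). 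The halting question at stage $i$ then quantifies over all tails whose elements begin with $p|_{i+1}$; this is a genuine $\Sigma^0_1$ question depending only on $p|_{i+1}$ and the finitely many committed words, and a ``no'' persists because all later elements begin with even longer prefixes of $p$.

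A secondary problem is the typing of the input. Here $\dom(\HP)=\IN^\IN$ and $\HP(p)=\{\J(q):\lim(q)=p\}$: the input is the \emph{point} $p$, not a convergent sequence, so your ``$x$'' is just $p$ and its prefixes are read directly from the input. In your setup the input is a convergent sequence whose limit $x$ you propose to compute by ``reading off longer and longer stable initial segments''; that is not possible, not even relative to $0'$, since stabilization is not detectable and $\lim$ is nowhere continuous, hence not computable relative to any fixed oracle. Fortunately this step disappears entirely once the input is read correctly.
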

\begin{proof}
We need to show that there exists a function $I:\IN^\IN\to\IN^\IN$ with $\lim\circ I=\id$ and such that
$\J\circ I$ is computable relative to $0'$.
Given $p\in\IN^\IN$ we describe the computation of $I$ relative to $0'$ by an inductive construction
in $i\in\IN$.
We start with $w_0:=\varepsilon\in\IN^*$ and $t_0:=0\in\IN$.
Given a finite sequence of words $w_0,...,w_{t_i}\in\IN^*$
together with numbers $t_0<t_1<...<t_i$ and bits $b_0,...,b_{i-1}\in\{0,1\}$
we describe how we determine $t_{i+1}\in\IN$, $w_{t_{\underline{i}}+1},...,w_{t_{\underline{i+1}}}\in\IN^*$ and $b_i\in\{0,1\}$ in stage $i$ of the construction.\footnote{For notational clarity, we have underlined the indexes of $t$;
the line has no mathematical meaning.}
We let $r_0:=p|_0w_0\widehat{0}=\widehat{0}$ and $r_{n+1}:=(p|_{n+1}w_{t_{\underline{n}}+1}\widehat{0},...,p|_{n+1}w_{t_{\underline{n+1}}}\widehat{0})$ for all $n<i$.
Now we consider the question whether the $i$--th Turing machine halts on
\[\langle r_0,...,r_i,p|_{i+1}q_{t_i+1},p|_{i+1}q_{t_i+2},p|_{i+1}q_{t_i+3},...\rangle\]
for some $q_{t_i+n}\in\IN^\IN$ with $n\in\IN$.
Whether or not this is the case can be decided with the help of $0'$ since the answer depends
only on the finite portion $p|_{i+1}$ of $p$ read so far.
If the outcome of the decision is positive, then already a finite number $q_{t_{\underline{i}}+1},...,q_{t_{\underline{i+1}}}\in\IN^\IN$ for some $t_{i+1}>t_i$ and, in fact, finite prefixes of these
are sufficient for the machine $i$ to halt. In this case we can compute such a $t_{i+1}$ and corresponding words $w_{t_i+n}\in\IN^*$ such that $q_j:=w_j\widehat{0}$ for $j\in\{t_i+1,...,t_{i+1}\}$
satisfy the condition, and we set $b_i:=1$. Otherwise, if the outcome of the decision is negative, then we
choose $w_{t_i+1}:=\varepsilon$, $t_{i+1}:=t_i+1$ and $b_i:=0$. 
If we continue inductively in this way, 
then we can compute $I(p):=\langle r_0,r_1,r_2,...\rangle$ with the help of $0'$ uniformly in $p$.
Since we have used longer and longer prefixes $p|_n$ of $p$ in the construction of the $r_n$, 
the value $I(p)$ satisfies $\lim\circ I(p)=p$.
Moreover, the construction guarantees that we can compute $\J\circ I$ with the help of $0'$,
namely $\J\circ I(p)=(b_0,b_1,b_2,...)$.
\end{proof}

We note that $\HP$ is, in particular, continuous, and $\HP^{-1}=\lim\circ\J^{-1}$ is computable by Theorem~\ref{thm:jump}.
Theorem~\ref{thm:limit-inversion} yields the following characterization of functions computable relative to the 
halting problem. The interesting point of the following theorem is not that there is an object $\HP$ that characterizes
functions that are computable relative to the halting problem (the problem $\langle\id\times\chi_{0'}\rangle$ with the characteristic function $\chi_{0'}$ of the halting problem
would be a simpler such example), but the point is that our particular $\HP$ has this property.

\begin{theorem}[Computability relative to the halting problem]
\label{thm:computable-halting-problem}
A function $F:\In\IN^\IN\to\IN^\IN$ is computable relative to the halting problem if and 
only if there is a computable function $G:\In\IN^\IN\to\IN^\IN$ such that $F=G\circ\HP$.
\end{theorem}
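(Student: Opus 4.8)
The plan is to prove the two directions separately, using $\HP = \J \circ \lim^{-1}$ together with the normal form theorems already established. For the ``if'' direction, suppose $F = G \circ \HP$ for some computable $G$. Then $F = G \circ \J \circ \lim^{-1}$, and since $\HP$ is computable relative to $0'$ by Theorem~\ref{thm:limit-inversion} and $G$ is computable, the composition $G \circ \HP$ is computable relative to $0'$ as well (composing a computable map after an oracle-computable map stays oracle-computable). Hence $F$ is computable relative to the halting problem.

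For the ``only if'' direction, suppose $F$ is computable relative to the halting problem, so there is a computable $G_0 : \In \IN^\IN \to \IN^\IN$ with $F(p) = G_0\langle 0', p \rangle$ for all $p \in \dom(F)$. The idea is to recover $0'$ from the output of $\HP$ rather than having it supplied as an oracle. Recall from the proof of Theorem~\ref{thm:limit-inversion} that $\HP = \J \circ \lim^{-1}$ on input $p$ produces $\J \circ I(p)$ where $I$ is the limit-inverse constructed there; in particular $\HP$ maps the constant sequence $\widehat{0} = \lim_{i\to\infty} \widehat{0}$ — or more to the point, any $p$ — in such a way that $0'$ can be uniformly computed from the pair $\langle p, y \rangle$ whenever $y \in \HP(p)$. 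Concretely, since $\HP^{-1} = \lim \circ \J^{-1}$ is computable (noted right after Theorem~\ref{thm:limit-inversion}), from any $y \in \HP(p)$ one computably recovers $p$ via $\lim \circ \J^{-1}$, and simultaneously $y$ itself encodes a Turing jump $\J^{-1}(y) = I(p)$, whose jump is $y$. The cleanest route is: define the computable $G$ on input $y$ to first compute $p := \HP^{-1}(y) = \lim\circ\J^{-1}(y)$ and also $z := \J^{-1}(y)$, note that $z' = y$ and that $0' \leqT z'$ uniformly (by relativizing the easy reduction $0' \leqT q'$, which holds computably in $q$), extract a fixed oracle $0'$ from $y$ by this uniform reduction, and then output $G_0\langle 0', p \rangle = F(p)$. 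Then $G$ is computable and $G \circ \HP = F$ on $\dom(F)$.

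The main obstacle is making the uniform recovery of $0'$ from $y = \HP(p)$ precise. One must check that $0' \leqT q'$ holds \emph{uniformly} in $q$ — that is, that there is a single computable functional taking $\langle q, q' \rangle$ to $0'$ — which follows because the halting problem $0' = \J(\widehat 0)$ is itself $\Sigma^0_1$ relative to nothing and a fortiori relative to $q$, and the witness can be read off $q'$ by a fixed reduction independent of $q$. Alternatively, and perhaps more in the spirit of this paper, one can avoid the detour through Turing degrees entirely: observe that $y = \HP(p)$ has the form $y = \J(I(p))$, and since $\widehat{0}$ is a computable initial segment of anything relevant, one can build into the construction of $I$ in Theorem~\ref{thm:limit-inversion} that $I(p)$ computes $\widehat 0$, hence $y$ computes $(\widehat 0)' = 0'$ uniformly. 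Either way, once $0'$ is available uniformly in $y$, the composition $G = G_0 \circ \langle (\text{recover } 0') , \HP^{-1} \rangle$ is visibly computable and satisfies $F = G \circ \HP$, completing the proof.
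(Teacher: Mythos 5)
Your proposal is correct and follows essentially the same route as the paper: the ``if'' direction via Theorem~\ref{thm:limit-inversion}, and the ``only if'' direction by recovering the input $p$ from $y\in\HP(p)$ via the computable $\HP^{-1}=\lim\circ\,\J^{-1}$ and recovering $0'$ from $y=\J(q)$ by a fixed computable reduction independent of $q$ (the paper makes this precise with a computable $r$ such that machine $r(i)$ halts on all inputs iff machine $i$ halts on $\widehat{0}$, so that $R(y)(i):=y(r(i))$ yields $0'$). The only detail worth adding is the restriction of $G$ so that $\dom(G\circ\HP)=\dom(F)$, which works because $\HP$ has pairwise disjoint values.
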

\begin{proof}
By Theorem~\ref{thm:limit-inversion} $\HP$ is computable relative to $0'$. Hence $G\circ\HP$ is 
computable relative to the halting problem for every computable $G:\In\IN^\IN\to\IN^\IN$.
Let now $F$ be computable relative to the halting problem $0'$. Then there is a computable
$S:\In\IN^\IN\to\IN^\IN$, such that $F(p)=S\langle 0',p\rangle$ for all $p\in\dom(F)$.
By Theorem~\ref{thm:jump} we know that there is a computable function $T:\In\IN^\IN\to\IN^\IN$ such that
$T\circ\J=\lim$, i.e., $T\circ\HP=\id$. On the other hand, there is a computable function
$r:\IN\to\IN$ such that Turing machine $r(i)$ halts on all inputs if and only if machine $i$
halts on input $\widehat{0}$. Hence $\J(\widehat{0})(i)=\J(p)(r(i))$ for all $p\in\IN^\IN$ and $i\in\IN$.
Then $R:\IN^\IN\to\IN^\IN$ with $R(q)(i):=qr(i)$ is computable and so is $G:=S\circ\langle R,T\rangle$.
We obtain
\[F(p)=S\langle\J(\widehat{0}),p\rangle=S\langle R\circ\HP(p),T\circ\HP(p)\rangle=G\circ\HP(p)\] 
for all $p\in\dom(F)$, hence $F=G\circ\HP$ if $G$ is restricted suitably (which is possible
since $\HP(p)\cap\HP(q)=\emptyset$ for $p\not=q$.)
\end{proof}

We note that this result implies that $\HP$ is not computable (hence it is an interesting example
of a natural problem that is continuous and not computable).
Another interesting consequence of our results is
that the functions that are computable relative to the halting problem form the largest class of functions
that, when composed with a limit computable function from the right,  yield a limit computable function.

\begin{corollary}[Computability relative to the halting problem]
\label{cor:computable-halting-problem}
A function $F:\In\IN^\IN\to\IN^\IN$ is computable relative to the halting problem if
and only if $F\circ G$ is limit computable for every limit computable $G:\In\IN^\IN\to\IN^\IN$.
\end{corollary}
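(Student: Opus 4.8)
The plan is to prove the corollary by establishing both implications, using Theorem~\ref{thm:computable-halting-problem} for the hard direction and a routine composition argument for the easy one.

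\medskip

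First I would handle the ``only if'' direction. Suppose $F$ is computable relative to the halting problem. By Theorem~\ref{thm:computable-halting-problem} there is a computable $G_0:\In\IN^\IN\to\IN^\IN$ with $F=G_0\circ\HP$. Now let $G:\In\IN^\IN\to\IN^\IN$ be any limit computable function. Then $F\circ G=G_0\circ\HP\circ G=G_0\circ(\J\circ\lim^{-1})\circ G$. The point is that $\HP$ itself has the jump normal form: since $\HP^{-1}=\lim\circ\J^{-1}$ is computable (as noted after Theorem~\ref{thm:limit-inversion}), and since $\J$ is limit computable, we can reorganize. More directly: by the jump normal form theorem (Theorem~\ref{thm:jump}) write $G=G_1\circ\J$ for a computable $G_1$. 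Then $F\circ G=G_0\circ\HP\circ G_1\circ\J$. We need $\HP\circ G_1\circ\J$ to be limit computable; since $\HP=\J\circ\lim^{-1}$ and $\lim^{-1}$ is the partial inverse of a limit computable map, one shows $\HP$ is limit computable relative to nothing extra — actually the cleanest route is to observe $F\circ G$ is computable relative to $0'$ composed with a limit computable map, and then invoke the limit normal form. Let me instead argue: $F$ computable relative to $0'$ means $F=\lim\circ H$ for some limit computable $H$ (since computability relative to $0'$ implies limit computability by Shoenfield, applied uniformly — more precisely $F(p)=S\langle 0',p\rangle$ and $0'$ is limit computable as a point, so $F=\lim\circ H$ for computable... actually $0'$ being limit computable gives $F$ limit computable by substituting a converging sequence for the $0'$-argument). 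Hence $F$ is limit computable, and by Proposition~\ref{prop:composition-low}-style reasoning, or rather directly: $F\circ G$ is limit computable whenever $F$ is computable relative to $0'$ and $G$ is limit computable. Concretely, $F=G_0\circ\HP$ with $G_0$ computable and $\HP$ computable relative to $0'$; since $G$ is limit computable, $G=\lim\circ G_2$ for computable $G_2$ by Theorem~\ref{thm:limit}; and $\HP\circ\lim=\J\circ\lim^{-1}\circ\lim\supseteq\J$, so $\HP\circ\lim$ is limit computable, hence $\HP\circ G$ is limit computable, hence $F\circ G=G_0\circ(\HP\circ G)$ is limit computable by Proposition~\ref{prop:composition-limit}.

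\medskip

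For the ``if'' direction, I would apply the hypothesis to the specific choice $G=\lim$. Since $\lim$ is limit computable by Example~\ref{example:limit-computable}, the hypothesis gives that $F\circ\lim$ is limit computable. By the jump normal form (Theorem~\ref{thm:jump}), there is a computable $G_3:\In\IN^\IN\to\IN^\IN$ with $F\circ\lim=G_3\circ\J$. Composing on the right with $\lim^{-1}$ yields $F\circ\lim\circ\lim^{-1}=G_3\circ\J\circ\lim^{-1}=G_3\circ\HP$. Since $\lim\circ\lim^{-1}$ restricted appropriately is the identity on $\range(\lim)=\IN^\IN$, and $F$ is everywhere defined on its domain inside $\IN^\IN$, we get $F=G_3\circ\HP$ (with $G_3$ suitably restricted, which is legitimate since $\HP(p)\cap\HP(q)=\emptyset$ for $p\neq q$, exactly as in the proof of Theorem~\ref{thm:computable-halting-problem}). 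By Theorem~\ref{thm:computable-halting-problem}, $F$ is computable relative to the halting problem.

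\medskip

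The main obstacle is the bookkeeping around the partial, multi-valued map $\lim^{-1}$ in the ``if'' direction: one must check that precomposing the identity $F\circ\lim\circ\lim^{-1}=F$ is valid as an equality of (multi-valued) functions, i.e.\ that $\lim\circ\lim^{-1}=\id$ holds on all of $\IN^\IN$ and that the restriction of $G_3$ needed to turn the inclusion $F\subseteq G_3\circ\HP$ into an equality is available — this is precisely the disjointness-of-fibers argument used in Theorem~\ref{thm:computable-halting-problem}, so it transfers verbatim. Everything else is a direct appeal to the already-established normal form theorems and the composition propositions.
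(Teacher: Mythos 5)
Your proof is correct and follows essentially the same route as the paper, which derives the ``if'' direction from Theorems~\ref{thm:computable-halting-problem} and \ref{thm:jump} by choosing $G=\lim$, and the ``only if'' direction from Theorems~\ref{thm:computable-halting-problem} and \ref{thm:limit} via the observation that $\J$ is a selector of $\HP\circ\lim$. The final ``Concretely, \dots'' paragraph of your ``only if'' argument is the actual proof; the preceding false starts can be deleted.
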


The ``if'' direction follows from Theorems~\ref{thm:computable-halting-problem} and \ref{thm:jump}, when one chooses $G=\lim$.
The ``only if'' direction is a consequence of Theorems~\ref{thm:computable-halting-problem} and \ref{thm:limit}.
A comparison of Corollaries~\ref{cor:computable-halting-problem} and \ref{cor:low-functions} shows
that the notions of a low function is dual to the notion of a function that is computable relative to the halting
problem. Both classes should be naturally studied alongside the limit computable functions.

As another non-uniform side result of the limit inversion theorem (Theorem~\ref{thm:limit-inversion}) we obtain a classical result
from computability theory, the Friedberg jump inversion theorem~\cite{Fri57}.

\begin{corollary}[Friedberg jump inversion theorem 1957]
\label{cor:friedberg-jump-inversion}
For every $q\in\IN^\IN$ with $0'\leqT q$ there exists a $p\in\IN^\IN$
with $p'\equivT q$.
\end{corollary}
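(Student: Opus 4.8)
The plan is to derive the Friedberg jump inversion theorem as a non-uniform consequence of the limit inversion theorem (Theorem~\ref{thm:limit-inversion}), mirroring how Shoenfield's limit lemma fell out of the jump normal form. First I would fix $q\in\IN^\IN$ with $0'\leqT q$. Since $\HP=\J\circ\lim^{-1}$ is computable relative to $0'$ by Theorem~\ref{thm:limit-inversion}, it is in particular computable relative to $q$; concretely, there is a function $I:\IN^\IN\to\IN^\IN$ with $\lim\circ I=\id$ such that $\J\circ I$ is computable relative to $0'$, hence relative to $q$. Apply this to the specific input $q$ itself and set $p:=I(q)$. Then $\lim\circ I(q)=q$ tells us $q$ is the limit of the sequence $p$, and $\J\circ I(q)=p'$ is computable relative to $0'$.

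The two reductions then go as follows. For $p'\leqT q$: the sequence $p=I(q)$ is produced by the construction of Theorem~\ref{thm:limit-inversion} using $q$ as oracle in place of $0'$, and that construction simultaneously outputs $p'=\J\circ I(q)$ with the help of $0'\leqT q$; hence $p'\leqT q$. For $q\leqT p'$: from $\lim\circ I=\id$ and the limit normal form (Theorem~\ref{thm:limit}, or more directly Corollary~\ref{cor:shoenfield} relativized) one has $q=\lim(p)\leqT p'$, since $\lim(p)$ is limit computable relative to $p$ and therefore reducible to $p'=\J(p)$. Combining the two reductions gives $p'\equivT q$, which is the claim.

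The main obstacle, and the point that needs a careful word, is ensuring that the limit inversion construction genuinely relativizes from $0'$ to an arbitrary $q\geqT0'$ in a way that keeps \emph{both} outputs $I(q)$ and $\J\circ I(q)$ below $q$. The construction in Theorem~\ref{thm:limit-inversion} only ever queries the oracle about whether the $i$th Turing machine halts on certain strings determined by finite prefixes of the input — that is, it makes $0'$-questions — so replacing the oracle $0'$ by any $q$ computing $0'$ leaves the whole inductive procedure intact and yields $I(q)\leqT q$ and $\J(I(q))\leqT q$ uniformly. The remaining inequalities $q=\lim(I(q))\leqT (I(q))'$ and the injectivity-free bookkeeping are routine given the earlier machinery, so I would spend the bulk of the written proof making the relativization explicit and then state the two reductions in a couple of lines each.
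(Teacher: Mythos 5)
Your proposal is correct and follows essentially the same route as the paper: set $p:=I(q)$ using the construction from Theorem~\ref{thm:limit-inversion} relativized to $q$, get $p'=\J\circ I(q)\leqT q$ from that construction, and get $q=\lim(p)\leqT p'$ from the jump normal form. The only slip is a citation: the reduction $\lim(p)\leqT\J(p)$ comes from Theorem~\ref{thm:jump} (jump normal form), not Theorem~\ref{thm:limit}, though your parenthetical appeal to a relativized Shoenfield lemma amounts to the same thing.
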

\begin{proof}
Given a fixed $q\in\IN^\IN$ with $0'\leqT q$ we can compute $p:=I(q)$ and
$p'=\J\circ I(q)$ with $I$ from the proof of Theorem~\ref{thm:limit-inversion}. In particular, $p'\leqT q$.
On the other hand, by the same theorem and  Theorem~\ref{thm:jump} we obtain $q=\lim(p)\leqT p'$.
\end{proof}

As another corollary we obtain the announced extension of Corollary~\ref{cor:completeness-lim}.

\begin{corollary}
\label{cor:computability-halting-problem}
For every function $F:\In\IN^\IN\to\IN^\IN$ that is computable relative to the halting problem there exists a computable function
$G:\In\IN^\IN\to\IN^\IN$ such that $\lim\circ\, G=F\circ\lim$.
\end{corollary}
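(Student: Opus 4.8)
The plan is to chain the characterization of functions computable relative to the halting problem with the limit normal form theorem, so that no new machinery is required. First I would note that $\lim:\In\IN^\IN\to\IN^\IN$ is itself limit computable by Example~\ref{example:limit-computable}. Since $F$ is computable relative to the halting problem, Corollary~\ref{cor:computable-halting-problem}, applied to the limit computable function $G=\lim$, tells us immediately that $F\circ\lim$ is limit computable. Finally, the limit normal form theorem (Theorem~\ref{thm:limit}), applied to the limit computable function $F\circ\lim$, yields a computable $G:\In\IN^\IN\to\IN^\IN$ with $F\circ\lim=\lim\circ\,G$, which is exactly the claim. (Corollary~\ref{cor:completeness-lim} is recovered as the special case where $F$ is computable, hence trivially computable relative to $0'$.)

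If one prefers a route that does not invoke Corollary~\ref{cor:computable-halting-problem}, I would instead start from Theorem~\ref{thm:computable-halting-problem}, which gives a computable $H:\In\IN^\IN\to\IN^\IN$ with $F=H\circ\HP$. Recall $\HP=\J\circ\lim^{-1}$; for every $p\in\dom(\lim)$ we have $p\in\lim^{-1}(\lim(p))$, hence $\J(p)\in\HP(\lim(p))$. Because $H\circ\HP$ is the single-valued function $F$, the map $H$ must be constant on each fibre $\HP(x)$, taking there the value $F(x)$; evaluating at the particular realizer $\J(p)$ therefore gives $F(\lim(p))=H(\J(p))$ for all $p\in\dom(\lim)$. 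Thus $F\circ\lim$ is a restriction of $H\circ\J$. Now $H\circ\J$ is limit computable by the jump normal form theorem (Theorem~\ref{thm:jump}), so $F\circ\lim$ is limit computable by closure under restriction, and again Theorem~\ref{thm:limit} converts it into the desired computable $G$ with $\lim\circ\,G=F\circ\lim$.

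The argument is essentially bookkeeping, so I do not anticipate a genuine obstacle. The only points that need a little care are the domains — one wants $\dom(F\circ\lim)=\dom(\lim\circ\,G)$, which is automatic from the statement of Theorem~\ref{thm:limit} — and, in the explicit variant, the multi-valued nature of $\HP$: one must observe that $H\circ\HP$ being single-valued forces $H$ to be constant on fibres of $\HP$, so that choosing the specific realizer $\J$ of $\HP\circ\lim$ causes no loss.
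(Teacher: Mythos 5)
Your proposal is correct and follows essentially the route the paper intends: the paper states this corollary without explicit proof as a consequence of the preceding characterization of computability relative to the halting problem, and your first argument (apply Corollary~\ref{cor:computable-halting-problem} with $G=\lim$ to see that $F\circ\lim$ is limit computable, then invoke the limit normal form theorem) is exactly that derivation, while your second argument merely unwinds the same dependencies via $F=H\circ\HP$ and $\HP\circ\lim\supseteq\J$. Both variants, including your care about fibres of $\HP$ and about domains, are sound.
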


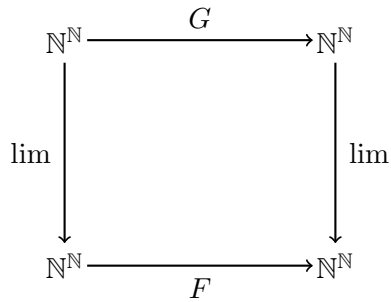
\begin{figure}[htb]
\begin{center}
\begin{tikzpicture}[scale=0.6]

\node at (-3,5) {$\IN^\IN$};
\node at (3,5) {$\IN^\IN$};
\node at (-3,0) {$\IN^\IN$};
\node at (3,0) {$\IN^\IN$};
\node at (0,5.5) {$G$};
\node at (0,-0.5) {$F$};
\node at (-3.75,2.5) {$\lim$};
\node at (3.75,2.5) {$\lim$};
\node (v1) at (-2.5,4.5) {};
\node (v2) at (2.5,0.5) {};
%\node at (0.5,3) {$H\circ\J$};

\draw [->,thick](-2.5,5) -- (2.5,5);
\draw [->,thick](-2.5,0) -- (2.5,0);
\draw [->,thick](-3,4.5) -- (-3,0.5);
\draw [->,thick](3,4.5) -- (3,0.5);
%\draw [->,thick] (v1) edge (v2);
% \draw [->,thick,looseness=0.9] (-3.5,0.5) to [out=170,in=190] (-3.5,4.5);
%\node at (-5,2.5) {$I$};
\end{tikzpicture}
%%% Local Variables:
%%% mode: latex
%%% TeX-master: "galois.tex"
%%% End:
\end{center}
\caption{Limit diagram.}
\label{fig:limit-control}
\end{figure}

This result can be interpreted as a statement about the diagram in Figure~\ref{fig:limit-control}.
Vice versa, if $G$ is an arbitrary computable function that transfers converging sequences into 
converging sequences (not necessarily in an extensional way), then we can mimic the behavior
of $G$ on the limits by a function that is computable relative to the halting problem.
This follows from Theorems~\ref{thm:computable-halting-problem} and \ref{thm:jump}
and was proved directly with a somewhat more involved proof that does not exploit
the Galois connection between Turing jumps and limits in \cite[Theorem~14.11]{BHK17a}.

\begin{corollary}[B., Hendtlass and Kreuzer 2017]
\label{cor:limit-control}
For all computable $G:\In\IN^\IN\to\IN^\IN$ the multi-valued function $\lim\circ G\circ\lim^{-1}$
is computable relative to the halting problem.
\end{corollary}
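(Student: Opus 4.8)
The plan is to realize $\lim\circ G\circ\lim^{-1}$ as a composition that factors through $\HP=\J\circ\lim^{-1}$, exactly along the lines suggested by the jump normal form (Theorem~\ref{thm:jump}) and the limit inversion theorem (Theorem~\ref{thm:limit-inversion}) which underlies Theorem~\ref{thm:computable-halting-problem}. The guiding idea is that the leading occurrence of $\lim$ can be traded for a jump on its input side, turning the trailing $\lim^{-1}$ into $\HP$.

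First I would note that $\lim\circ G$ is limit computable: $G$ is computable and $\lim$ is limit computable by Example~\ref{example:limit-computable}, so Proposition~\ref{prop:composition-limit} applies. By the jump normal form (Theorem~\ref{thm:jump}) there is therefore a computable function $H:\In\IN^\IN\to\IN^\IN$ with $\lim\circ G=H\circ\J$, and since $\J$ is injective we may arrange this as an exact equality of partial maps. Composing on the right with $\lim^{-1}$ then gives
\[\lim\circ G\circ\lim^{-1}=H\circ\J\circ\lim^{-1}=H\circ\HP,\]
where I would include a short check that the multi-valued domains agree: a point $p$ lies in the domain of the left-hand side iff $\lim^{-1}(p)\In\dom(\lim\circ G)=\dom(H\circ\J)$, equivalently iff $\HP(p)=\J(\lim^{-1}(p))\In\dom(H)$, i.e.\ iff $p$ lies in the domain of $H\circ\HP$; here $\HP$ is a total multi-valued map because every point is the limit of the constant (hence convergent) sequence with that value.

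Finally I would invoke Theorem~\ref{thm:limit-inversion}: its proof in fact produces a single-valued $I$, computable relative to $0'$, with $\lim\circ I=\id$ and $\J\circ I$ computable relative to $0'$, so $\J\circ I$ is a single-valued selector of $\HP=\J\circ\lim^{-1}$ that is computable relative to the halting problem. Postcomposing with the computable map $H$ yields $H\circ\J\circ I$, a single-valued selector of $H\circ\HP=\lim\circ G\circ\lim^{-1}$ which is still computable relative to $0'$, since a computation relative to $0'$ followed by an ordinary computation is again a computation relative to $0'$ (immediate from the defining form $p\mapsto S\langle 0',p\rangle$). Hence $\lim\circ G\circ\lim^{-1}$ has a selector computable relative to $0'$, so by definition it is computable relative to the halting problem. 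I do not expect any real obstacle; the only points requiring a little care are the bookkeeping of multi-valued domains in the identity $\lim\circ G\circ\lim^{-1}=H\circ\HP$ and the (routine) observation that computability relative to $0'$ is closed under postcomposition with computable maps.
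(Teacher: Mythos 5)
Your argument is correct and is essentially the paper's own: the paper derives this corollary from Theorems~\ref{thm:computable-halting-problem} and \ref{thm:jump}, i.e.\ precisely by writing $\lim\circ G=H\circ\J$ via the jump normal form so that $\lim\circ G\circ\lim^{-1}=H\circ\HP$, and then using that $\HP$ (hence $H\circ\HP$) is computable relative to $0'$ by Theorem~\ref{thm:limit-inversion}. Your additional domain bookkeeping and the explicit single-valued selector $H\circ\J\circ I$ just spell out details the paper leaves implicit.
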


We close this section with a uniform version of Corollary~\ref{cor:limit-control}.
For the formulation we need a total representations $\Phi$ of all continuous functions $F:\In\IN^\IN\to\IN^\IN$
with computable $G_\delta$--domain that satisfies a utm- and smn-theorem~\cite[Theorem~2.3.5]{Wei00}.
The result is tailor-made to yield a proof of Theorem~\ref{thm:jumps-products-exponentials}~(3).

\begin{theorem}[Uniform limit control theorem]
\label{thm:limit-control}
There exists a computable $R:\IN^\IN\to\IN^\IN$ such that
$\Phi_{\lim\circ R(q)}(p)\in\lim\circ\Phi_q\circ\lim^{-1}(p)$ for all $p,q\in\IN^\IN$ with
$p\in\dom(\lim\circ\Phi_q\circ\lim^{-1})$.
\end{theorem}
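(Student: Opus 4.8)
The plan is to produce, uniformly from a name $q$ of the continuous function $\Phi_q$, a $\Phi$-name that is \emph{limit computable in $q$} and names a selector of the multi-valued map $\lim\circ\Phi_q\circ\lim^{-1}$, by relativizing the limit inversion theorem (Theorem~\ref{thm:limit-inversion}) to $q$ and then repackaging the resulting $q'$-computation as a limit using the completeness statement for $\lim$ (Corollary~\ref{cor:completeness-lim}) together with the smn-theorem for $\Phi$. The guiding heuristic is the proof of Corollary~\ref{cor:limit-control}: for computable $G$ one can write $\lim\circ G\circ\lim^{-1}=(T\circ G^\#)\circ\HP$ with $T\circ\J=\lim$ and $\J\circ G=G^\#\circ\J$; in the parametrized situation the oracle needed to push $\Phi_q$ past a jump is not $0'$ but $q'$, which is exactly why the answer will come out as a limit-computable-in-$q$ name.

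First I would check that the finite-extension construction in the proof of Theorem~\ref{thm:limit-inversion} relativizes verbatim, with $0'$ replaced by $q'$ and $\J$ by $\J_q$: the halting questions now concern machine $i$ run on $\langle q,\langle r_0,\dots,r_i,p|_{i+1}w\widehat{0},\dots\rangle\rangle$, which is still $\Sigma^0_1$ in $q$ and the finite prefix $p|_{i+1}$, hence decidable with $q'$ uniformly in $q$. This yields a fixed procedure that, from oracle $q'$ and input $p$, outputs a sequence $I_q(p)$ with $\lim\circ I_q=\id$ together with the value $\J\langle q,I_q(p)\rangle$. Next I would record the standard uniform reduction of $\J\circ\Phi_q$ to $\J_q$: by the utm-theorem $\Phi_q(x)=U\langle q,x\rangle$ for a fixed computable $U$, and ``machine $i$ halts on $U\langle q,x\rangle$'' is a $\Sigma^0_1$ condition in $\langle q,x\rangle$ uniformly in $i$, so there is a fixed computable $\sigma:\IN^\IN\to\IN^\IN$ with $\J(\Phi_q(x))=\sigma(\J\langle q,x\rangle)$ whenever $x\in\dom(\Phi_q)$. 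Combining these with a fixed computable $T$ satisfying $T\circ\J=\lim$ (Theorem~\ref{thm:jump}), the function $\widetilde F_q\colon p\mapsto T\bigl(\sigma(\J\langle q,I_q(p)\rangle)\bigr)$ is computed by a single procedure with oracle $q'$, i.e.\ $\widetilde F_q(p)=H\langle q',p\rangle$ for a fixed computable $H$; and for $p\in\dom(\lim\circ\Phi_q\circ\lim^{-1})$ one has $I_q(p)\in\lim^{-1}(p)\In\dom(\Phi_q)$ and $\Phi_q(I_q(p))\in\dom(\lim)$, so $\widetilde F_q(p)=T(\J(\Phi_q(I_q(p))))=\lim(\Phi_q(I_q(p)))\in\lim\circ\Phi_q\circ\lim^{-1}(p)$.

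To finish, I would apply the smn-theorem for $\Phi$ to get a computable $n$ with $\Phi_{n(a)}=(p\mapsto H\langle a,p\rangle)$, so that $\Phi_{n(q')}=\widetilde F_q$; since $\J$ is limit computable (Example~\ref{example:limit-computable}), the limit normal form (Theorem~\ref{thm:limit}) provides a fixed computable $F_0$ with $\lim\circ F_0=\J$, hence $q'=\lim(F_0(q))$; and Corollary~\ref{cor:completeness-lim} applied to the computable $n$ yields a computable $R_0$ with $\lim\circ R_0=n\circ\lim$. Setting $R:=R_0\circ F_0$ then gives a computable $R$ with $\lim\circ R(q)=n(\lim(F_0(q)))=n(q')$, whence $\Phi_{\lim\circ R(q)}=\widetilde F_q$ and the claim follows from the previous paragraph. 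I expect the main obstacle to be the bookkeeping in the relativization of Theorem~\ref{thm:limit-inversion} — verifying that the entire construction survives with oracle $q'$ and is genuinely uniform in $q$ — together with the routine but easily mishandled point that the partial function $\widetilde F_q$ has a domain large enough (namely containing $\dom(\lim\circ\Phi_q\circ\lim^{-1})$) and is a legitimate $\Phi$-name; the final repackaging via the smn-theorem and Corollary~\ref{cor:completeness-lim} is then purely formal.
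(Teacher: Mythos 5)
Your proof is correct and follows essentially the same route as the paper: relativize the limit inversion theorem to obtain $I_{q'}$ and $\J_q\circ I_{q'}$ from oracle $q'$, push $\Phi_q$ past the jump via the jump normal form, apply the smn-theorem, and convert the resulting $q'$-index into a limit via the limit normal form. Your explicit construction of $H=T\circ\sigma$ and the factorization $R=R_0\circ F_0$ are only cosmetic variants of the paper's single applications of Theorems~\ref{thm:jump} and~\ref{thm:limit}.
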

\begin{proof}
By a relativized version of Theorem~\ref{thm:limit-inversion}
there exist computable $I,S:\In\IN^\IN\to\IN^\IN$ such that $\lim\circ I_{q'}=\id$
and $S_{q'}=\J_q\circ I_{q'}$ for every $q\in\IN^\IN$.
This can be proved exactly as in the proof of Theorem~\ref{thm:limit-inversion} except that we check whether the $i$--th Turing machine
halts upon input of
\[\langle q,\langle r_0,...,r_i,p|_{i+1}q_{t_i+1},p|_{i+1}q_{t_i+2},p|_{i+1}q_{t_i+3},...\rangle\rangle,\]
which can be decided using $q'$.
Now, by applying the jump normal form theorem (Theorem~\ref{thm:jump}) to $\Phi$ we obtain
a computable function $H$ with $\lim\circ\Phi_q=H\circ\J_q$. 
Then $H\circ S$ is computable, and by the smn-theorem for $\Phi$
there exists a total computable function $P$ such that $\Phi_{P(r)}(p)=H\circ S\langle r,p\rangle$ for all $r,p\in\IN^\IN$.
Finally, by the limit normal form theorem (Theorem~\ref{thm:limit}) there exists a computable function $R$ such that
$\lim\circ R=P\circ\J$. Such an $R$ is necessarily total. Altogether, we obtain
\[\Phi_{\lim R(q)}(p)=\Phi_{P\J(q)}(p)=HS\langle q',p\rangle=H\J_{q}I_{q'}(p)=\lim \Phi_q I_{q'}(p)\in\lim\circ\Phi_q(\lim\nolimits^{-1}(p))\]
for all $p,q\in\IN^\IN$ with $\lim^{-1}(p)\In\dom(\lim\circ\Phi_q)$.
\end{proof}

We note that the basic problems $\lim,\J^{-1}$ are generators of a monoid which can be used to characterize
all sorts of other computability theoretic properties. For instance $\J^{-1}\J^{-1}\lim\lim$
characterizes the property of being \emph{low$_2$} and $\J^{-1}\lim\lim$ the property of being
\emph{low relative to the halting problem} (see also \cite{BGM12}).

\section{Limit computability on represented spaces}
\label{sec:represented-spaces}

The purpose of this section is to transfer our results on limit computable maps to represented spaces.
We recall that a \emph{representation} of a set $X$ is a surjective
function $\delta:\In\IN^\IN\to X$.
In this situation we say that $(X,\delta)$ is a 
\emph{represented space}.
Any point $p\in\IN^\IN$ with $\delta(p)=x$ is called 
a \emph{name} of $x$, and the word ``representation'' is 
reserved for the map $\delta$ as such.
A \emph{problem} is a multi-valued partial function
$f:\In X\mto Y$ on represented spaces $X$ and $Y$.
If $(X,\delta_X)$ and $(Y,\delta_Y)$ are represented spaces
and $f:\In X\mto Y$ is a problem, then
$F:\In\IN^\IN\to\IN^\IN$ \emph{realizes}
$f$, in symbols $F\vdash f$, if $\delta_YF(p)\in f\delta_X(p)$
holds for all $p\in\dom(f\delta_X)$.

\begin{definition}[Computable functions]
\label{def:computable-realizer}
A problem $f$ is called \emph{continuous}, \emph{computable}, \emph{limit computable}, \emph{low} or \emph{computable with respect to the halting problem},
if it has a realizer with the corresponding property.
\end{definition}

Likewise, other properties can be transferred from realizers to problems.
By $\CC(X,Y)$ we denote the  \emph{set of total continuous functions} $f:X\to Y$.
It is clear that all properties related to closure under composition that have been discussed in the previous section can 
be transferred to problems. This is because $F\vdash f$ and $G\vdash g$ implies $F\circ G\vdash f\circ g$.
We can also transfer properties of points to represented spaces using representations.

\begin{definition}[Computable points]
A point $x\in X$ in a represented space $X$ is called \emph{computable}, \emph{limit computable} or \emph{low} 
if it has a name with the corresponding property.
\end{definition}

We note that the notion of $1$--genericity is an example of a property that should not be defined
via names since it is not invariant under equivalent representations (see Proposition~\ref{prop:non-generic-names} and the discussion afterwards).
 
Our main goal here is to transfer the limit normal form theorem and the
jump normal form theorem (Theorems~\ref{thm:limit} and \ref{thm:jump}) to problems on represented spaces.
Since we can incorporate limits and jumps into the represented spaces, these
normal forms can be expressed very neatly. For this purpose we need the following
concepts of jumps on represented spaces. 

\begin{definition}[Jumps]
Let $(X,\delta_X)$ be a represented space and $\delta:=\delta_X$, let $T:\In\IN^\IN\to\IN^\IN$ be surjective, and
let $S:\In\IN^\IN\mto\IN^\IN$ be such that $S^{-1}:\In\IN^\IN\to\IN^\IN$ is single-valued and surjective.
 We define the following representations of $X$:
\begin{enumerate}
\item $\delta^T:=\delta_{X^T}:=\delta_X\circ T$ \hfill (\emph{$T$--jump})
\item $\delta_S:=\delta_{X_S}:=\delta_X\circ S^{-1}$ \hfill (\emph{$S^{-1}$--jump})
\end{enumerate}
We denote the represented spaces $(X,\delta_{X^T})$ and $(X,\delta_{X_S})$ for short by $X^T$ and $X_S$, respectively.
In the special case of $T=\lim$ and $T=\lim_\Delta$, we define 
\begin{enumerate}
\item[(3)] $\delta':=\delta_{X'}:=\delta_X\circ\lim$ \hfill (jump)
\item[(4)] $\delta^\Delta:=\delta_{X^\Delta}:=\delta_X\circ\lim_\Delta$ \hfill (discrete jump)
\end{enumerate}
We denote the represented spaces $(X,\delta_{X'})$ and $(X,\delta_{X^\Delta})$ also by $X'$ and $X^\Delta$, respectively.
\end{definition}

We will apply this concept of a jump in the case of $T=\lim$, $T=\L$, $S=\HP$ and $S=\J$.
The special jumps for $T=\lim$ and $T=\lim_\Delta$ (the limit on Baire space with respect to the discrete metric)
were originally defined by Ziegler~\cite{Zie07} and later also studied by the author, de Brecht and Pauly~\cite{BBP12}.
More general concepts of jumps were also studied in the context of effective descriptive set theory by de Brecht~\cite{dBre14} and de Brecht and Pauly~\cite{PdB13,PdB14,PdB15}.

A common feature of the maps $\lim,\lim_\Delta,\J^{-1},\L$ and $\HP^{-1}$ is that they are surjective and transparent.
It was noted by de Brecht~\cite{dBre14} that the functor $X\mapsto X^T$ for surjective and transparent $T$ can be seen
as an endofunctor on the class of represented spaces (that leaves the maps unchanged). 
This is made precise by the following result.

\begin{proposition}[The jump as endofunctor]
\label{prop:jump-endofunctor}
If $f:\In X\mto Y$ is a computable problem and $T:\In\IN^\IN\to\IN^\IN$ is transparent and surjective
then it follows that $f$ considered as a problem of type
$f:\In X^T\mto Y^T$  is computable too.
\end{proposition}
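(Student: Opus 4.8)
The plan is to unwind the definitions of the realizer relation and of the jump $X^T$ and exhibit, from a realizer of $f:\In X\mto Y$, a realizer of the same problem viewed with the jumped representations. Suppose $F:\In\IN^\IN\to\IN^\IN$ is a computable realizer of $f$, so that $\delta_Y F(p)\in f\delta_X(p)$ for all $p\in\dom(f\delta_X)$. The representations of $X^T$ and $Y^T$ are $\delta_{X^T}=\delta_X\circ T$ and $\delta_{Y^T}=\delta_Y\circ T$. What I want is a computable $G:\In\IN^\IN\to\IN^\IN$ with $\delta_Y T G(p)\in f\delta_X T(p)$ for all suitable $p$; comparing with the displayed realizer condition for $F$, it suffices to find computable $G$ with $T\circ G=F\circ T$. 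But this is exactly the transparency hypothesis on $T$ applied to the computable function $F$ (extended to a total computable function if necessary, or one argues directly on the restriction, since transparency as defined in the excerpt is stated for computable $F:\In\IN^\IN\to\IN^\IN$). So the core of the argument is a one-line invocation of the \emph{Transparency} definition.

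Concretely, I would proceed as follows. First, fix a computable realizer $F$ of $f:\In X\mto Y$. Second, apply the definition of transparency to $F$ to obtain a computable $G:\In\IN^\IN\to\IN^\IN$ with $T\circ G=F\circ T$. Third, verify the realizer condition: let $p\in\dom\big(f\circ\delta_{X^T}\big)=\dom(f\circ\delta_X\circ T)$. Then $T(p)\in\dom(f\circ\delta_X)$, hence $\delta_Y F(T(p))\in f\delta_X(T(p))=f\delta_{X^T}(p)$, and since $F(T(p))=T(G(p))$ we get $\delta_{Y^T}(G(p))=\delta_Y T G(p)=\delta_Y F T(p)\in f\delta_{X^T}(p)$. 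Thus $G\vdash f$ with respect to the jumped representations, and $G$ is computable, so $f:\In X^T\mto Y^T$ is computable.

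The only point requiring a little care — and the thing I expect to be the main obstacle, though it is minor — is the interaction between partiality and transparency: the realizer $F$ is only partial, and the domain bookkeeping between $\dom(F\circ T)$, $\dom(G)$ and $\dom(f\circ\delta_{X^T})$ must be tracked so that the equation $T\circ G=F\circ T$ can legitimately be applied on all inputs $p$ that matter. This is handled by noting that we only need $F\circ T$ and $T\circ G$ to agree on $\dom(f\circ\delta_X\circ T)$, and that transparency can be invoked with $F$ restricted (or totalized) appropriately without affecting its behaviour on the relevant inputs; nothing deeper is needed. One should also remark that $T$ being surjective guarantees that $\delta_{X^T}$ and $\delta_{Y^T}$ are genuine representations, so $X^T$ and $Y^T$ are well-defined represented spaces and the statement makes sense. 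No further properties of $X$, $Y$ or $f$ are used.
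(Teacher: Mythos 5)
Your proposal is correct and follows exactly the paper's own argument: take a computable realizer $F$ of $f$, invoke transparency of $T$ to get a computable $G$ with $T\circ G=F\circ T$, and observe that $G$ realizes $f:\In X^T\mto Y^T$. The extra remarks on domain bookkeeping and on surjectivity guaranteeing that $\delta_{X^T}$, $\delta_{Y^T}$ are representations are sensible elaborations of details the paper leaves implicit.
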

\begin{proof}
 If $f:\In X\mto Y$ is computable, then it has a computable realizer $F:\In\IN^\IN\to\IN^\IN$. Since $T$ is transparent,
there is a computable $G:\In\IN^\IN\to\IN^\IN$ with $F\circ T=T\circ G$. Hence $G$ is a computable realizer of $f:\In X^T\mto Y^T$.
\end{proof}

Through some results presented in this section we will get a better understanding of the
types of problems mentioned in Proposition~\ref{prop:jump-endofunctor} for the specific maps $T$ that we are interested in.

We start with a result that characterizes limit computable problems
and shows that the jump $Y'$ on the output side can be balanced by $X_\J$
on the input side. 

\begin{theorem}[Limit computability]
\label{thm:limit-computability}
Let $f:\In X\mto Y$ be a problem. Then the following are equivalent:
\begin{enumerate}
\item $f:\In X\mto Y$ is limit computable,
\item $f:\In X\mto Y'$ is computable,
\item $f:\In X_\J\mto Y$ is computable.
\end{enumerate}
\end{theorem}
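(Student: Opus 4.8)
The plan is to unwind the definitions of the jumps $X_\J$ and $Y'$ down to the level of realizers and then invoke the two normal form theorems (Theorems~\ref{thm:limit} and~\ref{thm:jump}) from the Baire space setting. Recall that $\delta_{Y'} = \delta_Y \circ \lim$ and $\delta_{X_\J} = \delta_X \circ \J^{-1}$, where $\J^{-1}$ is single-valued and surjective by Proposition~\ref{prop:jump-inversion}. The strategy is to show that a realizer of $f$ with respect to one pair of representations can be converted into a realizer with respect to another pair precisely by pre- or post-composing with $\lim$ or $\J$, and then applying the normal forms to pass between "limit computable realizer" and "computable realizer composed with $\lim$" (or with $\J$).

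First I would prove $(1)\Leftrightarrow(2)$. Suppose $F \vdash f$ as a problem $X \mto Y$ with $F$ limit computable. By the limit normal form theorem (Theorem~\ref{thm:limit}), $F = \lim \circ G$ for some computable $G$. Then for all $p \in \dom(f\delta_X)$ we have $\delta_Y F(p) = \delta_Y \lim G(p) = \delta_{Y'} G(p) \in f\delta_X(p)$, so $G \vdash f$ as a problem $X \mto Y'$, and $G$ is computable. Conversely, if $G \vdash f : X \mto Y'$ with $G$ computable, then $\delta_{Y'} G(p) = \delta_Y \lim G(p) \in f\delta_X(p)$, so $\lim \circ G$ realizes $f : X \mto Y$, and $\lim \circ G$ is limit computable by Proposition~\ref{prop:composition-limit}.

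Next I would prove $(1)\Leftrightarrow(3)$, which is the dual argument using the jump normal form theorem (Theorem~\ref{thm:jump}). If $F \vdash f : X \mto Y$ with $F$ limit computable, then $F = G \circ \J$ for some computable $G$. Now observe that since $\J^{-1}$ is surjective, every name $q$ with respect to $\delta_{X_\J} = \delta_X \circ \J^{-1}$ satisfies $\J^{-1}(q) \in \dom(\delta_X)$, and moreover $\J(\J^{-1}(q))$ need not equal $q$, so I must be slightly careful: the clean way is to note that for $p \in \dom(\delta_X)$, writing $q := \J(p)$ gives $\delta_{X_\J}(q) = \delta_X(\J^{-1}(\J(p))) = \delta_X(p)$ since $\J$ is injective, and $G(q) = G\J(p) = F(p)$, so $G$ realizes $f : X_\J \mto Y$ on the range of $\J$; one then extends $G$ arbitrarily off $\range(\J)$ (or restricts attention to names of the form $\J(p)$, which suffice since $\J^{-1}$ is the inverse of the representation-defining map). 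Hence $f : X_\J \mto Y$ is computable. Conversely, if $G \vdash f : X_\J \mto Y$ with $G$ computable, then $\delta_Y G\J(p) = \delta_Y G(\delta_{X_\J}\text{-name of }\delta_X(p)) $, more precisely $\delta_{X_\J}(\J(p)) = \delta_X(p)$, so $\delta_Y (G\circ\J)(p) \in f\delta_X(p)$, whence $G \circ \J$ realizes $f : X \mto Y$ and is limit computable by Proposition~\ref{prop:composition-limit} together with Example~\ref{example:limit-computable}.

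The main obstacle I anticipate is the bookkeeping in $(1)\Leftrightarrow(3)$ around the fact that $\delta_{X_\J} = \delta_X \circ \J^{-1}$ uses the \emph{partial inverse} of a non-surjective, non-continuous map: one must check that the composition $G \circ \J$ (respectively, the restriction of a limit-computable realizer's normal form) lines up domains correctly, using injectivity of $\J$ (Proposition~\ref{prop:jump-inversion}) to guarantee that $\J^{-1}\circ\J = \id$ on $\IN^\IN$ so that $\J(p)$ is genuinely a $\delta_{X_\J}$-name of $\delta_X(p)$. Everything else reduces to transporting the Baire-space normal forms through the representations, which is routine once the domain conditions are stated carefully. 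I would present the proof compactly by treating the equivalences as "realizer-level" translations and citing Theorems~\ref{thm:limit} and~\ref{thm:jump} for the passage between limit computability and the composed forms.
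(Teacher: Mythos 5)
Your proof is correct and takes essentially the same route as the paper, whose entire proof is the one-line remark that the equivalences follow from the limit normal form theorem (Theorem~\ref{thm:limit}) and the jump normal form theorem (Theorem~\ref{thm:jump}). You have simply filled in the realizer-level bookkeeping that the paper leaves implicit, and your handling of $\delta_{X_\J}=\delta_X\circ\J^{-1}$ via injectivity of $\J$ is exactly the right way to make the domain conditions line up (noting only that every $\delta_{X_\J}$-name already lies in $\range(\J)$, so no extension of $G$ off that range is actually needed).
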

\begin{proof}
The equivalences are consequences of the limit normal form theorem (Theorem~\ref{thm:limit})
and the jump normal form theorem (Theorem~\ref{thm:jump}).
\end{proof}

We note that this result can be interpreted such that jumps $X'$ and $X_\J$ are adjoint to each other.
However, this is a pure computability theoretic adjointness that does not relativize to a topological adjointness.
The proof of the direction (3)$\TO$(2) can be extended to a uniform proof, 
but this is not so for the direction (2)$\TO$(3).
In fact, $\CC((\IN^\IN)_\J,\IN^\IN)\subsetneqq\CC(\IN^\IN,{(\IN^\IN)}')$, since $\J^{-1}$ is not topologically transparent.
However, we can derive the following relativized version of Theorem~\ref{thm:limit-computability} from
Theorem~\ref{thm:relativized-limit-computability}.

\begin{corollary}[Relativized limit computability]
\label{cor:limit-computability}
Let $f:\In X\mto Y$ be a problem and $q\in\IN^\IN$. Then the following are equivalent:
\begin{enumerate}
\item $f:\In X\mto Y$ is limit computable relative to $q$,
\item $f:\In X\mto Y'$ is computable relative to $q$,
\item $f:\In X_{\J_q}\mto Y$ is computable.
\end{enumerate}
\end{corollary}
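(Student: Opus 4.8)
The plan is to imitate the proof of Theorem~\ref{thm:limit-computability} but with everything carried out relative to the oracle $q$, invoking the relativized normal forms from Theorem~\ref{thm:relativized-limit-computability} in place of Theorems~\ref{thm:limit} and~\ref{thm:jump}. Concretely, unravelling the definitions: $f:\In X\mto Y$ is limit computable relative to $q$ means there is a realizer $F\vdash f$ that is limit computable relative to $q$; $f:\In X\mto Y'$ is computable relative to $q$ means there is a realizer of $f$ with respect to $\delta_X$ and $\delta_{Y'}=\delta_Y\circ\lim$ that is computable relative to $q$; and $f:\In X_{\J_q}\mto Y$ is computable means there is a realizer with respect to $\delta_{X_{\J_q}}=\delta_X\circ\J_q^{-1}$ and $\delta_Y$ that is computable (without oracle). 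So all three statements are statements about the existence of realizers $G:\In\IN^\IN\to\IN^\IN$ of appropriate type, and the equivalence should follow by chasing a single $G$ through the relativized limit/jump normal forms.

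First I would prove (1)$\Leftrightarrow$(2). If $F\vdash f$ is limit computable relative to $q$, then by Theorem~\ref{thm:relativized-limit-computability}(1)$\Rightarrow$(2) we may write $F=\lim\circ G$ with $G$ computable relative to $q$. But $\delta_YF=\delta_Y\lim G=\delta_{Y'}G$, so $G$ is a realizer of $f:\In X\mto Y'$ that is computable relative to $q$; this gives (2). Conversely, a realizer $G$ of $f:\In X\mto Y'$ that is computable relative to $q$ satisfies $\delta_{Y'}G=\delta_Y\lim G\in f\delta_X$, so $\lim\circ G$ is a realizer of $f:\In X\mto Y$, and by Theorem~\ref{thm:relativized-limit-computability}(2)$\Rightarrow$(1) (applied to $\lim\circ G$, or rather just by Proposition-style composition with $\lim$ relative to $q$) it is limit computable relative to $q$; this gives (1). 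For (1)$\Leftrightarrow$(3) I would use the equivalence (1)$\Leftrightarrow$(3) of Theorem~\ref{thm:relativized-limit-computability}: if $F\vdash f$ is limit computable relative to $q$, then $F=G\circ\J_q$ for some computable $G$; since $\delta_{X_{\J_q}}=\delta_X\circ\J_q^{-1}$ and $\J_q$ is injective (it inherits injectivity from $\J$), for a name $r$ of $x\in X_{\J_q}$ we have $\J_q^{-1}(r)$ is a $\delta_X$-name of $x$, and $\delta_YG(r)=\delta_YG\J_q\J_q^{-1}(r)=\delta_YF\J_q^{-1}(r)\in f\delta_X\J_q^{-1}(r)=f(x)$, so $G$ realizes $f:\In X_{\J_q}\mto Y$ computably. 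Conversely, given a computable realizer $G$ of $f:\In X_{\J_q}\mto Y$, the composition $G\circ\J_q$ realizes $f:\In X\mto Y$ and is limit computable relative to $q$ by Theorem~\ref{thm:relativized-limit-computability}(3)$\Rightarrow$(1); this gives (1). Since the proof of Theorem~\ref{thm:limit-computability} is described as merely a consequence of the two unrelativized normal forms, I would simply write this corollary's proof the same way, citing Theorem~\ref{thm:relativized-limit-computability}.

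The main obstacle, such as it is, is the bookkeeping around the jump $X_{\J_q}$: one must check that $\J_q$ is injective so that $\delta_{X_{\J_q}}$ is a well-defined representation and so that names transfer cleanly in both directions. This is immediate from the definition $\J_q(p)=\J\langle q,p\rangle$ together with injectivity of $\J$ (Proposition~\ref{prop:jump-inversion}) and injectivity of $p\mapsto\langle q,p\rangle$ for fixed $q$, and indeed $\J_q^{-1}$ is even computable by composing $\J^{-1}$ with the second projection. A secondary point of care is the asymmetry already flagged after Theorem~\ref{thm:limit-computability}: the equivalences here are again purely computability-theoretic and should not be expected to hold uniformly or topologically, but since we are only asked for the relativized pointwise statement, the routine realizer-chasing above suffices and I would keep the written proof to a single sentence referring back to Theorem~\ref{thm:relativized-limit-computability}.
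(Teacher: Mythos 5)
Your proposal is correct and follows exactly the route the paper intends: the corollary is stated as a direct consequence of Theorem~\ref{thm:relativized-limit-computability}, and your realizer-chasing (translating each of the three statements into the existence of a realizer and passing between them via the relativized limit and jump normal forms, with the side check that $\J_q$ is injective so that $\delta_{X_{\J_q}}$ is well defined) is precisely that derivation, carried out in the same way as the proof of Theorem~\ref{thm:limit-computability}. No gaps.
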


We can transfer our characterization of low problems. 

\begin{theorem}[Low computability]
\label{thm:low}
Let $f:\In X\mto Y$ be a problem. Then the following are equivalent:
\begin{enumerate}
\item $f:\In X\mto Y$ is low,
\item $f:\In X\mto Y^\L$ is computable,
\item $f:\In X^\L\mto Y^\L$ is computable,
\item $f:\In X_\J\mto Y_\J$ is computable,
\item $f:\In X_\J\mto Y_\J$ is computable relative to the halting problem $0'$.
\end{enumerate}
\end{theorem}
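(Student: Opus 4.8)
The strategy is to reduce everything to the normal form theorems of the previous sections, chaining together the identities $\L = \J^{-1}\circ\lim$ and the transparency of $\lim$, $\J^{-1}$, $\L$. The cleanest route is to establish a cycle of implications (1)$\Rightarrow$(2)$\Rightarrow$(3)$\Rightarrow$(4)$\Rightarrow$(5)$\Rightarrow$(1), together with the trivial remark that (4)$\Rightarrow$(5) is immediate since computable implies computable relative to $0'$. First I would unwind the definitions at the realizer level: a realizer of $f:\In X\mto Y^\L$ is a function $F$ with $\delta_Y\circ\L\circ F(p)\in f\delta_X(p)$ on the relevant domain, while a realizer of $f:\In X\mto Y$ that is low is a function of the form $\L\circ G$ with $G$ computable and $\delta_Y\circ\L\circ G(p)\in f\delta_X(p)$; so (1)$\Leftrightarrow$(2) is essentially a restatement once one checks that $\L$ is transparent and surjective, which follows because $\lim$ and $\J^{-1}$ are. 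For (2)$\Rightarrow$(3), I would invoke Proposition~\ref{prop:jump-endofunctor} with $T=\L$: since $\L$ is transparent and surjective, a computable problem $f:\In X\mto Y^\L$ remains computable when promoted to $f:\In X^\L\mto (Y^\L)^\L$; but $(Y^\L)^\L$ has representation $\delta_Y\circ\L\circ\L$, and using $\L\circ\L = \L\circ S$ for a computable $S$ (as in the proof of Proposition~\ref{prop:composition-low}), one shows $(Y^\L)^\L$ and $Y^\L$ are computably equivalent as represented spaces, giving (3).

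The real content is in the step (3)$\Leftrightarrow$(4), which translates the $\L$-jump into the $\J$-jump on both sides. Here the key algebraic facts are $\L = \J^{-1}\circ\lim$ and the jump normal form identities: $\lim = R\circ\J$ for a computable $R$ (Theorem~\ref{thm:jump}), hence $\L = \J^{-1}\circ R\circ\J$. Thus the representation $\delta_X\circ\L = \delta_X\circ\J^{-1}\circ R\circ\J$. I would compose with $\J$ on the appropriate side: the space $X_\J$ has representation $\delta_X\circ\J$ (since $S=\J$ means $S^{-1}=\J^{-1}$, so $\delta_{X_\J}=\delta_X\circ(\J^{-1})^{-1}=\delta_X\circ\J$), and $X^\L$ has representation $\delta_X\circ\L$. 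The claim is that $f:\In X^\L\mto Y^\L$ is computable iff $f:\In X_\J\mto Y_\J$ is computable; the bridge is that a realizer for one can be converted to a realizer for the other by pre- and post-composing with the computable functions $R$ and $\J^{-1}$ and exploiting that $\J^{-1}$ is computable (Proposition~\ref{prop:jump-inversion}) while $R$ is computable by Theorem~\ref{thm:jump}. Concretely, from a realizer $G$ of $f:\In X_\J\mto Y_\J$ (so $\delta_Y\J G(p)\in f\delta_X\J(p)$) one builds a realizer of $f:\In X^\L\mto Y^\L$ by absorbing the extra $\J^{-1}\circ R$ factors, using transparency to push the computable maps through.

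The main obstacle I anticipate is bookkeeping the domains and the direction of composition correctly: the $T$-jump $\delta^T=\delta\circ T$ acts by pre-composition (it changes what a name is), whereas low computability is about post-composing the realizer with $\L$, so one must be careful that "$\L$ on the output side" (which is what $Y^\L$ encodes, via $\delta_Y\circ\L$) genuinely matches "realizer of the form $\L\circ G$". The identities $\L\circ\L=\L\circ S$ and $\lim\circ\L = R\circ\lim$ from the proof of Proposition~\ref{prop:composition-low} are exactly what makes this work, and I would lean on them heavily rather than re-deriving. For (5)$\Rightarrow$(1), I would argue that if $f:\In X_\J\mto Y_\J$ has a realizer $G$ computable relative to $0'$, then $G$ is computable relative to $0'$ means $G = \widetilde G\langle 0',\cdot\rangle$ for computable $\widetilde G$; composing with the computable maps relating the $\J$-jumps to the $\L$-jumps and using Theorem~\ref{thm:computable-halting-problem} to express the $0'$-oracle via $\HP$, one unwinds back to a realizer of the form $\L\circ(\text{computable})$ for $f:\In X\mto Y$, which is (1). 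Most of these steps are routine once the dictionary between $\J$-jumps, $\L$-jumps, and oracle computability is set up; the one genuinely delicate point is verifying the equivalence of the represented spaces $(Y^\L)^\L$ and $Y^\L$, and more generally that all the auxiliary represented-space identities are computable isomorphisms and not merely continuous ones.
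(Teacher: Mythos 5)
Your outline for (1)$\iff$(2), for (2)$\TO$(3), and for (5)$\TO$(1) is essentially sound and close to the paper's argument (the paper gets (2)$\TO$(3) from closure of low maps under composition, Proposition~\ref{prop:composition-low}, rather than from the endofunctor plus $(Y^\L)^\L\equiv Y^\L$, but these amount to the same computation). The genuine gap is in the step you yourself call ``the real content'', namely (3)$\iff$(4), and it begins with a misreading of the jump $X_\J$: by definition $\delta_{X_\J}=\delta_X\circ S^{-1}$ with $S=\J$, i.e.\ $\delta_{X_\J}=\delta_X\circ\J^{-1}$, so names in $X_\J$ are Turing jumps $\J(p)$ of $\delta_X$--names $p$ --- not $\delta_X\circ(\J^{-1})^{-1}=\delta_X\circ\J$ as you write. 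With the correct reading, converting a $Y^\L$--name $u$ (where $\delta_{Y^\L}=\delta_Y\circ\J^{-1}\circ\lim$) into a $Y_\J$--name of the same point means computing $\lim(u)=R\circ\J(u)$, which involves $\J$ itself, not $\J^{-1}$; so $\id:Y^\L\to Y_\J$ is only limit computable, and ``pre- and post-composing with the computable functions $R$ and $\J^{-1}$'' cannot turn a realizer of $f:\In X^\L\mto Y^\L$ into one of $f:\In X_\J\mto Y_\J$. This is not mere bookkeeping: statement (3) is equivalent to $f:\In X_\J\mto Y_\J$ being computable \emph{relative to $0'$} (that is (3)$\iff$(5), an instance of Theorem~\ref{thm:computability-halting-problem}, which rests on the limit inversion theorem), and collapsing ``relative to $0'$'' to plain computability on $X_\J$ uses the additional fact that every $X_\J$--name computes $0'$. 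Neither ingredient is obtained by composing with fixed computable maps.

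The repair is the paper's route. For (3)$\TO$(2): $\L$ has a computable right inverse (since $\J$ is limit computable, $\J=\lim\circ F$ for a computable $F$ by Theorem~\ref{thm:limit}, whence $\L\circ F=\J^{-1}\circ\J=\id$), so $\id:X\to X^\L$ is computable. For (2)$\iff$(4): observe that $Y^\L=(Y_\J)'$ literally as representations and apply Theorem~\ref{thm:limit-computability} with output space $Y_\J$ --- a computable realizer $H$ of $f:\In X\mto(Y_\J)'$ gives the limit computable realizer $\lim\circ H$ of $f:\In X\mto Y_\J$, and the jump normal form (Theorem~\ref{thm:jump}) rewrites this composite as $G\circ\J$ with $G$ computable, which is exactly a computable realizer of $f:\In X_\J\mto Y_\J$. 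The non-computable $\lim$ on the output side is traded for a jump on the input side by applying the normal form theorem to the whole composite realizer, not by composing with a fixed computable map. Finally (3)$\iff$(5) is the cited instance of Theorem~\ref{thm:computability-halting-problem} applied to $f:\In X_\J\mto Y_\J$, using $(X_\J)'=X^\L$.
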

\begin{proof}
The equivalence of (1) and (2) is a direct consequence of the definition of low maps on Baire space.
Since low maps are closed under composition by Proposition~\ref{prop:composition-low}, it follows that (2) implies (3).
Since $\id:X\to X^\L$ is computable, it follows that (3) implies (2). 
By Theorem~\ref{thm:limit-computability} (2) and (4) are equivalent  since $X^\L=(X_\J)'$.
By the statement ``(1)$\iff$(4)'' of Theorem~\ref{thm:computability-halting-problem} (which independently
follows from Corollaries~\ref{cor:computability-halting-problem} and \ref{cor:limit-control}) we obtain that (3) and (5) are equivalent.
\end{proof}

Finally, we obtain a dual characterization of the problems that are computable relative to the halting problem.\footnote{We note that the equivalence of (1) and (4) shows that \cite[Theorem~23~(c)]{Zie07a} is not correct.}

\begin{theorem}[Computability relative to the halting problem]
\label{thm:computability-halting-problem}
Let $f:\In X\mto Y$ be a problem. Then the following are equivalent:
\begin{enumerate}
\item $f:\In X\mto Y$ is computable relative to the halting problem $0'$,
\item $f:\In X_\HP\mto Y$ is computable,
\item $f:\In X_\HP\mto Y_\HP$ is computable,
\item $f:\In X'\mto Y'$ is computable,
\item $f:\In X'\mto Y'$ is low.
\end{enumerate}
\end{theorem}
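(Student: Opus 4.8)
The plan is to mirror the structure of Theorem~\ref{thm:limit-computability}, using $\HP$ in the role that $\J$ played there, together with the transparency of $\lim$ (Corollary~\ref{cor:completeness-lim}) and of $\J^{-1}$. First I would establish the equivalence of (1) and (2): a realizer of $f:\In X_\HP\mto Y$ is a computable $G$ with $\delta_Y G(p)\in f\delta_X\HP^{-1}(p)$, and precomposing with the realizer $H$ of $\HP$ that is computable relative to $0'$ (Theorem~\ref{thm:limit-inversion}) gives a realizer of $f:\In X\mto Y$ relative to $0'$; conversely, if $F$ realizes $f:\In X\mto Y$ relative to $0'$, then by Theorem~\ref{thm:computable-halting-problem} there is a computable $G$ with $F=G\circ\HP$, and since $\HP\circ\HP^{-1}=\id$ (as $\HP^{-1}=\lim\circ\J^{-1}$ is single-valued), $G$ realizes $f:\In X_\HP\mto Y$. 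This is exactly the pattern of Theorem~\ref{thm:computable-halting-problem} lifted to represented spaces.

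Next I would handle (2)$\iff$(3). The direction (3)$\TO$(2) is immediate since $\id:Y_\HP\to Y$ is computable (indeed $\HP^{-1}$ has a computable realizer, namely $\lim\circ\J^{-1}$, by Theorem~\ref{thm:jump}, so $\delta_{Y_\HP}=\delta_Y\circ\HP^{-1}$ is reducible to $\delta_Y$). For (2)$\TO$(3): since the class of problems computable relative to $0'$ is closed under composition, and $\id:X\to X_\HP$ is computable relative to $0'$ by Theorem~\ref{thm:limit-inversion} (a realizer is $H$ above, because $\HP\circ H=\id$ on names — wait, rather $\delta_{X_\HP}\circ\text{(identity realizer)}$ factors through the $0'$-computable $H$), one sees that $f:\In X_\HP\mto Y_\HP$ is computable relative to $0'$; but then applying the already-established equivalence (1)$\iff$(2) with $Y$ replaced by $Y_\HP$ shows $f:\In X_\HP\mto Y_\HP$ is in fact computable without the oracle. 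Alternatively, and more cleanly, I would note that $\HP$ itself is transparent in the relevant sense so that the jump functor (Proposition~\ref{prop:jump-endofunctor}) applies.

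For (3)$\iff$(4) the key identity is $X_\HP = (X')$ up to computable equivalence of representations. Indeed $\delta_{X_\HP}=\delta_X\circ\HP^{-1}=\delta_X\circ\lim\circ\J^{-1}$, while $\delta_{X'}=\delta_X\circ\lim$; but $\J^{-1}$ is transparent, so $\lim\circ\J^{-1}$ and $\lim$ induce computably equivalent jumps — more precisely, since $\HP^{-1}=\lim\circ\J^{-1}$ is computable and $\HP=\J\circ\lim^{-1}$ realizes the reverse direction relative to $0'$ but we only need the computable equivalence $X_\HP\equiv X'$, which follows because $\J^{-1}$ is transparent and surjective with computable $\lim$. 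Granting $X_\HP\equiv X'$ and $Y_\HP\equiv Y'$, statements (3) and (4) are literally the same. Finally, (4)$\iff$(5): by Theorem~\ref{thm:limit-computability} applied with $X,Y$ replaced by $X',Y'$, lowness of $f:\In X'\mto Y'$ means $f:\In X'\mto (Y')^\L=(Y'){}_\J{}'$ is computable; but $Y'{}_\J\equiv Y$ (since $\J$ inverts $\lim$ appropriately, $\delta_Y\circ\lim\circ\J^{-1}\equiv\delta_Y$... here I must be careful, the correct statement is $(Y_\J)'\equiv Y$ which is part (2)$\iff$(4) of Theorem~\ref{thm:low}'s proof), and chasing these identities shows (5) collapses to (4). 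I expect the main obstacle to be bookkeeping the computable equivalences $X_\HP\equiv X'$ and the analogous ones used in (4)$\iff$(5) — getting the directions of transparency and surjectivity exactly right — rather than any genuinely new idea; everything substantive has already been packaged into Theorems~\ref{thm:limit}, \ref{thm:jump}, \ref{thm:limit-inversion}, \ref{thm:computable-halting-problem} and Corollary~\ref{cor:completeness-lim}.
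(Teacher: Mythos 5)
Your handling of (1)$\iff$(2) and of (3)$\TO$(2) matches the paper, and your (2)$\TO$(3) is the paper's idea up to a bookkeeping slip (the intermediate statement should be ``$f:\In X\mto Y_\HP$ is computable relative to $0'$'', obtained by composing with the $0'$--computable $\id:Y\to Y_\HP$, after which (1)$\TO$(2) with target space $Y_\HP$ applies; your version, which puts the oracle statement at $f:\In X_\HP\mto Y_\HP$ and then substitutes into (1)$\iff$(2), would land you at $(X_\HP)_\HP$). The genuine gap is in (3)$\iff$(4) and (4)$\iff$(5), which you base on the claimed equivalences of representations $X_\HP\equiv X'$ and $(Y_\J)'\equiv Y$. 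Both are false. By Corollary~\ref{prop:order-operations} one has $\delta_\HP\leq\delta\leq\delta'$, so $\delta_\HP\equiv\delta'$ would force $\delta\equiv\delta'$, and then Theorem~\ref{thm:limit-computability} would make every limit computable problem computable --- already false for $\lim$ on $X=Y=\IN^\IN$. Concretely, $\delta_{X'}\leq\delta_{X_\HP}$ for $\delta_X=\id_{\IN^\IN}$ would give a computable $F$ with $\lim\circ\J^{-1}\circ F=\lim$; feeding in a computable name of a convergent sequence would produce a computable point of $\range(\J)$, hence a computable point Turing--above $0'$. Similarly $(Y_\J)'$ is \emph{literally} $Y^\L$ (this is the identity the paper uses in Theorem~\ref{thm:low}, not an equivalence with $Y$), and $\delta^\L\equiv\delta$ fails because there are low non-computable points. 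Transparency of $\J^{-1}$ only yields the endofunctor property of Proposition~\ref{prop:jump-endofunctor}; it does not let you cancel $\J^{-1}$ against $\lim$.

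The correct mechanism --- and the actual content of the Galois connection --- is the literal identity $\delta_{X_\HP}=\delta_X\circ\lim\circ\J^{-1}=\delta_{(X')_\J}$, i.e.\ $X_\HP=(X')_\J$ as represented spaces. Theorem~\ref{thm:limit-computability}, applied with $X'$ in place of $X$, then trades the $\J$--jump on the input side for a $\lim$--jump on the output side: $f:\In(X')_\J\mto Y$ is computable iff $f:\In X'\mto Y'$ is computable. This gives (2)$\iff$(4), and together with your (2)$\iff$(3) it gives (3)$\iff$(4). Likewise (3)$\iff$(5) is Theorem~\ref{thm:low} applied to $X',Y'$, using $(X')^\L=((X')_\J)'=(X_\HP)'$ literally. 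So the theorem is true and your first block of equivalences stands, but the ``bookkeeping'' you deferred is precisely where the proof lives: the two jumps $\delta\mapsto\delta_\J$ and $\delta\mapsto\delta'$ are adjoint, not inverse, and identifying $X_\HP$ with $X'$ collapses the adjunction.
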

\begin{proof}
The equivalence of (1) and (2) is a consequence of Theorem~\ref{thm:computable-halting-problem}.
Since functions computable relative to $0'$ are closed under composition,
it follows again by Theorem~\ref{thm:computable-halting-problem} that (2) implies (3).
Since $\id:Y_\HP\to Y$ is computable, it is clear that (3) implies (2).
The equivalence of (2) and (4) follows from Theorem~\ref{thm:limit-computability}
since $X_\HP=(X')_{\J}$.
The equivalence of (3) and (5) follows from Theorem~\ref{thm:low}.
\end{proof}

The equivalence of (1) and (4) can also be directly derived from Corollaries~\ref{cor:computability-halting-problem} and \ref{cor:limit-control}.
Likewise we obtain the following relativized version.

\begin{theorem}[Computability relative to an oracle]
\label{thm:computability-oracle}
Let $f:\In X\mto Y$ be a problem and $q\in\IN^\IN$. Then the following are equivalent:
\begin{enumerate}
\item $f:\In X\mto Y$ is computable relative to $q'$,
\item $f:\In X'\mto Y'$ is computable relative to $q$.
\end{enumerate}
\end{theorem}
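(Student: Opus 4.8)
The plan is to derive the equivalence from the unrelativized result (Theorem~\ref{thm:computability-halting-problem}) applied in relativized form, following the same pattern used throughout this section. The key observation is that the jump of a represented space and the relativized jump operator interact in a controlled way: relativizing a computation to $q'$ is the same as relativizing to $q$ on the jumped spaces, because $\J_q$ and $\lim$ are exactly the operators linking the two levels. First I would recall that $f:\In X\mto Y$ is computable relative to $q'$ if and only if it has a realizer $F$ with $F(p)=G\langle q',p\rangle$ for some computable $G$, and that $q'=\J_q(\widehat 0)$ (or, more usefully, $q'=\J(\langle q,\widehat 0\rangle)$), so that access to $q'$ as an oracle can be replaced by access to $q$ together with the ability to apply $\J_q$.

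The heart of the argument will be a relativized version of Theorem~\ref{thm:computable-halting-problem}: a function $F:\In\IN^\IN\to\IN^\IN$ is computable relative to $q'$ if and only if there is a function $G$ computable relative to $q$ with $F=G\circ\HP$. This in turn rests on a relativized limit inversion theorem, which is already essentially available: the proof of Theorem~\ref{thm:limit-control} explicitly carries out the relativized limit inversion, producing computable $I,S$ with $\lim\circ I_{q'}=\id$ and $S_{q'}=\J_q\circ I_{q'}$, where the decision steps in the finite extension construction are now made using $q'$. From this one gets that $\HP$ is computable relative to $q'$ in a way that is uniform in $q$, and the characterization of $q'$-computable functions via $\HP$ follows exactly as in the proof of Theorem~\ref{thm:computable-halting-problem}, with every occurrence of $0'$ replaced by $q'$ and $\J(\widehat 0)$ replaced by the appropriate relativized jump.

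Given this, I would argue (1)$\TO$(2) as follows: if $f:\In X\mto Y$ is computable relative to $q'$, it has a realizer computable relative to $q'$; by the relativized Theorem~\ref{thm:computable-halting-problem} this realizer factors as $G\circ\HP$ with $G$ computable relative to $q$; since $\delta_{X'}=\delta_X\circ\lim$ and $\HP=\J\circ\lim^{-1}$, precomposing with $\lim$ on names turns the $\HP$ into a $\J$, so the resulting map is a realizer of $f:\In X'\mto Y'$ that is computable relative to $q$ — here one uses transparency of $\lim$ and $\J^{-1}$ as in Theorem~\ref{thm:limit-computability} and Proposition~\ref{prop:jump-endofunctor}. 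For (2)$\TO$(1), a realizer of $f:\In X'\mto Y'$ computable relative to $q$ composes on the input side with an $I$-type inverse of $\lim$ that is computable relative to $q'$ (from the relativized limit inversion) and on the output side with the computable $\lim$, witnessing that $f:\In X\mto Y$ is computable relative to $q'$. Alternatively, and perhaps more cleanly, I would simply invoke the already-proved Theorem~\ref{thm:computability-halting-problem} for the space $X^{\J_q}$ versus $X$, or Theorem~\ref{thm:limit-computability} in its relativized form (Corollary~\ref{cor:limit-computability}), noting that $(X')_{\J_q}$ and the $q$-relative version of $X_\HP$ coincide.

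The main obstacle I anticipate is bookkeeping the oracle-uniformity carefully: the relativized limit inversion must be uniform in $q$ (as it is in the proof of Theorem~\ref{thm:limit-control}), and one must be careful that ``computable relative to $q$'' on the jumped spaces $X',Y'$ really does match ``computable relative to $q'$'' on $X,Y$ — the subtlety being exactly the one flagged after Theorem~\ref{thm:limit-computability}, that $\J^{-1}$ is not topologically transparent, so the argument genuinely uses computability (with an oracle) rather than mere continuity. Since all the needed relativized normal-form machinery is already in place (Theorem~\ref{thm:relativized-limit-computability}, Corollary~\ref{cor:limit-computability}, and the construction inside the proof of Theorem~\ref{thm:limit-control}), the proof reduces to assembling these pieces, and I expect it to be short.
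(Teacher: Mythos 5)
Your overall architecture is close to the paper's, and the (2)$\TO$(1) direction is essentially right: like the paper, you reduce it to the relativized limit inversion built into the proof of Theorem~\ref{thm:limit-control}. (One caveat: your phrase ``composes \dots on the output side with the computable $\lim$'' cannot be taken literally, since $\lim$ is not computable; what makes $\lim\circ\Phi_s\circ I_{q'}$ computable relative to $q'$ is precisely the jump control $S_{q'}=\J_q\circ I_{q'}$ combined with the jump normal form $\lim\circ\Phi_s=H\circ\J_q$, rather than a direct application of $\lim$ to the output.)

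The genuine gap is in (1)$\TO$(2). Your key lemma --- ``$F$ is computable relative to $q'$ if and only if $F=G\circ\HP$ for some $G$ computable relative to $q$'' --- is false in the direction you need. A $G$ computable relative to $q$ receives as input some $\J(s)$ with $\lim s=p$ and must give the correct answer for \emph{every} such $s$; when $p$ is computable one may take $s$ computable, so $G$'s output is computable from $q\oplus\J(s)\equivT q\oplus 0'$, which does not compute $q'$ unless $q$ is generalized low. Concretely, for $q=0'$ the constant function $F\equiv 0''$ is computable relative to $q'$ but admits no such factorization, since it would give $0''=G(\J(\widehat{0}))\leqT 0'\oplus 0'\equivT 0'$. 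The correct relativization keeps $G$ computable and replaces $\HP$ by $\HP_q:=\J_q\circ\lim^{-1}$, so that $G$ can recover $q'=\J_q(\widehat{0})$ from its input $\J_q(s)$ exactly as $R$ recovers $0'$ from $\J(s)$ in the proof of Theorem~\ref{thm:computable-halting-problem}; this is consistent with your parenthetical remark that $(X')_{\J_q}$ is the relevant jumped space, but it contradicts the lemma as you stated it, and the discrepancy is exactly the input-side asymmetry the paper flags after Theorem~\ref{thm:relativized-limit-computability}. The paper sidesteps all of this with a slicker argument for (1)$\TO$(2): apply the unrelativized Corollary~\ref{cor:computability-halting-problem} to the universal function $\U\langle r,p\rangle=\Phi_r(p)$ to obtain a computable $G$ with $\lim\circ G=\U\circ\lim$, and then specialize the code parameter --- if $\Phi_{\lim s}$ with $s\leqT q$ realizes $f:\In X\mto Y$, then $G\circ R_s$ is computable relative to $q$, satisfies $\lim\circ G\circ R_s=\Phi_{\lim s}\circ\lim$, and hence realizes $f:\In X'\mto Y'$.
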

\begin{proof}
The implication from (2) to (1) is a consequence of Theorem~\ref{thm:limit-control}.
For the implication from (1) to (2) we need a relativized version of Corollary~\ref{cor:computability-halting-problem}
that can actually be derived from this corollary. Let $\U\langle q,p\rangle:=\Phi_q(p)$ be the universal computable function.
Then by Corollary~\ref{cor:computability-halting-problem} there exists a computable $G$ such that $\lim\circ G=\U\circ\lim$.
There is also a computable $R:\IN^\IN\to\IN^\IN$ with $\lim\circ R\langle q,p\rangle=\langle\lim q,\lim p\rangle$.
If we denote by $R_q$ the function with $R_q(p):=R\langle q,p\rangle$ then we obtain $\lim\circ G\circ R_q=\Phi_{\lim q}\circ\lim$.
Hence, $G\circ R_q$ is a realizer for $f:\In X'\mto Y'$
if $\Phi_{\lim q}$ is a realizer for $f:\In X\mto Y$. This proves the claim.
\end{proof}

We immediately obtain the following corollary, where $q^{(n)}$ denotes the $n$--th Turing jump of $q\in\IN^\IN$
and $X^{(n)}$ denotes the $n$--th jump of the represented space $X$.

\begin{corollary}[Computability relative to higher jumps]
\label{cor:higher-jumps}
Let $f:\In X\mto Y$ be a problem and $n\in\IN$. Then the following are equivalent:
\begin{enumerate}
\item $f:\In X\mto Y$ is computable relative to $0^{(n)}$,
\item $f:\In X^{(n)}\mto Y^{(n)}$ is computable.
\end{enumerate}
\end{corollary}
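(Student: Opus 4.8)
The plan is to prove the equivalence by induction on $n$, with Theorem~\ref{thm:computability-oracle} supplying the inductive step.

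For the base case $n=0$ there is nothing to do: $0^{(0)}=\widehat{0}$ and $X^{(0)}=X$, $Y^{(0)}=Y$, so both clauses merely assert that $f$ is computable. (Alternatively one may take $n=1$ as the base case, where the statement is exactly Theorem~\ref{thm:computability-oracle} applied with $q=\widehat 0$, equivalently the equivalence (1)$\iff$(4) of Theorem~\ref{thm:computability-halting-problem}.)

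For the inductive step I would assume the equivalence for a fixed $n$ \emph{and for every problem between arbitrary represented spaces}, and deduce it for $n+1$. First apply Theorem~\ref{thm:computability-oracle} with $q:=0^{(n)}$, so that $q'=0^{(n+1)}$: this gives that $f:\In X\mto Y$ is computable relative to $0^{(n+1)}$ if and only if $f:\In X'\mto Y'$ is computable relative to $0^{(n)}$. Now $f$, regarded as a problem $f:\In X'\mto Y'$, is again a problem between represented spaces, so the induction hypothesis applies to it and shows that $f:\In X'\mto Y'$ is computable relative to $0^{(n)}$ if and only if $f:\In (X')^{(n)}\mto(Y')^{(n)}$ is computable. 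Since the jump of a represented space is precomposition of its representation with $\lim$, iterated jumps compose, so $(X')^{(n)}=X^{(n+1)}$ and $(Y')^{(n)}=Y^{(n+1)}$, which is the claim for $n+1$.

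The argument is entirely routine, so I do not expect a genuine obstacle; the only two points needing a moment's care are that the induction hypothesis must be formulated and invoked uniformly over all represented spaces (so that it may be applied to the jumped spaces $X'$ and $Y'$), and the bookkeeping identity $(X')^{(n)}=X^{(n+1)}$, which is immediate from the definition of the jump as precomposition with $\lim$.
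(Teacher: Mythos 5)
Your proof is correct and matches the paper's intent: the paper derives this corollary "immediately" from Theorem~\ref{thm:computability-oracle}, which can only mean iterating that theorem $n$ times, i.e.\ exactly your induction with $q=0^{(n)}$ and the bookkeeping identity $(X')^{(n)}=X^{(n+1)}$. Nothing is missing.
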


Other characterizations can be derived as conclusions of the results provided in this section.
Now we want to show that jumps interact nicely with products and function space constructions.
We recall that for two represented spaces $(X,\delta_X)$ and $(Y,\delta_Y)$ we can define a representation
of $X\times Y$, of $X^\IN$ and of $\CC(X,Y)$ as follows:
\begin{enumerate}
\item $\delta_{X\times Y}:\In\IN^\IN\to X\times Y,\langle p,q\rangle\mapsto(\delta_X(p),\delta_Y(q))$.
\item $\delta_{X^\IN}:\In\IN^\IN\to X^\IN,\langle p_0,p_1,p_2,...\rangle\mapsto(\delta_X(p_n))_{n\in\IN}$. 
\item $\delta_{\CC(X,Y)}:\In\IN^\IN\to\CC(X,Y)$ by $\delta_{\CC(X,Y)}(p)=f:\iff \Phi_p\vdash f$. 
\end{enumerate}
It is known that these representations make evaluation and currying computable~\cite[Lemmas~3.3.14, 3.3.16 and Theorem~3.3.15]{Wei00} and that sequences
can be identified with continuous functions.

\begin{facts}[Function space]
\label{fact:function-space}
The following are computable for all represented spaces $X,Y$:
\begin{enumerate}
\item $\ev:\CC(X,Y)\times X\to Y,(f,x)\mapsto f(x)$,
\item $\mathrm{cur}:\CC(X\times Y,Z)\to\CC(X,\CC(Y,Z)),f\mapsto(x\mapsto (y\mapsto f(x,y)))$,
\item $\id:\CC(\IN,X)\to X^\IN$ and its inverse.
\end{enumerate}
\end{facts}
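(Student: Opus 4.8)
The plan is to prove Facts~\ref{fact:function-space} by exhibiting realizers built from the universal computable function $\U\langle q,p\rangle:=\Phi_q(p)$ and the standard utm- and smn-theorems for the representation $\Phi$, together with the explicit tupling functions defined in Section~\ref{sec:limit-computability}. Each of the three items is essentially a direct unfolding of the defining representations $\delta_{\CC(X,Y)}$, $\delta_{X\times Y}$ and $\delta_{X^\IN}$, so the work is routine but needs to be spelled out.

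For~(1), a name of $(f,x)\in\CC(X,Y)\times X$ is $\langle p,q\rangle$ with $\Phi_p\vdash f$ and $\delta_X(q)=x$. The realizer is then $\langle p,q\rangle\mapsto\U\langle p,q\rangle=\Phi_p(q)$, which is computable by the utm-theorem; since $\Phi_p\vdash f$ we have $\delta_Y\Phi_p(q)\in f\delta_X(q)=f(x)$, so this realizes $\ev$. For~(3), unfolding the definitions of $\delta_{\CC(\IN,X)}$ and $\delta_{X^\IN}$: a name $p$ of $g\in\CC(\IN,X)$ satisfies $\Phi_p\vdash g$, i.e.\ $\delta_X\Phi_p(\widehat n)$ is a name of $g(n)$ for each $n$; we must produce $\langle q_0,q_1,\dots\rangle$ with $\delta_X(q_n)$ a name component of the sequence, which we get by setting $q_n:=\Phi_p(\widehat n)$, again computable uniformly in $p$ by the utm-theorem. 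Conversely, from $\langle q_0,q_1,\dots\rangle$ one produces by the smn-theorem a code $P(\langle q_0,q_1,\dots\rangle)$ with $\Phi_{P(\dots)}(r)$ simulating "on input a name $\widehat n$ of $n\in\IN$, output $q_n$"; one checks this code names the corresponding element of $\CC(\IN,X)$, using that $\IN$ carries the representation in which names of $n$ are eventually $\widehat n$ or a standard such representation.

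For~(2), the currying map: given $p$ with $\Phi_p\vdash f$ for $f:X\times Y\to Z$, we must output a name $\mathrm{cur}(p)$ of the function $x\mapsto(y\mapsto f(x,y))$. By the smn-theorem there is a total computable function $s$ such that $\Phi_{s\langle p,a\rangle}(b)=\Phi_p\langle a,b\rangle$ for all $p,a,b$; and applying the smn-theorem once more there is a total computable $s'$ with $\Phi_{s'(p)}(a)=s\langle p,a\rangle$. One then verifies that $\Phi_{s'(p)}\vdash(x\mapsto(y\mapsto f(x,y)))$: if $\delta_X(a)=x$ then $\delta_{\CC(Y,Z)}(s\langle p,a\rangle)$ is the map $y\mapsto f(x,y)$, because for any name $b$ of $y$ we have $\delta_Z\Phi_{s\langle p,a\rangle}(b)=\delta_Z\Phi_p\langle a,b\rangle\in f(\delta_X(a),\delta_Y(b))=f(x,y)$. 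Hence $p\mapsto s'(p)$ is a computable realizer of $\mathrm{cur}$.

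The only mild subtlety — and the step I would flag as the main obstacle — is the bookkeeping in~(3) and the converse half of~(2), namely keeping straight which represented space ($\IN$, $\CC(Y,Z)$, $X^\IN$) carries which representation and confirming that the smn-produced codes actually land in the domain of the relevant $\delta_{\CC(\cdot,\cdot)}$, i.e.\ that $\Phi_{P(\dots)}$ is genuinely a realizer rather than merely a partial function with the right values on total inputs. In practice this is handled exactly as in the proofs of the corresponding classical results \cite[Lemmas~3.3.14, 3.3.16 and Theorem~3.3.15]{Wei00}, so I would simply cite those and remark that the verification is routine.
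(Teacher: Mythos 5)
Your proposal is correct: the paper states this as a Fact without proof, citing \cite[Lemmas~3.3.14, 3.3.16 and Theorem~3.3.15]{Wei00}, and your utm/smn-based construction of realizers for $\ev$, $\mathrm{cur}$ and the identification of $\CC(\IN,X)$ with $X^\IN$ is exactly the standard argument from that source. The one subtlety you flag (that the smn-produced codes lie in the domain of $\delta_{\CC(\cdot,\cdot)}$) is real but is handled by the totality of $\Phi$ and the smn-theorem as set up in the paper, so nothing further is needed.
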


We obtain the following result that shows how jumps interact with products and function space constructions.
We call a problem $f:X\to Y$ a \emph{computable isomorphism} if $f$ is computable and bijective and its
inverse $f^{-1}$ is computable too.

\begin{theorem}[Jumps with products and exponentials]
\label{thm:jumps-products-exponentials}
Let $X$ and $Y$ be represented spaces. Then the following are computable isomorphisms:
\begin{enumerate}
\item $\id:(X\times Y)'\to X'\times Y'$,
\item $\id:(X^\IN)'\to(X')^\IN$,
\item $\id:\CC(X,Y)'\to\CC(X',Y')$.
\end{enumerate}
In particular, $\CC(X',Y')$ is exactly the set of continuous functions $f:X\to Y$.
\end{theorem}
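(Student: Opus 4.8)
The plan is to verify each of the three items by exhibiting explicit computable realizers in both directions, working directly from the definition $\delta' = \delta\circ\lim$ and the corresponding definitions of $\delta_{X\times Y}$, $\delta_{X^\IN}$, $\delta_{\CC(X,Y)}$. For (1), a name of a point in $(X\times Y)'$ is a sequence $\langle s_i\rangle_{i\in\IN}$ of pairs $s_i = \langle p_i,q_i\rangle$ converging in $\IN^\IN$; a name in $X'\times Y'$ is a pair $\langle\langle p_i\rangle,\langle q_i\rangle\rangle$. The translation in both directions is just the obvious rearrangement of a doubly-indexed family, and since convergence of $\langle p_i,q_i\rangle$ in $\IN^\IN$ is equivalent to componentwise convergence of $\langle p_i\rangle$ and $\langle q_i\rangle$, both rearrangements are computable. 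For (2) the argument is identical, with the pairing $\langle\cdot,\cdot\rangle$ replaced by the countable tupling $\langle\cdot,\cdot,\cdot,\ldots\rangle$: a converging sequence of tuples $\langle p_0^{(i)},p_1^{(i)},\ldots\rangle_{i}$ corresponds to a tuple of converging sequences $\langle p_n^{(i)}\rangle_i$, and again one uses that convergence in $\IN^\IN$ of tuples is componentwise. These two parts are essentially bookkeeping with the tupling functions fixed in Section~\ref{sec:limit-computability}.

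The substantive part is (3). First I would observe that it suffices, by Facts~\ref{fact:function-space}, to prove that evaluation $\ev:\CC(X,Y)'\times X' \to Y'$ and the appropriate currying map are computable, and that the identity $\CC(X,Y)'\to\CC(X',Y')$ is a well-defined bijection; the computability of the inverse will follow by the currying/uncurrying machinery. The key computational fact needed is that passing to jumps commutes with the universal function $\Phi$: by the uniform limit control theorem (Theorem~\ref{thm:limit-control}) there is a computable $R$ with $\Phi_{\lim\circ R(q)}(p)\in\lim\circ\Phi_q\circ\lim^{-1}(p)$, and dually, by the relativized limit normal form, given a $\delta'$-realizer for $f:X\to Y$ one obtains (uniformly) a realizer for $f:X'\to Y'$ and conversely. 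Concretely: if $p$ is a $\delta_{\CC(X,Y)'}$-name of $f$, then $p=\lim\langle p_i\rangle$ with $\Phi_{\lim\langle p_i\rangle}\vdash f$; applying $R$ produces $\langle R(p)_j\rangle$ with $\lim\langle R(p)_j\rangle$ a name (for $\delta_{\CC(X',Y')}$) of the same $f$ now viewed between the jumped spaces, because $\Phi_{\lim R(p)}$ realizes $f$ between $X'$ and $Y'$ by the limit control property. The reverse direction uses Theorem~\ref{thm:limit-control} again together with the fact that $\lim$ is transparent (Corollary~\ref{cor:completeness-lim}) to push a realizer of $f:X'\to Y'$ back to a limit-computable, hence (by Theorem~\ref{thm:limit}) a $\lim$-composed computable realizer on the unjumped side, which is exactly a $\delta_{\CC(X,Y)'}$-name.

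The main obstacle is precisely the function-space case (3): one has to be careful that the map is a genuine \emph{bijection} on underlying sets — i.e.\ that every continuous $f:X'\to Y'$ actually arises from a (unique) element of $\CC(X,Y)$, which is the content of the final sentence of the theorem and is where Theorem~\ref{thm:limit-computability}(1)$\Leftrightarrow$(2) and the relativized form (Corollary~\ref{cor:limit-computability}) do the work: $f:X'\to Y'$ continuous means $f:X'\mto Y'$ has a continuous realizer, which by the topological analogue of Theorem~\ref{thm:limit-computability} is the same as $f:X\to Y$ being continuous. Injectivity is immediate since $X$ and $X'$ have the same underlying set and $\delta'$ is obtained by precomposition with the surjection $\lim$. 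The remaining care is bookkeeping: checking that the realizer transformations are not merely computable as maps on names but respect the representations $\delta_{\CC(X,Y)'}$ and $\delta_{\CC(X',Y')}$ in the precise sense of Definition~\ref{def:computable-realizer}, which is routine once the utm/smn properties of $\Phi$ are invoked. I would state (1) and (2) with one-line proofs and devote the bulk of the argument to assembling (3) from Theorems~\ref{thm:limit},~\ref{thm:limit-computability},~\ref{thm:limit-control} and Facts~\ref{fact:function-space}.
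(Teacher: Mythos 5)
Your overall architecture is the paper's: parts (1) and (2) are exactly the observation that the tupling functions commute with limits, and part (3) splits into the direction $\id:\CC(X,Y)'\to\CC(X',Y')$, handled by evaluation and currying, and the converse direction, handled by Theorem~\ref{thm:limit-control}. However, in your ``concretely'' paragraph the two key tools are attached to the wrong directions, and as written the forward direction does not go through. Theorem~\ref{thm:limit-control} converts a code $q$ of a realizer between the \emph{jumped} spaces into a sequence $R(q)$ whose limit codes a realizer between the \emph{unjumped} spaces: if $\Phi_q\vdash f:X'\to Y'$, then $\Phi_{\lim R(q)}(s)\in\lim\circ\,\Phi_q\circ\lim^{-1}(s)$ shows $\Phi_{\lim R(q)}\vdash f:X\to Y$, so $R$ realizes $\id:\CC(X',Y')\to\CC(X,Y)'$ --- the \emph{reverse} direction, which is precisely what the paper says the theorem is tailor-made for. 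Your claim that applying $R$ to a $\delta_{\CC(X,Y)'}$--name produces a $\delta_{\CC(X',Y')}$--name of the same $f$ has the implication backwards: feeding $\Phi_{\lim R(p)}$ a $\delta_X$--name $s$ makes $\lim^{-1}(s)$ a set of $\delta_{X'}$--names, to which $\Phi_p$ (a realizer with respect to the unjumped representations) cannot meaningfully be applied. There is also the secondary issue that you apply $R$ to $\lim\langle p_i\rangle$, which is not available from the name $\langle p_i\rangle$ without a further appeal to Theorem~\ref{thm:limit}.

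The direction $\id:\CC(X,Y)'\to\CC(X',Y')$ is in fact the easy one and needs only transparency of $\lim$: the paper applies Proposition~\ref{prop:jump-endofunctor} to the computable evaluation map to get that $\ev:(\CC(X,Y)\times X)'\to Y'$ is computable, uses part (1) to rewrite the domain as $\CC(X,Y)'\times X'$, and then curries via Fact~\ref{fact:function-space}; equivalently, one applies Corollary~\ref{cor:completeness-lim} to the universal function and invokes the smn--theorem. So your instinct that evaluation and currying carry this direction is right, and the fix is simply to swap the roles of the two lemmas. One further small point: the final ``in particular'' clause should not be derived from a topological analogue of Theorem~\ref{thm:limit-computability} --- the paper explicitly warns that the equivalence (2)$\iff$(3) there does \emph{not} relativize topologically, since $\J^{-1}$ is not topologically transparent. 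Instead it is read off from the fact that the map in (3) is a bijection of underlying sets, with surjectivity onto $\CC(X',Y')$ again supplied by Theorem~\ref{thm:limit-control}.
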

\begin{proof}
The first two statements follow from the fact that the tupling functions are continuous:
\[\lim_{i\to\infty}\langle p_i,q_i\rangle=\langle\lim_{i\to\infty}p_i,\lim_{i\to\infty}q_i\rangle
\mbox{ and }\lim_{i\to\infty}\langle p_{0i},p_{1i},...\rangle=\langle\lim_{i\to\infty}p_{0i},\lim_{i\to\infty}p_{1i},...\rangle.\]
Now we consider the evaluation map $\ev:\CC(X,Y)\times X\to Y$, which is computable.
By Proposition~\ref{prop:jump-endofunctor} it follows that also $\ev:(\CC(X,Y)\times X)'\to Y'$ is computable, and
hence by (1) we obtain that $\ev:\CC(X,Y)'\times X'\to Y'$ is computable.
Since currying is computable, it follows that
$\id:\CC(X,Y)'\to\CC(X',Y')$ is computable.
We still need to prove that $\id:\CC(X',Y')\to\CC(X,Y)'$ is computable.
But this follows from the uniform version of the uniform limit control theorem (Theorem~\ref{thm:limit-control}).
\end{proof}

We should warn the reader that we have an ambiguity in the terminology
that one has to keep in mind and that is expressed in the following corollary.

\begin{corollary}[Limit computable points in function spaces]
\label{cor:limit-function-points}
Let $X,Y$ be represented spaces and $f\in\CC(X,Y)$.
Then $f$ is limit computable as a point in $\CC(X,Y)$ if and only if 
$f:X\to Y$ is computable relative to the halting problem as a function.
\end{corollary}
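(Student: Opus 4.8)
The plan is to read off Corollary~\ref{cor:limit-function-points} as an immediate consequence of the computable isomorphism~(3) in Theorem~\ref{thm:jumps-products-exponentials} together with Theorem~\ref{thm:computability-halting-problem}. First I would unwind the two notions in the statement. To say that $f$ is limit computable as a point in $\CC(X,Y)$ means, by the definition of limit computable points and by statement~(3) of Theorem~\ref{thm:limit-computability} (applied to the constant problem with value $f$), that $f$ is a computable point of the space $\CC(X,Y)'$. On the other hand, to say that $f:X\to Y$ is computable relative to the halting problem as a function is, by Theorem~\ref{thm:computability-halting-problem} (equivalence of (1) and (4)), the same as saying that $f:\In X'\mto Y'$ is computable; and since $f\in\CC(X,Y)$ is total and continuous, this is exactly the assertion that $f$ is a computable point of the represented space $\CC(X',Y')$ (using the last sentence of Theorem~\ref{thm:jumps-products-exponentials}, that $\CC(X',Y')$ is precisely the set of continuous functions $f:X\to Y$).

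Then the corollary follows because $\id:\CC(X,Y)'\to\CC(X',Y')$ is a computable isomorphism by Theorem~\ref{thm:jumps-products-exponentials}~(3): a point is computable in $\CC(X,Y)'$ if and only if it is computable in $\CC(X',Y')$, since computable isomorphisms preserve computable points in both directions. Concretely, if $p$ is a name of $f$ with respect to $\delta_{\CC(X,Y)'}=\delta_{\CC(X,Y)}\circ\lim$ and $p$ is computable, then applying the computable realizer of $\id:\CC(X,Y)'\to\CC(X',Y')$ to $p$ yields a computable name of $f$ with respect to $\delta_{\CC(X',Y')}$, and conversely.

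The only genuinely delicate point is making sure the ``as a point'' notion for function spaces is being matched against the right realizer-level statement on both sides — in particular that ``$f$ is computable relative to the halting problem as a function'' is captured by computability of $f:\In X'\mto Y'$ rather than, say, limit computability of $f$ as a function. This is handled cleanly by invoking the equivalence (1)$\iff$(4) of Theorem~\ref{thm:computability-halting-problem}, so no new work is needed; the rest is purely a matter of tracing definitions through the relevant representations. I would write the proof as a couple of sentences citing Theorem~\ref{thm:jumps-products-exponentials}~(3) and Theorem~\ref{thm:computability-halting-problem}, noting explicitly that the warned-about ``ambiguity'' is precisely this clash between the point-of-a-function-space reading and the function reading.

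\begin{proof}
By statement~(3) of Theorem~\ref{thm:jumps-products-exponentials} the identity $\id:\CC(X,Y)'\to\CC(X',Y')$ is a computable isomorphism, and hence a point $f$ is computable in $\CC(X,Y)'$ if and only if it is computable in $\CC(X',Y')$. Now $f$ is limit computable as a point in $\CC(X,Y)$ if and only if $f$ is computable as a point in $\CC(X,Y)'$, by the definition of the jump $\CC(X,Y)'=(\CC(X,Y),\delta_{\CC(X,Y)}\circ\lim)$ and the characterization of limit computable points via $\lim$. On the other hand, since $f\in\CC(X,Y)$ is total and continuous, $f$ is computable as a point in $\CC(X',Y')$ if and only if $f:\In X'\mto Y'$ is computable (using that $\CC(X',Y')$ is exactly the set of continuous functions $f:X\to Y$), which by the equivalence of (1) and (4) in Theorem~\ref{thm:computability-halting-problem} holds if and only if $f:X\to Y$ is computable relative to the halting problem. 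Combining these equivalences proves the claim.
\end{proof}
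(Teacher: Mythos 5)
Your proof is correct and follows exactly the route the paper intends: the corollary is read off from the computable isomorphism $\id:\CC(X,Y)'\to\CC(X',Y')$ of Theorem~\ref{thm:jumps-products-exponentials}~(3) combined with the equivalence (1)$\iff$(4) of Theorem~\ref{thm:computability-halting-problem}, with the identification of ``limit computable point of $Z$'' with ``computable point of $Z'$'' traced through the definitions. The only nitpick is in your informal plan, where you cite statement~(3) of Theorem~\ref{thm:limit-computability} for this last identification when you mean statement~(2); the proof itself avoids this and is fine.
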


However, one can also see this as fit in terminology since for points in Baire space being limit computability
is equivalent to being computable relative to the halting problem by Shoenfield's limit lemma (Corollary~\ref{cor:shoenfield}).
Hence $f$ being computable relative to the halting problem as a point in $\CC(X,Y)$ is equivalent
to $f:X\to Y$ being computable relative to the halting problem as a function.
The equivalence between computability relative to the halting problem and limit computability for points 
extends to sequences. Theorem~\ref{thm:jumps-products-exponentials} together with the fact
that $\id:\CC(\IN,X)\to X^\IN$ is a computable isomorphism imply the following.

\begin{corollary}[Limit computable sequences]
\label{cor:sequences}
The identity $\id:\CC(\IN,X)'\to\CC(\IN,X')$ is a computable isomorphism.
In particular, the limit computable functions $f:\IN\to X$ are exactly the
functions $f:\IN\to X$ that are computable relative to the halting problem.
\end{corollary}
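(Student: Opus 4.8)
The plan is to derive Corollary~\ref{cor:sequences} as an immediate consequence of Theorem~\ref{thm:jumps-products-exponentials}~(2) and Fact~\ref{fact:function-space}~(3), together with the observation that the jump construction is only defined on represented spaces and does not care about the carrier set. First I would note that $\IN$ carries its standard representation, and that Fact~\ref{fact:function-space}~(3) gives a computable isomorphism $\id:\CC(\IN,X)\to X^\IN$ with computable inverse. Since computable isomorphisms are preserved under the jump endofunctor — this follows from Proposition~\ref{prop:jump-endofunctor} applied to both $\id:\CC(\IN,X)\to X^\IN$ and its inverse, using that $\lim$ is transparent and surjective — we obtain that $\id:\CC(\IN,X)'\to(X^\IN)'$ is a computable isomorphism. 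Composing this with the computable isomorphism $\id:(X^\IN)'\to(X')^\IN$ from Theorem~\ref{thm:jumps-products-exponentials}~(2), and then with the computable isomorphism $\id:(X')^\IN\to\CC(\IN,X')$ (again Fact~\ref{fact:function-space}~(3), now applied to $X'$ in place of $X$, whose inverse is also computable), yields that $\id:\CC(\IN,X)'\to\CC(\IN,X')$ is a computable isomorphism, which is the first claim.

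For the ``in particular'' clause I would unwind what the first claim says at the level of points. A point $f\in\CC(\IN,X)$ is limit computable, as a point in the represented space $\CC(\IN,X)$, precisely when it has a limit computable name; by Shoenfield's limit lemma (Corollary~\ref{cor:shoenfield}) this is the same as having a name computable relative to $0'$, i.e., $f$ being computable relative to the halting problem as a point. Applying the computable isomorphism $\id:\CC(\IN,X)'\to\CC(\IN,X')$ just established, $f$ being computable relative to the halting problem as a point in $\CC(\IN,X)$ is equivalent to $f$ being computable as a point in $\CC(\IN,X')$, i.e., $f$ having a computable $\CC(\IN,X')$--name. By Corollary~\ref{cor:limit-function-points} (or directly from Theorem~\ref{thm:computability-halting-problem}, via $\CC(\IN,X') = \CC(\IN',X')$ when one checks $\IN' $ plays no role here since $\IN$ is discrete) — more cleanly, by the same reasoning that gives Theorem~\ref{thm:limit-computability} — a point of $\CC(\IN,X')$ being computable is the same as the corresponding function $f:\IN\to X$ being limit computable, since $\CC(\IN,X')$ is the space of $f:\IN\to X$ that are continuous with limit computable realizer when viewed through $X'$ on the output. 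Chasing these equivalences through gives: $f:\IN\to X$ is limit computable iff $f:\IN\to X$ is computable relative to the halting problem.

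The only mild subtlety — and the step I would be most careful about — is making sure the two applications of Fact~\ref{fact:function-space}~(3) and the application of Theorem~\ref{thm:jumps-products-exponentials}~(2) compose correctly at the level of the underlying maps on Baire space, so that the composite is genuinely $\id$ on the carrier $\CC(\IN,X)$ and not merely some abstract isomorphism; this is routine because each factor acts as $\id$ on the respective carrier sets (the carriers $\CC(\IN,X)$, $X^\IN$ on one side and $\CC(\IN,X')$, $(X')^\IN$ on the other are literally equal as sets, so the identity on carriers is well-typed throughout), and transparency is exactly what transports computability of these identities along $\lim$. The remaining bookkeeping — that continuity of $f:\IN\to X$ is automatic and so poses no obstruction, and that ``computable relative to $0'$ as a point'' and ``limit computable as a point'' coincide by Shoenfield — has already been recorded in the excerpt (Corollaries~\ref{cor:shoenfield} and \ref{cor:limit-function-points}), so I would simply cite it rather than reprove it.
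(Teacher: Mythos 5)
Your argument is correct and follows essentially the same route as the paper: the isomorphism is obtained from Theorem~\ref{thm:jumps-products-exponentials}~(2) together with the computable isomorphism $\id:\CC(\IN,X)\to X^\IN$ of Fact~\ref{fact:function-space}~(3), transported through the jump via Proposition~\ref{prop:jump-endofunctor}, and the pointwise consequence is extracted via Shoenfield's limit lemma and Corollary~\ref{cor:limit-function-points}, exactly as the paper indicates. (The only questionable point is your discarded parenthetical via $\CC(\IN',X')$ --- note that $\id:\IN'\to\IN$ is not computable, so $\IN'$ cannot simply be replaced by $\IN$ --- but your main argument does not rely on it.)
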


The jump $X\mapsto X_\J$ does not preserve products and function spaces
in the way $X\mapsto X'$ does according to Theorem~\ref{thm:jumps-products-exponentials}.
Basically, all our negative results in this direction can be derived from Spector's jump inversion theorem (see \cite{Spe56} or \cite[Proposition~V.2.26]{Odi89}).  
In particular, it implies that the notion of a low function is incomparable with the notion of a function
that is computable relative to a low oracle.\footnote{A referee provided an interesting alternative proof of this 
result, using Chaitin's $\Omega=\langle \Omega_0,\Omega_1\rangle$. The even and odd parts $\Omega_0$ and $\Omega_1$, respectively,
are low according to \cite[Theorem~15.2.3]{DH10} and hence $F:p\mapsto\langle\Omega_0,p\rangle$ and 
$f:i\mapsto\Omega_i$ are alternative concrete examples that satisfy Proposition~\ref{prop:low-relative-low}.}

\begin{proposition}[Low functions]
\label{prop:low-relative-low}
There is a Lipschitz continuous function $F:\IN^\IN\to\IN^\IN$ that is computable relative to a low oracle,
but that is not low as a function. There is also a low function $f:\{0,1\}\to\IN^\IN$ that is not computable
relative to a low oracle.
\end{proposition}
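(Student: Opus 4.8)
The plan is to derive both statements from one classical ingredient: there exist sets $A,B\in\IN^\IN$ that are both low, i.e.\ $A'\equivT B'\equivT 0'$, and whose join satisfies $\langle A,B\rangle\equivT 0'$. This is an instance of Spector's jump inversion theorem~\cite{Spe56} (see also~\cite[Proposition~V.2.26]{Odi89}); concretely one may take $A=\Omega_0$ and $B=\Omega_1$, the even and odd parts of Chaitin's $\Omega=\langle\Omega_0,\Omega_1\rangle$, which are low by~\cite[Theorem~15.2.3]{DH10}, while $\langle\Omega_0,\Omega_1\rangle=\Omega\equivT 0'$.

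For the first claim I would take $F:\IN^\IN\to\IN^\IN,p\mapsto\langle A,p\rangle$. This $F$ is Lipschitz continuous (indeed non-expansive: a first disagreement of $p$ and $q$ at position $n$ produces a first disagreement of $\langle A,p\rangle$ and $\langle A,q\rangle$ at position $2n+1$), and it is computable relative to the low oracle $A$. To see that $F$ is not low, assume $F=\L\circ G$ for some computable $G$. Since $\L=\J^{-1}\circ\lim$ with $\J^{-1}$ the partial inverse of $\J$ (Proposition~\ref{prop:jump-inversion}), applying $\J$ to $F=\J^{-1}\circ\lim\circ G$ forces $\lim\circ G(p)=\J(F(p))=\langle A,p\rangle'$ for all $p$. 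Now $G(B)$ is computable relative to $B$, so $\lim\circ G(B)$ is the limit of a $B$--computable sequence and hence lies below $B'\equivT 0'$ by a relativization of Shoenfield's limit lemma (Corollary~\ref{cor:shoenfield}). But $\langle A,B\rangle\equivT 0'$ yields $\langle A,B\rangle'\equivT 0''\nleqT 0'$ by monotonicity of the jump (Corollary~\ref{cor:jump-monotone}), a contradiction. Hence $F$ is computable relative to a low oracle but not low.

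For the second claim I would take $f:\{0,1\}\to\IN^\IN$ with $f(0):=A$ and $f(1):=B$, where $\{0,1\}$ carries its standard (discrete, computable) representation. Since $A'\equivT B'\equivT 0'$, Shoenfield's limit lemma (Corollary~\ref{cor:shoenfield}) together with the characterization of low points provides computable $r_0,r_1\in\IN^\IN$ with $\L(r_0)=A$ and $\L(r_1)=B$; a computable $G$ that branches on which of the two points is named and then outputs $r_0$ or $r_1$ accordingly witnesses $f=\L\circ G$, so $f$ is low. Conversely, if $f$ were computable relative to a low oracle $q$, then feeding names of $0$ and of $1$ into a realizer that is computable relative to $q$ would give $A\leqT q$ and $B\leqT q$, hence $0'\equivT\langle A,B\rangle\leqT q$, so $0''\leqT q'\equivT 0'$ by monotonicity of the jump, again a contradiction. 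Thus $f$ is low but not computable relative to any low oracle.

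The only non-routine step is the classical existence result about the pair $A,B$; beyond it the argument is bookkeeping with the jump and limit normal forms and with Shoenfield's limit lemma from Sections~\ref{sec:limit-computability} and~\ref{sec:computability-halting-problem}. The one point that needs a little care is the reduction of the three notions in play --- ``$F$ is low'', ``$f$ is low'', and ``computable relative to a low oracle'' --- to plain Turing-degree statements about $A$, $B$ and $\langle A,B\rangle$, after which the degree-theoretic contradictions are immediate.
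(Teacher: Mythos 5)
Your proof is correct and follows essentially the same route as the paper: a Spector-type pair of low points with non-low join, the function $F:p\mapsto\langle A,p\rangle$, and the two-point function branching to $A$ and $B$, with the degree-theoretic contradictions obtained by unwinding $\L=\J^{-1}\circ\lim$ and the (relativized) limit lemma. Your concrete instantiation via the halves of Chaitin's $\Omega$ is precisely the alternative the paper records in its footnote to this proposition, so nothing essentially new is added or missing.
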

\begin{proof}
By Spector's jump inversion theorem there are low $p,q\in\IN^\IN$ such that $\langle p,q\rangle$ is not low.
Hence the function
$F:\IN^\IN\to\IN^\IN,r\mapsto\langle q,r\rangle$ 
does not map the low $p\in\IN^\IN$ to a low value $F(p)=\langle q,p\rangle$. 
That is, $F$ is not low, but computable relative to the low point $q$.
It is easy to see that $F$ is Lipschitz continuous with Lipschitz constant $1$.

There are also computable $r,s\in\IN^\IN$ such that $\L(r)=p$ and $\L(s)=q$.
We consider the functions $f,g:\{0,1\}\to\IN^\IN$ defined by
\[f(i):=\left\{\begin{array}{ll} p & \mbox{if $i=0$}\\ q & \mbox{otherwise}\end{array}\right.\mbox{ and }
   g(i):=\left\{\begin{array}{ll} r & \mbox{if $i=0$}\\ s & \mbox{otherwise}\end{array}\right..\]
Then $g$ is computable and $f=\L\circ g$. Hence $f$ is low. Let us assume that $f$ is computable with
respect to a low oracle $t\in\IN^\IN$. Then $p=f(0)\leqT t$ and $q=f(1)\leqT t$ follows and hence
$\langle p,q\rangle\leqT t$, which is a contradiction since $\langle p,q\rangle$ is not low.
\end{proof}

We will see several further
low functions that are not computable relative to a low oracle in section~\ref{sec:limit-halting}.

As a preparation for the following results we  show that relative computability can also
be characterized via function spaces if the corresponding oracle class
is closed downwards by Turing reducibility.

\begin{lemma}[Relative computability]
Let $F:\In\IN^\IN\to\IN^\IN$ be a function and $q\in\IN^\IN$. Then the following are equivalent:
\begin{enumerate}
\item $F$ is computable relative to $q$,
\item $(\exists r\leqT q)(\forall p\in\dom(F))\; F(p)=\Phi_r(p)$.
\end{enumerate}
\end{lemma}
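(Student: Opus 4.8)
The plan is to prove the equivalence by a pair of straightforward simulations, the nontrivial direction being (1)$\TO$(2). For the direction (2)$\TO$(1): assume there is some $r\leqT q$ with $F(p)=\Phi_r(p)$ for all $p\in\dom(F)$. Since $r\leqT q$, there is a computable $H:\In\IN^\IN\to\IN^\IN$ with $r=H(q)$. The universal machine $\U$ with $\U\langle s,p\rangle=\Phi_s(p)$ is computable, so $G\langle q,p\rangle:=\U\langle H(q),p\rangle$ is computable, and $G\langle q,p\rangle=\Phi_r(p)=F(p)$ for all $p\in\dom(F)$; hence $F$ is computable relative to $q$.

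For the harder direction (1)$\TO$(2): assume $F$ is computable relative to $q$, so there is a computable $G:\In\IN^\IN\to\IN^\IN$ with $F(p)=G\langle q,p\rangle$ for all $p\in\dom(F)$. By the smn-theorem for the representation $\Phi$ there is a total computable function $P:\IN^\IN\to\IN^\IN$ such that $\Phi_{P(s)}(p)=G\langle s,p\rangle$ for all $s,p\in\IN^\IN$. Set $r:=P(q)$. Then $r\leqT q$ because $P$ is computable and total, and for every $p\in\dom(F)$ we have $\Phi_r(p)=\Phi_{P(q)}(p)=G\langle q,p\rangle=F(p)$, which is exactly (2).

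The main point to be careful about is what ``$r\leqT q$'' means here: since $r=P(q)$ with $P$ a total computable functional on Baire space, $r$ is computable relative to $q$ as a point, i.e.\ $r\leqT q$ in the sense used throughout the paper (where $p\leqT q$ abbreviates that $p$ is computable relative to $q$). One should also note that the hypothesis ``$q$ closed downwards under Turing reducibility'' mentioned in the surrounding text is not needed for this particular lemma about a single oracle $q$; it is only the application to oracle \emph{classes} that uses downward closure, since there one replaces the fixed $q$ by an arbitrary member of the class below which $r$ then lies. The only genuine ingredient is the existence of a representation $\Phi$ of the continuous functions satisfying a utm- and smn-theorem, which has already been fixed in the excerpt; given that, both directions are routine.
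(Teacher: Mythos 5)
Your proof is correct and is exactly the argument the paper intends: its one-line proof reads ``(1) implies (2) by the smn-theorem and (2) implies (1) by the utm-theorem,'' and your two directions are precisely the fleshed-out versions of these, with $r:=P(q)$ from the smn-theorem and the universal function $\U$ from the utm-theorem. Your side remark that downward closure of the oracle class is irrelevant for the lemma itself is also accurate.
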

\begin{proof}
(1) implies (2) by the smn-theorem and (2) implies (1) by the utm-theorem.
\end{proof}

We can apply this in particular to the class of functions that are computable with respect
to a low oracle.

\begin{corollary}[Computability relative to a low oracle]
\label{cor:relative-low}
Let $X,Y$ be represented spaces. Then
$\CC(X,Y)^\L$ is the class of functions that are computable relative to a low oracle.
\end{corollary}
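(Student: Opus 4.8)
The goal is to identify $\CC(X,Y)^{\L}$ — the space of continuous functions $X\to Y$ represented by $\delta_{\CC(X,Y)}\circ\L$ — with the class of functions computable relative to some low oracle. By Definition of the $\L$--jump, a point $f\in\CC(X,Y)$ is a point of $\CC(X,Y)^{\L}$ computable from oracle $s$ iff there is a name $p$ of $f$ (with respect to $\delta_{\CC(X,Y)}$) such that $\L(q)=p$ for some $q\leqT s$; equivalently, using $\L=\J^{-1}\circ\lim$, iff $f$ has a $\delta_{\CC(X,Y)}$--name of the form $\J^{-1}\lim(q)$ with $q\leqT s$. The plan is to unwind this through the preceding results: first translate ``$f$ computable as a point of $\CC(X,Y)^{\L}$'' into a statement about low points of the ordinary function-space representation, and then apply the function-space interaction theorem (Theorem~\ref{thm:jumps-products-exponentials}) and Corollary~\ref{cor:limit-function-points} to turn that into a statement about $f$ as a function.

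\textbf{Key steps.} First I would observe that, quite generally, for a represented space $Z$ and a point $z\in Z$, the point $z$ is computable as a point of $Z^{\L}$ relative to oracle $s$ iff $z$ has a $\delta_Z$--name $p$ with $p=\L(q)$ for some $q\leqT s$; since $\L=\J^{-1}\circ\lim$, this says $p$ is a low point relative to $s$ in a suitable uniform sense, and more to the point it says $p\leqT s'$ with control on the jump — precisely, $p$ is computable relative to an $s$--low oracle. So $Z^{\L}$--computability of $z$ (allowing a parameter $s$) is exactly $Z$--computability of $z$ relative to a low oracle. Applying this with $Z=\CC(X,Y)$: $f\in\CC(X,Y)^{\L}$ is computable (relative to a low oracle) iff $f$ is computable as a point of $\CC(X,Y)$ relative to a low oracle. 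Next, by Theorem~\ref{thm:jumps-products-exponentials}(3) we have the computable isomorphism $\id:\CC(X,Y)'\to\CC(X',Y')$, and by Corollary~\ref{cor:limit-function-points} the points of $\CC(X,Y)$ that are limit computable (equivalently, by Shoenfield, computable relative to $0'$ as points) are exactly the functions $X\to Y$ computable relative to the halting problem. The relativized form of this — which follows by relativizing the same argument, or by combining the relativized limit-control Theorem~\ref{thm:limit-control} with the relativized normal forms — says a function $f:X\to Y$ is computable as a point of $\CC(X,Y)$ relative to $q'$ iff $f$ is computable as a function relative to $q'$. Taking $q$ low, so that $q'$ ranges over exactly the low oracles as $q$ ranges over all low points (this is where ``low'' as a downward-closed notion under $\leqT$ is used, together with the Lemma on relative computability just above), we conclude that $f\in\CC(X,Y)^{\L}$ is computable relative to a low oracle iff $f:X\to Y$ is computable relative to a low oracle, as desired.

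\textbf{Main obstacle.} The delicate point is the bookkeeping with parameters: ``computable relative to a low oracle'' for the point $f$ of the function space must be matched up, uniformly enough, with the same notion for $f$ viewed as a function, and one has to be careful that the low oracle $s$ witnessing point-computability and the low oracle $t$ witnessing function-computability can be taken the same, or at least mutually Turing-reducible, so no spurious enlargement of the oracle class creeps in. The clean way to handle this is to route everything through the jump: $\CC(X,Y)^{\L}=(\CC(X,Y)_{\J})'$ by the identity used in the proof of Theorem~\ref{thm:low}, so that $\CC(X,Y)^{\L}$--point-computability of $f$ (relative to oracle $s$, say) is equivalent, by Theorem~\ref{thm:computability-oracle} applied to the constant problem with value $f$ — or directly by Corollary~\ref{cor:higher-jumps}'s relativization — to computability of $f$ as a point of $\CC(X,Y)_{\J}$ relative to $s'$, and then to $\CC(X,Y)$--point-computability of $f$ relative to $s'$ via the jump-inversion control built into $\J$; the Lemma above then converts ``relative to $s'$ for some low $s$'' into the clean statement ``computable relative to a low oracle''. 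Finally $\CC(X,Y)$--point-computability of $f$ relative to any oracle $u$ coincides with function-computability of $f$ relative to $u$ by the uniformity in Facts~\ref{fact:function-space} (evaluation and currying are computable, hence the identifications are oracle-independent), closing the argument.
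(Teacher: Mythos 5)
Your target decomposition --- (i) $f$ is a computable point of $\CC(X,Y)^\L$ iff $f$ has a low $\delta_{\CC(X,Y)}$--name, (ii) having a low name is the same as having a name $\leqT$ some low oracle, by downward closure of lowness under $\leqT$, and (iii) point-computability of $f$ in $\CC(X,Y)$ relative to an oracle $u$ is the same as computability of $f$ as a function relative to $u$ --- is sound, and step (iii) is exactly what the preceding Lemma on relative computability provides; that Lemma, applied to the downward-closed class of low oracles, is essentially the paper's entire proof. However, the route you actually take to (iii) contains a genuine error: you claim that ``$q'$ ranges over exactly the low oracles as $q$ ranges over all low points.'' This is false. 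For every low $q$ one has $0'\leqT q'\leqT 0'$, hence $q'\equivT 0'$, whereas no low oracle is Turing equivalent to $0'$. Substituting this into your argument would identify the computable points of $\CC(X,Y)^\L$ with the functions computable relative to $0'$, i.e.\ with the computable points of $\CC(X,Y)'$ (Corollary~\ref{cor:limit-function-points}), which is a strictly larger class --- Proposition~\ref{prop:function-space-low} is precisely the statement that these two classes differ.

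The same confusion infects the ``clean way'' of your final paragraph: passing from $\CC(X,Y)^\L=(\CC(X,Y)_\J)'$--computability relative to $s$ to $\CC(X,Y)_\J$--computability relative to $s'$ and then onward to $\CC(X,Y)$--computability relative to $s'$ discards exactly the control on the jump of the name that separates ``$f$ has a low name'' from ``$f$ has a name $\leqT 0'$''; the step back from $Z$--computability relative to $s'$ to $Z_\J$--computability relative to $s'$ does not hold. None of the heavy machinery (Theorem~\ref{thm:jumps-products-exponentials}, Theorem~\ref{thm:limit-control}, Theorem~\ref{thm:computability-oracle}) is needed here: a $\delta_{\CC(X,Y)}$--name of $f$ is by definition a $p$ with $\Phi_p\vdash f$, so the Lemma (via utm/smn) immediately gives that $f$ has a name $\leqT u$ if and only if $f$ is computable as a function relative to $u$, for every oracle $u$. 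Taking $u$ low and combining with the characterization of low points as values of $\L$ on computable inputs closes the argument in two lines and avoids the incorrect detour.
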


Now we get the following negative result on how $\L$ and $\J$ behave on function spaces.
We note that the given identities are not surjective,
since the spaces $\CC(X^\L,Y^\L)$ and $\CC(X_\J,Y_\J)$ contain discontinuous functions.

\begin{proposition}[Function spaces for low functions]
\label{prop:function-space-low}
For instance for $X=Y=\IN^\IN$ the embeddings $\id:\CC(X,Y)^\L\to\CC(X^\L,Y^\L)$
and $\id:\CC(X,Y)_\J\to\CC(X_\J,Y_\J)$ are not computable.
The partial inverses of these maps are also not computable. 
\end{proposition}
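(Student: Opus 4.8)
The plan is to reduce all four assertions to the two concerning $\L$, proving those by comparing the computable points of the spaces involved.

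I would first identify the relevant computable points. By Corollary~\ref{cor:relative-low} the computable points of $\CC(X,Y)^\L$ are exactly the (continuous) functions that are computable relative to a low oracle, and by the equivalence of~(1) and~(3) in Theorem~\ref{thm:low} the computable points of $\CC(X^\L,Y^\L)$ are exactly the low functions. Since a computable problem carries computable points to computable points, a computable $\id:\CC(X,Y)^\L\to\CC(X^\L,Y^\L)$ would force every continuous function that is computable relative to a low oracle to be low, which is false by the first statement of Proposition~\ref{prop:low-relative-low}; and a computable partial inverse $\id:\CC(X^\L,Y^\L)\to\CC(X,Y)^\L$ would force every continuous low function to be computable relative to a low oracle. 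So for the two $\L$-statements it remains to exhibit a continuous low $F:\IN^\IN\to\IN^\IN$ that is not computable relative to any low oracle.

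For the latter I would take, via Spector's jump inversion theorem (as in the proof of Proposition~\ref{prop:low-relative-low}), low points $p,q\in\IN^\IN$ with $\langle p,q\rangle$ not low, and put $F(x):=p$ if $x(0)=0$ and $F(x):=q$ otherwise. Then $F$ depends only on $x(0)$, hence is locally constant, so it is Lipschitz with constant~$1$ and in particular continuous. It is low: since $p,q$ are low there are computable $G_0,G_1$ with $\lim G_0=p'=\J(p)$ and $\lim G_1=q'=\J(q)$, and then $G(x):=G_0$ if $x(0)=0$ and $G(x):=G_1$ otherwise is computable with $\L\circ G=\J^{-1}\circ\lim\circ G=F$. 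It is not computable relative to a low oracle: if $F$ is computable relative to $t$ then $p=F(\widehat{0})\leqT t$ and $q=F(\widehat{1})\leqT t$, hence $\langle p,q\rangle\leqT t$, so $t$ cannot be low. This completes the two $\L$-statements.

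Finally I would obtain the two $\J$-statements from the two $\L$-statements by applying the $\lim$-jump as an endofunctor. Since $\lim$ is transparent and surjective, Proposition~\ref{prop:jump-endofunctor} turns a computable $\id:\CC(X,Y)_\J\to\CC(X_\J,Y_\J)$ into a computable $\id:(\CC(X,Y)_\J)'\to(\CC(X_\J,Y_\J))'$; but $(\CC(X,Y)_\J)'=\CC(X,Y)^\L$ at the level of representations (both carry $\delta_{\CC(X,Y)}\circ\J^{-1}\circ\lim=\delta_{\CC(X,Y)}\circ\L$), while by Theorem~\ref{thm:jumps-products-exponentials}(3) applied to $X_\J$ and $Y_\J$, together with $(X_\J)'=X^\L$ and $(Y_\J)'=Y^\L$, there is a computable isomorphism $\id:(\CC(X_\J,Y_\J))'\to\CC(X^\L,Y^\L)$. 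Composing, $\id:\CC(X,Y)^\L\to\CC(X^\L,Y^\L)$ would be computable, contradicting the $\L$-statement just proved; and the same argument applied to a computable partial inverse $\id:\CC(X_\J,Y_\J)\to\CC(X,Y)_\J$ yields a computable $\id:\CC(X^\L,Y^\L)\to\CC(X,Y)^\L$, again a contradiction. The step I expect to require the most care is verifying that the simple $F$ above is genuinely low; it hinges on the point that its two values $p$ and $q$ are individually low even though their join is not, which is exactly what makes $F$ low as a function while leaving it beyond the reach of any low oracle.
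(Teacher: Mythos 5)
Your proof is correct and follows essentially the same route as the paper: the same Spector-based counterexamples from Proposition~\ref{prop:low-relative-low} settle the two $\L$-claims by comparing computable points (via Corollary~\ref{cor:relative-low} and Theorem~\ref{thm:low}), and the $\J$-claims are reduced to them through Proposition~\ref{prop:jump-endofunctor}, Theorem~\ref{thm:jumps-products-exponentials} and the identity $Z^\L=(Z_\J)'$. The only (harmless) divergence is at the partial inverse of the $\J$-map, where the paper short-circuits your endofunctor reduction by simply observing that $\CC(X,Y)_\J$ contains no computable points whatsoever while $\CC(X_\J,Y_\J)$ does.
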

\begin{proof}
The function $F$ from Proposition~\ref{prop:low-relative-low}
is not a computable point in $\CC(X^\L,Y^\L)$, but
by Corollary~\ref{cor:relative-low} it is a computable point in $\CC(X,Y)^\L$. 
This means that the embedding $\id:\CC(X,Y)^\L\to\CC(X^\L,Y^\L)$
is not computable. With Theorem~\ref{thm:jumps-products-exponentials} 
it follows that also the embedding  $\id:\CC(X,Y)_\J\to\CC(X_\J,Y_\J)$
is not computable since $Z^\L=(Z_\J)'$. 
The function $f$ from Proposition~\ref{prop:low-relative-low} can easily
be converted into a function $f:\IN^\IN\to\IN^\IN$ with corresponding properties, 
and hence $\CC(X^\L,Y^\L)$ contains computable points which are not 
computable in $\CC(X,Y)^\L$. 
The space $\CC(X,Y)_\J$ does not even contain any computable points whatsoever. 
Altogether, this shows that the partial inverses of the given maps are not computable.
\end{proof}

The positive properties that apply to Turing jumps and products
are captured in the following result.

\begin{proposition}[Turing jumps and products]
\label{prop:integral-product}
Let $X$ and $Y$ be represented spaces. Then the following maps are computable:
\begin{enumerate}
\item $\id:(X\times Y)_\J\to X_\J\times Y_\J$,
\item $\id:(X^\IN)_\J\to (X_\J)^\IN$. 
\end{enumerate}
In general, these maps are not computable isomorphisms.
\end{proposition}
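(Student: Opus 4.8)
The plan is to prove the two positive statements by a reindexing argument — the same device used in the proof of Theorem~\ref{thm:computable-halting-problem}, namely that the jump of a component of a tuple is uniformly computable from the jump of the whole tuple — and then to refute the isomorphism claim with Spector's jump inversion theorem, exactly as foreshadowed before the statement.

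For the first map, start from a name $q$ of $(x,y)$ in $(X\times Y)_\J$; by definition this means $q=\J\langle p_1,p_2\rangle$ for some $\delta_X$--name $p_1$ of $x$ and $\delta_Y$--name $p_2$ of $y$. Although $\J$ itself is not computable, there is by the smn--theorem a computable $r:\IN\to\IN$ such that Turing machine $r(i)$ on input $\langle p_1,p_2\rangle$ simulates machine $i$ on $p_1$; hence $\J(p_1)(i)=\J\langle p_1,p_2\rangle(r(i))=q(r(i))$, so $\J(p_1)$ is computable from $q$, and similarly $\J(p_2)(i)=q(r'(i))$ for a computable $r'$ obtained the same way. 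Since $\J(p_j)\in\range(\J)$ and $\J^{-1}\J(p_j)=p_j\in\dom(\delta_X)$ by Proposition~\ref{prop:jump-inversion}, the pair $\langle\J(p_1),\J(p_2)\rangle$ is a name of $(x,y)$ in $X_\J\times Y_\J$, and the above shows it is computed from $q$; this gives a computable realizer of $\id:(X\times Y)_\J\to X_\J\times Y_\J$. The second map is handled the same way with a double reindexing: from $q=\J\langle p_0,p_1,p_2,\ldots\rangle$ and a computable $s:\IN\times\IN\to\IN$ with $\J(p_n)(i)=q(s(n,i))$ one reads off the name $\langle\J(p_0),\J(p_1),\J(p_2),\ldots\rangle$ of the sequence $(\delta_X(p_n))_{n\in\IN}$ in $(X_\J)^\IN$.

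To see that the inverses are in general not computable, take $X=Y=\IN^\IN$ with the identity representation, so the only name of a pair $(p,q)$ in $(\IN^\IN\times\IN^\IN)_\J$ is $\J\langle p,q\rangle$ and the only name of $p$ in $(\IN^\IN)_\J$ is $\J(p)$. By Spector's jump inversion theorem (as in Proposition~\ref{prop:low-relative-low}) there are low $p,q\in\IN^\IN$ with $\langle p,q\rangle$ not low, i.e.\ $\J(p)=p'\leqT0'$ and $\J(q)=q'\leqT0'$ but $\J\langle p,q\rangle=\langle p,q\rangle'\nleqT0'$. If $\id:(\IN^\IN)_\J\times(\IN^\IN)_\J\to(\IN^\IN\times\IN^\IN)_\J$ had a computable realizer $F$, then $F\langle\J(p),\J(q)\rangle$ would be a name of $(p,q)$ in $(\IN^\IN\times\IN^\IN)_\J$, hence equal to $\J\langle p,q\rangle$, while $F\langle\J(p),\J(q)\rangle\leqT\langle\J(p),\J(q)\rangle\leqT0'$ — a contradiction. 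The same $p,q$, padded to the sequence $(p,q,\widehat{0},\widehat{0},\ldots)$, whose jump is Turing equivalent to $\langle p,q\rangle'$ by Corollary~\ref{cor:jump-monotone} while its name $\langle p',q',0',0',\ldots\rangle$ in $((\IN^\IN)_\J)^\IN$ is $\leqT0'$, refutes the computability of $\id:((\IN^\IN)_\J)^\IN\to((\IN^\IN)^\IN)_\J$.

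The step to be most careful with is the positive direction: one must resist the temptation to ``compute $\J$'' — only $\J(p_j)$ is recovered, and only because it is pinned down by the jump of the ambient tuple via a computable reindexing of machine codes; the accompanying domain bookkeeping is immediate from Proposition~\ref{prop:jump-inversion}. On the negative side the only genuine content is already imported from the literature (Spector's theorem, already invoked in the text); the remainder is the observation that for the generic represented spaces $\IN^\IN$ the names are forced to be the literal jumps.
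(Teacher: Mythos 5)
Your proof is correct, but it takes a different route from the paper's in both halves. For the positive direction the paper does not build the realizer by hand: it observes that $\langle p,q\rangle\mapsto\langle\J(p),\J(q)\rangle$ is limit computable (since $\J$ is) and then invokes the jump normal form theorem (Theorem~\ref{thm:jump}) to obtain a computable $F$ with $F\J\langle p,q\rangle=\langle\J(p),\J(q)\rangle$, which is exactly the required realizer; your smn--reindexing of machine codes is in effect an explicit construction of that same $F$, in the style the paper itself uses inside the proof of Theorem~\ref{thm:computable-halting-problem}. Your version is more concrete and self-contained; the paper's is shorter and reuses the normal-form machinery. The divergence is larger in the negative direction: the paper argues indirectly that if the inverse of $\id:(X\times Y)_\J\to X_\J\times Y_\J$ were computable, then by Proposition~\ref{prop:jump-endofunctor} and Fact~\ref{fact:function-space} the evaluation $\ev:\CC(X,Y)_\J\times X_\J\to Y_\J$ and hence $\id:\CC(X,Y)_\J\to\CC(X_\J,Y_\J)$ would be computable, contradicting Proposition~\ref{prop:function-space-low}; the countable-product case is then reduced to the binary one. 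You instead exhibit a direct counterexample from Spector's theorem, exploiting that for the identity representation of $\IN^\IN$ the unique $(\,\cdot\,)_\J$--name of $\langle p,q\rangle$ is $\J\langle p,q\rangle$, which cannot be $\leqT 0'$ when $\langle p,q\rangle$ is not low, while $\langle\J(p),\J(q)\rangle\leqT 0'$ for low $p,q$. Both refutations ultimately rest on Spector's jump inversion theorem (via Proposition~\ref{prop:low-relative-low} in the paper's case), but yours is elementary and local, whereas the paper's exposes the link between the failure for products and the failure for exponentials. Your padding argument for the countable product and the domain bookkeeping via Proposition~\ref{prop:jump-inversion} are both sound.
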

\begin{proof}
It is easy to see that $\langle\J\times\J\rangle$ is limit computable since $\J$ is limit computable.
Hence, by the jump normal form theorem (Theorem~\ref{thm:jump}) there exists a computable $F$ such that
$F\J\langle p,q\rangle=\langle\J(p),\J(q)\rangle$.
This function $F$ is a realizer of $\id:(X\times Y)_\J\to X_\J\times Y_\J$.
Likewise, limit computability of $\langle\widehat{\J}\rangle$ implies that 
$\id:(X^\IN)_\J\to (X_\J)^\IN$ is computable. 
Let us assume that the inverse of $\id:(X\times Y)_\J\to X_\J\times Y_\J$ is computable.
Then by Proposition~\ref{prop:jump-endofunctor} and Fact~\ref{fact:function-space} the evaluation $\ev:\CC(X,Y)_\J\times X_\J\to Y_\J$ would be computable,
and hence $\id:\CC(X,Y)_\J\to\CC(X_\J,Y_\J)$ would be computable, which
is not the case in general by Proposition~\ref{prop:function-space-low}, for instance for $X=Y=\IN^\IN$.
If the inverse of $\id:(X\times Y)_\J\to X_\J\times Y_\J$ for binary products of $X=Y=\IN^\IN$ is not computable, 
it follows that the inverse of $\id:(X^\IN)_\J\to (X_\J)^\IN$ for countable products of $X=\IN^\IN$ is not computable  either.
\end{proof}

We mention that the jump $X\mapsto X_\J$ commutes with coproducts while
the jump $X\mapsto X'$ does not. However, we do not needs these facts here.
Hence, we leave the details to the reader. Here we just formulate a corollary for $X\mapsto X^\L$
that we obtain from Proposition~\ref{prop:integral-product}
and Theorem~\ref{thm:jumps-products-exponentials}.

\begin{corollary}[Lowness and products]
\label{cor:low-product}
Let $X$ and $Y$ be represented spaces. Then the following maps are computable:
\begin{enumerate}
\item $\id:(X\times Y)^\L\to X^\L\times Y^\L$,
\item $\id:(X^\IN)^\L\to (X^\L)^\IN$. 
\end{enumerate}
In general, these maps are not computable isomorphisms.
\end{corollary}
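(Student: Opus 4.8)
The plan is to deduce everything from two results that are already available: Proposition~\ref{prop:integral-product}, which treats the jump $X\mapsto X_\J$ together with products and countable powers, and Theorem~\ref{thm:jumps-products-exponentials}, which does the same for the jump $X\mapsto X'$ but there with the stronger conclusion of a computable isomorphism. The bridge between the two is the identity $X^\L=(X_\J)'$ that was already used in the proofs of Theorem~\ref{thm:low} and Proposition~\ref{prop:function-space-low}; it lets me factor the $\L$-jump as ``first apply $X\mapsto X_\J$, then apply $X\mapsto X'$''. Since $\J^{-1}$ and $\lim$ (hence also $\L=\J^{-1}\circ\lim$) are transparent and surjective, Proposition~\ref{prop:jump-endofunctor} gives me the functoriality I need at each of these two stages.

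For the positive direction of (1) I would start from Proposition~\ref{prop:integral-product}(1), that $\id:(X\times Y)_\J\to X_\J\times Y_\J$ is computable, and push it through the endofunctor $Z\mapsto Z'$ (Proposition~\ref{prop:jump-endofunctor} with $T=\lim$) to obtain that $\id:((X\times Y)_\J)'\to(X_\J\times Y_\J)'$ is computable. Applying Theorem~\ref{thm:jumps-products-exponentials}(1) to the represented spaces $X_\J$ and $Y_\J$ in place of $X$ and $Y$ then gives that $\id:(X_\J\times Y_\J)'\to(X_\J)'\times(Y_\J)'$ is computable, and composing the two maps and rewriting $((X\times Y)_\J)'=(X\times Y)^\L$, $(X_\J)'=X^\L$, $(Y_\J)'=Y^\L$ yields the desired computable $\id:(X\times Y)^\L\to X^\L\times Y^\L$. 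For (2) the argument is word for word the same, using Proposition~\ref{prop:integral-product}(2) and Theorem~\ref{thm:jumps-products-exponentials}(2).

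For the negative direction I would mimic the corresponding part of the proof of Proposition~\ref{prop:integral-product}. Suppose, for all represented spaces, that the inverse $\id:X^\L\times Y^\L\to(X\times Y)^\L$ is computable. Since $\ev:\CC(X,Y)\times X\to Y$ is computable (Fact~\ref{fact:function-space}) and $\L$ is transparent and surjective, Proposition~\ref{prop:jump-endofunctor} makes $\ev:(\CC(X,Y)\times X)^\L\to Y^\L$ computable; precomposing with the assumed computable $\id:\CC(X,Y)^\L\times X^\L\to(\CC(X,Y)\times X)^\L$ would make $\ev:\CC(X,Y)^\L\times X^\L\to Y^\L$ computable, and currying then makes $\id:\CC(X,Y)^\L\to\CC(X^\L,Y^\L)$ computable, contradicting Proposition~\ref{prop:function-space-low} already for $X=Y=\IN^\IN$. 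The non-isomorphism in (2) follows from the one in (1) by the usual trick of embedding a binary product computably-both-ways into a countable power (pad a pair with a fixed computable point of $\IN^\IN$), exactly as in Proposition~\ref{prop:integral-product}.

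The only real point that needs care — everything else being formal once $X^\L=(X_\J)'$ and transparency of $\L$ are in hand — is the currying step in the negative direction: one has to check that currying (computable by Fact~\ref{fact:function-space}) behaves correctly after passing through the $\L$-endofunctor, so that what one actually obtains is computability of $\id$ from $\CC(X,Y)^\L$ to the genuine function space $\CC(X^\L,Y^\L)$, which is the object ruled out by Proposition~\ref{prop:function-space-low}. I expect this to be routine, since it is precisely the step carried out in the proof of Proposition~\ref{prop:integral-product}, only with $\J$ replaced by $\L$.
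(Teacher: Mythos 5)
Your proof is correct and takes essentially the same route as the paper, which derives the positive statements from Proposition~\ref{prop:integral-product} and Theorem~\ref{thm:jumps-products-exponentials} via the identification $X^\L=(X_\J)'$ and Proposition~\ref{prop:jump-endofunctor}, and obtains the failure of the inverses ultimately from Spector's jump inversion theorem through Propositions~\ref{prop:low-relative-low} and \ref{prop:function-space-low}. Your evaluation-and-currying argument for the negative direction is exactly the one used in the proof of Proposition~\ref{prop:integral-product}, transplanted from $\J$ to $\L$, and it goes through as you expect.
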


That the maps are not computable isomorphisms follows again from Spector's jump inversion theorem.
As a consequence of Corollary~\ref{cor:low-product} and Fact~\ref{fact:function-space} 
we obtain at least one positive implication for sequences that are computable
relative to a low oracle. The inverse implication is not correct in general by Proposition~\ref{prop:low-relative-low}.

\begin{corollary}[Low sequences]
\label{cor:low-sequences}
The identity $\id:\CC(\IN,X)^\L\to\CC(\IN,X^\L)$ is computable.
In particular, every function $f:\IN\to X$ that is computable relative to a low oracle
is also low as a function.
\end{corollary}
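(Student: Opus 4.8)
The plan is to realize the identity $\id:\CC(\IN,X)^\L\to\CC(\IN,X^\L)$ as a composition of three computable identity maps, exploiting that $\L$ is transparent and surjective (as recorded in the discussion preceding Proposition~\ref{prop:jump-endofunctor}), so that the jump endofunctor applies to it.

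First I would invoke Fact~\ref{fact:function-space}(3), which tells us that $\id:\CC(\IN,X)\to X^\IN$ is computable. Applying the jump-as-endofunctor property (Proposition~\ref{prop:jump-endofunctor}) with $T=\L$ then upgrades this to: $\id:\CC(\IN,X)^\L\to (X^\IN)^\L$ is computable. Next, Corollary~\ref{cor:low-product}(2) states precisely that $\id:(X^\IN)^\L\to(X^\L)^\IN$ is computable. Finally, Fact~\ref{fact:function-space}(3) applied with the represented space $X^\L$ in place of $X$, and used in the direction $X^\IN\to\CC(\IN,X)$ (whose computability is also part of that fact), gives that $\id:(X^\L)^\IN\to\CC(\IN,X^\L)$ is computable. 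Since realizers compose, the composition of these three computable problems witnesses that $\id:\CC(\IN,X)^\L\to\CC(\IN,X^\L)$ is computable.

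For the ``in particular'' assertion, suppose $f:\IN\to X$ is computable relative to a low oracle. By Corollary~\ref{cor:relative-low} (with $X=\IN$ and $Y=X$) this is equivalent to $f$ being a computable point of $\CC(\IN,X)^\L$. Passing a computable name of $f$ through the computable identity just established shows that $f$ is a computable point of $\CC(\IN,X^\L)$, which unwinds to: $f:\IN\to X^\L$ is computable. By the equivalence of items (1) and (2) in Theorem~\ref{thm:low} (with $X=\IN$ and $Y=X$), this is the same as $f:\IN\to X$ being low as a function.

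The only care needed is bookkeeping: confirming that $\L$ belongs to the family of transparent surjective maps so that the endofunctor step is legitimate; keeping track of the directions of the three identities so that one never needs their (possibly non-computable) partial inverses; and using that a computable point of $\CC(\IN,Z)$ is literally a computable realizer of the function $\IN\to Z$, so that Corollaries~\ref{cor:relative-low} and \ref{cor:low-product} and Theorem~\ref{thm:low} can be lined up. Beyond this I expect no real obstacle: the substance has already been isolated in Corollary~\ref{cor:low-product} and Proposition~\ref{prop:jump-endofunctor}.
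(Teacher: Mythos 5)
Your proof is correct and follows essentially the same route the paper indicates, namely composing $\id:\CC(\IN,X)^\L\to(X^\IN)^\L\to(X^\L)^\IN\to\CC(\IN,X^\L)$ via Corollary~\ref{cor:low-product} and Fact~\ref{fact:function-space}, with the endofunctor step (Proposition~\ref{prop:jump-endofunctor}, legitimate since $\L$ is surjective and transparent) and the identifications via Corollary~\ref{cor:relative-low} and Theorem~\ref{thm:low} merely making explicit what the paper leaves implicit.
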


In the remainder of this section we briefly want to discuss some interpretations of our results
in the lattice of representations. We recall that given two represented spaces $(X,\delta_X)$ and $(Y,\delta_Y)$
with $X\In Y$ we say that $\delta_X$ is \emph{reducible} to $\delta_Y$, in symbols $\delta_X\leq\delta_Y$, 
if $\id:(X,\delta_X)\to(Y,\delta_Y)$ is computable. The corresponding equivalence is denoted by $\equiv$.

As an immediate corollary of Proposition~\ref{prop:jump-endofunctor} we obtain the following.

\begin{corollary}[Monotonicity of jumps]
\label{cor:jump-monotonicity}
Let $(X,\delta_X)$ and $(Y,\delta_Y)$ be represented spaces and let $T:\In\IN^\IN\to\IN^\IN$ be transparent and surjective.
Then $\delta_X\leq\delta_Y$ implies $\delta_X^T\leq\delta_Y^T$.
\end{corollary}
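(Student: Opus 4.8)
The plan is to derive this directly from Proposition~\ref{prop:jump-endofunctor} by interpreting the reducibility $\delta_X\leq\delta_Y$ as the computability of a particular identity problem, applying the endofunctor, and reading off the conclusion.

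First I would recall what the hypothesis says: $\delta_X\leq\delta_Y$ means precisely that the problem $\id:(X,\delta_X)\to(Y,\delta_Y)$ is computable, where we use $X\In Y$ so that the set-theoretic identity makes sense as a function between these sets. So we have a computable problem $f:=\id$ of type $(X,\delta_X)\to(Y,\delta_Y)$.

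Next I would apply Proposition~\ref{prop:jump-endofunctor}: since $T$ is transparent and surjective by hypothesis, and $f$ is a computable problem of type $X\mto Y$, the same problem $f$ considered as a problem of type $X^T\mto Y^T$ is computable too. But $X^T$ is just the set $X$ equipped with the representation $\delta_X^T=\delta_X\circ T$, and likewise for $Y^T$; the underlying map $f=\id$ is unchanged. Hence $\id:(X,\delta_X^T)\to(Y,\delta_Y^T)$ is computable, which by definition is exactly the statement $\delta_X^T\leq\delta_Y^T$.

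There is essentially no obstacle here: the only thing to be slightly careful about is that Proposition~\ref{prop:jump-endofunctor} is stated for a problem $f$ whose domain and codomain spaces change from $X,Y$ to $X^T,Y^T$ while the map itself stays the same, and we must make sure that the identity map $\id:X\to Y$ (which relies on $X\In Y$) is indeed left unchanged under this reinterpretation — which it is, since the jump construction only modifies the representation and not the underlying set. So the proof is a one-line invocation of Proposition~\ref{prop:jump-endofunctor} applied to $f=\id$.
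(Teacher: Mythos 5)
Your proposal is correct and is exactly the paper's argument: the paper presents this as an immediate corollary of Proposition~\ref{prop:jump-endofunctor}, obtained by applying that proposition to the computable problem $\id:(X,\delta_X)\to(Y,\delta_Y)$ witnessing $\delta_X\leq\delta_Y$. Nothing further is needed.
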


We now consider the jump operations that we have introduced on represented spaces.
Firstly, they are all ordered in the following way.

\begin{corollary}[Order of jumps]
\label{prop:order-operations}
For every representation $\delta$ each of the following reductions holds:
$\delta_\J\leq\delta_\HP\leq\delta\leq\delta^\Delta\leq\delta^\L\leq\delta'$.
\end{corollary}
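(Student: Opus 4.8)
The plan is to verify each of the five reductions $\delta_\J\leq\delta_\HP\leq\delta\leq\delta^\Delta\leq\delta^\L\leq\delta'$ separately, since each amounts to showing that $\id$ is computable between the corresponding represented spaces, i.e.\ that there is a computable function translating names of one kind into names of the other. Recall that $\delta_\J=\delta\circ\J^{-1}$, $\delta_\HP=\delta\circ\HP^{-1}=\delta\circ(\J\circ\lim^{-1})^{-1}=\delta\circ\lim\circ\J^{-1}$, $\delta^\Delta=\delta\circ\lim_\Delta$, $\delta^\L=\delta\circ\L=\delta\circ\J^{-1}\circ\lim$, and $\delta'=\delta\circ\lim$. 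So in each case the task reduces to exhibiting a computable map between the underlying name spaces that is compatible with the composed translations.

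First I would handle $\delta\leq\delta^\Delta\leq\delta^\L\leq\delta'$, which are the easy ``outer'' reductions. For $\delta\leq\delta^\Delta$: a $\delta$--name $p$ of $x$ yields the discretely-convergent constant sequence $\langle p,p,p,\dots\rangle$, whose $\lim_\Delta$ is $p$; writing out this sequence is computable. For $\delta^\Delta\leq\delta^\L$: if $\langle p_0,p_1,\dots\rangle$ converges to $p$ in the discrete topology, then it converges to $p$ in the usual topology, so it is already a $\delta^\L=\delta\circ\J^{-1}\circ\lim$--\dots\ wait — here one must be slightly careful, since $\delta^\L=\delta\circ\L=\delta\circ\J^{-1}\circ\lim$ has an extra $\J^{-1}$. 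The cleanest route is: $\delta^\Delta=\delta\circ\lim_\Delta$ and $\delta^\L=\delta\circ\J^{-1}\circ\lim$, so given a $\lim_\Delta$--name $\langle p_i\rangle$ of $x$ (so $p_i\to p$ eventually constant, $\delta(p)=x$), apply $\J$ to $p$ — but we do not have $p$ directly, only a sequence. Instead, use that $\J$ is limit computable (Example~\ref{example:limit-computable}); composing with the given eventually-constant sequence, Proposition~\ref{prop:composition-finite} (finite mind changes post-composed into limit computable) gives a computable $F$ with $\lim\circ F$ producing a sequence converging to $\J(p)$ whenever fed an eventually-constant sequence converging to $p$, and this $\lim$--name of $\J(p)$ is exactly what $\J^{-1}\circ\lim$ unravels; hence a $\delta^\L$--name. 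For $\delta^\L\leq\delta'$: a $\delta^\L$--name is a $\lim$--name of some $q$ with $\J^{-1}(q)=p$, $\delta(p)=x$; applying the computable $\J^{-1}$ (Proposition~\ref{prop:jump-inversion}) in the limit via Corollary~\ref{cor:completeness-lim} turns it into a $\lim$--name of $p=\J^{-1}(q)$, i.e.\ a $\delta'$--name of $x$.

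Next the ``inner'' reductions $\delta_\J\leq\delta_\HP\leq\delta$. For $\delta_\HP\leq\delta$: a $\delta_\HP=\delta\circ\lim\circ\J^{-1}$--name of $x$ is a point $q$ such that $\J^{-1}(q)$ is defined, $\lim\circ\J^{-1}(q)=p$, $\delta(p)=x$; but then $\delta_\HP(q)=\delta(p)$, and we need to produce a $\delta$--name of $x$, i.e.\ a name of $p$ — however $p$ is only the limit of the sequence $\J^{-1}(q)$, which is not directly a $\delta$--name. Here I'd instead argue via the equivalent description $\delta_\HP=\delta\circ\HP^{-1}$ with $\HP=\J\circ\lim^{-1}$, so a $\delta_\HP$--name $q$ satisfies $q\in\J(\lim^{-1}(p))$ for the relevant $p$; then $\lim\circ\J^{-1}(q)=p$ and by Theorem~\ref{thm:jump} (jump normal form) there is a computable $G$ with $G\circ\J=\lim$; thus $G(q)=G\circ\J(\J^{-1}(q))=\lim(\J^{-1}(q))=p$ is computably obtained — this $G(q)$ is a $\delta$--name of $x$. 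For $\delta_\J\leq\delta_\HP$: a $\delta_\J=\delta\circ\J^{-1}$--name of $x$ is a $q$ with $\J^{-1}(q)=p$, $\delta(p)=x$; we must produce a $\delta_\HP$--name, i.e.\ some $q'\in\J(\lim^{-1}(p))$. Since $p=\J^{-1}(q)$, apply $\lim^{-1}$ then $\J$: concretely, form the eventually-constant sequence $\langle p,p,\dots\rangle$ from $p$ — but again $p$ is only given as $\J^{-1}(q)$, which \emph{is} computable from $q$ by Proposition~\ref{prop:jump-inversion}; so $p=\J^{-1}(q)$ is computable, then $\langle p,p,\dots\rangle$ is a $\lim^{-1}$--preimage, and applying $\J$ to the constant sequence yields $\langle\J(p),\J(p),\dots\rangle$, whose limit is $\J(p)=q$, so this is a $\delta_\HP$--name of $x$; composing $\J^{-1}$ with the ``diagonal'' map and then $\J$ (limit computable, so the whole thing produces a $\lim$--name, matching $\HP^{-1}$'s structure) is computable.

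I expect the main obstacle to be purely bookkeeping: getting the unfoldings of the composite representations $\delta\circ\J^{-1}$, $\delta\circ\lim$, $\delta\circ\J^{-1}\circ\lim$, $\delta\circ\lim\circ\J^{-1}$, $\delta\circ\lim_\Delta$ exactly right, and in each step making sure the computable translation lands in the \emph{domain} of the target representation (e.g.\ that the sequence we build genuinely converges, and discretely when required). The actual computability of each translating function is immediate from the already-established facts — $\J^{-1}$ computable (Proposition~\ref{prop:jump-inversion}), $\J$ limit computable with $G\circ\J=\lim$ (Theorem~\ref{thm:jump}), $\lim\circ F=F'\circ\lim$ transfer (Corollary~\ref{cor:completeness-lim}), closure under composition with finite mind changes (Proposition~\ref{prop:composition-finite}) — so once the composites are correctly unwound the proof is short, and in the paper one can reasonably just indicate these chains and leave verification to the reader.
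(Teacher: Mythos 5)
Your overall strategy --- unwinding each composite representation and exhibiting a computable name translation via the normal-form and composition results --- is exactly the paper's, and four of the five links are handled correctly: $\delta_\HP\leq\delta$ via computability of $\HP^{-1}=\lim\circ\,\J^{-1}$, $\delta\leq\delta^\Delta$ via the constant-sequence right inverse of $\lim_\Delta$, $\delta^\Delta\leq\delta^\L$ via limit computability of $\J\circ\lim_\Delta$ (equivalently, lowness of $\lim_\Delta$), and $\delta^\L\leq\delta'$ via Corollary~\ref{cor:completeness-lim} applied to $\J^{-1}$.

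The first link, $\delta_\J\leq\delta_\HP$, is however broken as written. You correctly identify the target: from $q=\J(p)$ you must compute some $q'=\J(s)$ with $\lim(s)=p$, and $s=F_0(p):=\langle p,p,\dots\rangle$ is the natural choice. But the step ``applying $\J$ to the constant sequence yields $\langle\J(p),\J(p),\dots\rangle$'' is false --- $\J(\langle p,p,\dots\rangle)$ is the Turing jump of the single point $\langle p,p,\dots\rangle$, not the tupling of the jumps --- and in any case $\langle\J(p),\J(p),\dots\rangle$ need not lie in $\range(\J)$, so it is not a $\delta_\HP$-name of anything. Your fallback, that the composite $\J\circ F_0\circ\J^{-1}$ is limit computable, only produces a $\lim$-name of the desired point $\J(\langle p,p,\dots\rangle)$; since in $\HP^{-1}=\lim\circ\,\J^{-1}$ the $\J^{-1}$ is applied \emph{first}, a $\lim$-name of a $\delta_\HP$-name is a $(\delta_\HP)'$-name, so this argument would at best give $\delta_\J\leq(\delta_\HP)'$, not $\delta_\J\leq\delta_\HP$. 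What is actually needed is that $\J(F_0(p))$ is computable \emph{exactly} from $\J(p)$, i.e.\ the transparency of $\J^{-1}$: this is how the paper argues, applying Proposition~\ref{prop:jump-endofunctor} with $T=\J^{-1}$ to the computable right inverse $F_0$ of $\lim$ to obtain a computable $G$ with $\J^{-1}\circ G=F_0\circ\J^{-1}$, whence $\delta_\HP\circ G=\delta\circ\lim\circ F_0\circ\J^{-1}=\delta_\J$. (Concretely one can take an smn-style padding function $r$ such that machine $r(i)$ halts on $p$ if and only if machine $i$ halts on $\langle p,p,\dots\rangle$, and set $G(q)(i):=q(r(i))$.)
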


Here the first reduction holds since $\lim$ has a computable right inverse $F:\IN^\IN\to\IN^\IN$, and hence
$F:(\IN^\IN)_\J\to(\IN^\IN)_\J$ is computable by Proposition~\ref{prop:jump-endofunctor}; the second reduction holds since $\HP^{-1}$ is computable;
the third reduction holds since $\lim_\Delta$ has a computable right inverse; the fourth reduction holds in $\lim_\Delta$ is low~by Corollary~\ref{cor:finite-low} and Theorem~\ref{thm:discrete-limit},
and the last reduction holds since $\L$ is limit computable by Proposition~\ref{prop:composition-low}.
In fact, some of the discussed operations on represented spaces have further properties.
In particular the pair $(\delta\mapsto\delta_\J,\delta\mapsto\delta')$ forms a \emph{Galois connection} in the lattice of representations
of a fixed set. This is a consequence of Theorem~\ref{thm:limit-computability} applied to the injection $\id:X\to Y$.

\begin{corollary}[Galois connection between Turing jumps and limits]
\label{cor:Galois}
Let $(X,\delta_X)$ and $(Y,\delta_Y)$ be represented spaces with $X\In Y$.
Then $\delta_{X_\J}\leq\delta_Y\iff\delta_X\leq\delta_{Y'}$.
\end{corollary}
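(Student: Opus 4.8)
The plan is to obtain the claimed equivalence as a direct instance of the limit computability theorem (Theorem~\ref{thm:limit-computability}), applied to the simplest possible problem between $X$ and $Y$, namely the identity inclusion. Since $X\In Y$, the map $\id:X\to Y$ is a (total, single-valued) problem, so Theorem~\ref{thm:limit-computability} applies to it verbatim and yields the equivalence of the three statements: (1) $\id:X\to Y$ is limit computable; (2) $\id:X\to Y'$ is computable; (3) $\id:X_\J\to Y$ is computable.

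The remaining work is purely a matter of unwinding the definition of reducibility of representations. By definition, $\delta_X\leq\delta_{Y'}$ means precisely that $\id:(X,\delta_X)\to(Y,\delta_{Y'})$ is computable, i.e.\ that $\id:X\to Y'$ is computable, which is exactly statement (2). Likewise $\delta_{X_\J}\leq\delta_Y$ means that $\id:(X,\delta_{X_\J})\to(Y,\delta_Y)$ is computable, i.e.\ that $\id:X_\J\to Y$ is computable, which is exactly statement (3). Hence the equivalence (2)$\iff$(3) furnished by Theorem~\ref{thm:limit-computability} is literally the assertion $\delta_{X_\J}\leq\delta_Y\iff\delta_X\leq\delta_{Y'}$. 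As a bonus, statement (1) records that both sides are moreover equivalent to the inclusion $X\to Y$ being limit computable.

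There is essentially no obstacle beyond this bookkeeping: all the substance has already been packed into Theorem~\ref{thm:limit-computability}, and through it into the limit and jump normal form theorems (Theorems~\ref{thm:limit} and~\ref{thm:jump}), whose composition-based proofs do the real work. The only point one might wish to add, in order to justify the phrase ``Galois connection'' in full, is that the two maps $\delta\mapsto\delta_\J$ and $\delta\mapsto\delta'$ are monotone on the lattice of representations of a fixed set; this is immediate from Corollary~\ref{cor:jump-monotonicity}, since $\J^{-1}$ and $\lim$ are transparent and surjective — although monotonicity is in any case already forced by the adjunction identity just established.
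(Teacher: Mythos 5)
Your proposal is correct and matches the paper's own argument exactly: the paper also obtains the corollary by applying Theorem~\ref{thm:limit-computability} to the injection $\id:X\to Y$ and unwinding the definition of reducibility of representations. Nothing further is needed.
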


The operation $\delta\mapsto\delta^\L=(\delta_\J)'$ is the \emph{monad} of this Galois connection, which 
implies that it is a closure operator. Likewise, $\delta\mapsto\delta_\HP=(\delta')_\J$ is an interior operator~\cite[Proposition~3(4)]{EKMS93}.

\begin{proposition}[Closure and interior operators]
\label{prop:discrete-low-closure}
$\delta\mapsto\delta^\L$ and $\delta\mapsto\delta^\Delta$ are closure operators
and $\delta\mapsto\delta_\HP$ is an interior operator
on the lattice of representations of a fixed set $X$.
\end{proposition}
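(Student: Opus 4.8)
The plan is to verify each claim directly against the Galois connection established in Corollary~\ref{cor:Galois}, together with the order relations of Corollary~\ref{prop:order-operations} and the general lattice-theoretic facts about Galois connections cited from \cite{EKMS93}.

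First I would recall that a pair of monotone maps $(F,G)$ on a poset forms a Galois connection (in the sense of an adjunction $F(a)\leq b\iff a\leq G(b)$) exactly when $G\circ F$ is a closure operator and $F\circ G$ is an interior operator. By Corollary~\ref{cor:Galois}, the pair $(\delta\mapsto\delta_\J,\delta\mapsto\delta')$ is such a Galois connection on the lattice of representations of a fixed set $X$ (monotonicity of both directions is Corollary~\ref{cor:jump-monotonicity}, since $\J^{-1}$ and $\lim$ are transparent and surjective; more precisely one uses that $\delta\mapsto\delta_\J$ is monotone because $\delta\mapsto\delta^{\J^{-1}}$ is, noting $\delta_\J=\delta\circ\J=\delta^{\J^{-1}}$ up to the single-valuedness convention). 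Hence $\delta\mapsto\delta^\L=\delta\mapsto(\delta_\J)'=(\delta\mapsto\delta')\circ(\delta\mapsto\delta_\J)$ is the monad of the connection and is therefore a closure operator, and dually $\delta\mapsto\delta_\HP=\delta\mapsto(\delta')_\J=(\delta\mapsto\delta_\J)\circ(\delta\mapsto\delta')$ is the comonad and therefore an interior operator. This disposes of the first and third claims.

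For the middle claim, that $\delta\mapsto\delta^\Delta$ is a closure operator, I would argue by hand since $\lim_\Delta$ need not be transparent in the same clean way. A closure operator must be (i) monotone, (ii) inflationary ($\delta\leq\delta^\Delta$), and (iii) idempotent ($(\delta^\Delta)^\Delta\equiv\delta^\Delta$). Monotonicity follows from Proposition~\ref{prop:jump-endofunctor} applied to $T=\lim_\Delta$ once one checks $\lim_\Delta$ is transparent and surjective, or more directly: if $\id:(X,\delta_X)\to(Y,\delta_Y)$ is computable via a realizer $F$, then since $\lim_\Delta$ is transparent there is a computable $G$ with $F\circ\lim_\Delta=\lim_\Delta\circ G$, and $G$ realizes $\id:(X,\delta_X^\Delta)\to(Y,\delta_Y^\Delta)$; alternatively monotonicity is already recorded as a special case of Corollary~\ref{cor:jump-monotonicity}. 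The inflationary property $\delta\leq\delta^\Delta$ is the third reduction of Corollary~\ref{prop:order-operations} (it holds because $\lim_\Delta$ has a computable right inverse, namely $p\mapsto\langle p,p,p,\dots\rangle$). For idempotency, the nontrivial direction is $(\delta^\Delta)^\Delta\leq\delta^\Delta$, i.e.\ $\id:(X,(\delta_X^\Delta)^\Delta)\to(X,\delta_X^\Delta)$ is computable; this amounts to showing that $\lim_\Delta\circ\lim_\Delta$ factors computably through $\lim_\Delta$, or equivalently, by the discrete limit normal form theorem (Theorem~\ref{thm:discrete-limit}), that a function computable with finitely many mind changes composed with another such function is again computable with finitely many mind changes — which follows from Proposition~\ref{prop:composition-finite} and its analogue (Theorem~\ref{thm:discrete-limit} applied twice, using that $\lim_\Delta$ itself is computable with finitely many mind changes is false, but $\lim_\Delta\circ\lim_\Delta$ is still a ``discrete limit'' in the appropriate iterated sense). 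Concretely, one writes $\lim_\Delta\circ\lim_\Delta=\lim_\Delta\circ R$ for a computable $R$: given a sequence of sequences each of which is eventually constant and whose limits form an eventually constant sequence, one can compute a single eventually constant sequence converging to the same value by a diagonal argument, reindexing stages so that mind changes on the inner level and the outer level are interleaved.

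The main obstacle I anticipate is the idempotency of $\delta\mapsto\delta^\Delta$: the other two operators inherit everything formally from the Galois connection of Corollary~\ref{cor:Galois} plus \cite[Proposition~3]{EKMS93}, but $\lim_\Delta$ is not part of that adjunction, so one must supply the reindexing/diagonalization argument showing $\lim_\Delta\circ\lim_\Delta\equiv_{\mathrm{W}}\lim_\Delta$ (or at least that the former computably reduces to the latter in the strong-reduction sense needed for representations), which is the content of Theorem~\ref{thm:discrete-limit} together with the observation that iterated discrete limits collapse. I would therefore structure the proof as: (1) invoke the Galois connection and \cite{EKMS93} for $\delta^\L$ and $\delta_\HP$; (2) for $\delta^\Delta$, verify monotone + inflationary from Corollaries~\ref{cor:jump-monotonicity} and \ref{prop:order-operations}, then prove idempotency via Theorem~\ref{thm:discrete-limit} and a short diagonalization; and note throughout that all these maps are well-defined on the lattice because they do not change the underlying set $X$.
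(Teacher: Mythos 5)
Your proposal is correct and follows essentially the paper's own route: for $\delta\mapsto\delta^\L$ and $\delta\mapsto\delta_\HP$ the paper likewise just identifies these as the monad $(\delta_\J)'$ and comonad $(\delta')_\J$ of the Galois connection of Corollary~\ref{cor:Galois} and cites \cite[Proposition~3(4)]{EKMS93}, while the claim about $\delta\mapsto\delta^\Delta$ is left as an easy direct verification, which your argument (monotone, inflationary, plus idempotency via the computable diagonal map sending $\langle p_0,p_1,\dots\rangle$ to the sequence whose $k$--th term is the $k$--th component of $p_k$, so that $\lim_\Delta\circ\lim_\Delta=\lim_\Delta\circ R$) correctly supplies. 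Two harmless slips worth fixing: $\delta_\J=\delta\circ\J^{-1}$, not $\delta\circ\J$; and $\lim_\Delta$ \emph{is} computable with finitely many mind changes (apply Theorem~\ref{thm:discrete-limit} with $F=\id$), so, contrary to your parenthetical, idempotency could alternatively be obtained by noting that the composition of two finite-mind-change computations is again of that kind.
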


The fact that $\delta\mapsto\delta^\Delta$ is also a closure operator can easily be proved directly.
Another consequence of Corollary~\ref{cor:Galois} which can also easily be proved directly,
is that $\delta\mapsto\delta_\J$ preserves suprema and $\delta\mapsto\delta'$ preserves infima~\cite[Proposition~3(8)]{EKMS93}.
We recall for represented spaces $(X,\delta_X)$ and $(Y,\delta_Y)$ the \emph{meet} or \emph{infimum} $\delta_X\wedge\delta_Y$, 
which is a representation of $X\cap Y$, can be defined by $(\delta_X\wedge\delta_Y)\langle p,q\rangle=z:\iff\delta_X(p)=\delta_Y(q)=z$.
If $X=Y$, then $\delta_X\wedge\delta_Y$ is actually known to the the infimum in the lattice of representations of $X$~\cite[Lemma~3.3.8]{Wei00}.
Hence we obtain the following.

\begin{proposition}[Infimum and jumps]
\label{prop:infima}
Let $(X,\delta_X)$ and $(Y,\delta_Y)$ be represented spaces. Then $(\delta_X\wedge\delta_Y)'\equiv\delta_X'\wedge\delta_Y'$.
\end{proposition}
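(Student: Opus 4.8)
The plan is to establish the equivalence $(\delta_X\wedge\delta_Y)'\equiv\delta_X'\wedge\delta_Y'$ by proving both reductions directly, using only the definitions of the jump $\delta\mapsto\delta'=\delta\circ\lim$ and of the infimum $(\delta_X\wedge\delta_Y)\langle p,q\rangle=z:\iff\delta_X(p)=\delta_Y(q)=z$, together with the continuity (indeed computability, via the jump normal form, Theorem~\ref{thm:jump}) of the tupling functions. Note that both representations on display are representations of $X\cap Y$, so the identity $\id:X\cap Y\to X\cap Y$ is the map whose computability in each direction we need to verify.

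For the reduction $(\delta_X\wedge\delta_Y)'\leq\delta_X'\wedge\delta_Y'$: a name of $z\in X\cap Y$ with respect to $(\delta_X\wedge\delta_Y)'$ is some $r$ with $(\delta_X\wedge\delta_Y)(\lim r)=z$; writing $\lim r=\langle p,q\rangle$ we have $\delta_X(p)=\delta_Y(q)=z$. Since the tupling and untupling maps are computable and limit computable, by the limit normal form theorem (Theorem~\ref{thm:limit}) — or more directly by the jump normal form theorem (Theorem~\ref{thm:jump}) — there is a computable $F$ with $F\circ\J=\pr_0\circ\lim$ and similarly $G$ with $G\circ\J=\pr_1\circ\lim$, using that the component projections commute with $\lim$ on convergent sequences, exactly as in the proof of Theorem~\ref{thm:jumps-products-exponentials}. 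Concretely, because $\lim$ is continuous with respect to tupling, $\pr_0(\lim r)=\lim(\text{first components of }r)$ and $\pr_1(\lim r)=\lim(\text{second components of }r)$, so from $r$ we can computably extract names $r_0,r_1$ with $\lim r_0=p$ and $\lim r_1=q$. Then $r_0$ is a $\delta_X'$--name of $z$ and $r_1$ is a $\delta_Y'$--name of $z$, so $\langle r_0,r_1\rangle$ is a $(\delta_X'\wedge\delta_Y')$--name of $z$, computably obtained from $r$. For the converse reduction $\delta_X'\wedge\delta_Y'\leq(\delta_X\wedge\delta_Y)'$: a $(\delta_X'\wedge\delta_Y')$--name of $z$ is $\langle r_0,r_1\rangle$ with $\delta_X(\lim r_0)=\delta_Y(\lim r_1)=z$; interleaving $r_0$ and $r_1$ componentwise into a single convergent sequence $r$ with $\lim r=\langle\lim r_0,\lim r_1\rangle$ is computable (again because tupling commutes with limits), and this $r$ is a $(\delta_X\wedge\delta_Y)'$--name of $z$.

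I would expect this proof to be essentially routine once the right bookkeeping is set up, the only real content being the observation — already exploited in Theorem~\ref{thm:jumps-products-exponentials}(1) — that the tupling function $\langle\cdot,\cdot\rangle$ and its inverse are simultaneously computable and limit computable, so that they lift to computable isomorphisms on the jumped spaces and in particular interact correctly with the infimum. The mild subtlety to be careful about is the domain conditions: one must check that a pair $\langle p,q\rangle$ in the range of $\lim$ decomposes into $p,q$ each in the range of $\lim$ and conversely, and that the convergence is with respect to the discrete topology on each coordinate so that the componentwise limit genuinely exists; this is immediate from the definition of the product topology used for $\lim$ on $\IN^\IN$ via tupling. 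Thus the main (and only) obstacle is simply citing or re-deriving the fact that $(X\times Y)'\equiv X'\times Y'$ at the level of realizers and transporting it across the definition of $\wedge$, after which both reductions follow by composing the relevant computable realizers; one could also phrase the whole argument as an instance of $\delta\mapsto\delta_\J$ preserving suprema dualized via the Galois connection (Corollary~\ref{cor:Galois}), but the direct verification is shorter.
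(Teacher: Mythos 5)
Your proof is correct and matches the paper's: the paper obtains the proposition abstractly from the Galois connection of Corollary~\ref{cor:Galois} (upper adjoints preserve infima) and, alternatively, from the fact that $\delta\mapsto\delta'$ commutes with products via the identity $\delta_X\wedge\delta_Y=\Delta_{X\cap Y}^{-1}\circ\delta_{X\times Y}$, and your realizer-level argument is precisely the unpacking of that second route, which the paper itself notes ``can also easily be proved directly''. Both of your reductions, including the domain bookkeeping for $\lim$ and the componentwise extraction/interleaving of names, are sound.
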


Analogously, $\delta\mapsto\delta'$ commutes with products (see Theorem~\ref{thm:jumps-products-exponentials}),
which also implies Proposition~\ref{prop:infima} since $\delta_{X}\wedge\delta_{Y}=\Delta_{X\cap Y}^{-1}\circ\delta_{X\times Y}$,
where $\Delta_{X\cap Y}$ denotes the \emph{diagonal} $\Delta_{X\cap Y}:X\cap Y\to(X\cap Y)\times(X\cap Y),x\mapsto(x,x)$.
Likewise, $\delta\mapsto\delta_\J$ commutes with coproducts and suprema, but we do not formulate these results here.

\section{Limit computability on metric spaces}
\label{sec:metric}

In this section we want to transfer some of our results to computable metric spaces.
We recall that $(X,d,\alpha)$ is called a \emph{computable metric space}, if 
$(X,d)$ is a metric space with metric $d:X\times X\to\IR$ and $\alpha:\IN\to X$ is a sequence
that is dense in $X$ such that $d\circ(\alpha\times\alpha):\IN^2\to\IR$ is a computable sequence of real numbers.
Each computable metric space is equipped with its \emph{Cauchy representation} $\delta_X:\In\IN^\IN\to X$ 
that is defined by $\delta_X(p):=\lim_{n\to\infty}\alpha p(n)$ with $\dom(\delta_X):=\{p\in\IN^\IN:(\forall k)(\forall n\geq k)\;d(\alpha p(n),\alpha p(k))<2^{-k}$
and $(\alpha p(n))_{n\in\IN}$ converges$\}$. The real numbers $\IR$ are also equipped with a Cauchy representation
that is induced by a standard numbering $\alpha$ of all rational numbers. 
Besides the usual Cauchy representation there is also the so-called \emph{naive Cauchy representation} $\delta_X^\mathrm{n}$
that is defined exactly as $\delta_X$ but with $\dom(\delta_X^\mathrm{n}):=\{p\in\IN^\IN:(\alpha p(n))_{n\in\IN}$ converges$\}$.

Given a computable metric space $(X,d,\alpha)$ we define the \emph{open ball} $B_{\langle c,r\rangle}:=B(\alpha(c),\overline{r}):=\{x\in X:d(x,\alpha(c))<\overline{r}\}$
for every $c,r\in\IN$,
where $\overline{\langle i,j,k\rangle}:=\frac{i-j}{k+1}$ denotes the rational number encoded by $i,j,k\in\IN$.
By $W_i:=\{n\in\IN:$ Turing machine $i$ halts on input $n\}$ we denote the usual numbering of c.e.\ subsets of $\IN$,
and by $U_i:=\bigcup_{n\in W_i}B_n$ we denote a numbering of all \emph{c.e.\ open} subsets of $X$.
We can consider the set $\OO(X)$ of open subsets as a represented space with representation
$\delta_{\OO(X)}(p)=\bigcup_{i\in\IN}B_{p(i)}$. The c.e.\ open subsets are exactly the computable points in $\OO(X)$
with this representation.

We can now generalize the limit map and the Turing jump as defined in Example~\ref{example:limit-computable}
to arbitrary computable metric spaces.

\begin{definition}[Limit and Turing jump]
\label{def:limit-jump}
Let $X$ be a computable metric space. We define:
\begin{enumerate}
\item The \emph{limit map} of $X$ by
$\lim\nolimits_X:\In X^\IN\to X,(x_n)_{n\in\IN}\mapsto\lim_{n\to\infty}x_n$.
\item The \emph{Turing jump} $\J_X:X\to\IN^\IN$ of $X$ by
\[\J_X(x)(i):=\left\{\begin{array}{ll}
1 & \mbox{if $x\in U_i$}\\
0 & \mbox{otherwise}
\end{array}\right.
\]
for all $x\in X$ and $i\in\IN$.
\end{enumerate}
\end{definition}

It is easy to see that both functions defined here are always limit computable.

\begin{proposition}
\label{prop:metric-lim-jump}
$\lim_X$ and $\J_X$ are limit computable for every computable metric space $X$.
\end{proposition}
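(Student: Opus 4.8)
The plan is to invoke Definition~\ref{def:computable-realizer}: it suffices to exhibit a limit computable realizer in each case, and by the limit normal form theorem (Theorem~\ref{thm:limit}) this reduces to describing a computable $G:\In\IN^\IN\to\IN^\IN$ whose composite $\lim\circ\,G$ realizes the map in question, or, equivalently, to describing a suitable limit machine directly. Both constructions will use the standing fact that for a computable metric space the restriction $d\circ(\alpha\times\alpha)$ of the metric to the dense sequence is a computable double sequence of reals, so that from a Cauchy name $p$ of a point $x$ and an index $c$ the real number $d(x,\alpha(c))$ is computable uniformly in $p$ and $c$; in particular relations of the form ``$d(x,\alpha(c))<\overline r$'' are semidecidable uniformly.

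For $\J_X$ I would argue as follows. Recall $U_i=\bigcup_{n\in W_i}B_n$. Let $G$ send a Cauchy name $p$ of $x$ to $\langle q_0,q_1,q_2,\dots\rangle$, where $q_s(i):=1$ if, within $s$ steps of a fixed dovetailing that enumerates each $W_i$ and runs the semidecision procedures for ``$x\in B_n$'' above, some $n\in W_i$ with $x\in B_n$ has been witnessed, and $q_s(i):=0$ otherwise. Then $G$ is computable, and since $x\in U_i$ holds exactly when such a witness eventually appears, $\lim_{s\to\infty}q_s(i)=\J_X(x)(i)$ for every $i$; hence $\lim\circ\,G$ realizes $\J_X$. (Note that $G$ revises each output coordinate at most once, from $0$ to $1$.) I expect this part to be entirely routine.

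The map $\lim_X$ is the one needing care, and I would factor it through the naive Cauchy representation $\delta_X^{\mathrm n}$ recalled above. First, the assignment $(x_n)_{n\in\IN}\mapsto\lim_{n\to\infty}x_n$, viewed as a problem $X^\IN\to(X,\delta_X^{\mathrm n})$, is computable: from a name $\langle p_0,p_1,p_2,\dots\rangle$ of a convergent sequence $(x_n)$ with limit $x$, output the sequence whose $n$-th entry is $p_n(n)$; since $d(\alpha(p_n(n)),x_n)\leq 2^{-n}$ and $x_n\to x$ this is a $\delta_X^{\mathrm n}$-name of $x$. Second, $\id:(X,\delta_X^{\mathrm n})\to(X,\delta_X)$ is limit computable, which I would establish by an ``anchor'' limit machine: on input $r$ with $\alpha(r(n))\to x$, maintain for each output position $k$ an anchor index $N$ (initially $0$), output $r(N)$, dovetail the semidecisions of ``$d(\alpha(r(n)),\alpha(r(m)))>2^{-k-2}$'' over pairs $n,m\geq N$, and reset $N:=\max(n,m)+1$ (re-outputting $r(N)$) whenever such an inequality is witnessed. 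Using that $(\alpha(r(n)))_{n\in\IN}$ is Cauchy --- so that some $N_0$ has $d(\alpha(r(n)),\alpha(r(m)))\leq 2^{-k-3}$ for all $n,m\geq N_0$ --- one checks that the anchor is bumped only finitely often and stabilizes at some $N^*$ with $d(\alpha(r(n)),\alpha(r(m)))\leq 2^{-k-2}$ for all $n,m\geq N^*$, whence $d(\alpha(r(N^*)),x)\leq 2^{-k-2}$; the stable outputs $q(k):=r(N^*)$ then form an admissible Cauchy name of $x$, since $d(\alpha(q(k)),\alpha(q(j)))\leq 2^{-k-2}+2^{-j-2}<2^{-k}$ for $j\geq k$ and $\alpha(q(k))\to x$. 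Composing these two steps and invoking Proposition~\ref{prop:composition-limit} then gives that $\lim_X$ is limit computable. The main obstacle is precisely this second step: since the metric inequalities are only semidecidable, the anchor search must be set up so that the Cauchy property still forces stabilization, and one must verify that the stabilized outputs genuinely satisfy the inter-distance conditions of the Cauchy representation; everything else is routine bookkeeping.
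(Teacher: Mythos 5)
Your proof is correct, and the $\J_X$ half is essentially the paper's argument (default output $0$, revised once to $1$ when membership in $U_i$ is witnessed). For $\lim_X$ you take a genuinely different computational route. The paper first builds, from the input name $p=\langle p_0,p_1,\dots\rangle$, the c.e.\ predicate $(\exists n>i)\,d(\delta_X(p_n),\delta_X(p_i))>2^{-k-2}$, decides it with the oracle $\J(p)$, runs a \emph{terminating} search for an index $i_k$ where it fails, and then concludes via the jump normal form theorem (Theorem~\ref{thm:jump}) that the resulting $G$ with $\lim_X\delta_{X^\IN}=\delta_X\circ G\circ\J$ witnesses limit computability. You instead factor through the naive Cauchy representation --- the diagonal step is exactly the ``diagonal sequence'' the paper later alludes to in Corollary~\ref{cor:naive} --- and replace the oracle-assisted search by a guess-and-revise anchor on a limit machine, closing with Proposition~\ref{prop:composition-limit}. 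The underlying estimate is the same in both proofs (find a tail of the approximating sequence with oscillation at most $2^{-k-2}$, then the triangle inequality gives a valid Cauchy name), but the mechanisms differ: the paper buys a clean, non-revising algorithm at the price of invoking the jump normal form, while your construction is self-contained at the machine level and requires the extra stabilization argument (each bump strictly increases the anchor, and the Cauchy property caps the number of bumps), which you supply correctly. One small point worth noting explicitly: your composite machine is a genuine limit machine rather than a finite-mind-change one, since although each output position is revised finitely often, the total number of revisions over all positions need not be bounded; this is consistent with what is being proved and causes no problem.
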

\begin{proof}
Let $\delta_X$ be the Cauchy representation of the computable metric space $(X,d,\alpha)$.
Given a $p=\langle p_0,p_1,p_2,...\rangle\in\IN^\IN$ with a sequence $(p_n)_{n\in\IN}$ of names of points $x_n:=\delta_X(p_n)$ such that
$x:=\lim_{n\to\infty}x_n$ exists, we need to devise a limit computation that yields a $q\in\dom(\delta_X)$
with $\delta_X(q)=x$. For every $i,k\in\IN$ the property
\[(\exists n>i)\;d(\delta_X(p_n),\delta_X(p_i))>2^{-k-2}\]
is c.e.\ in $p$ and $k,i$. Hence, with the help of $\J(p)$ we can decide this property,
and hence for each $k\in\IN$ we can systematically search for a $i=i_k\in\IN$ such that it fails. Such 
an $i_k$ must exists since $x_n$ converges to $x$. We can assume that the sequence $(i_k)_{k\in\IN}$ is strictly
monotone increasing.
For each $k\in\IN$ we let $q(k):=p_{i_k}(k+2)$. Then for $n>k$
\begin{eqnarray*}
d(\alpha q(n),\alpha q(k))&\leq& d(\alpha p_{i_n}(n+2),\delta_X(p_{i_n}))+d(\delta_X(p_{i_n}),\delta_X(p_{i_k}))+d(\delta_X(p_{i_k}),\alpha p_{i_k}(k+2))\\
&\leq& 2^{-n-2}+2^{-k-2}+2^{-k-2}<2^{-k}
\end{eqnarray*}
and $\delta_X(q)=\lim_{k\to\infty}\alpha p_{i_k}(k+2)=\lim_{k\to\infty}x_k=x$.
By Theorem~\ref{thm:jump} it follows that $\lim_X$ is limit computable.

$\J_X$ can easily be computed on a limit Turing machine. Given a name of an input $x\in X$ one produces for each
output position $\J_X(x)(i)$ the default value $0$ and simultaneously one tries to verify $x\in U_i$. As soon 
as $x\in U_i$ can be confirmed, the $i$--th output value has to be changed to $1$, which happens at most once
for each position $i$.
\end{proof}

This allows us to transfer the limit normal form theorem (Theorem~\ref{thm:limit}) to computable metric spaces.
For $f,s:\In X\mto Y$ we write $s\prefix f$ if $s(x)\In f(x)$ for all $x\in\dom(f)$ and $\dom(f)\In\dom(s)$.
In this situation we call $s$ a \emph{solution} or \emph{selector} of $f$.

\begin{theorem}[Limit normal form]
\label{thm:metric-limit-normal-form}
Let $X$ be a represented space, $Y$ a computable metric space and
$f:\In X\mto Y$ a problem. Then the following are equivalent:
\begin{enumerate}
\item $f:\In X\mto Y$ is limit computable,
\item $\lim_Y\circ g\prefix f$ for some computable $g:\In X\mto Y^\IN$.
\end{enumerate}
\end{theorem}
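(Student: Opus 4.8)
The plan is to reduce the statement to the already-established realizer-level limit normal form (Theorem~\ref{thm:limit}) by passing through realizers and the Cauchy representation of $Y$. For the direction (2)$\TO$(1), suppose $g:\In X\mto Y^\IN$ is computable with $\lim_Y\circ g\prefix f$. Then $\lim_Y\circ g:\In X\mto Y$ is limit computable: $g$ is computable and $\lim_Y$ is limit computable by Proposition~\ref{prop:metric-lim-jump}, so the composition is limit computable by (the problem-level version of) Proposition~\ref{prop:composition-limit}. Since limit computability is closed under passing to selectors — a realizer of $\lim_Y\circ g$ restricted to $\dom(f\delta_X)$ realizes $f$, using $\dom(f)\In\dom(\lim_Y\circ g)$ and $(\lim_Y\circ g)(x)\In f(x)$ — it follows that $f:\In X\mto Y$ is limit computable.

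For the converse (1)$\TO$(2), let $F:\In\IN^\IN\to\IN^\IN$ be a limit computable realizer of $f$, so $\delta_Y F(p)\in f\delta_X(p)$ for all $p\in\dom(f\delta_X)$. By the limit normal form theorem (Theorem~\ref{thm:limit}) there is a computable $F_0:\In\IN^\IN\to\IN^\IN$ with $F=\lim\circ\,F_0$, where $\lim$ is the limit map on Baire space with the discrete topology in the sense of Example~\ref{example:limit-computable}(2). Writing $F_0(p)=\langle q_0,q_1,q_2,\dots\rangle$, the sequence $(q_n)_n$ is eventually constant with value $F(p)$. The idea is to read each $q_n$ as (an approximation to) a name of a point of $Y$ and thereby obtain a computable $g$ realizer producing a sequence in $Y^\IN$ whose $\lim_Y$ is $\delta_Y F(p)$. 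The obstacle here — and the main technical point — is that an arbitrary $q_n$ need not lie in $\dom(\delta_Y)$, since $\delta_Y$ is the genuine Cauchy representation with its fast-convergence side condition; only the eventual value $F(p)$ is guaranteed to be a valid Cauchy name. I would handle this by composing $F_0$ with a computable ``repair'' map $\rho:\IN^\IN\to\IN^\IN$ that turns every string into a legal Cauchy name while fixing legal ones: e.g. $\rho(q)(k):=q(n_k)$ where $n_k$ is the least $n\ge k$ such that the first $k{+}1$ entries of $q$ at positions $k,\dots,n$ are pairwise $2^{-k}$-close in the metric (so $\rho$ extracts a fast-converging subsequence, defaulting to $q(k)$ repeated if no such window is found within a bounded search — any total repair heuristic works since $q\in\dom(\delta_Y)$ already has all such windows). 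For the eventually-constant sequence of valid names this repair is eventually the identity, so $\lim_\Delta(\rho\circ F_0'(p))$, where $F_0'$ applies $\rho$ componentwise, still converges to $F(p)\in\dom(\delta_Y)$; thus $g$ defined by the realizer $p\mapsto\langle\rho q_0,\rho q_1,\dots\rangle$ is computable as a map $X\mto Y^\IN$ and satisfies $\lim_Y\circ g(x)\ni\delta_Y F(p)\in f(x)$, i.e. $\lim_Y\circ g\prefix f$ after restricting $g$ to $\dom(f)$.

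I expect the verification that the repaired sequence still $\delta_X$-realizes the intended behaviour — i.e. that componentwise repair commutes with the eventual-stabilization of $F_0$ and does not disturb the value of $\lim_Y$ — to be the only place requiring care; everything else is bookkeeping with realizers that the paper has already set up in Definition~\ref{def:computable-realizer} and in the proof of Theorem~\ref{thm:limit}. An alternative, perhaps cleaner, route avoiding explicit repair is to invoke the represented-space limit computability theorem (Theorem~\ref{thm:limit-computability}): $f:\In X\mto Y$ limit computable is equivalent to $f:\In X\mto Y'$ computable, and $Y'$ has representation $\delta_Y\circ\lim$; unfolding this equivalence directly yields a computable $g$ into $Y^\IN$ with $\lim_Y\circ g\prefix f$ by definition of $\lim_Y$ on the Cauchy representation, sidestepping the domain issue since the $\delta_{Y'}$ side condition is exactly convergence-to-a-valid-name. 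I would present the argument via Theorem~\ref{thm:limit-computability} for brevity, falling back on the explicit repair construction only if a self-contained proof is wanted.
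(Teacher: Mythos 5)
Your direction (2)$\TO$(1) and your overall plan for (1)$\TO$(2) --- apply the Baire-space limit normal form (Theorem~\ref{thm:limit}) to a realizer and then ``repair'' each component $q_i$ of $F_0(p)=\langle q_0,q_1,\dots\rangle$ into a legal Cauchy name --- coincide with the paper's strategy, and you correctly identify the central obstacle. But your justification that the repaired sequence still converges in $Y$ rests on a false premise. You assert that ``the sequence $(q_n)_n$ is eventually constant with value $F(p)$''. The map $\lim$ in Theorem~\ref{thm:limit} is the limit with respect to the Baire-space (product) topology, not the discrete topology on $\IN^\IN$; the discrete version is $\lim_\Delta$, and $F=\lim_\Delta\circ F_0$ would say that $F$ is computable with finitely many mind changes (Theorem~\ref{thm:discrete-limit}), a strictly stronger property. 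All you actually know is that for every $m$ there is an $N$ with $q_i|_m=F(p)|_m$ for $i\geq N$; no tail of the sequence need consist of valid names, so the claim that ``for the eventually-constant sequence of valid names this repair is eventually the identity'' has nothing to stand on. What is needed instead --- and what the paper's construction delivers --- is a \emph{quantitative} property of the repair: if $q_i$ agrees on a prefix of length $k$ with some valid name $q\in\dom(\delta_Y)$, then the repaired name $r_i$ satisfies $d(\delta_Y(r_i),\delta_Y(q))\leq 2^{-k+1}$. Combined with Baire convergence of $(q_i)$ to $F(p)$, this estimate is what forces $\delta_Y(r_i)\to\delta_Y F(p)$ in the metric of $Y$. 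Your $\rho$ is not shown to have any such modulus, and as specified it is doubtful on two further counts: the ``windows'' selected for different $k$ are not chained together, so $\rho(q)$ is not obviously a fast Cauchy sequence; and ``the least $n$ such that the entries are pairwise $2^{-k}$-close'' is not a computable search, since closeness of computable reals is only semi-decidable --- this is why the paper races the two c.e.\ conditions ``prefix looks valid up to $k$'' and ``prefix violates the Cauchy bound at $k$'' and acts on whichever is recognized first.

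Your proposed shortcut via Theorem~\ref{thm:limit-computability} does not close this gap either. That $f:\In X\mto Y'$ is computable gives a realizer $G$ with $\lim G(p)\in\dom(\delta_Y)$, but the domain condition of $\delta_{Y'}=\delta_Y\circ\lim$ constrains only the \emph{limit} of the name sequence, not its terms: the individual $q_i$ still need not lie in $\dom(\delta_Y)$, so $G$ is not thereby a realizer of a map into $Y^\IN$, and the same repair --- with the same quantitative estimate --- is still required.
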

\begin{proof}
That (2) implies (1) follows since $\lim_Y$ is limit computable by Proposition~\ref{prop:metric-lim-jump}.
We need to prove that (1) implies (2). Let $f:\In X\mto Y$ be limit computable.
We use the Cauchy representation $\delta_Y$ of $Y$.
Then $f$ has a limit computable realizer $F:\In\IN^\IN\to\IN^\IN$.
By the limit normal form theorem (Theorem~\ref{thm:limit}) there exists a computable $G:\In\IN^\IN\to\IN^\IN$
such that $F=\lim\circ G$.  Now we define a function $H:\In\IN^\IN\to\IN^\IN$
by $H(p)=\langle r_0,r_1,r_2,...\rangle$, provided that $G(p)=\langle q_0,q_1,q_2,...\rangle$, where each $r_i$
is determined as follows. We choose $r_1:=r_0:=(q_0(0),q_0(0),...)$ and if $i>1$ then
for each $k\in\{2,...,i\}$ we check, which of the following two conditions is recognized first:
\begin{enumerate}
\item $(\forall j\in\{1,...,k-1\})\;d(\alpha q_i(k),\alpha q_i(j))<2^{-j+1}$,
\item $(\exists j\in\{1,...,k-1\})\;d(\alpha q_i(k),\alpha q_i(j))>2^{-j}$.
\end{enumerate}
Depending on which condition is recognized first, we choose 
\begin{enumerate}
\item $r_i:=(q_i(1),q_i(2),...,q_i(i),q_i(i),....)$, if for all $k\leq i$ the first condition is recognized first,
\item $r_i:=(q_i(1),q_i(2),...,q_i(k-1),q_i(k-1),...)$, if $k\leq i$ is minimal such that the second condition is recognized first.
\end{enumerate}
We note that $r_i\in\dom(\delta_Y)$ for all $i$ and if $q_i$ has a prefix of length $k\leq i$ that is a valid prefix
of a name in $q\in\dom(\delta_Y)$, then $d(\delta_Y(r_i),\delta_Y(q))\leq 2^{-k+1}$.
Altogether, $H$ is computable, and the definition ensures that  $\range(H)\In\dom(\delta_Y^\IN)$
and $\lim_Y\circ\delta_Y^\IN\circ H=\delta_Y\circ\lim\circ G=\delta_Y\circ F$.
This proves that $H$ is a realizer of a computable multi-valued  function $g:\In X\mto Y^\IN$ with $\lim_Y\circ g\prefix f$.
\end{proof}

For completeness we formulate the result also for single-valued $f$.
We note that the problem $g$ still needs to be multi-valued in general.

\begin{corollary}[Limit normal form]
\label{cor:metric-limit-normal-form}
Let $X$ be a represented space, $Y$ a computable metric space and
$f:\In X\to Y$ a map. Then the following are equivalent:
\begin{enumerate}
\item $f:\In X\to Y$ is limit computable,
\item $f=\lim_Y\circ g$ for some computable $g:\In X\mto Y^\IN$.
\end{enumerate}
\end{corollary}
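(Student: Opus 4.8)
The plan is to read this corollary off Theorem~\ref{thm:metric-limit-normal-form}, whose two conditions are exactly the multi-valued analogues of~(1) and~(2) stated here; the only work is to translate between the selector relation $\lim_Y\circ g\prefix f$ appearing there and the genuine equality $f=\lim_Y\circ g$ demanded here, which is possible precisely because $f$ is single-valued.

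For the implication (2)$\TO$(1) I would argue as follows: if $f=\lim_Y\circ g$ for a computable $g:\In X\mto Y^\IN$, then trivially $\lim_Y\circ g\prefix f$ (equality of partial multifunctions entails the selector relation, with matching domains), so the implication (2)$\TO$(1) of Theorem~\ref{thm:metric-limit-normal-form} yields that $f$ is limit computable. Equivalently, $\lim_Y$ is limit computable by Proposition~\ref{prop:metric-lim-jump}, and precomposing a limit computable problem with a computable one stays limit computable since realizers compose, cf.\ Proposition~\ref{prop:composition-limit}. For (1)$\TO$(2) I would apply the implication (1)$\TO$(2) of Theorem~\ref{thm:metric-limit-normal-form} to obtain a computable $g_0:\In X\mto Y^\IN$ with $\lim_Y\circ g_0\prefix f$. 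Unwinding $\prefix$ gives $\dom(f)\In\dom(\lim_Y\circ g_0)$ together with $(\lim_Y\circ g_0)(x)\In f(x)$ for every $x\in\dom(f)$; since $(\lim_Y\circ g_0)(x)$ is nonempty for such $x$ and $f$ is single-valued, in fact $(\lim_Y\circ g_0)(x)=f(x)$ there. Thus $\lim_Y\circ g_0$ extends $f$ but may have a strictly larger domain. I would then set $g:=g_0|_{\dom(f)}$, the restriction of $g_0$ to inputs lying in $\dom(f)$. Restriction preserves computability, and by the definition of composition of multifunctions one checks that $\dom(\lim_Y\circ g)=\dom(f)$ with $\lim_Y\circ g=f$ on it, using that $g_0(x)\In\dom(\lim_Y)$ for all $x\in\dom(f)$ because $\dom(f)\In\dom(\lim_Y\circ g_0)$. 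Hence $f=\lim_Y\circ g$ with $g$ computable, as required.

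The only obstacle is this small piece of domain bookkeeping: because $\dom(\lim_Y\circ g_0)$ need not coincide with $\dom(f)$, one must restrict $g_0$ and then verify that the composition's domain collapses back to $\dom(f)$ while computability survives the restriction — all routine once the definitions are spelled out. It is also worth recording, as noted after the statement, that $g$ genuinely has to be allowed to be multi-valued even though $f$ is single-valued: there is no reason a single-valued computable selector of $g_0$ should exist, which is why the corollary is phrased with a multi-valued $g$.
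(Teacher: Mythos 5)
Your proposal is correct and follows the paper's intended route exactly: the paper states this corollary without a separate proof, as an immediate consequence of Theorem~\ref{thm:metric-limit-normal-form}, and your derivation is precisely that consequence with the routine conversion from $\lim_Y\circ g_0\prefix f$ to $f=\lim_Y\circ g$ via restriction to $\dom(f)$ spelled out. The domain bookkeeping and the closing remark that $g$ must remain multi-valued both match the paper's reading of the statement.
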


As a corollary we obtain the following conclusion on pointwise limits of computable sequences of functions.

\begin{corollary}[Pointwise limit]
\label{cor:pointwise-metric}
Let $X$ be a represented space and $Y$ a computable metric space.
Let $(g_n)_{n\in\IN}$ be a computable sequence in $\CC(X,Y)$, i.e., a computable
sequence of computable functions $g_n:X\to Y$. Then $f:X\to Y$, defined by 
$f(x):=\lim_{n\to\infty}g_n(x)$ for all $x\in X$, is limit computable.
\end{corollary}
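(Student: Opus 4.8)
The plan is to reduce directly to the metric limit normal form, Corollary~\ref{cor:metric-limit-normal-form}: it suffices to exhibit a computable single-valued map $g:X\to Y^\IN$ with $f=\lim_Y\circ g$, the obvious candidate being $g(x)=(g_n(x))_{n\in\IN}$.

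First I would unwind the hypothesis. That $(g_n)_{n\in\IN}$ is a computable sequence in $\CC(X,Y)$ means, via the identification $\CC(\IN,\CC(X,Y))\cong\CC(X,Y)^\IN$ of Fact~\ref{fact:function-space}(3), that it is a computable point of $\CC(\IN,\CC(X,Y))$; uncurrying (Fact~\ref{fact:function-space}(2)) this is the same as a computable function $h:\IN\times X\to Y$ with $h(n,x)=g_n(x)$. Swapping the two arguments (a trivial computable isomorphism) and currying again yields a computable map $X\to\CC(\IN,Y)$, and composing with the computable isomorphism $\CC(\IN,Y)\to Y^\IN$ of Fact~\ref{fact:function-space}(3) produces the desired computable $g:X\to Y^\IN$, $g(x)=(g_n(x))_{n\in\IN}$.

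Next I would verify the normal-form equation. For every $x\in X$ the sequence $(g_n(x))_{n\in\IN}$ converges in $Y$, by the assumption that $f(x)=\lim_{n\to\infty}g_n(x)$ exists; hence $g(x)\in\dom(\lim_Y)$, so that $\dom(\lim_Y\circ g)=X=\dom(f)$ and $\lim_Y(g(x))=\lim_{n\to\infty}g_n(x)=f(x)$ for all $x\in X$. Thus $f=\lim_Y\circ g$ with $g$ computable, and Corollary~\ref{cor:metric-limit-normal-form} gives that $f$ is limit computable.

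There is no serious obstacle here; the only point requiring minor care is the bookkeeping with the function-space isomorphisms of Fact~\ref{fact:function-space} to see that a computable sequence of computable functions really does yield a computable map $x\mapsto(g_n(x))_{n\in\IN}$ into $Y^\IN$. Everything else is immediate, since $f$ is total and the pointwise limit exists by hypothesis. (One could also bypass the normal form and describe a limit machine for $f$ directly, simulating $g_n$ with increasing precision and revising the output as $n$ grows, but invoking Corollary~\ref{cor:metric-limit-normal-form} is cleaner.)
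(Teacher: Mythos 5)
Your proof is correct and follows exactly the route the paper intends: the paper presents this as an immediate consequence of the limit normal form (Theorem~\ref{thm:metric-limit-normal-form} / Corollary~\ref{cor:metric-limit-normal-form}), with the computable map $x\mapsto(g_n(x))_{n\in\IN}$ obtained from evaluation and currying via Fact~\ref{fact:function-space}, just as you do. The only remark is that the paper's normal form is stated with a multi-valued $g:\In X\mto Y^\IN$, but your single-valued $g$ is a special case, so nothing is lost.
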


We note that in general one cannot expect that all limit computable $f$ can be obtained as the pointwise
limit of a computable sequence of single-valued $g$ (as in the case of Baire space, see Corollary~\ref{cor:pointwise}).
We obtain, however, a characterization of limit computable points in computable metric spaces.

\begin{corollary}[Limit computable points]
\label{cor:metric-limit-computable-points}
Let $X$ be a computable metric space. Then $x\in X$ is limit computable (in the sense that it has a 
limit computable name) if and only if there is a computable sequence $(x_n)_{n\in\IN}$ such that 
$x=\lim_{n\to\infty}x_n$.
\end{corollary}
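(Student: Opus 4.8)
The plan is to derive this purely from the metric limit normal form (Theorem~\ref{thm:metric-limit-normal-form}, equivalently Corollary~\ref{cor:metric-limit-normal-form}) and from the limit computability of $\lim_X$ (Proposition~\ref{prop:metric-lim-jump}), by translating the statement about points into the corresponding statement about problems. Throughout I use that a computable sequence $(x_n)_{n\in\IN}$ in $X$ is the same thing as a computable point of the represented space $X^\IN$ (with the representation $\delta_{X^\IN}$), and that such a sequence lies in $\dom(\lim_X)$ precisely when it converges in $(X,d)$.

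For the implication from right to left, I would assume that $(x_n)_{n\in\IN}$ is a computable sequence in $X$ with $x=\lim_{n\to\infty}x_n$. By Proposition~\ref{prop:metric-lim-jump} the map $\lim_X:\In X^\IN\to X$ is limit computable, so it has a limit computable realizer $F:\In\IN^\IN\to\IN^\IN$. Feeding $F$ a computable $\delta_{X^\IN}$--name $q$ of $(x_n)_{n\in\IN}$, and using that $F$ realizes $\lim_X$ and $(x_n)_{n\in\IN}\in\dom(\lim_X)$, we get $q\in\dom(F)$ and that $F(q)$ is a $\delta_X$--name of $\lim_X((x_n)_{n\in\IN})=x$. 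Since limit computable maps send computable inputs to limit computable outputs, $F(q)$ is limit computable, hence $x$ has a limit computable name, i.e.\ $x$ is limit computable.

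For the converse, I would assume that $x$ is limit computable, say $p\in\dom(\delta_X)$ is a limit computable name of $x$, so $p=\lim_{i\to\infty}p_i$ for some computable sequence $(p_i)_{i\in\IN}$ in $\IN^\IN$. Then the constant map $c:\IN^\IN\to X,\ r\mapsto x$ is limit computable: it is realized by the constant map $r\mapsto p$, which equals $\lim$ precomposed with the computable map $r\mapsto\langle p_0,p_1,p_2,\dots\rangle$ and is therefore limit computable by Theorem~\ref{thm:limit}. Applying Corollary~\ref{cor:metric-limit-normal-form} to $c$ (with the represented space $\IN^\IN$ and the given computable metric space $X$ as codomain) yields a computable $g:\In\IN^\IN\mto X^\IN$ with $c=\lim_X\circ g$. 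Picking a computable realizer $G$ of $g$ and evaluating it at a computable name of some fixed $r\in\IN^\IN$ produces a computable point $(x_n)_{n\in\IN}\in g(r)\In\dom(\lim_X)$ of $X^\IN$, that is, a computable sequence in $X$, with $\lim_{n\to\infty}x_n=\lim_X((x_n)_{n\in\IN})=c(r)=x$.

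I expect the substance of the argument to be entirely contained in the cited results; the only point requiring care is the bookkeeping around representations: identifying a computable sequence in $X$ with a computable point of $X^\IN$, checking that the relevant points lie in the domains of $\lim_X$ and of the chosen realizers, and, in the converse direction, remembering that the problem $g$ produced by the normal form is multi-valued, so one must pass to a realizer (equivalently, a selector) in order to extract an honest sequence rather than a set of sequences.
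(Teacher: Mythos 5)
Your proof is correct and follows exactly the route the paper intends: the paper states this as a corollary of the metric limit normal form (Corollary~\ref{cor:metric-limit-normal-form}) applied to the constant map with value $x$, together with the limit computability of $\lim_X$ (Proposition~\ref{prop:metric-lim-jump}) and the fact that limit computable maps send computable inputs to limit computable outputs. The bookkeeping you flag (identifying computable sequences with computable points of $X^\IN$, and passing from the multi-valued $g$ to a realizer) is handled correctly.
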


We recall that a computable metric space $(X,d)$ is called \emph{computably compact} if $\{X\}$ is c.e.\ open in $\OO(X)$.
We note that for computably compact metric spaces $(X,d)$ the function space $\CC(X)=\CC(X,\IR)$ with the metric induced by the \emph{uniform norm} $||f||:=\sup_{x\in X}|f(x)|$ 
is a computable metric space again. As a dense subset one can use, for instance, the set of rational polynomials
in the \emph{distance functions} $d_x:X\to\IR,y\mapsto d(x,y)$.
This space $\CC(X)$ is then computably isomorphic to the space obtained by using the function space representation $\delta_{\CC(X,\IR)}$,  see \cite{Bra98b}.
If we apply Corollaries~\ref{cor:limit-function-points} and \ref{cor:metric-limit-computable-points} to the space $\CC(X)$, then we obtain the following result.

\begin{corollary}[Uniform limit]
\label{cor:uniform-metric}
Let $X$ be a computably compact computable metric space.
Then $f:X\to\IR$ is computable relative to the halting problem if and only if there is a computable sequence $(f_n)_{n\in\IN}$
of computable functions $f_n:X\to\IR$ that converges uniformly to $f$, i.e., such that $\lim_{n\to\infty}||f-f_n||=0$.
\end{corollary}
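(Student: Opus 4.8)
The plan is to derive this directly from Corollaries~\ref{cor:limit-function-points} and \ref{cor:metric-limit-computable-points}, applied to the function space $\CC(X)=\CC(X,\IR)$. This is a genuine computable metric space precisely because $X$ is computably compact (as recalled just before the statement), and its Cauchy representation is computably isomorphic to the function space representation $\delta_{\CC(X,\IR)}$ by~\cite{Bra98b}; both facts are used below.

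First I would note that in either direction $f$ is a total continuous function, hence a legitimate point of $\CC(X)$: if $f$ is computable relative to $0'$ then it has a continuous realizer and is therefore continuous; conversely, a uniform limit of the (continuous) computable functions $f_n$ is continuous. So it always makes sense to speak of $f$ as a point of $\CC(X)$.

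The next step is to chain three equivalences. By Corollary~\ref{cor:limit-function-points} with $Y=\IR$, the function $f:X\to\IR$ is computable relative to the halting problem if and only if $f$ is limit computable as a point of $\CC(X,\IR)$; and since limit computability of a point is invariant under composition with a computable isomorphism of representations (a consequence of Proposition~\ref{prop:composition-limit} together with the fact that limit computable points are exactly the values of constant limit computable maps), this is the same as $f$ being limit computable as a point of the metric space $\CC(X)$. Now Corollary~\ref{cor:metric-limit-computable-points}, applied to the computable metric space $\CC(X)$, says that $f\in\CC(X)$ is limit computable if and only if there is a computable sequence $(f_n)_{n\in\IN}$ in $\CC(X)$ with $f=\lim_{n\to\infty}f_n$ in $\CC(X)$; unwinding the uniform metric, this convergence is exactly $\lim_{n\to\infty}\|f-f_n\|=0$. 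Finally, a computable sequence in the metric space $\CC(X)$ is, via the computable isomorphism $\CC(X)\cong\CC(X,\IR)$ (which transfers to countable powers) together with Fact~\ref{fact:function-space}(3), the same data as a computable point of $\CC(\IN,\CC(X,\IR))$, i.e.\ a computable sequence of computable functions $f_n:X\to\IR$. Composing the three equivalences yields the claim.

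The main obstacle is not mathematical depth — the substance is entirely inherited from the two corollaries being invoked — but the representation bookkeeping: one must check carefully that ``$f$ limit computable as a point of the metric space $\CC(X)$'' genuinely coincides with ``$f$ computable relative to $0'$ as a function $X\to\IR$'' (this is where the equivalence between the Cauchy representation of $\CC(X)$ and the function space representation $\delta_{\CC(X,\IR)}$ is needed), and that ``computable sequence in the metric space $\CC(X)$'' unpacks correctly to ``computable sequence of computable real-valued functions on $X$''. Once these identifications are in place, the proof is just the displayed chain of equivalences.
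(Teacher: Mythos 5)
Your proposal is correct and follows exactly the route the paper takes: the paper's entire ``proof'' is the sentence preceding the corollary, which sets up $\CC(X)$ as a computable metric space (computably isomorphic to the function space representation) and then applies Corollaries~\ref{cor:limit-function-points} and \ref{cor:metric-limit-computable-points} to it. Your version merely spells out the representation bookkeeping that the paper leaves implicit, and does so correctly.
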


A characterization similar to Corollary~\ref{cor:metric-limit-normal-form} was obtained
for effectively $\SO{2}$--measurable functions in \cite[Theorem~9.5]{Bra05} (we refer the reader
to \cite{Bra05} for the definition of measurability, as we are not going to use these notions here any further). 
This yields the following conclusion.

\begin{corollary}[Borel measurability]
\label{cor:Borel}
Let $X,Y$ be computable metric spaces. Then 
a function $f:X\to Y$ is limit computable if and only if it is effectively $\SO{2}$--measurable.
\end{corollary}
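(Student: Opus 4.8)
The plan is to derive the statement by transitivity from two factorization results of the same shape. On one side, Corollary~\ref{cor:metric-limit-normal-form} characterizes limit computability: $f:\In X\to Y$ is limit computable if and only if $f=\lim_Y\circ\,g$ for some computable multi-valued $g:\In X\mto Y^\IN$. On the other side, the characterization of effectively $\SO{2}$--measurable functions obtained in \cite[Theorem~9.5]{Bra05} is exactly of this form: $f:X\to Y$ is effectively $\SO{2}$--measurable if and only if it factors as $\lim_Y\circ\,g$ for a suitable computable $g:\In X\mto Y^\IN$. The fact that $g$ must be allowed to be multi-valued, rather than single-valued, matters here (cf.\ the remark after Corollary~\ref{cor:pointwise-metric}). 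First I would state both characterizations in this common normal form, and then simply chain the two equivalences: limit computability $\Longleftrightarrow$ factorization through $\lim_Y$ by a computable problem $\Longleftrightarrow$ effective $\SO{2}$--measurability.

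Second, I would carry out the bookkeeping needed to line the two statements up precisely. This means checking that the ingredients used on both sides agree: that the represented space $Y^\IN$ is built from the same (Cauchy) representation of $Y$, that $\lim_Y$ is the metric limit map of Definition~\ref{def:limit-jump}, and that the domain and selector conventions (the relation $\prefix$ and the restriction to $\dom(f)$) match. Since both accounts rest on the Cauchy representation of $Y$ and on the same metric limit, this is routine; any superficial discrepancy in the formulation of \cite[Theorem~9.5]{Bra05} is absorbed by using that $\lim_Y$ is limit computable (Proposition~\ref{prop:metric-lim-jump}) and transparent, together with closure of computable problems under composition.

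The genuine work is not in the present argument but is packaged into the cited result: the equivalence between the \emph{definition} of effective $\SO{2}$--measurability -- phrased via effective preimages of open sets at the second level of the Borel hierarchy -- and the existence of a factorization $f=\lim_Y\circ\,g$ through the metric limit. That equivalence requires effectivizing the finite Borel hierarchy on computable metric spaces and is precisely \cite[Theorem~9.5]{Bra05}, which I would invoke as a black box. The one point I would explicitly double-check is that the convergence encoded by $\lim_Y$ -- convergence in the metric of $Y$ -- is the one that corresponds to the $\SO{2}$ level, and not, say, to the discrete-limit level $\lim_\Delta$ of Theorem~\ref{thm:discrete-limit}; once that is confirmed, the corollary is immediate.
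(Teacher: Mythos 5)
Your proposal matches the paper's argument exactly: the paper derives the corollary by combining the limit normal form (Corollary~\ref{cor:metric-limit-normal-form}) with the analogous factorization characterization of effectively $\SO{2}$--measurable functions from \cite[Theorem~9.5]{Bra05}, invoked as a black box. The additional bookkeeping you describe (matching representations, the metric limit $\lim_Y$ versus $\lim_\Delta$, and the multi-valuedness of $g$) is sensible due diligence but does not change the route.
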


Corollary~\ref{cor:metric-limit-normal-form} can also be applied to the naive Cauchy representation
since $\delta_X^\mathrm{n}=\lim_X\circ\widehat{\alpha}$, where $\widehat{\alpha}:\IN^\IN\to X^\IN,p\mapsto (\alpha p(0),\alpha p(1),\alpha p(2),...)$
is the parallelization of $\alpha$ that is computable with respect to the Cauchy representation $\delta_X$.
We obtain the following result that generalizes corresponding results for the spaces $X=\IR$
and $X=\CC([0,1]^n)$ by Ziegler~\cite[Propositions~2.5 and 2.7]{Zie07}.

\begin{corollary}[Naive Cauchy representation as jump]
\label{cor:naive}
$\delta_X^\mathrm{n}\equiv\lim_X\circ\delta_X^\IN\equiv\delta_X'$ for every computable metric space $X$.
\end{corollary}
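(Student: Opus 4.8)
The plan is to establish the cycle of reductions
$\delta_X'\leq\lim_X\circ\delta_X^\IN\leq\delta_X^\mathrm{n}\leq\delta_X'$,
which delivers all three asserted equivalences at once. Throughout, $\delta_X$ denotes the Cauchy representation of $(X,d,\alpha)$, and I use freely that $\delta_X^\mathrm{n}=\lim_X\circ\widehat\alpha$ and that $\widehat\alpha\colon\IN^\IN\to X^\IN$ is computable with respect to $\delta_X$, say via a computable $A$ with $\delta_X^\IN\circ A=\widehat\alpha$.

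First I would dispatch $\delta_X^\mathrm{n}\equiv\lim_X\circ\delta_X^\IN$ by two elementary observations. For ``$\leq$'', the map $A$ already works: if $r\in\dom(\delta_X^\mathrm{n})$ then $(\alpha r(m))_m$ converges, so $A(r)$ is a $\delta_X^\IN$--name of $\widehat\alpha(r)\in\dom(\lim_X)$ with $(\lim_X\circ\delta_X^\IN)(A(r))=\lim_X\widehat\alpha(r)=\delta_X^\mathrm{n}(r)$. For ``$\geq$'', given a $\delta_X^\IN$--name $\langle p_0,p_1,\dots\rangle$ of a sequence $(x_n)_n$ with $x_n\to x$, let $r(n):=p_n(n)$; the Cauchy condition on $p_n$ gives $d(\alpha r(n),x_n)\leq 2^{-n}$, hence $\alpha r(n)\to x$, so $r$ is a $\delta_X^\mathrm{n}$--name of $x$, computably in $\langle p_i\rangle$.

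Next, $\delta_X^\mathrm{n}\leq\delta_X'$ follows from the abstract framework: $\lim_X$ is limit computable by Proposition~\ref{prop:metric-lim-jump}, so by Theorem~\ref{thm:limit-computability} the problem $\lim_X\colon X^\IN\to X'$ is computable; precomposing a realizer of it with the computable realizer $A$ of $\widehat\alpha$ and using $\delta_X^\mathrm{n}=\lim_X\circ\widehat\alpha$ yields a computable reduction witnessing $\delta_X^\mathrm{n}\leq\delta_X'$. (Equivalently, the same realizer of $\lim_X\colon X^\IN\to X'$ already witnesses $\lim_X\circ\delta_X^\IN\leq\delta_X'$.)

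The remaining reduction $\delta_X'\leq\lim_X\circ\delta_X^\IN$ carries the real content, and I expect it to be the main obstacle. Given a $\delta_X'$--name $\langle q_0,q_1,\dots\rangle$, so that $q:=\lim_i q_i$ exists and is a Cauchy name of $x:=\delta_X(q)$, I would at stage $m$ compute $\ell_m\leq m$ as the largest $\ell$ for which one can verify, within $m$ steps of the semidecision procedure available because $d\circ(\alpha\times\alpha)$ is a computable sequence of reals, that $d(\alpha q_m(n),\alpha q_m(k))<2^{-k}$ holds for all $k\leq n\leq\ell$ (with $\ell_m:=0$ if nothing is verified), and then output $p_m:=\widehat{q_m(\ell_m)}$, the constant Cauchy name of $\alpha(q_m(\ell_m))$. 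This is computable in $\langle q_i\rangle$. To see that $\langle p_0,p_1,\dots\rangle$ is a $\lim_X\circ\delta_X^\IN$--name of $x$, fix $K$. Since $q_i\to q$ coordinatewise, $q_m(j)=q(j)$ for all $j\leq K$ and all large $m$; since $q$ is a Cauchy name, the finitely many strict inequalities $d(\alpha q(n),\alpha q(k))<2^{-k}$ with $k\leq n\leq K$ are verified within some fixed number of steps, so $\ell_m\geq K$ for all large $m$. For such $m$, the verified Cauchy condition up to $\ell_m$ gives $d(\alpha q_m(\ell_m),\alpha q_m(K))\leq 2^{-K}$, while $\alpha q_m(K)=\alpha q(K)$ and $d(\alpha q(K),x)\leq 2^{-K}$, so $d(\delta_X(p_m),x)\leq 2^{-K+1}$. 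As $K$ is arbitrary, $\delta_X(p_m)\to x$, so $\langle p_0,p_1,\dots\rangle$ names $x$ as required. These reductions, together with $\delta_X^\mathrm{n}\leq\lim_X\circ\delta_X^\IN$ from the second paragraph, close the cycle and yield all three equivalences.
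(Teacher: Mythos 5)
Your proof is correct, and its overall shape---closing a cycle of reductions among the three representations---matches the paper's. Your diagonal argument for $\lim_X\circ\,\delta_X^\IN\leq\delta_X^\mathrm{n}$ and your appeal to Proposition~\ref{prop:metric-lim-jump} plus Theorem~\ref{thm:limit-computability} for $\delta_X^\mathrm{n}\leq\delta_X'$ are exactly the paper's steps. The one place you genuinely diverge is the reduction $\delta_X'\leq\lim_X\circ\,\delta_X^\IN$: the paper obtains it in one line by applying Corollary~\ref{cor:metric-limit-normal-form} to the limit computable identity $\id\colon X'\to X$, whereas you rebuild the witness by hand. Your construction is sound: $\widehat{q_m(\ell_m)}$ is a legitimate Cauchy name of $\alpha(q_m(\ell_m))$; the finitely many strict inequalities $d(\alpha q(n),\alpha q(k))<2^{-k}$ with $k\leq n\leq K$ are eventually verified and $q_m$ eventually agrees with $q$ on the first $K+1$ positions, so $\ell_m\geq K$ for large $m$; and the $2^{-K+1}$ bound follows from the triangle inequality together with $d(\alpha q(K),x)\leq 2^{-K}$. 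In effect you have re-proved, for the single-valued realizer $\lim$, the special case of Theorem~\ref{thm:metric-limit-normal-form} that the corollary needs (the ``verify Cauchy prefixes, then truncate to a constant tail'' trick is the same one used in that theorem's proof). This costs you some bookkeeping but makes the argument self-contained; the paper's citation buys brevity at the price of leaning on the general multi-valued normal form machinery.
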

\begin{proof}
With the help of Theorem~\ref{thm:limit-computability} and Corollary~\ref{cor:metric-limit-normal-form} 
one obtains $\delta_X^\mathrm{n}\leq\delta_X'$.
The reduction $\delta_X'\leq\lim_X\circ\delta_X^\IN$ follows from the same theorems applied to 
the identity $\id:X'\to X$.
The remaining reduction $\lim_X\circ\delta_X^\IN\leq\lim_X\circ\widehat{\alpha}$ is easy to prove directly,
by choosing a diagonal sequence.
\end{proof}

Together with Theorem~\ref{thm:computability-halting-problem} we obtain a proof 
of the following result from~\cite[Theorem~22]{BH02}.

\begin{corollary}[B.\ and Hertling 2002]
Let $X,Y$ be computable metric spaces.
The functions $f:\In X\to Y$ that are continuous with respect to the naive Cauchy representations
on $X$ and $Y$ are the usual continuous functions.
\end{corollary}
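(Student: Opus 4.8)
The plan is to read this off directly from the identification of the naive Cauchy representation with the jump of the Cauchy representation. By Corollary~\ref{cor:naive} we have $\delta_X^{\mathrm n}\equiv\delta_X'$ and $\delta_Y^{\mathrm n}\equiv\delta_Y'$. The property of having a continuous realizer is invariant under replacing a representation by an equivalent one: given a continuous realizer of $f$ for one pair of representations, one pre- and post-composes it with the computable, hence continuous, translation maps witnessing $\delta_X^{\mathrm n}\equiv\delta_X'$ and $\delta_Y^{\mathrm n}\equiv\delta_Y'$. So the first step is to observe that $f:\In X\to Y$ is continuous with respect to $(\delta_X^{\mathrm n},\delta_Y^{\mathrm n})$ if and only if $f:\In X'\to Y'$ is continuous.

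The second step is to show that this last condition is exactly ordinary topological continuity of $f$. For total $f$ this is precisely the ``in particular'' clause of Theorem~\ref{thm:jumps-products-exponentials}, which states that $\CC(X',Y')$ is the set of topologically continuous functions $X\to Y$. For partial $f$ one argues via the relativised form of Theorem~\ref{thm:computability-halting-problem}, namely Theorem~\ref{thm:computability-oracle}: $f:\In X'\to Y'$ is computable relative to $q$ if and only if $f:\In X\to Y$ is computable relative to $q'$. Since a problem between represented spaces is continuous exactly when it is computable relative to some oracle, and since the set $\{q':q\in\IN^\IN\}$ of Turing jumps is cofinal under $\leqT$ (every $r$ satisfies $r\leqT r'$), taking the union over all $q$ gives that $f:\In X'\to Y'$ is continuous if and only if $f:\In X\to Y$ is continuous with respect to the Cauchy representations $\delta_X,\delta_Y$. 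Because the Cauchy representation of a computable metric space is admissible for the metric topology, the latter is equivalent to topological continuity of $f$ by the standard correspondence between admissible representations and topological continuity~\cite{Wei00}. Chaining the equivalences yields the statement.

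The one genuinely non-routine point — everything else being quotation — is the passage from the jumped representations back to the metric topology. One must resist the temptation to read continuity of a realizer for $X'\to Y'$ as topological continuity between the intrinsic topologies of $X'$ and $Y'$: those topologies are in general strictly finer than the topologies of $X$ and $Y$. It is exactly the uniform limit-control machinery underlying Theorem~\ref{thm:computability-halting-problem} (ultimately Theorem~\ref{thm:limit-control}) that collapses this apparent extra generality and makes continuity of $f:\In X'\to Y'$ coincide with topological continuity of $f$ on the original spaces. Once this is granted, the passage to partial functions is routine, since all of the invoked equivalences are already formulated for partial $f$ and the arguments are uniform in the domain.
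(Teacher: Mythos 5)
Your proof is correct and follows essentially the same route the paper intends: it reduces the statement to Corollary~\ref{cor:naive} (identifying the naive Cauchy representation with the jump $\delta_X'$) and then to the relativized/topological content of Theorem~\ref{thm:computability-halting-problem}, i.e.\ Theorem~\ref{thm:computability-oracle}, closing with admissibility of the Cauchy representation. The paper gives only this one-line indication, and your write-up supplies exactly the missing details (the cofinality of jumps under $\leqT$ and the treatment of partial $f$).
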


Now we study the jump $\J_X$. We first prove that $\J_X^{-1}$ is computable in general.

\begin{proposition}[Inverse jump]
\label{prop:inverse-jump-metric}
$\J_X^{-1}:\In\IN^\IN\to X$ is computable for every computable metric space $X$.
\end{proposition}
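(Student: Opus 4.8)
The plan is to exploit the fact that a name $p=\J_X(x)$ records, for \emph{every} c.e.\ open set $U_i$, whether $x\in U_i$. Since each single basic ball $B_n$ is itself a c.e.\ open set, this in particular lets us decide membership of $x$ in every basic ball, and from those balls we can read off arbitrarily good rational approximations of $x$ and assemble a Cauchy name.

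Concretely, first I would fix a computable $s:\IN\to\IN$ with $W_{s(n)}=\{n\}$, so that $U_{s(n)}=B_n$ for all $n\in\IN$; then for every $x\in X$ and $n\in\IN$ we have $\J_X(x)(s(n))=1\iff x\in B_n$, so membership of $x$ in basic balls is decidable from $p=\J_X(x)$, uniformly in $n$. Next I would invoke density: given $k\in\IN$, density of $\alpha$ yields a $c$ with $d(x,\alpha(c))<2^{-k-2}$, and since every positive rational occurs as some $\overline{\langle i,j,m\rangle}=\frac{i-j}{m+1}$, there is an $r$ with $d(x,\alpha(c))<\overline r<2^{-k-1}$, so that $x\in B_{\langle c,r\rangle}$ with radius below $2^{-k-1}$. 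A machine reading $p=\J_X(x)$ can therefore, for each $k$, search through all $n=\langle c,r\rangle$, test the decidable condition $\overline r<2^{-k-1}$ together with $p(s(n))=1$, and output the centre index $c$ of the first pair it finds; the search terminates by the observation just made. Setting $q(k):=c$ gives $d(\alpha q(k),x)<2^{-k-1}$, hence $d(\alpha q(n),\alpha q(k))\le d(\alpha q(n),x)+d(x,\alpha q(k))<2^{-n-1}+2^{-k-1}\le 2^{-k}$ for all $n\ge k$; thus $q\in\dom(\delta_X)$ and $\delta_X(q)=x$, so this machine realizes $\J_X^{-1}$. (That $\J_X^{-1}$ is single-valued is clear as well: metric spaces are $T_0$, so two distinct points are separated by some basic ball $B_n$, hence distinguished by $U_{s(n)}$, so $\J_X$ is injective.)

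I do not anticipate a real obstacle here; the only points requiring a little care are picking the approximation rate so that the output string is a bona fide Cauchy name, and checking that the rational radii available in the chosen coding are dense enough to describe arbitrarily small admissible balls around $x$.
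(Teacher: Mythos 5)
Your proof is correct and follows essentially the same route as the paper: both arguments observe that each basic ball $B_n$ occurs as some c.e.\ open set $U_{s(n)}$, so that $\J_X(x)$ decides membership of $x$ in every basic ball, and then convert this ball-membership information into a Cauchy name. The only difference is that the paper delegates the second step to the known reduction of the standard representation to the Cauchy representation, whereas you carry out that reduction explicitly by searching for balls of radius below $2^{-k-1}$ containing $x$ and outputting their centres.
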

\begin{proof}
We recall that a standard representation of $X$ can be defined by 
$\delta(p)=x:\iff\range(p)=\{n\in\IN:x\in B_n\}$. It is easy to see that $\delta\leq\delta_X$,
where $\delta_X$ denotes the Cauchy representation of $X$~\cite[Theorem~8.1.4]{Wei00}.
Hence, it suffices to compute $\J_X^{-1}$ with respect to $\delta$.
Since $\J_X(x)$ is the characteristic function of $\{n\in\IN:x\in U_n\}$, it is straightforward
to generate a list of all $n\in\IN$ with $x\in B_n$, given $\J_X(x)$.
\end{proof}

Using this result we obtain the following form of the jump normal form theorem.
Surprisingly we need a variant of Schr\"oder's representation~\cite{Sch95} 
of computable metric spaces that has compact fibers, which can be computed 
with respect to positive information. 
Such a representation was studied in~\cite{Bra05}.

\begin{theorem}[Jump normal form]
\label{thm:jump-metric}
Let $X,Y$ be computable metric spaces and
$f:\In X\mto Y$ a problem. Then the following are equivalent:
\begin{enumerate}
\item $f:\In X\mto Y$ is limit computable,
\item $g\circ\J_X\prefix f$ for some computable $g:\In \IN^\IN\to Y$.
\end{enumerate}
\end{theorem}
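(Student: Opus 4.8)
The plan is to reduce the metric-space statement to the Baire-space jump normal form theorem (Theorem~\ref{thm:jump}) via suitable realizers, exactly as Theorem~\ref{thm:metric-limit-normal-form} reduced to Theorem~\ref{thm:limit}. The implication (2)$\TO$(1) is the easy half: by Proposition~\ref{prop:metric-lim-jump} the map $\J_X$ is limit computable, so if $g\circ\J_X\prefix f$ for some computable $g:\In\IN^\IN\to Y$, then composing a computable realizer of $g$ with a limit computable realizer of $\J_X$ and using Proposition~\ref{prop:composition-limit} (together with $\J_X^{-1}$ being computable, Proposition~\ref{prop:inverse-jump-metric}, to make the fibers behave) shows $f$ is limit computable.

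For the hard direction (1)$\TO$(2), suppose $f:\In X\mto Y$ is limit computable. First I would fix representations carefully: on the input side I would \emph{not} use the Cauchy representation of $X$ but rather the compact-fibered variant of Schr\"oder's representation that is mentioned just before the statement (and studied in~\cite{Bra05}), since the point of having compact fibers with positive information is precisely that it is equivalent to the ``positive-information'' representation $\delta$ with $\delta(p)=x:\iff\range(p)=\{n:x\in B_n\}$ appearing in the proof of Proposition~\ref{prop:inverse-jump-metric}, and $\J_X$ is designed so that $\J_X(x)$ encodes exactly the set $\{i:x\in U_i\}$. Thus $\J_X$ is, up to computable translation, a representation-changing map from this positive-information representation to the Cauchy representation on the image side. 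Concretely, there is a computable $e:\IN^\IN\to\IN^\IN$ such that $e$ translates a $\delta$-name (or a name in the compact-fibered representation) of $x$ into a name of $\J_X(x)$ in $\IN^\IN$; and conversely by Proposition~\ref{prop:inverse-jump-metric} there is a computable $d$ translating $\J_X(x)$ back. Then a limit computable realizer $F$ of $f$ with respect to the chosen input representation $\delta_X^-$ and the Cauchy representation $\delta_Y$ satisfies $\delta_Y F(p)\in f\delta_X^-(p)$; I would apply Theorem~\ref{thm:jump} to $F\circ d$ (or directly arrange the realizer on the $\J_X$-image side) to obtain a computable $G:\In\IN^\IN\to\IN^\IN$ with $F\circ d = G\circ\J$ on the relevant domain, and then set $g:=\delta_Y\circ G'$ for an appropriate adjustment $G'$ of $G$, so that $g\circ\J_X\prefix f$.

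I expect the main obstacle to be getting the diagram of representations to commute in the right order — in particular, ensuring that the realizer one feeds into Theorem~\ref{thm:jump} is of the form (something)$\circ\;\J$ \emph{with the Baire-space Turing jump $\J$ on Baire-space names}, and then transferring the resulting factorization back so that it factors through $\J_X$ on points. This is where the compact-fibered variant of Schr\"oder's representation earns its keep: the positive-information input representation is what makes $\J_X$ behave like the abstract Turing jump on names, so that the computable function supplied by Theorem~\ref{thm:jump} descends to a well-defined computable $g:\In\IN^\IN\to Y$ on the image of $\J_X$ (restricting $g$ to $\range(\J_X)$ if needed, which is harmless since $\J_X$ is injective by construction of the characteristic function). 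The multivaluedness of $f$ causes no trouble, as it is absorbed into the realizer relation $\prefix$ throughout, just as in Theorem~\ref{thm:metric-limit-normal-form}. Once the representation bookkeeping is set up, the remaining verification that $g\circ\J_X\prefix f$ is a routine unwinding of the defining equations $F\circ d=G\circ\J$ and $\J_X=e\circ(\text{input name})$.
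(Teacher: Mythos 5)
The easy direction (2)$\TO$(1) is fine, but there is a genuine gap in (1)$\TO$(2), and it sits exactly where you wave at ``representation bookkeeping.'' Your plan hinges on the claim that there is a computable $e$ translating a positive-information name of $x$ into $\J_X(x)$, so that $\J_X$ becomes ``up to computable translation, a representation-changing map.'' That is false: from a list of all basic balls containing $x$ one can \emph{enumerate} $\{i:x\in U_i\}$ but cannot decide it, so no computable $e$ produces the characteristic function $\J_X(x)$; only the inverse direction $\J_X^{-1}$ is computable (Proposition~\ref{prop:inverse-jump-metric}). Without such an $e$, applying Theorem~\ref{thm:jump} to a realizer does not help: if you apply it to $F\circ d$ (with $d$ realizing $\J_X^{-1}$) you obtain a factorization through $\J(\J_X(x))$, the Baire-space jump \emph{of} $\J_X(x)$, which is one jump too high; if you apply it to $F$ itself you obtain a factorization through $\J(p)$ for a \emph{name} $p$ of $x$, and this does not descend to the point $x$, since different names of the same point have incomparable jumps and none of them is computable from $\J_X(x)$. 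The whole content of the theorem is that the name-independent object $\J_X(x)$ suffices, and that is not a consequence of the Baire-space normal form by diagram chasing.

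What the paper actually does is first apply the metric limit normal form (Theorem~\ref{thm:metric-limit-normal-form}) to write $\lim_Y\circ\, h\prefix f$ with $h$ computable, and then use the compact-fibered representation for the specific purpose of showing that the set $U$ of triples $(x,n,k)$ for which \emph{some} name $p\in\delta^{-1}\{x\}$ still produces an oscillation $>2^{-k}$ beyond stage $n$ is c.e.\ open in $X$ (the existential quantifier over the compact fiber, read with positive information, preserves c.e.-openness), and that by compactness of the fiber a single stage $n$ works for \emph{all} names of $x$. Only then can $\J_X(x)$ be used: it decides membership in these c.e.\ open sets, hence yields a modulus of convergence $s(k)$, and together with a name of $x$ recovered via $\J_X^{-1}$ this gives an effectively convergent sequence in $Y$, i.e.\ the desired computable $g$ with $g\circ\J_X\prefix f$. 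You correctly sense that the compact-fibered representation ``earns its keep,'' but the reason is this quantitative compactness argument converting stabilization of the realizer into c.e.\ open subsets of $X$, not a translation between $\J_X$ and the abstract jump $\J$ on names; that translation does not exist.
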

\begin{proof}
By Proposition~\ref{prop:metric-lim-jump} it is clear that $g\circ\J_X$ is limit computable for every computable $g:\In\IN^\IN\to Y$.
Let now $f:\In X\mto Y$ be limit computable. By Theorem~\ref{thm:metric-limit-normal-form} there is a computable
$h:\In X\mto Y^\IN$ such that $\lim_Y\circ h\prefix f$. 
Let $\delta\equiv\delta_X$ be a representation of $X$ that is equivalent to the Cauchy representation $\delta_X$
and that makes $\kappa_X:X\to\KK_+(\IN^\IN),x\mapsto\delta^{-1}\{x\}$ computable, where
$\KK_+(\IN^\IN)$ denotes the space of compact subsets of $\IN^\IN$ endowed with the
representation $\delta_{\KK_+(\IN^\IN)}(p)=K:\iff \range(p)=\{n\in\IN:K\cap B_n\not=\emptyset\}$. 
Such a representation $\delta$ with compact fibers $\delta^{-1}\{x\}$ exists for every computable metric space~\cite[Lemma~6.3~(5)]{Bra05}. 
Let $F$ be a computable realizer of $h$, i.e., $\lim_Y\delta_{Y^\IN} F\prefix f\delta$. 
That $\kappa_X$ is computable implies that the set 
\[U:=\{(x,n,k)\in X\times\IN^2:(\exists p\in\delta^{-1}\{x\})(\exists i>n)\;d_Y(\delta_Y(F(p)(i)),\delta_Y(F(p)(n)))>2^{-k}\}\]
is c.e.\ open relative to $\dom(f)\times\IN^2$, and hence there is a computable function $r:\IN\to\IN$ such that
$U_{r\langle n,k\rangle}\cap\dom(f)=\{x\in X:(x,n,k)\in U\}\cap\dom(f)$.
Let $x\in\dom(f)$ and $k\in\IN$. Then for all $p\in\delta^{-1}\{x\}$ there is some $n\in\IN$ such that 
for all $i>n$ we have $d_Y(\delta_Y(F(p)(i)),\delta_Y(F(p)(n)))\leq2^{-k}$, because $(\delta_YF(p)(n))_{n\in\IN}$ is convergent.
Since $\delta^{-1}\{x\}$ is compact, the number $n\in\IN$ even exists uniformly for all $p\in\delta^{-1}\{x\}$.
That is, for every $x\in\dom(f)$ and $k\in\IN$ there is some $n\in\IN$ with $x\not\in U_{r\langle n,k\rangle}$.
With the help of $\J_X(x)(r\langle n,k\rangle)$ we can actually check this condition, and hence we can compute an $s:\IN\to\IN$ such that
for each $k\in\IN$ we obtain $x\not\in U_{r\langle s(k),k\rangle}$.
By Proposition~\ref{prop:inverse-jump-metric} we can also obtain a $p\in\IN^\IN$ with $\delta(p)=x$
from $\J_X(x)$. Hence there is a computable $g:\In\IN^\IN\to Y$ with $g\circ\J_X(x)=\lim_{k\to\infty}\delta_Y(F(p)(s(k)))=\lim_Y\delta_{Y^\IN}F(p)\prefix f(x)$ 
for all $x\in\dom(f)$, where the first limit is computable since it is effective (with speed of convergence $2^{-k}$, see the proof of Proposition~\ref{prop:metric-lim-jump}
for a detailed calculation).
\end{proof}

Since $\J_X$ is injective, we can assume that $\dom(g)$ is such that $\dom(g\circ\J_X)=\dom(f)$.
We formulate the characterization for single-valued functions as a corollary.

\begin{corollary}[Jump normal form]
\label{cor:jump-metric}
Let $X,Y$ be computable metric spaces and
$f:\In X\to Y$ a function. Then the following are equivalent:
\begin{enumerate}
\item $f:\In X\to Y$ is limit computable,
\item $f=g\circ\J_X$ for some computable $g:\In \IN^\IN\to Y$.
\end{enumerate}
\end{corollary}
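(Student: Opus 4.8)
The plan is to reduce the statement directly to the multi-valued jump normal form theorem (Theorem~\ref{thm:jump-metric}), together with the remark immediately preceding this corollary, which records that $\dom(g)$ can be arranged so that $\dom(g\circ\J_X)=\dom(f)$. Since a single-valued $f:\In X\to Y$ is in particular a problem, Theorem~\ref{thm:jump-metric} already supplies all the content; what remains is only to pass from the selector relation $g\circ\J_X\prefix f$ to a genuine equality $f=g\circ\J_X$, which is where single-valuedness of $f$ and injectivity of $\J_X$ enter.

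For the implication (1)$\Rightarrow$(2) I would start from a limit computable $f:\In X\to Y$ and apply Theorem~\ref{thm:jump-metric} to obtain a computable $g:\In\IN^\IN\to Y$ with $g\circ\J_X\prefix f$. As $g$ and $\J_X$ are single-valued, so is $g\circ\J_X$, and hence for each $x\in\dom(f)$ the inclusion $(g\circ\J_X)(x)\In\{f(x)\}$ between singletons forces $(g\circ\J_X)(x)=f(x)$; the relation $\prefix$ moreover gives $\dom(f)\In\dom(g\circ\J_X)$. Using the remark before the corollary (valid because $\J_X$ is total and injective), one may shrink $\dom(g)$ — a restriction that preserves computability of $g$ — so that $\dom(g\circ\J_X)=\dom(f)$; then $f=g\circ\J_X$ as partial functions. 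Conversely, for (2)$\Rightarrow$(1), the equality $f=g\circ\J_X$ with $g$ computable is a special case of $g\circ\J_X\prefix f$, so limit computability of $f$ is immediate from the (2)$\Rightarrow$(1) implication of Theorem~\ref{thm:jump-metric} (or directly from Proposition~\ref{prop:metric-lim-jump} and the closure of limit computable maps under precomposition with computable maps).

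I do not anticipate any real obstacle: all the substance is already contained in Theorem~\ref{thm:jump-metric}, and this corollary is essentially a bookkeeping step distinguishing single-valued from multi-valued problems. The only point requiring a line of care is the domain adjustment, which goes through precisely because $\J_X$ is injective, so that removing the unused names from $\dom(g)$ does not discard any name actually needed to match a value of $f$.
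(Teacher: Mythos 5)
Your proposal is correct and follows exactly the route the paper intends: the paper derives this corollary from Theorem~\ref{thm:jump-metric} via the remark that, since $\J_X$ is injective, $\dom(g)$ can be restricted so that $\dom(g\circ\J_X)=\dom(f)$, with single-valuedness turning the selector relation $g\circ\J_X\prefix f$ into the equality $f=g\circ\J_X$. No gaps.
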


Another consequence of Theorem~\ref{thm:jump-metric} is that every multi-valued
limit computable problem has a single-valued limit computable selector.

\begin{corollary}[Selection]
\label{cor:selection}
Let $X,Y$ be computable metric spaces. 
For every limit computable $f:\In X\mto Y$ there is a limit computable single-valued 
$g:\In X\to Y$ with $g\prefix f$.
\end{corollary}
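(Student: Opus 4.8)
The plan is to read the statement off directly from the jump normal form theorem for computable metric spaces (Theorem~\ref{thm:jump-metric}), whose proof already produces a \emph{single-valued} function on the $\IN^\IN$--side. So, given a limit computable problem $f:\In X\mto Y$, I would first invoke Theorem~\ref{thm:jump-metric} to obtain a computable single-valued $g_0:\In\IN^\IN\to Y$ with $g_0\circ\J_X\prefix f$. Then I would set $g:=g_0\circ\J_X:\In X\to Y$ and check that this $g$ is a limit computable single-valued selector of $f$.

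Three routine verifications remain. First, $g$ is single-valued, being the composition of the single-valued map $\J_X$ with the single-valued map $g_0$. Second, $g\prefix f$: by the remark immediately following Theorem~\ref{thm:jump-metric}, the injectivity of $\J_X$ (distinct points of a metric space are separated by a basic open ball, since $\alpha$ is dense) allows one to restrict $\dom(g_0)$ so that $\dom(g_0\circ\J_X)=\dom(f)$, and for every $x\in\dom(f)$ we then have $g(x)=g_0(\J_X(x))\in f(x)$. Third, $g$ is limit computable: $\J_X$ is limit computable by Proposition~\ref{prop:metric-lim-jump}, $g_0$ is computable, and composing a computable function after a limit computable function yields a limit computable function by (the realizer version of) Proposition~\ref{prop:composition-limit} --- this is precisely the easy direction already recorded in the proof of Theorem~\ref{thm:jump-metric}.

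I do not expect a genuine obstacle: essentially all the work sits in Theorem~\ref{thm:jump-metric}, whose proof already constructs a single-valued $g_0$. The only point needing a moment's care is the domain bookkeeping --- ensuring that $g$, now regarded as a partial map $X\to Y$ rather than as a partial map $\IN^\IN\to Y$, has domain exactly $\dom(f)$ --- and this is handled by the injectivity of $\J_X$ noted after Theorem~\ref{thm:jump-metric}.
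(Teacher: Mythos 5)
Your proof is correct and is exactly the argument the paper intends: the corollary is stated as a direct consequence of the jump normal form (Theorem~\ref{thm:jump-metric}), taking $g:=g_0\circ\J_X$ for the single-valued computable $g_0$ that theorem provides, with single-valuedness, limit computability via Propositions~\ref{prop:metric-lim-jump} and \ref{prop:composition-limit}, and the domain adjustment via the injectivity of $\J_X$ all as you describe. No discrepancies.
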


Now we define $1$--genericity for computable metric spaces.

\begin{definition}[$1$--genericity]
Let $X$ be a computable metric space. Then we call $x\in X$ \emph{$1$--generic} if it is a point
of continuity of $\J_X$.
\end{definition}

It is easy to see that $1$--generic points can be characterized as those points that avoid
the boundaries of all c.e.\ open sets. By $\partial U$ we denote the \emph{boundary} of $U$.

\begin{lemma}[$1$--genericity]
\label{lem:genericity}
Let $X$ be a computable metric space. Then $x\in X$ is $1$--generic if and only if 
$x\in X\setminus\bigcup_{i=0}^\infty \partial U_i$.
\end{lemma}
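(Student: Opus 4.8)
The plan is to prove both implications directly from the definition of $\J_X$ and the topology it induces on $X$. Recall that $\J_X(x)(i) = 1$ if $x \in U_i$ and $0$ otherwise, so $\J_X$ is continuous at $x$ exactly when, for each $i$, the value $\J_X(y)(i)$ is locally constant at $x$; equivalently, for each $i$ there is a neighborhood of $x$ on which the indicator of $U_i$ is constant. Since $U_i$ is open, this fails precisely when $x$ lies on the boundary $\partial U_i$: if $x \in U_i$ then already $U_i$ itself is a neighborhood on which the indicator is $1$, and if $x \notin \overline{U_i}$ then the complement of $\overline{U_i}$ is a neighborhood on which it is $0$; the only remaining case is $x \in \partial U_i = \overline{U_i}\setminus U_i$, where every neighborhood of $x$ meets both $U_i$ and its complement.

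First I would prove the ``only if'' direction (in fact its contrapositive): suppose $x \in \partial U_i$ for some $i$. Then $x \notin U_i$, so $\J_X(x)(i) = 0$. But $x \in \overline{U_i}$, so every open ball $B$ around $x$ contains some point $y \in U_i$, and for that point $\J_X(y)(i) = 1$. Hence in every neighborhood of $x$ there is a point $y$ with $\J_X(y)$ differing from $\J_X(x)$ already in coordinate $i$; so $\J_X$ is not continuous at $x$, i.e.\ $x$ is not $1$-generic. This gives $x$ $1$-generic $\Rightarrow$ $x \in X \setminus \bigcup_i \partial U_i$.

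Next I would prove the ``if'' direction: suppose $x \notin \partial U_i$ for all $i$, i.e.\ for each $i$ either $x \in U_i$ (which is open) or $x \in X \setminus \overline{U_i}$ (which is open, being the complement of a closed set). To check continuity of $\J_X$ at $x$, fix a basic open neighborhood of $\J_X(x)$ in $\IN^\IN$, which is determined by prescribing finitely many coordinates $i_1, \dots, i_m$. For each $i_j$, let $V_j := U_{i_j}$ if $x \in U_{i_j}$, and $V_j := X \setminus \overline{U_{i_j}}$ if $x \notin \overline{U_{i_j}}$; in either case $V_j$ is an open set containing $x$ on which the indicator of $U_{i_j}$ is constant and equal to $\J_X(x)(i_j)$. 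Then $V := \bigcap_{j=1}^m V_j$ is an open neighborhood of $x$, and for every $y \in V$ we have $\J_X(y)(i_j) = \J_X(x)(i_j)$ for all $j = 1, \dots, m$, so $\J_X(y)$ lies in the prescribed basic neighborhood. Hence $\J_X$ is continuous at $x$ and $x$ is $1$-generic.

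There is no real obstacle here; the argument is a routine unwinding of the definitions, the only thing to be a little careful about being the trichotomy ``$x \in U_i$ / $x \in \partial U_i$ / $x \in X \setminus \overline{U_i}$'' for an open set $U_i$, which one should state explicitly so that the finite intersection in the ``if'' direction is manifestly an open neighborhood of $x$.
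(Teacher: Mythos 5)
Your proof is correct; the paper itself gives no argument for this lemma (it is introduced with ``It is easy to see that\dots''), and your routine unwinding --- reducing continuity of $\J_X$ at $x$ to local constancy of each indicator $\chi_{U_i}$ and then to the trichotomy $x\in U_i$ / $x\in\partial U_i$ / $x\in X\setminus\overline{U_i}$ for the open set $U_i$ --- is exactly the intended argument. No gaps.
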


This characterization is equivalent to the definition of $1$--genericity as it is used in computability theory
and as it has been used in computable metric spaces before (see \cite[Corollary~2.3]{KT14}, \cite[Lemma~9.2]{BHK16} or \cite[Definition~2.1]{Hoy17}). 
By the Baire category theorem every complete metric space is comeager and since the set $\bigcup_{i=0}^\infty\partial U_i$ is meager,
it follows that the $1$--generics form a comeager $G_\delta$--set in any complete computable metric space.
In particular, in a complete computable metric space, there are many $1$--generic points.
We obtain the following characterization of $1$--genericity.

\begin{theorem}[$1$--genericity]
\label{thm:characterization-genericity-metric}
Let $X$ be a computable metric space. 
Then $x\in X$ is $1$--generic if and only if for every computable metric space $Y$, every limit computable $f:\In X\to Y$  with $x\in\dom(f)$ is continuous at $x$.
\end{theorem}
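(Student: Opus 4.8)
The plan is to mimic the proof of Corollary~\ref{cor:1-generic}: the two implications reduce to the jump normal form for computable metric spaces, using that $1$--genericity of $x$ means, by definition, continuity of $\J_X$ at $x$. Throughout I use that a computable function between computable metric spaces is continuous (on its domain, with the subspace topology), and that the metric topology of a computable metric space is the one underlying all the continuity statements; in particular Baire space $\IN^\IN$, with its standard metric and a numbering of the eventually-zero sequences, is a computable metric space whose topology is the usual product topology.

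For the ``if'' direction I would simply instantiate the hypothesis at $Y=\IN^\IN$ and $f=\J_X$. Indeed $\J_X:X\to\IN^\IN$ is total, so $x\in\dom(\J_X)$, and it is limit computable by Proposition~\ref{prop:metric-lim-jump}; hence the assumption forces $\J_X$ to be continuous at $x$, which is precisely $1$--genericity of $x$.

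For the ``only if'' direction, assume $x$ is $1$--generic, i.e.\ that $\J_X$ is continuous at $x$, and let $f:\In X\to Y$ be limit computable with $x\in\dom(f)$. Applying the jump normal form theorem in the form of Corollary~\ref{cor:jump-metric} I would obtain a computable, hence continuous, $g:\In\IN^\IN\to Y$ with $f=g\circ\J_X$; since $x\in\dom(f)$ and $\J_X$ is total, $\J_X(x)\in\dom(g)$. Continuity of $f$ at $x$ is then the routine composition argument: given an open $V\In Y$ with $f(x)=g(\J_X(x))\in V$, continuity of $g$ at $\J_X(x)$ yields an open $W\In\IN^\IN$ with $\J_X(x)\in W$ and $g(W\cap\dom(g))\In V$, and continuity of $\J_X$ at $x$ yields an open $U\In X$ with $x\in U$ and $\J_X(U)\In W$; then every $y\in U\cap\dom(f)$ satisfies $\J_X(y)\in W\cap\dom(g)$, whence $f(y)=g(\J_X(y))\in V$.

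I do not anticipate a genuine obstacle, since the hard work has already gone into the jump normal form theorem for computable metric spaces (Theorem~\ref{thm:jump-metric}), which is the only nontrivial ingredient. The one place that calls for a little attention is the bookkeeping of domains in the composition $g\circ\J_X$ — but this is harmless because $\J_X$ is total, so $\dom(g\circ\J_X)=\J_X^{-1}(\dom(g))$ and membership of $x$ in it is exactly the hypothesis $x\in\dom(f)$.
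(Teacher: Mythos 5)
Your proposal is correct and follows essentially the same route as the paper: the ``if'' direction by instantiating at $Y=\IN^\IN$, $f=\J_X$, and the ``only if'' direction by factoring $f=g\circ\J_X$ via Corollary~\ref{cor:jump-metric} and composing continuity of the computable $g$ with continuity of $\J_X$ at $x$. The paper leaves the final composition-of-continuity step implicit; your explicit bookkeeping of domains and open sets is the same argument spelled out.
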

\begin{proof}
For the ``if'' direction it is sufficient to consider $Y=\IN^\IN$ and $f=\J_X:X\to\IN^\IN$. For the ``only if'' direction
we assume that $f:\In X\to Y$ is limit computable and $x\in\dom(f)$. Then by Corollary~\ref{cor:jump-metric}
there is a computable $g:\In\IN^\IN\to X$ such that $f=g\circ\J_X$. Hence, it follows that $f$ is continuous at $x$.
\end{proof}

The ``if'' direction of this theorem can be seen as a computability theoretic version of a well-known theorem of Baire that
states that every Baire class $1$ function has a comeager $G_\delta$--set of continuity points~\cite[Theorem~24.14]{Kec95}.
We note that the ``only if'' direction of Theorem~\ref{thm:characterization-genericity-metric} 
for the special case of $X=[0,1]$ was also proved by Kuyper and Terwijn~\cite[Theorem~4.2]{KT14}.

Since $\lim_X$ is not continuous at any point for computable metric spaces $X$ with at least two points, 
it follows that no $1$--generic sequence in such a space is convergent.

\begin{corollary}[$1$--generic sequences]
Let $X$ be a computable metric space with at least two points. Then 
every sequence $(x_n)_{n\in\IN}$ that is a $1$--generic point in $X^\IN$ 
is not convergent.
\end{corollary}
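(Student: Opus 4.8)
The plan is to deduce this directly from the characterization of $1$--genericity in Theorem~\ref{thm:characterization-genericity-metric}, using the fact that the limit map of a non-trivial computable metric space is continuous at no point of its domain. First I would note that if $X$ is a computable metric space, then so is the countable power $X^\IN$ (for instance with the metric $d^\IN((x_n),(y_n)):=\sum_{n\in\IN}2^{-n}\min\{1,d(x_n,y_n)\}$ and the dense sequence obtained from $\alpha$ in the obvious way), and that its Cauchy representation is computably equivalent to the representation $\delta_{X^\IN}$ used throughout. So the notion of a $1$--generic point of $X^\IN$ is well defined, and $\lim_X:\In X^\IN\to X$ is limit computable by Proposition~\ref{prop:metric-lim-jump}.

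Next I would check that $\lim_X$ is continuous at no point of $\dom(\lim_X)$. Fix $(x_n)_{n\in\IN}\in\dom(\lim_X)$ with $x:=\lim_{n\to\infty}x_n$, and choose $y\in X$ with $y\neq x$; such a $y$ exists because $X$ has at least two points. For each $k\in\IN$ consider the sequence $s^{(k)}:=(x_0,\dots,x_{k-1},y,y,y,\dots)$. Then $s^{(k)}\in\dom(\lim_X)$ with $\lim_X(s^{(k)})=y$, and $s^{(k)}\to(x_n)_{n\in\IN}$ in $X^\IN$ as $k\to\infty$ (coordinatewise, $s^{(k)}_n=x_n$ for $k>n$). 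Since $d(y,x)>0$ is fixed while $\lim_X(s^{(k)})=y$ does not approach $\lim_X((x_n)_{n\in\IN})=x$, the map $\lim_X$ is discontinuous at $(x_n)_{n\in\IN}$.

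Now I would combine the two observations. Suppose towards a contradiction that $(x_n)_{n\in\IN}$ is a $1$--generic point of $X^\IN$ that converges. Then $(x_n)_{n\in\IN}\in\dom(\lim_X)$, so applying Theorem~\ref{thm:characterization-genericity-metric} with $X^\IN$ in the r\^ole of the domain space, $X$ in the r\^ole of $Y$, and $f=\lim_X$, we obtain that $\lim_X$ is continuous at $(x_n)_{n\in\IN}$ --- contradicting the previous paragraph. Hence no $1$--generic point of $X^\IN$ is convergent.

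The only slightly delicate point is the bookkeeping in the first paragraph: $1$--genericity and Theorem~\ref{thm:characterization-genericity-metric} are formulated for computable metric spaces, so one must make sure that $X^\IN$, equipped with a suitable product metric, really is a computable metric space whose Cauchy representation is equivalent to $\delta_{X^\IN}$. Once this standard fact is in hand, the remainder of the argument is immediate; alternatively one may simply invoke the sentence preceding the corollary, which already records that $\lim_X$ is nowhere continuous, and feed it into Theorem~\ref{thm:characterization-genericity-metric}.
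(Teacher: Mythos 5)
Your proposal is correct and follows exactly the paper's route: the paper's justification is the single sentence preceding the corollary, namely that $\lim_X$ is limit computable and nowhere continuous when $X$ has at least two points, combined with Theorem~\ref{thm:characterization-genericity-metric}. You merely spell out the details (the explicit witness sequences $s^{(k)}$ for discontinuity and the check that $X^\IN$ is a computable metric space), which the paper leaves implicit.
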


The following result generalizes Proposition~\ref{prop:jump-1-generic} to computable metric spaces.

\begin{proposition}[Jump on $1$--generics]
\label{prop:jump-1-generic-metric}
The Turing jump operator $\J_X:X\to\IN^\IN$ restricted to the set  
of $1$--generic points $x\in X$ is computable relative to the halting problem for all computable metric spaces $X$.
\end{proposition}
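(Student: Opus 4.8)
The plan is to follow the proof of Proposition~\ref{prop:jump-1-generic}, but to organize the argument around the characterization of $1$--genericity from Lemma~\ref{lem:genericity}. Recall that $\J_X(x)(i)=1$ iff $x\in U_i$, and that by Lemma~\ref{lem:genericity} a $1$--generic point $x$ lies in $X\setminus\bigcup_{j}\partial U_j$. Since every $U_i$ is open, $\overline{U_i}=U_i\cup\partial U_i$, so for a $1$--generic $x$ and each $i$ exactly one of the two mutually exclusive alternatives ``$x\in U_i$'' and ``$x\notin\overline{U_i}$'' holds, corresponding respectively to $\J_X(x)(i)=1$ and $\J_X(x)(i)=0$. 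It therefore suffices to describe, uniformly in $i$ and in a Cauchy name $p$ of $x$ and with the oracle $0'$, two semi-decision procedures that are run in parallel, one confirming ``$x\in U_i$'' and one confirming ``$x\notin\overline{U_i}$''; on a $1$--generic $x$ exactly one of them halts, and its label is written as the $i$-th bit of the output.

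First I would note that ``$x\in U_i$'' needs no oracle at all: since $U_i=\bigcup_{n\in W_i}B_n$, this is $(\exists n\in W_i)\ x\in B_n$, and ``$x\in B_n$'' is c.e.\ in $p$ and $n$ because the distance of $x$ from the centre of $B_n$ is a computable real obtained from $p$ and the centre and radius indices of $B_n$, while the radius is a fixed rational. For the complementary alternative I would use the elementary fact that $x\notin\overline{U_i}$ iff there is a positive rational $q$ with $B(x,q)\cap U_i=\emptyset$, i.e.\ with $B(x,q)\cap B_n=\emptyset$ for all $n\in W_i$. Here ``$B(x,q)\cap B_n\ne\emptyset$'' is, by density of the dense sequence $\alpha$ and continuity of $d$, equivalent to ``$(\exists j)\ d(\alpha(j),x)<q$ and $\alpha(j)\in B_n$'', hence c.e.\ in $p,q,n$; thus ``$B(x,q)\cap B_n=\emptyset$'' is $\Pi^0_1$, ``$(\forall n\in W_i)\ B(x,q)\cap B_n=\emptyset$'' is again $\Pi^0_1$ (a $\Pi^0_1$ statement guarded by the c.e.\ condition $n\in W_i$ and universally quantified over $n$), and consequently ``$x\notin\overline{U_i}$'' is $\Sigma^0_1$ relative to $0'$, which is precisely the strength the second procedure needs. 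Assembling the two procedures yields the desired realizer of $\J_X$ restricted to the $1$--generics that is computable relative to the halting problem, uniformly in $i$ and in the name of $x$.

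The main obstacle is somewhat hidden and concerns the ``positive'' alternative. The naive metric analogue of ``machine~$i$ halts on all extensions of~$w$'' in the proof of Proposition~\ref{prop:jump-1-generic} would be to search for a basic ball $B_m$ with $x\in B_m$ and $B_m\In U_i$; but the inclusion predicate ``$B_m\In U_i$'' need not be decidable relative to $0'$ --- in general it is not even arithmetical --- so it must be avoided. The key realization is that one does not need it: directly semi-deciding ``$x\in U_i$'' already suffices, because $1$--genericity guarantees that this confirmation succeeds exactly when $\J_X(x)(i)=1$. A second, smaller point that needs care is that the description of ``$x\notin\overline{U_i}$'' as $\Sigma^0_1$ relative to $0'$ must be phrased through emptiness of the ball intersection $B(x,q)\cap B_n$, rather than through a distance lower bound such as $d(x,\mathrm{centre}(B_n))\ge q+\mathrm{radius}(B_n)$: a general computable metric space has no geodesics, so the latter estimate can fail for every $q>0$ even when $x\notin\overline{U_i}$, whereas the ball-disjointness condition is exactly equivalent and of the right complexity.
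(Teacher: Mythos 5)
Your overall architecture --- the dichotomy from Lemma~\ref{lem:genericity}, two semi-decision procedures run in parallel of which exactly one halts on a $1$--generic input, and the observation that the positive alternative ``$x\in U_i$'' is plainly c.e.\ in a name $p$ of $x$ and needs no inclusion predicate --- is sound and matches the paper's proof in spirit. The gap is in the complexity analysis of the negative alternative. You assert that ``$B(x,q)\cap B_n=\emptyset$'' is $\pO{1}$ and conclude that ``$x\notin\overline{U_i}$'' is $\sO{1}$ relative to $0'$. But ``$B(x,q)\cap B_n\neq\emptyset$'' is c.e.\ \emph{in the name $p$} (confirming $d(\alpha(j),x)<q$ requires reading $p$), so its negation is $\pO{1}$ relative to $p$, and $\exists q\,(\forall n\in W_i)\,B(x,q)\cap B_n=\emptyset$ is therefore $\sO{2}$ relative to $p$, i.e.\ c.e.\ relative to $p'$ --- not c.e.\ relative to $\langle 0',p\rangle$. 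These oracles differ: a name $p$ of a $1$--generic point $x$ need not itself be generalized low, so $p'$ can lie strictly above $\langle 0',p\rangle$. Worse, computability of $\J_X(x)$ from $p'$ is trivial and holds for \emph{all} $x$, by Proposition~\ref{prop:metric-lim-jump} together with the jump normal form (Theorem~\ref{thm:jump}); the entire content of the proposition is that $\langle 0',p\rangle$ suffices on $1$--generics, which your second procedure as written does not deliver.

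The repair is small and is exactly what the paper arranges with its formal disjointness relation: witness the separation of $x$ from $U_i$ by a \emph{basic} ball rather than by a ball centred at $x$. Indeed $x\notin\overline{U_i}$ if and only if there is some $j$ with $x\in B_j$ and $B_j\cap B_n=\emptyset$ for all $n\in W_i$ (by density of $\alpha$ one can squeeze a basic ball between $x$ and $B(x,q)$). In this formulation the only $p$--dependent clause is ``$x\in B_j$'', which is c.e.\ in $p$ and occurs positively, while ``$(\forall n\in W_i)\,B_j\cap B_n=\emptyset$'' is a purely arithmetical $\pO{1}$ statement about indices (non-emptiness of $B_j\cap B_n$ is witnessed by some $\alpha(k)$, hence c.e.\ with no reference to $x$) and is therefore decidable from $0'$; the resulting search over $j$ is c.e.\ relative to $\langle 0',p\rangle$, as required. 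Your remark about the absence of geodesics is well taken --- it is precisely why one tests disjointness of balls rather than a distance estimate between $x$ and the centres --- but it does not rescue the version with $B(x,q)$, whose defect is the occurrence of $x$ inside the universally quantified clause rather than the choice of disjointness test.
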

\begin{proof}
We consider the metric space $(X,d)$ and
we define the relations of \emph{formal inclusion} and \emph{formal disjointness} by
\begin{enumerate}
\item $B_{\langle c_1,r_1\rangle}\lhd B_{\langle c_2,r_2\rangle}:\iff d(\alpha(c_1),\alpha(c_2))+\overline{r_1}<\overline{r_2}$,
\item $B_{\langle c_1,r_1\rangle}\bowtie B_{\langle c_2,r_2\rangle}:\iff d(\alpha(c_1),\alpha(c_2))>\overline{r_1}+\overline{r_2}$. 
\end{enumerate}
These are rather relations between the numbers $c_1,r_1\in\IN$ and $c_2,r_2\in\IN$ than between the corresponding balls,
but they are denoted for convenience in the given way. It is clear that both relations are c.e.\ and that
$B_j\lhd B_i$ implies $B_j\In B_i$ and $B_j\bowtie B_i$ implies $B_j\cap B_i=\emptyset$.
The sets
\begin{enumerate}
\item $A:=\{(n,j)\in\IN^2:(\exists i\in W_n)\;B_j\lhd B_i\}$,
\item $B:=\{(n,j)\in\IN^2:(\forall i\in W_n)\;B_j\bowtie B_i\}$
\end{enumerate}
are both computable relative to $0'$. A $1$--generic $x\in X$ satisfies $x\in X\setminus\bigcup_{n=0}^\infty\partial U_n$ and hence
\begin{enumerate}
\item $x\in U_n\iff (\exists j\in\IN)(x\in B_j$ and $(n,j)\in A)$,
\item $x\not\in U_n\iff (\exists j\in\IN)(x\in B_j$ and $(n,j)\in B)$.
\end{enumerate}
Thus, for each $1$--generic $x$ we can systematically search for some $j\in\IN$ such that $x\in B_j$ and either
$(n,j)\in A$ or $(n,j)\in B$ and the condition that holds tells us how to compute $\J_X(x)(n)$.
Hence, $\J_X$ restricted to $1$--generics is computable relative to $0'$.
\end{proof}

Another consequence of the jump normal form (Theorem~\ref{thm:jump-metric}) together with Proposition~\ref{prop:jump-1-generic-metric}
is the following result that generalizes Corollary~\ref{cor:jump-1-generic}.

\begin{corollary}[Limit computability on $1$--generics]
\label{cor:jump-1-generic-metric}
Restricted to $1$--generics every limit computable $F:\In X\to Y$ on computable metric spaces $X,Y$ 
is computable relative to the halting problem.
\end{corollary}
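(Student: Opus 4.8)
The plan is to read the statement off directly from the jump normal form on computable metric spaces (Corollary~\ref{cor:jump-metric}) combined with the fact that the Turing jump becomes computable relative to the halting problem once one restricts to $1$--generics (Proposition~\ref{prop:jump-1-generic-metric}). So the whole corollary should reduce to a one-line composition argument, and the only real work is bookkeeping about domains and uniformity.

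First I would take a limit computable $F:\In X\to Y$ and apply Corollary~\ref{cor:jump-metric} to obtain a computable $g:\In\IN^\IN\to Y$ with $F=g\circ\J_X$; by the remark following Theorem~\ref{thm:jump-metric} I may in addition assume that $\dom(g)$ is chosen so that $\dom(g\circ\J_X)=\dom(F)$. The point to notice is that the intermediate value $\J_X(x)$ already lives in $\IN^\IN$, so no Cauchy encoding sits between the two maps and the composition is literally ``first apply $\J_X$, then feed the result to $g$''.

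Next I would invoke Proposition~\ref{prop:jump-1-generic-metric}: there is a single-valued $H:\In\IN^\IN\to\IN^\IN$, computable relative to $0'$, that realizes $\J_X$ on every name of a $1$--generic point $x\in X$, uniformly in the name. Letting $G$ be a computable realizer of $g$, the composite $G\circ H$ is then computable relative to $0'$ (functions computable relative to the halting problem are closed under precomposition with computable functions, as already used in the proof of Theorem~\ref{thm:computability-halting-problem}), and on any name of a $1$--generic $x\in\dom(F)$ it outputs a name of $g(\J_X(x))=F(x)$. Hence $F$ restricted to the $1$--generics is computable relative to the halting problem.

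I do not expect any genuine obstacle here; the ``hard'' part is purely a matter of care in interpreting ``restricted to $1$--generics'' as the partial map whose domain is $\dom(F)$ intersected with the set of $1$--generic points of $X$, and in checking that the realizer provided by Proposition~\ref{prop:jump-1-generic-metric} is uniform over all such names, so that a single oracle machine handles every $1$--generic input simultaneously. Both facts are immediate from the cited results, which is exactly why the proof collapses to the short composition above.
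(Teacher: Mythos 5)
Your proposal is correct and follows exactly the paper's route: the paper derives this corollary as an immediate consequence of the jump normal form (Theorem~\ref{thm:jump-metric}, in your case its single-valued instance Corollary~\ref{cor:jump-metric}) composed with Proposition~\ref{prop:jump-1-generic-metric}, which is precisely your two-step composition. The bookkeeping points you flag (domains, uniformity of the realizer over all names of $1$--generic inputs) are handled the same way implicitly in the paper.
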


Now the question appears how the property of being $1$--generic relates to the property of having
a $1$--generic name. What we can prove in general is that atypical points have atypical names.
As a preparation we need to show that admissible representations are hereditarily quotient maps.
We recall that a representation $\delta$ of a topological space $X$ is called \emph{admissible} (with respect to this space) 
if and only if it is maximal with respect to the topological version $\leqt$ of the reducibility $\leq$ among all continuous representations~\cite{Sch02}.
The Cauchy representation $\delta_X$ is an example of an admissible representation~\cite[Theorem~8.1.4]{Wei00}, and hence any representation $\delta\equiv\delta_X$
is also admissible. It follows from the second half of the proof of \cite[Theorem~4]{Sch02} that admissible representations
are hereditary quotient maps in the following sense.

\begin{lemma}[Hereditary quotients]
\label{lem:admissibility}
Let $\delta$ be an admissible representation of a topological space $X$, and let $Y$ be another topological space.
Let $f:\In X\to Y$ be a function and $x\in\dom(f)$. If $f\delta$ is continuous at all $p\in\delta^{-1}\{x\}$, then
$f$ is sequentially continuous at $x$. 
\end{lemma}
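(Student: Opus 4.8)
The plan is to reduce the lemma to the standard sequence-lifting property of admissible representations — precisely the property isolated in the second half of the proof of \cite[Theorem~4]{Sch02}: if $\delta$ is an admissible representation of a space $X$ and $(x_n)_{n\in\IN}$ converges to $x$ in $X$, then there are names $p\in\delta^{-1}\{x\}$ and $p_n\in\delta^{-1}\{x_n\}$ with $p_n\to p$ in $\IN^\IN$. Once this lifting is available, the lemma is almost immediate, so the substance lies in the lifting, which I would either quote from \cite{Sch02} or reprove along the following lines.

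For the lifting, consider the one-point compactification $A:=\IN\cup\{\infty\}$ of the discrete space $\IN$, whose open sets are all subsets of $\IN$ together with all sets containing $\infty$ with finite complement in $\IN$. Then a sequence $x_n\to x$ in $X$ is the same thing as a continuous map $g:A\to X$ with $g(n)=x_n$ and $g(\infty)=x$; continuity of $g$ is exactly the statement that each open neighbourhood of $x$ contains all but finitely many of the $x_n$. The space $A$ carries the continuous representation $\delta_A$ with $\dom(\delta_A)=\{0^n1q:n\in\IN,\,q\in\IN^\IN\}\cup\{\widehat{0}\}$, $\delta_A(0^n1q):=n$ and $\delta_A(\widehat{0}):=\infty$, for which the names $a_n:=0^n1\widehat{0}$ of $n$ converge to the name $a:=\widehat{0}$ of $\infty$. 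Now $g\circ\delta_A$ is a continuous partial map into $X$, so padding it together with $\delta$ yields a continuous representation of $X$; by maximality of $\delta$ among continuous representations there is a continuous $G:\In\IN^\IN\to\IN^\IN$ with $\delta\circ G=g\circ\delta_A$ on $\dom(\delta_A)$. Putting $p_n:=G(a_n)$ and $p:=G(a)$ we obtain $\delta(p_n)=x_n$, $\delta(p)=x$, and $p_n\to p$ by continuity of $G$ at $a$.

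Granting the lifting, the lemma follows: let $(x_n)_{n\in\IN}$ be a sequence in $\dom(f)$ converging to $x\in\dom(f)$, and choose $p\in\delta^{-1}\{x\}$ and $p_n\in\delta^{-1}\{x_n\}$ with $p_n\to p$. Since $x,x_n\in\dom(f)$, all of $p,p_n$ lie in $\dom(f\delta)=\delta^{-1}(\dom f)$, and $p_n\to p$ in $\IN^\IN$, hence also in the subspace $\dom(f\delta)$. By hypothesis $f\delta$ is continuous at the point $p\in\delta^{-1}\{x\}$, so $f\delta(p_n)\to f\delta(p)$, that is $f(x_n)\to f(x)$ in $Y$. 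Hence $f$ is sequentially continuous at $x$. The only genuine difficulty is the lifting step, and there the routine verifications — continuity of the auxiliary $\delta_A$ and the padding needed to invoke maximality of $\delta$ for the non-surjective map $g\circ\delta_A$ — are exactly what is carried out in the cited part of Schr\"oder's argument.
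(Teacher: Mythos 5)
Your proposal is correct and follows essentially the same route as the paper: the paper proves this lemma simply by appealing to the second half of Schr\"oder's proof of \cite[Theorem~4]{Sch02}, which is precisely the sequence-lifting property you isolate and reprove via the one-point compactification of $\IN$ (together with the padding trick needed to apply maximality of $\delta$ to the non-surjective map $g\circ\delta_A$). Your deduction of sequential continuity from the lifting, using that the hypothesis gives continuity of $f\delta$ at whichever name $p\in\delta^{-1}\{x\}$ the lifting produces, is exactly right.
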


Since every metric space $X$ is sequential, we can replace sequential continuity by ordinary continuity in this case.
Then the property expresses what is usually called a {\em hereditary quotient map} or a {\em pseudo-open map}~\cite{AP90}.

\begin{corollary}[Hereditary quotients]
Every admissible representation of a sequential topological space is a 
hereditary quotient map (or equivalently, is pseudo-open).
\end{corollary}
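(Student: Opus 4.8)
The plan is to prove the equivalent statement that an admissible representation $\delta$ of a sequential space $X$ is \emph{pseudo-open}, since by \cite{AP90} the pseudo-open maps are exactly the hereditary quotient maps. Concretely I want to show: whenever $x\in X$ and $O\In\dom(\delta)$ is open with $\delta^{-1}\{x\}\In O$, then $x\in\mathrm{int}\,\delta(O)$. The idea is to encode this condition into a single test function and feed that function to Lemma~\ref{lem:admissibility}.

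First I would fix such an $x$ and $O$, set $V:=\delta(O)$ (so $x\in V$), and consider the map $f\colon X\to\IS$ into the Sierpi\'nski space with open point $\top$, defined by $f(y)=\top$ iff $y\in V$. The composition $f\delta$ is the characteristic function of $\delta^{-1}(V)$, and it is continuous at every $p\in\delta^{-1}\{x\}$: such a $p$ lies in the open set $O$, and $f\delta$ is constantly $\top$ on $O$ because $\delta(O)=V$; at points mapped to $\bot$ continuity is automatic since $\bot$ has no proper open neighbourhood in $\IS$. Hence Lemma~\ref{lem:admissibility} applies and gives that $f$ is sequentially continuous at $x$, which, on unwinding the topology of $\IS$, says precisely that $V$ is a \emph{sequential} neighbourhood of $x$, i.e.\ every sequence converging to $x$ is eventually in $V$.

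The remaining step is to upgrade ``sequential neighbourhood'' to ``neighbourhood'', and this is exactly where the hypothesis that $X$ is sequential is used, and where I expect the main obstacle to lie. For a Fr\'echet--Urysohn space, in particular for any metric space (the case that matters for the applications, and the one singled out in the remark before the statement), this is immediate: $X\setminus V$ is then not sequentially dense at $x$, so $x\notin\cl(X\setminus V)$, and $X\setminus\cl(X\setminus V)$ is an open neighbourhood of $x$ contained in $V$, giving $x\in\mathrm{int}\,\delta(O)$. For a general sequential space one cannot use Lemma~\ref{lem:admissibility} as a black box, since its conclusion is only sequential continuity and a sequential (non-Fr\'echet--Urysohn) space can have sequential neighbourhoods that are not neighbourhoods; instead one revisits the construction behind it, namely the second half of the proof of \cite[Theorem~4]{Sch02}, which, run on the map $f\delta$ above, exhibits an honest open set around $x$ that $\delta$ maps into $V$. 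Once pseudo-openness is established at every $x$, \cite{AP90} delivers the hereditary quotient property. Everything before the last paragraph is a routine application of the lemma with $\IS$ as codomain; the passage from sequential to genuine continuity is the crux.
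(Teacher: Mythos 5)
Your argument is a correct and usefully explicit rendering of what the paper actually does: the paper's entire ``proof'' of this corollary consists of the two sentences preceding it, namely that Lemma~\ref{lem:admissibility} plus the passage from sequential continuity to ordinary continuity yields exactly the pointwise continuity-transfer property that \cite{AP90} identifies with pseudo-openness. Your Sierpi\'nski-valued test function $f=\chi_{\delta(O)}$ is the standard way to make that identification concrete, and your treatment of the Fr\'echet--Urysohn (in particular metric) case is complete and correct. You have also put your finger on precisely the right weak point: Lemma~\ref{lem:admissibility} only delivers pointwise \emph{sequential} continuity, i.e.\ that $\delta(O)$ is a sequential neighbourhood of $x$, and upgrading this to a genuine neighbourhood uses the Fr\'echet--Urysohn property at $x$, not mere sequentiality.

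Where your proposal goes wrong is the suggested escape for general sequential spaces. The obstruction is not that the lemma is too coarse a black box to be reopened; the statement is in fact false for sequential spaces that are not Fr\'echet--Urysohn. A continuous pseudo-open (equivalently, hereditarily quotient) image of a metrizable space is Fr\'echet--Urysohn: if $x\in\cl(A)$ and no $p\in\delta^{-1}\{x\}$ lies in $\cl(\delta^{-1}(A))$, then $O:=\dom(\delta)\setminus\cl(\delta^{-1}(A))$ is an open set containing $\delta^{-1}\{x\}$ with $\delta(O)\cap A=\emptyset$, contradicting pseudo-openness; so some fibre point is a limit of a sequence from $\delta^{-1}(A)$, which pushes forward to a sequence in $A$ converging to $x$. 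Since $\dom(\delta)\In\IN^\IN$ is metrizable, no continuous representation of a sequential non-Fr\'echet space can be pseudo-open; yet such spaces (e.g.\ the Arens space, which is Hausdorff and a quotient of a countably based space) do admit admissible representations by Schr\"oder's theorem. So revisiting the second half of the proof of \cite[Theorem~4]{Sch02} cannot produce the honest open set you want -- it produces exactly the sequential neighbourhood you already have. The corollary is correct, and your proof is complete, under the Fr\'echet--Urysohn hypothesis (which covers every metric space and every use the paper makes of the statement); the paper's own one-line derivation makes the same silent leap from ``metric'' to ``sequential'' that you were rightly suspicious of.
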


Using Lemma~\ref{lem:admissibility} we can now prove the following result, which shows that atypical
points have atypical names with respect to every representation in the equivalence class of the
Cauchy representation.

\begin{proposition}[Non-generic names]
\label{prop:non-generic-names}
Let $X$ be a computable metric space with Cauchy representation $\delta_X$ and $x\in X$. 
Let $\delta\equiv\delta_X$ be another representation of $X$. 
If $x$ is not $1$--generic, then there is a $p\in\IN^\IN$ with $\delta(p)=x$ that is not $1$--generic.
\end{proposition}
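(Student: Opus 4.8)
The plan is to pull the discontinuity of $\J_X$ at $x$ down to a single name of $x$, using the fact that $\delta\equiv\delta_X$ is admissible, hence a hereditary quotient map (Lemma~\ref{lem:admissibility}), and then to read off non-$1$-genericity of that name via the Baire space characterization in Corollary~\ref{cor:1-generic}.

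First I would consider the composite partial function $\widetilde{\J}:=\J_X\circ\delta:\In\IN^\IN\to\IN^\IN$, whose domain is exactly $\dom(\delta)$ since $\J_X:X\to\IN^\IN$ is total and single-valued. Because $\J_X$ is limit computable (Proposition~\ref{prop:metric-lim-jump}) it has a limit computable realizer $G$ with respect to the Cauchy representation $\delta_X$ and the identity representation on $\IN^\IN$, that is, $G(q)=\J_X(\delta_X(q))$ for all $q\in\dom(\delta_X)$. Since $\delta\leq\delta_X$, there is a computable $H:\In\IN^\IN\to\IN^\IN$ with $\delta_X(H(p))=\delta(p)$ for all $p\in\dom(\delta)$. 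Hence $\widetilde{\J}(p)=G(H(p))$ on $\dom(\delta)$, and $\widetilde{\J}$ is limit computable, being the restriction to $\dom(\delta)$ of the limit computable function $G\circ H$ (limit computable by Proposition~\ref{prop:composition-limit}, and closed under restriction).

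Next I would invoke admissibility. As recalled above, $\delta\equiv\delta_X$ is admissible because $\delta_X$ is; moreover $X$, being a metric space, is sequential, so sequential continuity and ordinary continuity coincide on $X$. Since $x$ is not $1$-generic, $\J_X$ is by definition discontinuous at $x$, and $x\in\dom(\J_X)$. Applying the contrapositive of Lemma~\ref{lem:admissibility} to $f=\J_X:X\to\IN^\IN$ therefore produces a name $p\in\delta^{-1}\{x\}$ at which $\widetilde{\J}=\J_X\circ\delta$ is discontinuous: if $\widetilde{\J}$ were continuous at every point of $\delta^{-1}\{x\}$, the lemma would force $\J_X$ to be (sequentially, hence ordinarily) continuous at $x$.

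Finally, $\widetilde{\J}$ is a limit computable function with $p\in\dom(\widetilde{\J})$ that is discontinuous at $p$, so by Corollary~\ref{cor:1-generic} the point $p$ cannot be $1$-generic; and $\delta(p)=x$, which is exactly the claimed non-$1$-generic name. The only point requiring care is the applicability of Lemma~\ref{lem:admissibility} (checking that $\delta$ is admissible, that $X$ is sequential so the lemma gives genuine continuity, and that $x$ lies in the domain of the total map $\J_X$); one should also note that the discontinuity of $\widetilde{\J}$ at $p$ is with respect to the subspace topology on $\dom(\delta)$, which is precisely the topology under which Corollary~\ref{cor:1-generic} speaks of continuity at $p$, so no mismatch arises. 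Everything else is a routine unfolding of definitions.
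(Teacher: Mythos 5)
Your proof is correct and follows essentially the same route as the paper's: both establish that $\J_X\circ\delta$ is limit computable, both invoke Lemma~\ref{lem:admissibility} (admissible representations are hereditary quotients) to transfer discontinuity of $\J_X$ at $x$ to discontinuity of $\J_X\circ\delta$ at some name $p$, and both conclude non-$1$-genericity of $p$ from the fact that limit computable functions are continuous at $1$-generics. The only cosmetic difference is that the paper factors $\J_X\delta=F\circ\J$ via the jump normal form and argues the contrapositive directly with $\J$, whereas you cite Corollary~\ref{cor:1-generic}, which packages the same fact.
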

\begin{proof}
By Propositions~\ref{prop:metric-lim-jump} and \ref{thm:jump} there is some computable $F$ such that $\J_X\delta=F\J$. 
Let $x$ be such that all names $p\in\delta^{-1}\{x\}$ are $1$--generic. 
Then $F\J$ is continuous at all these $p$ and hence so is $\J_X\delta$.
By Lemma~\ref{lem:admissibility} this implies that $\J_X$ is continuous at $x$, 
hence $x$ is $1$--generic.
\end{proof}

This result captures as much as can be said about the relation of arbitrary points 
and their names with respect to genericity in arbitrary computable metric spaces.
It is easy to see that in the equivalence class of the Cauchy representation $\delta_X$ there
are always representations without $1$--generic names: one can for instance define $\delta\equiv\delta_X$
such that names are always zero in every second component and hence not $1$--generic.
But even if we restrict ourselves to total $\delta\equiv\delta_X$ for complete metric spaces
$X$, then there are spaces such as $X=\IN$ in which all points are $1$--generic.
Hence having a non $1$--generic name does not imply being not $1$--generic in general.
If we consider the case of the reals $X=\IR$ and reasonable total variants of the Cauchy representation
(such as the one from \cite[Lemma~6.1]{BHK16}),
then there are syntactic properties of names such as: every component of the name encodes
a rational number with even denominator. This property constitutes a co-c.e.\ closed set $A\In\IN^\IN$
that is nowhere dense, i.e., $\partial A=A$, which means that all points in $A$ are not $1$--generic.
On the other hand, every real $x$, in particular every $1$--generic real,
has a name $p\in A$. Thus having a non $1$--generic name does not imply being not $1$--generic for 
such a representation.
For special types of spaces and special representations one
can provide further results along these lines. 
For instance Kuyper and Terwijn proved that an irrational number $x\in[0,1]$ is $1$--generic 
if and only if its unique binary expansion is $1$--generic~\cite[Proposition~2.5]{KT14}.

\section{Limit computability and computability relative to the halting problem}
\label{sec:limit-halting}

It is clear that every function that is computable relative to the halting problem is also limit computable.
This observation even holds uniformly in the sense specified in the following corollary.
The corollary is a consequence of Theorem~\ref{thm:jumps-products-exponentials}~(3) since $\id:X\to X'$ is computable.

\begin{corollary}[Limit computability of functions computable relative to the halting problem]
\label{cor:limit-halting}
$\id:\CC(X,Y)'\to\CC(X,Y')$ is computable for all represented spaces $X,Y$.
\end{corollary}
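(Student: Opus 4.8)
The plan is to derive this directly from Theorem~\ref{thm:jumps-products-exponentials}~(3), which asserts that $\id:\CC(X,Y)'\to\CC(X',Y')$ is a computable isomorphism, together with the simple fact that $\id:X\to X'$ is computable for every represented space $X$ (indeed $\delta_{X'}=\delta_X\circ\lim$ and $\lim$ has a computable right inverse, so passing from a $\delta_X$--name to a $\delta_{X'}$--name is computable). The key observation is that if $f:X\to Y$ is a continuous function, then precomposing the input representation and postcomposing the output representation in a computable way does not change continuity, so $\CC(X,Y)$, $\CC(X',Y')$, and $\CC(X,Y')$ are the same underlying set of (continuous) functions; the content is purely in how a \emph{name} of such a function in one space translates into a name in another.

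First I would invoke Theorem~\ref{thm:jumps-products-exponentials}~(3) to obtain a computable realizer witnessing $\id:\CC(X,Y)'\to\CC(X',Y')$. Next I would use that $\id:X\to X'$ is computable, hence by Proposition~\ref{prop:jump-endofunctor}-style reasoning (or directly) the operation ``restrict the domain representation from $X'$ back to $X$'' is computable on the function-space level: given a name of $f\in\CC(X',Y')$, i.e.\ a name $p$ with $\Phi_p\vdash f$ where the realizer works on $\delta_{X'}$--names, one composes with a computable realizer $R\vdash\id:X\to X'$ to obtain $\Phi_p\circ R$, which realizes $f$ as an element of $\CC(X,Y')$; by the smn-theorem this transformation on names is computable. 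Composing these two computable maps yields a computable map $\CC(X,Y)'\to\CC(X,Y')$ realizing the identity.

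Slightly more carefully, what makes this go through is that $\CC(X,Y')$ is the represented space whose names are (codes for) realizers on $\delta_X$--input-names producing $\delta_{Y'}$--output-names; since $\delta_{Y'}=\delta_Y\circ\lim$, such a realizer is exactly a limit-valued realizer of $f:X\to Y$. So the statement is really: from a name of $f$ in $\CC(X,Y)'$ (a limit computable description of $f$ as a point of the function space) one can compute a description of $f$ as a continuous function $X\to Y'$, i.e.\ a limit computable realizer. I would spell this out by chasing the representations: a $\delta_{\CC(X,Y)'}$--name of $f$ is a $\langle q_i\rangle_i$ converging to some $\delta_{\CC(X,Y)}$--name $p$ of $f$; applying the computable isomorphism of Theorem~\ref{thm:jumps-products-exponentials}~(3) gives a $\delta_{\CC(X',Y')}$--name; then precomposition with the computable $\id:X\to X'$ realizer gives the desired $\delta_{\CC(X,Y')}$--name.

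The only mild obstacle is bookkeeping: one must be a little careful that the function-space representations $\delta_{\CC(X,Y)}$, $\delta_{\CC(X',Y')}$, $\delta_{\CC(X,Y')}$ all refer to the \emph{same} universal representation $\Phi$ of continuous partial functions with $G_\delta$--domain, and that currying/uncurrying and precomposition with a fixed computable function are computable operations on such codes (Fact~\ref{fact:function-space} and the smn-theorem); once this is granted the proof is a two-line composition of previously established computable maps. I do not expect any genuinely new argument to be needed beyond citing Theorem~\ref{thm:jumps-products-exponentials}~(3) and the computability of $\id:X\to X'$.
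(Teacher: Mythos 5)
Your proposal is correct and follows exactly the paper's own (one-sentence) argument: the paper derives the corollary from Theorem~\ref{thm:jumps-products-exponentials}~(3) combined with the computability of $\id:X\to X'$, precisely the two ingredients you compose. The extra bookkeeping you supply about precomposition of realizers and the universal representation $\Phi$ is sound but not a different route.
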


For some spaces, such as $X=\IN$, this identity is even a computable isomorphism (see Corollary~\ref{cor:sequences}).
However, it is clear that this does not happen for many other spaces $X,Y$ since often $\CC(X,Y)'\subsetneqq\CC(X,Y')$. 
The former set contains only continuous functions, the latter can contain discontinuous ones.
Even restricted to continuous functions the identity in Corollary~\ref{cor:limit-halting} is not a computable isomorphism in general.

We can, however, characterize the functions that are computable relative to the halting problems in terms
of limit computable functions. For this purpose we need the notion of a modulus of continuity.
Let $(X,d_X)$ and $(Y,d_Y)$ be metric spaces and $f:X\to Y$ a function.
Then $m:\IN\to\IN$ is called a \emph{modulus of continuity} of $f$ at $x\in X$ if
\[d_X(x,y)<2^{-m(n)}\TO d_Y(f(x),f(y))<2^{-n}\]
holds for all $y\in X$ and $n\in\IN$. 
The following result is well-known (see, for instance, \cite[Lemma~4.4.25]{Bra98b}).

\begin{proposition}[Modulus of continuity]
\label{prop:modulus}
Let $X,Y$ be computable metric spaces. Then
\[\Mod:\CC(X,Y)\times X\mto \IN^\IN,(f,x)\mapsto\{m\in\IN^\IN:m\mbox{ is a modulus of continuity of $f$ at $x$}\}\] 
is computable.
\end{proposition}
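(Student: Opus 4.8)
The plan is to argue directly with realizers. A name of a pair $(f,x)\in\CC(X,Y)\times X$ is a sequence $\langle p,q\rangle$ for which $F:=\Phi_p$ is a realizer of $f$ with respect to the Cauchy representations $\delta_X,\delta_Y$ and $q\in\dom(\delta_X)$ is a Cauchy name of $x$. Since $\Mod$ is defined on all of $\CC(X,Y)\times X$, we may assume $f$ is a total continuous map, so $F$ is defined on all of $\dom(\delta_X)$ and $F(r)$ is a total $\delta_Y$-name of $f(\delta_X(r))$ there. The goal is to compute, uniformly in $\langle p,q\rangle$, one modulus of continuity $m\in\IN^\IN$ of $f$ at $x$, and then simply output $m$.

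First I would replace $q$ by the padded Cauchy name $\tilde q$ of $x$ given by $\tilde q(j):=q(j+2)$; this still satisfies $\tilde q\in\dom(\delta_X)$ and $\delta_X(\tilde q)=x$, but now approximates $x$ with a safety margin, $d_X(\alpha_X\tilde q(j),x)\le 2^{-(j+2)}$. Running $F$ on $\tilde q$ and watching the input head, for each $n$ I can compute a bound $L_n$ such that $F$ reads at most the prefix $\tilde q|_{L_n}$ of its input before it has written the output symbols $F(\tilde q)(0),\dots,F(\tilde q)(n+2)$. I then set $m(n):=L_n+2$, which is plainly computable from the given data.

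To see that $m$ is a modulus of continuity of $f$ at $x$, fix $n$ and $y$ with $d_X(x,y)<2^{-m(n)}$, pick any Cauchy name $s$ of $y$, and form the spliced sequence $q'$ with $q'(j):=\tilde q(j)$ for $j<L_n$ and $q'(j):=s(j+2)$ for $j\ge L_n$. One checks that $q'\in\dom(\delta_X)$ with $\delta_X(q')=y$: the Cauchy conditions for two indices both $<L_n$ or both $\ge L_n$ are inherited from $\tilde q$ and from $s$, while for a mixed pair $i\ge L_n>j$ the triangle inequality through $x$ and $y$, using $d_X(\alpha_Xs(i+2),y)\le 2^{-(i+2)}$, $d_X(y,x)<2^{-(L_n+2)}$, $d_X(x,\alpha_X\tilde q(j))\le 2^{-(j+2)}$ and $i\ge L_n\ge j+1$, keeps the distance $\le 2^{-(j+3)}+2^{-(j+3)}+2^{-(j+2)}=2^{-(j+1)}<2^{-j}$; convergence to $y$ is clear from the tail. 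Since $q'$ and $\tilde q$ agree on the prefix $\tilde q|_{L_n}$, the machine $F$ produces identical output symbols $0,\dots,n+2$ on $q'$ as on $\tilde q$, so $F(q')$ is a $\delta_Y$-name of $f(y)$ that agrees with the $\delta_Y$-name $F(\tilde q)$ of $f(x)$ on positions $0,\dots,n+2$; hence $d_Y(f(x),f(y))\le 2^{-(n+2)}+2^{-(n+2)}<2^{-n}$, as required.

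The only real obstacle is this last verification, and specifically the bookkeeping with the \emph{strict} inequalities in the definition of $\dom(\delta_X)$: splicing a name of $x$ onto a name of $y$ produces a legal Cauchy name only if all approximations stay a definite distance from the relevant ball boundaries, which is exactly why I pad $q$ to $\tilde q(j)=q(j+2)$ and build in the matching slack in $m(n)=L_n+2$ and in the tail $s(j+2)$. Everything else — that $\langle p,r\rangle\mapsto\Phi_p(r)$ is computable, that the realizer is total on valid inputs because $f$ is total, and that $L_n$ is obtained by bounded simulation — is routine.
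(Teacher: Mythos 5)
Your proof is correct: the paper gives no proof of this proposition but only cites it as well-known (\cite[Lemma~4.4.25]{Bra98b}), and your argument is exactly the standard one behind that reference --- read off the use $L_n$ of a Type-2 realizer on a (padded) Cauchy name of $x$ and turn it into a modulus via the splicing argument, with the $+2$ offsets correctly absorbing the non-strict bounds $d(x,\alpha q(k))\leq 2^{-k}$ so that the spliced sequence satisfies the strict Cauchy condition of $\dom(\delta_X)$. The verification of the mixed-index case and of the final estimate $d_Y(f(x),f(y))\leq 2^{-(n+2)}+2^{-(n+2)}<2^{-n}$ both check out, and multi-valuedness of $\Mod$ means outputting the single modulus $m(n)=L_n+2$ suffices.
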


We call $M:X\mto\IN^\IN,x\mapsto\{m\in\IN^\IN:m$ is a modulus of continuity of $f$ at $x\}$ the 
\emph{global modulus of continuity} of $f$.
Using the previous proposition we can prove the following characterization.

\begin{theorem}[Computability relative to the halting problem]
\label{thm:limit-halting}
Let $X,Y$ be computable metric spaces and let $f:X\to Y$ be a function. Then the following are equivalent:
\begin{enumerate}
\item $f$ is computable relative to the halting problem,
\item $f$ is limit computable and continuous and its global modulus of continuity is computable relative to the halting problem.
\end{enumerate}
\end{theorem}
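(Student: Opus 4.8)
\emph{Proof plan.} The plan is to prove the two implications separately, using the modulus‑of‑continuity map of Proposition~\ref{prop:modulus} and the identification of limit computability with $0'$‑computability for sequences (Corollary~\ref{cor:sequences}).

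For the direction (1)$\Rightarrow$(2): if $f$ is computable relative to the halting problem, then it is in particular limit computable (this is Corollary~\ref{cor:limit-halting} applied to $f$ as a point of $\CC(X,Y)$, or simply the observation that $\id\colon X\to X'$ is computable), and it is continuous, since by Theorem~\ref{thm:computable-halting-problem} it has a realizer of the form $G\circ\HP$ with $G$ computable and $\HP$ continuous, and Cauchy representations are admissible. For the modulus, I would invoke the discussion after Corollary~\ref{cor:limit-function-points} (together with Shoenfield's limit lemma, Corollary~\ref{cor:shoenfield}) to see that $f$ is a point of $\CC(X,Y)$ that is computable relative to $0'$, i.e.\ that it has a name in $\CC(X,Y)$ computable relative to $0'$. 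Pairing this fixed name with the identity on $X$ yields a map $x\mapsto(f,x)$ of type $X\to\CC(X,Y)\times X$ that is computable relative to $0'$, and composing it with the computable map $\Mod$ of Proposition~\ref{prop:modulus} shows that the global modulus of continuity $M\colon X\mto\IN^\IN$ of $f$ is computable relative to $0'$.

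For the direction (2)$\Rightarrow$(1): the key observation is that the restriction $f\circ\alpha\colon\IN\to Y$ of $f$ to the dense sequence $\alpha$ of $X$ is the composition of the computable map $\alpha$ with the limit computable $f$, hence limit computable by Proposition~\ref{prop:composition-limit}; by Corollary~\ref{cor:sequences} it is therefore computable relative to $0'$. Thus, uniformly in $j,n$ and with oracle $0'$, we can compute a dense index $c$ with $d_Y(\alpha(c),f(\alpha(j)))<2^{-n}$. Now, given a fast Cauchy name $p$ of an input $x$, we first compute, relative to $0'$, a modulus $m$ of continuity of $f$ at $x$ (this is exactly where the hypothesis on $M$ enters, and where the multi‑valuedness of $M$ is handled by taking the $0'$‑computable realizer selecting one modulus); then, to produce the $n$‑th approximation of $f(x)$, we pass to the dense point $\alpha(p(m(n)+1))$, which satisfies $d_X(x,\alpha(p(m(n)+1)))\le 2^{-(m(n)+1)}<2^{-m(n)}$, so that $d_Y\bigl(f(x),f(\alpha(p(m(n)+1)))\bigr)<2^{-n}$ by the defining property of $m$; finally we replace $f(\alpha(p(m(n)+1)))$ by a $2^{-n}$‑approximation obtained from the oracle procedure above. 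After the usual reindexing (using precision $k+2$ at stage $k$) these approximations form a genuine fast Cauchy name of $f(x)$, computed from $p$ with oracle $0'$, so $f$ is computable relative to $0'$.

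The routine parts are the precision bookkeeping in the last step of (2)$\Rightarrow$(1) and the continuity verification in (1)$\Rightarrow$(2). The main conceptual point, which I expect to be the only real obstacle, is the clean separation of roles in (2): limit computability is used \emph{only} to control $f$ on the countable dense set $\alpha(\IN)$ (via Corollary~\ref{cor:sequences}), while the $0'$‑computable global modulus is used to reduce evaluation of $f$ at an arbitrary point to its values on that dense set. Getting this to fit together amounts to checking that the modulus procedure is genuinely uniform in the name $p$ of $x$, which is precisely what Proposition~\ref{prop:modulus} and the realizer‑based definition of computability relative to $0'$ from Section~\ref{sec:represented-spaces} guarantee.
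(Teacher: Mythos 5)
Your proposal is correct and follows essentially the same route as the paper: direction (1)$\Rightarrow$(2) extracts the $0'$-computable global modulus by combining the computable map $\Mod$ of Proposition~\ref{prop:modulus} with $f$ viewed as a $0'$-computable point of $\CC(X,Y)$ (the paper phrases this via the jump functor, $\Mod:\CC(X,Y)'\times X'\mto(\IN^\IN)'$, which is the same argument in the language of jumped spaces), and direction (2)$\Rightarrow$(1) uses exactly the paper's decomposition — $f\circ\alpha$ is $0'$-computable by Corollary~\ref{cor:sequences}, and the $0'$-computable modulus reduces evaluation at an arbitrary point to evaluation on the dense sequence. The only cosmetic difference is that you read the nearby dense point directly off the fast Cauchy name rather than searching for one, which changes nothing.
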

\begin{proof}
Let $f:X\to Y$ be computable relative to the halting problem. Then $f:X'\to Y'$ is computable by Theorem~\ref{thm:computability-halting-problem},
and hence $f$ is computable as a point in $\CC(X,Y)'$ by Theorem~\ref{thm:jumps-products-exponentials}. In particular, $f$ is also limit computable (see Corollary~\ref{cor:limit-halting}).
By Theorem~\ref{thm:jumps-products-exponentials} and Proposition~\ref{prop:jump-endofunctor} the problem $\Mod$ from Proposition~\ref{prop:modulus} is computable with type $\Mod:\CC(X,Y)'\times X'\mto(\IN^\IN)'$,
and hence the global modulus of $f$ is computable with type $M:X'\mto(\IN^\IN)',x\mapsto \Mod(f,x)$.  
By Theorem~\ref{thm:computability-halting-problem} this means that $M$ is computable relative to the halting problem.

Let now $f$ be limit computable and continuous with a global modulus of continuity $M$ that is computable relative to the halting problem.
We consider the computable metric spaces $(X,d_X,\alpha)$ and $(Y,d_Y)$.
If $f$ is limit computable, then also $f\circ\alpha:\IN\to Y$ is limit computable and hence computable relative to the halting problem by Corollary~\ref{cor:sequences}.
Given $x\in X$ and $k\in\IN$, we can compute with the help of the halting problem a modulus of continuity $m:\IN\to\IN$ of $f$ at $x$,  and we can compute an $n\in\IN$ with $d_X(\alpha(n),x)<2^{-m(k)}$, which implies $d_Y(f\alpha(n),f(x))<2^{-k}$. 
Since $f\alpha(n)$ can be computed with the help of the halting problem, we have found an approximation of $f(x)$ with precision $2^{-k}$ with the help of the halting problem.
This means that $f$ is computable relative to the halting problem. 
\end{proof}

This result tells us exactly which additional condition beyond continuity a limit computable function $f:X\to Y$ must satisfy in order to be computable relative to the halting problem. 
If a function is limit computable and just continuous on metric spaces, then 
a hyperarithmetical oracle is sufficient to compute it. 
In fact, we can prove an even stronger result.\footnote{This stronger result and its proof have been kindly
suggested by an anonymous referee.}

\begin{theorem}[Effective Borel measurability and continuity]
\label{thm:Borel-continuity}
Let $X,Y$ be computable metric spaces and let $X$ be complete. 
Let $A\In X$ be an effective $\Sigma^1_1$--subset and let $f:A\to Y$ be a function.
If $f$ is effectively Borel measurable and continuous, then $f$ is computable relative to
some hyperarithmetical oracle (i.e., an oracle in $\Delta^1_1$).
\end{theorem}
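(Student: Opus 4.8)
The plan is to reduce the statement to the existence of a realizer that is computable relative to a single hyperarithmetical oracle $q$, and to obtain $q$ from the effective Borel measurability of $f$ by a boundedness argument. First, by the Suslin--Kleene boundedness theorem $\dI{1}=\bigcup_{\xi<\omega_1^{\mathrm{CK}}}\sO{\xi}$; applying this to the $\dI{1}$ join $\{(j,x):x\in f^{-1}(B_j)\cap A\}$ of the computable family $\bigl(f^{-1}(B_j)\cap A\bigr)_{j\in\IN}$ yields a computable ordinal $\xi$ such that $f^{-1}(B_j)\cap A$ is, uniformly in $j$, effectively $\sO{\xi}$ as a subset of $X$. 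It then suffices to exhibit an oracle machine, with a hyperarithmetical oracle, that from a $\delta_X$--name $p$ of a point $x\in A$ enumerates the set $N(x):=\{j\in\IN:f(x)\in B_j\}$ of basic balls of $Y$ containing $f(x)$; from such an enumeration one computes a $\delta_Y$--name of $f(x)$, because the positive-information ball representation of $Y$ reduces to its Cauchy representation (this is precisely the reduction used in the proof of Proposition~\ref{prop:inverse-jump-metric}).

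Second, I would use continuity of $f$ to make the enumeration of $N(x)$ local. For a $\delta_X$--name $p$ of $x\in A$ let $B^{(k)}:=B(\alpha p(k),2^{-k+1})$ denote the $k$--th prefix ball; these are basic balls with codes computable from finite prefixes of $p$, they all contain $x$, and they shrink to $x$. Since $f$ is continuous on $A$, we have $f(x)\in B_j$ if and only if $B^{(k)}\cap A\In f^{-1}(B_j)$ for some $k$, the forward implication by continuity at $x$ and the converse because $x\in B^{(k)}\cap A$. Hence enumerating $N(x)$ from $p$ reduces to enumerating, relative to a hyperarithmetical oracle, the pairs $(k,j)$ with $B^{(k)}\cap A\In f^{-1}(B_j)$, and this in turn reduces to controlling the relation
\[
R(m,j)\ :\Longleftrightarrow\ B_m\cap A\In f^{-1}(B_j)
\]
on the instances that actually occur.

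The crux --- and the place where the hypotheses that $A$ is effective $\SI{1}$ and $X$ is complete are genuinely used --- is this last point. A priori $R$ is only $\PI{1}$, since it quantifies universally over the $\SI{1}$ set $A$; and without an effectivity hypothesis on $f$ the conclusion is false (e.g.\ $p\mapsto\langle a,p\rangle$ with $a$ not hyperarithmetical is Lipschitz continuous on $A=\IN^\IN$ and not computable relative to any hyperarithmetical oracle, and it is not effectively Borel measurable). Completeness of $X$ ensures that $\dom(\delta_X)$ is an effectively closed set rather than merely $\SI{1}$, so that $A$ is the computable image of an effectively closed subset $\widetilde A\In\IN^\IN$, on which $f$ lifts to a continuous, effectively $\sO{\xi}$--measurable map. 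Using this together with the effective $\sO{\xi}$--measurability of the sets $f^{-1}(B_j)\cap A$, one shows that the instances of $R$ needed above form a $\SI{1}$ subset of the $\PI{1}$ set $R$, so that by $\SI{1}$--boundedness the canonical $\PI{1}$--rank of $R$ is bounded there by a computable ordinal $\xi'\geq\xi$; consequently $R$ restricted to these instances coincides with the $\dI{1}$ set $\{(m,j):\rho_R(m,j)<\xi'\}$ and is decidable relative to a hyperarithmetical oracle $q$. Feeding $q$ into the enumeration of the first two paragraphs produces the required realizer, so $f$ is computable relative to $q\in\Delta^1_1$. (Alternatively one can route the argument through Theorem~\ref{thm:limit-halting} and its transfinite iteration: $f$ is limit computable and continuous, and by the same boundedness its global modulus of continuity --- an effectively Borel measurable multivalued map by Proposition~\ref{prop:modulus} --- is computable relative to some oracle in $\Delta^1_1$.) I expect this $\SI{1}$--boundedness step to be the main technical obstacle.
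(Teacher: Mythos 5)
There is a genuine gap, and it sits exactly where you flag it: the claim that ``the instances of $R$ needed above form a $\sI{1}$ subset of the $\pI{1}$ set $R$.'' Unwinding your own setup, a pair $(m,j)$ is \emph{needed} roughly when there exists some $x\in A$ with $f(x)\in B_j$ such that $B_m$ is a small enough prefix ball around $x$ --- and ``small enough'' means precisely that $B_m\cap A\In f^{-1}(B_j)$, i.e.\ that $R(m,j)$ itself holds. So the natural description of the needed set has the form (a $\sI{1}$ side condition about the existence of a witness $x\in A$) conjoined with the $\pI{1}$ relation $R$; that is a difference of analytic sets, not a $\sI{1}$ set, and $\sI{1}$--boundedness for $\pI{1}$--ranks does not apply to it. No alternative $\sI{1}$ overestimate of the needed instances that still sits inside $R$ is exhibited, and producing one is not a routine matter: what you are really asking for is a $\dI{1}$ set $R'$ with $R'\In R$ that still covers every point of $A$ by arbitrarily small good balls, which (specialized to a single $j$) amounts to separating the disjoint $\sI{1}$ sets $f^{-1}(B_j)$ and $A\setminus f^{-1}(B_j)$, which are separated by an open set by continuity, by an open set that is c.e.\ relative to a hyperarithmetical oracle. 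That is the level-one instance of Louveau's separation theorem, which is the deep ingredient the paper invokes at this point (followed by effective $\pI{1}$--uniformization to choose the separating codes uniformly in $j$ as a single $\dI{1}$ function). First reflection or classical $\sI{1}$--separation only yields a $\dI{1}$ separator with no control on its Borel level, so the elementary tools you name cannot close the gap; your closing remark that you ``expect this $\sI{1}$--boundedness step to be the main technical obstacle'' is accurate, but it is the whole theorem.

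The surrounding architecture of your argument is sound and close to the paper's: reducing the problem to enumerating, relative to a fixed oracle, which basic balls of $Y$ contain $f(x)$, and using continuity to localize this to a relation $R(m,j)$ on integer codes, parallels the paper's reduction to writing $f^{-1}(U_n)=U_k^p\cap A$ and then enumerating $\{n:x\in U^p_{k_n}\}$. (Two smaller corrections: $f^{-1}(B_j)$ is only of the form $D_j\cap A$ with $D_j$ effectively Borel in $X$, so the join you feed to Suslin--Kleene should be the join of the $D_j$, not of the sets $f^{-1}(B_j)\cap A$, which are merely $\sI{1}$ in $X$; and the computable level $\xi$ you extract this way plays no role in the rest of your argument --- the paper's proof needs only that $f^{-1}(U_n)$ and $A\setminus f^{-1}(U_n)$ are $\sI{1}$ and separated by an \emph{open} set.) The ``alternative route'' through Theorem~\ref{thm:limit-halting} and a transfinite iteration rests on the same unproved boundedness step and does not rescue the proof.
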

\begin{proof}
We use a standard numbering $(U_n^p)_{n\in\IN}$ of the sets $U_n^p\In Y$ that are c.e.\ relative to $p\in\IN^\IN$.
We omit the upper index in order to denote the unrelativized c.e.\ open subsets.
For any c.e.\ open set $U_n\In Y$ we have that $f^{-1}(U_n)$ is effectively $\Delta^1_1$ and open in $A$,
i.e., we can effectively find some $\Delta_1^1$--set $D_n$ in $X$ and there is an open set $V\In X$ such that $f^{-1}(U_n)=V\cap A=D_n\cap A$.
In particular, $f^{-1}(U_n)$ and $A\setminus f^{-1}(U_n)$ are effectively $\Sigma^1_1$ in $X$ and
separated by the open set $V$. Then by the Louveau separation theorem \cite[Theorem~B, page~369]{Lou80} the
sets $f^{-1}(U_n)$ and $A\setminus f^{-1}(U_n)$ are also separated by a $\Sigma^{0,p}_1$--set
for some hyperarithmetical $p\in\IN^\IN$ (i.e., by a c.e.\ open set relative to $p$).
Hence $f^{-1}(U_n)=U_k^p\cap A$ for some $k\in\IN$ and some hyperarithmetical $p\in\IN^\IN$.
Now we consider the function $g:\IN\to\IN^\IN,n\mapsto\langle k,p\rangle$. The property $B:=\{\langle n,k,p\rangle\in\IN^\IN:f^{-1}(U_n)=U_k^p\cap A\}$
is an effective $\Pi^1_1$--property, since
\[
\langle n,k,p\rangle\in B
\iff (\forall x\in X)((x\not\in U_k^p\cap A\vee x\in D_n)\wedge(x\not\in f^{-1}(U_n)\vee x\in U_k^p)).
\]
Hence, by the effective $\Pi^1_1$--uniformization theorem~\cite[Theorem~4E.4]{Mos09}
the function $g$ is a total $\Pi^1_1$--function, hence in $\Delta^1_1$. Clearly, $f$ is computable relative to $g$.
\end{proof}

As an immediate corollary we obtain the following result.

\begin{corollary}[Limit computability and continuity]
\label{cor:limit-continuity}
Let $X,Y$ be computable metric spaces and let $X$ be complete. 
If $f:X\to Y$ is limit computable and continuous,
then $f$ is computable relative to some hyperarithmetical oracle (i.e., an oracle in $\dI{1}$).
\end{corollary}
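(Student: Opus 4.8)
The plan is to obtain this as an immediate specialization of Theorem~\ref{thm:Borel-continuity}, applied with $A:=X$, the whole space. First I would check that $X$, regarded as a subset of itself, is an effective $\sI{1}$--subset: since the dense sequence $\alpha$ already witnesses $X=\bigcup_{c\in\IN}B(\alpha(c),1)$, the whole space is c.e.\ open in $\OO(X)$, hence a computable (in particular effectively open) point of $\OO(X)$, and every effectively open set is a fortiori effectively $\sI{1}$. Two of the remaining hypotheses of Theorem~\ref{thm:Borel-continuity} are handed to us outright: $X$ is complete by assumption, and $f$ is continuous by assumption.

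The only point that still needs to be supplied is that a limit computable $f:X\to Y$ is effectively Borel measurable. This is exactly Corollary~\ref{cor:Borel}, which identifies the limit computable functions between computable metric spaces with the effectively $\SO{2}$--measurable ones; and an effectively $\SO{2}$--measurable function is in particular effectively Borel measurable, since $\SO{2}$ is a level of the effective Borel hierarchy. With $A=X$ now satisfying all hypotheses, Theorem~\ref{thm:Borel-continuity} yields that $f$ is computable relative to some oracle in $\dI{1}$, i.e.\ relative to a hyperarithmetical oracle, which is the claim.

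I do not expect any genuine obstacle here: the whole argument is a bookkeeping reduction to the already-established Theorem~\ref{thm:Borel-continuity}. The one place calling for even a moment's care is the verification that the ambient space counts as an effective $\sI{1}$--set, and that is immediate from density of $\alpha$ as indicated above.
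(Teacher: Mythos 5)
Your proposal is correct and is exactly the derivation the paper intends: the corollary is stated as an immediate consequence of Theorem~\ref{thm:Borel-continuity} with $A=X$, using Corollary~\ref{cor:Borel} to convert limit computability into effective $\SO{2}$--measurability. The details you supply (that the whole space is c.e.\ open and hence effectively $\sI{1}$, and that effectively $\SO{2}$--measurable implies effectively Borel measurable) are precisely the bookkeeping the paper leaves implicit.
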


For uniformly continuous functions we can improve this result.
In this case even the jump of the halting problem suffices to compute the function.

\begin{theorem}[Limit computability and uniform continuity]
\label{thm:limit-uniform-continuity}
Let $X,Y$ be computable metric spaces. If $f:X\to Y$ is limit computable and uniformly continuous,
then $f$ is computable relative to $0''$.
\end{theorem}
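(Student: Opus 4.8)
The plan is to imitate the proof of Theorem~\ref{thm:limit-halting} with one extra oracle jump. A uniformly continuous $f$ has a global modulus of continuity that may be taken constant, namely a single uniform modulus $m\colon\IN\to\IN$; essentially the argument of Theorem~\ref{thm:limit-halting} (relativized to $0'$) shows that the only ingredient beyond $0'$ that one needs in order to compute $f$ is such an $m$, so it suffices to show that a uniform modulus of continuity of $f$ can be computed relative to $0''$. To this end, write $(X,d_X,\alpha)$ and $(Y,d_Y)$ for the given computable metric spaces and, for $k,n\in\IN$, consider the predicate
\[
P(k,n)\;:\iff\;(\forall i,j\in\IN)\,\bigl(d_X(\alpha(i),\alpha(j))<2^{-k}\TO d_Y(f\alpha(i),f\alpha(j))\le 2^{-n}\bigr).
\]
Using density of $\alpha$ and continuity of $f$, and the fact that the hypothesis of the implication is a strict inequality while its conclusion is not, one checks that $P(k,n)$ holds if and only if $d_X(x,y)<2^{-k}$ implies $d_Y(f(x),f(y))\le 2^{-n}$ for all $x,y\in X$; and since $f$ has some uniform modulus $h$ by uniform continuity, $P(h(n),n)$ holds for every $n$, so for each $n$ there is a $k$ with $P(k,n)$.

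Next I would estimate the arithmetical complexity of $P$. Since $f$ is limit computable and $\alpha$ is computable, $f\circ\alpha\colon\IN\to Y$ is limit computable by Proposition~\ref{prop:composition-limit}, hence computable relative to $0'$ by Corollary~\ref{cor:sequences}. Thus $f\alpha(i)$ is, uniformly in $i$, a point of $Y$ computable relative to $0'$, so $d_Y(f\alpha(i),f\alpha(j))$ is a real number computable relative to $0'$ uniformly in $i,j$, and the condition ``$d_Y(f\alpha(i),f\alpha(j))\le 2^{-n}$'' is $\pO{1}$ relative to $0'$, i.e.\ $\pO{2}$; on the other hand ``$d_X(\alpha(i),\alpha(j))<2^{-k}$'' is c.e.\ in $i,j,k$ because $X$ is a computable metric space. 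Hence the implication inside $P$ is $\pO{2}$, and $P(k,n)$, being this $\pO{2}$ matrix prefixed by the number quantifiers $\forall i\,\forall j$, is again $\pO{2}$, so it is decidable relative to $0''$. Therefore $m(n):=\min\{k:P(k,n+1)\text{ holds}\}$ is a total function computable relative to $0''$, and by the equivalence above $m$ is a uniform modulus of continuity of $f$: $d_X(x,y)<2^{-m(n)}$ implies $d_Y(f(x),f(y))\le 2^{-n-1}<2^{-n}$.

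Finally I would assemble the $0''$-algorithm for $f$. Given a Cauchy name $p$ of a point $x\in X$ and $n\in\IN$: compute $k:=m(n+2)$ relative to $0''$; read off from $p$ the index $i:=p(k+1)$, so that $d_X(x,\alpha(i))\le 2^{-k-1}<2^{-k}$ and hence $d_Y(f(x),f\alpha(i))\le 2^{-n-3}$; using that $f\circ\alpha$ is computable relative to $0'$ (Corollary~\ref{cor:sequences}), compute relative to $0'$ a $\delta_Y$--name of $f\alpha(i)$ and extract from it an index of a dense point $y$ of $Y$ with $d_Y(f\alpha(i),y)<2^{-n-3}$; then output (the index of) $y$, which satisfies $d_Y(f(x),y)<2^{-n-2}$. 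Doing this for all $n$ produces, uniformly in $p$ and relative to $0''$, a valid Cauchy name of $f(x)$; since every step but the computation of $m$ uses only $0'$, this shows that $f$ is computable relative to $0''$.

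I expect the main obstacle to be the complexity estimate of the second paragraph: getting the equivalence between $P(k,n)$ and the ``all points'' statement exactly right (this is where the asymmetry between $<$ and $\le$ must be exploited so that the passage to the dense sequence is faithful), and invoking Corollary~\ref{cor:sequences} to see that $f\circ\alpha$ is $0'$--computable so that the inner inequality lands in $\pO{2}$ and not higher in the arithmetical hierarchy. The rest is routine bookkeeping with Cauchy names, exactly as in the proof of Theorem~\ref{thm:limit-halting}.
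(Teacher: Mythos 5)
Your proof is correct and follows essentially the same route as the paper: your predicate $P(k,n)$ is just the complement of the set $A$ used there, both land at a $0''$-decidable condition from which a uniform modulus of continuity is extracted, and the final assembly via $f\circ\alpha$ being $0'$-computable is identical. Your extra care with the $<$ versus $\le$ asymmetry when passing from the dense sequence to arbitrary points is a detail the paper leaves implicit, but it does not change the argument.
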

\begin{proof}
We consider the computable metric spaces $(X,d_X,\alpha)$ and $(Y,d_Y)$.
As in the proof of Theorem~\ref{thm:limit-halting} we obtain that $f\circ\alpha:\IN\to Y$ is computable relative to $0'$.
Additionally, the set 
\[A:=\{(n,k)\in\IN^2:(\exists i,j\in\IN)(d_X(\alpha(i),\alpha(j))<2^{-n}\mbox{ and }d_Y(f\alpha(i),f\alpha(j))>2^{-k})\}\]
is c.e.\ relative to $0'$ and hence computable relative to $0''$. Since $f$ is uniformly continuous, it follows that for each $k\in\IN$ there are only finitely many $n\in\IN$ with $(n,k)\in A$.
Hence, for each $k\in\IN$ we can compute some $m(k)$ with $(m(k),k+1)\not\in A$ with the help of $0''$
and $m:\IN\to\IN$ is a uniform modulus of continuity of $f$ (i.e., at every $x\in X$).
We can proceed as in the proof of Theorem~\ref{thm:limit-halting} to show that $f$ is computable relative to $0''$.
\end{proof}

It is clear that this theorem also holds for compact computable metric spaces $X$ and continuous and limit computable $f$, since any such $f$ is automatically uniformly continuous. 

\begin{corollary}[Limit computability and uniform continuity]
\label{cor:limit-uniform-continuity}
Let $X,Y$ be computable metric spaces and let $X$ be compact. If $f:X\to Y$ is limit computable and continuous,
then $f$ is computable relative to $0''$.
\end{corollary}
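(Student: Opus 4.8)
The plan is to derive this immediately from Theorem~\ref{thm:limit-uniform-continuity}, using the fact that compactness of $X$ promotes mere continuity of $f$ to uniform continuity. First I would invoke the classical Heine--Cantor theorem: a continuous function $f\colon X\to Y$ from a compact metric space $X$ into an arbitrary metric space $Y$ is automatically uniformly continuous. This requires only topological compactness of $X$ and is a purely classical fact, so it applies verbatim under the hypothesis of the corollary. Once $f$ is known to be uniformly continuous, I would simply apply Theorem~\ref{thm:limit-uniform-continuity} to the limit computable uniformly continuous function $f$ and conclude that $f$ is computable relative to $0''$.

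There is essentially no obstacle here, since the substantive content is already contained in the proof of Theorem~\ref{thm:limit-uniform-continuity}: the crucial observation there is that the \emph{mere existence} of a uniform modulus of continuity for $f$ already suffices to \emph{compute} one with the help of $0''$ (the relevant set of bad scale/precision pairs is c.e.\ relative to $0'$, and uniform continuity guarantees that for each precision only finitely many scales are bad, so a correct modulus value is eventually located). The only point worth recording explicitly is that the compactness hypothesis enters solely through this classical passage from continuity to uniform continuity, so no effective or computable form of compactness needs to be assumed in the statement.
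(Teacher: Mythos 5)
Your proposal is correct and coincides with the paper's own argument: the corollary is obtained from Theorem~\ref{thm:limit-uniform-continuity} by the classical Heine--Cantor theorem, which upgrades continuity on the compact space $X$ to uniform continuity, with no effective compactness needed. Your additional remark that compactness enters only through this classical step is accurate and consistent with the proof of the theorem, which uses only the existence (not computability) of a uniform modulus of continuity.
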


We could also prove such a result for effectively locally compact $X$
and continuous and limit computable $f$, since it is sufficient to obtain the modulus of continuity for a suitable neighborhood of the input.

We will show that Theorem~\ref{thm:limit-uniform-continuity} gives the best possible condition and $0''$ cannot be improved to $0'$.
In the following we will see some examples of continuous functions that are limit computable but not computable relatively to the halting problem.
For these examples we use the set $\Fin:=\{n\in\IN:W_n$ finite$\}$, which is known to be 
$\sO{2}$--complete in the arithmetical hierarchy \cite[Theorem~4.3.2]{Soa16}, and hence it is not computable relative to the halting problem.
Its complement $\Inf:=\IN\setminus\Fin$ is $\pO{2}$--complete and hence not even c.e.\ relative to the halting problem.
By $|A|$ we denote the \emph{cardinality} of a set $A$.

\begin{proposition}
\label{prop:cantor-real-counter}
There exists a uniformly continuous function $f:2^\IN\to\IR$ that is computable with finitely many mind changes and hence low and limit computable, 
but that is not computable relative to the halting problem.
\end{proposition}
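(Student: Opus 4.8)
The plan is to hide the $\sO{2}$--complete set $\Fin$ inside the global modulus of continuity of a function on Cantor space, and then to apply Theorem~\ref{thm:limit-halting}: if the function were computable relative to $0'$, so would be its global modulus of continuity, which we arrange to be impossible.

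First I would fix a computable stage--wise enumeration $(W_{n,t})_{n,t\in\IN}$ of the c.e.\ sets and define a computable $a:\IN^2\to\IN$ by letting $a(n,t)$ be the least $s\le t$ with $W_{n,s}=W_{n,t}$. One checks that $t\mapsto a(n,t)$ is nondecreasing, that it stabilizes at the settling stage $s_n:=\min\{s:W_{n,s}=W_n\}$ when $n\in\Fin$, and that it tends to infinity when $n\in\Inf$. Writing $[w]\In 2^\IN$ for the clopen cylinder of a finite word $w$, I would consider at stage $t$ the function $f_t:2^\IN\to\IR$ taking the value $2^{-n-1}$ on $[0^n1\,0^{a(n,t)}1]$ for every $n\in\IN$ and the value $0$ elsewhere; since the leading block $0^n1$ of a point, if present, is unique, this is well defined, and each $f_t$ is computable uniformly in $t$ and uniformly continuous. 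The desired function is the pointwise limit $f:=\lim_{t\to\infty}f_t$.

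Then I would check three properties. \emph{(i) $f$ is a well-defined uniformly continuous function.} For each $p\in 2^\IN$ the value $f_t(p)$ assumes at most two distinct values as $t$ grows --- it is $0$ until the moving cylinder $[0^n1\,0^{a(n,t)}1]$ (where $0^n1$ is the leading block of $p$) catches $p$, then $2^{-n-1}$, then possibly $0$ again once that cylinder moves further towards $z_n:=0^n1\widehat 0$ --- so the limit exists and in fact $f=\sum_{n\in\Fin}2^{-n-1}\chi_{[0^n1\,0^{s_n}1]}$. This $f$ is locally constant away from $\widehat 0$ and the points $z_n$, and vanishes on a ball around each of these, hence is continuous; since a ``bump'' of height at least $2^{-\ell}$ occurs only for $n<\ell$, a direct estimate gives uniform continuity. \emph{(ii) $f$ is computable with finitely many mind changes.} A realizer just runs this approximation: after reading a finite prefix of its input and simulating up to stage $t$ it emits a fast Cauchy name of its current rational guess for $f(p)$ (using continuity of $f$ at $\widehat 0$ to deal with all--zero inputs); since the guess changes at most twice, the whole output is rewritten at most twice. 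Hence $f$ is computable with finitely many mind changes, so low by Corollary~\ref{cor:finite-low} and in particular limit computable. \emph{(iii) $f$ is not computable relative to the halting problem.} If it were, then by Theorem~\ref{thm:limit-halting} its global modulus of continuity $M:2^\IN\mto\IN^\IN$ would be computable relative to $0'$, and feeding the fixed computable name of $z_n$ into it yields, uniformly in $n$ and relative to $0'$, a modulus $m_n\in\IN^\IN$ of $f$ at $z_n$. Since $f(z_n)=0$ whereas $f$ equals $2^{-n-1}$ on $[0^n1\,0^{s_n}1]$, whose points lie within distance $2^{-(n+s_n+1)}$ of $z_n$, the defining inequality of $m_n$ at precision $n+1$ forces $s_n\le m_n(n+1)$ whenever $n\in\Fin$. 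Thus $b(n):=m_n(n+1)$ is computable relative to $0'$ with $b(n)\ge s_n$ for all $n\in\Fin$, so $n\in\Fin\iff W_{n,b(n)}=W_n$; as the right--hand side is a $\pO{1}$ condition in $n$ and $b(n)$, this shows $\Fin\leqT0'$, contradicting that $\Fin$ is not computable relative to the halting problem.

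The main obstacle is part (iii), and specifically the extraction of the bound $b(n)\ge s_n$ from an \emph{arbitrary} modulus of continuity of $f$ at $z_n$: the construction must place the bump witnessing the settling of $W_n$ at a definite depth that every modulus at $z_n$ is forced to resolve, while simultaneously the bumps near $\widehat 0$ and near each $z_n$ must be small enough that $f$ stays (uniformly) continuous and the stage--wise guesses stabilize after at most two changes --- which is why the $n$--th bump has height $2^{-n-1}$ and sits in the cylinder $[0^n1\,0^{a(n,t)}1]$.
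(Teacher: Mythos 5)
Your proof is correct, but the impossibility argument takes a genuinely different route from the paper's. Both constructions are of the same shape --- a sum of disjoint ``bumps'' of height about $2^{-n}$ indexed by $n\in\Fin$, computed with at most two mind changes per input --- but you encode the \emph{settling time} $s_n$ of $W_n$ in the depth of the $n$--th bump, whereas the paper encodes the cardinality $|W_n|$ in the block $0^n1^{|W_n|+1}0$. More significantly, the paper derives the contradiction by \emph{value extraction}: assuming $f\leqT 0'$, it invokes Corollary~\ref{cor:uniform-metric} to get a uniform limit of computable functions, computes the maxima $\max f(K_n)$ over the computably compact sets $K_n=0^n12^\IN$, and thereby obtains the sequence $n\mapsto 2^{-n}\cdot\chi_{\Fin}(n)$ as a limit computable (hence $0'$--computable) sequence of reals. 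You instead perform a \emph{modulus extraction}: Theorem~\ref{thm:limit-halting} gives a $0'$--computable global modulus of continuity, which evaluated at $z_n=0^n1\widehat{0}$ must majorize $s_n$ (since the bump of height $2^{-n-1}$ sits at distance $2^{-(n+s_n+1)}$ from $z_n$), and then $n\in\Fin\iff W_{n,b(n)}=W_n$ is decided from $0'$ because $\{(n,m):W_{n,m}=W_n\}$ is a $\pO{1}$ subset of $\IN^2$ --- this last step is the one place where care is genuinely needed, and you handle it correctly. Your route has the virtue of locating the obstruction exactly where Theorem~\ref{thm:limit-halting} says it must live (in the modulus of continuity) and avoids the machinery of computable compactness and maxima; the paper's route avoids any quantitative analysis of moduli and reuses Corollary~\ref{cor:sequences} directly. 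Two cosmetic remarks: the parenthetical appeal to continuity at $\widehat{0}$ in step (ii) is unnecessary for a finite--mind--change machine (the default output, a fixed name of $0$, is simply never revised on such inputs), and one should fix a monotone stage--wise enumeration so that $W_{n,t}=W_n$ for \emph{all} $t\geq s_n$, which your definition of $a(n,t)$ implicitly assumes.
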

\begin{proof}
We define $f:2^\IN\to\IR$ by
\[f(p):=\left\{\begin{array}{ll}
2^{-n} & \mbox{if $0^n1^{|W_n|+1}0\prefix p$ for some $n\in\Fin$}\\
0      & \mbox{otherwise}
\end{array}\right..\]
Then $f$ is continuous and hence uniformly continuous, since $2^\IN$ is compact.
We use a fixed computable enumeration $e_n:\IN\to\IN$ of $W_n$, i.e., $\range(e_n)=W_n$.
The following algorithm shows that $f$ is computable with finitely many mind changes:
\begin{enumerate}
\item We start producing the default output $0$ as long as no other value is determined in the next step.
\item As soon as a prefix of the input $p$ of form $0^n1^{k+1}0w$ with $w\in\{0,1\}^m$ is known, we check if there are exactly $k$ 
        different values among the numbers 
        $e_n(0),...,e_n(m)$. If so, then the output is replaced by $2^{-n}$,
      otherwise by $0$.
\end{enumerate}
This algorithm describes a computation with finitely many mind changes, since for any fixed input $p$,
the output value changes at most two times.
In particular, $f$ is low and limit computable.
Let us assume that $f$ is computable relative to the halting problem. Then by Corollary~\ref{cor:uniform-metric} 
there exists a computable sequence $(f_k)_{k\in\IN}$ of computable functions 
$f_k:2^\IN\to\IR$ such that $\lim_{k\to\infty}||f_k-f||=0$.
Since the sequence $(K_n)_{n\in\IN}$ with $K_n:=0^n12^\IN$ is a computable sequence of computably compact sets, 
it follows that $(f_k(K_n))_{\langle k,n\rangle\in\IN}$ is a computable sequence of computably compact sets too \cite{Pau16}.
Since the maximum is computable on computably compact sets by \cite[Lemma~5.2.6]{Wei00}, 
it follows that $(\max f_k(K_n))_{k\in\IN}$ is a computable sequence, and it converges
for each $n\in\IN$ to $\max f(K_n)$. Hence, by
\[s_n:=\lim_{k\to\infty}\max f_k(K_n)=\max f(0^n12^\IN)=\left\{\begin{array}{ll}
2^{-n} & \mbox{if $n\in\Fin$}\\
0      & \mbox{otherwise}
\end{array}\right.\]
we define a sequence of numbers $(s_n)_{n\in\IN}$ that is limit computable and hence
computable relative to the halting problem by Corollary~\ref{cor:sequences}.
But this would imply that $\Fin$ is computable relative to the halting problem. Contradiction!
\end{proof}

We can transfer this construction from Cantor space to the unit interval $[0,1]$.

\begin{proposition}
\label{prop:unit-real-counter}
There exists a uniformly continuous function $f:[0,1]\to\IR$ that is computable with finitely many mind changes and hence low and limit computable, 
but that is not computable relative to the halting problem.
\end{proposition}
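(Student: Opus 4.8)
The plan is to repeat the argument of Proposition~\ref{prop:cantor-real-counter} with Cantor space replaced by an ``interval model'' inside $[0,1]$ carrying the same combinatorics. First I would fix once and for all a computable double sequence $(I_{n,k})_{n,k\in\IN}$ of pairwise disjoint nondegenerate closed intervals with rational endpoints such that, for each $n$, the intervals $I_{n,k}$ ($k\in\IN$) are contained in $[2^{-n-1},2^{-n}]$, are pairwise separated by nonempty gaps, stay bounded away from $2^{-n}$, and accumulate only at $2^{-n-1}$; thus the whole family accumulates only at $0$ and at the points $2^{-n-1}$. To each $I_{n,k}$ I attach the continuous ``tent'' $T_{n,k}:[0,1]\to\IR$ that vanishes outside $I_{n,k}$ and rises linearly to the value $2^{-n}$ at the midpoint of $I_{n,k}$. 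Using a fixed computable enumeration $e_n$ of $W_n$, as in the proof of Proposition~\ref{prop:cantor-real-counter}, I then set $f:=\sum_{n\in\Fin}T_{n,|W_n|}$; this is well defined because the supports of the summands are pairwise disjoint, and explicitly $f(x)=T_{n,|W_n|}(x)$ if $n\in\Fin$ and $x\in I_{n,|W_n|}$, and $f(x)=0$ otherwise.

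The easy properties would be checked next. Continuity of $f$ is clear on the interior of each $I_{n,k}$; it holds at the endpoints of the $I_{n,k}$ because tents vanish there and the intervals are separated by gaps; and it holds at the remaining points of $[0,1]$ (gap points, the accumulation points $2^{-n-1}$, and $0$) because in a sufficiently small neighbourhood of such a point $f$ is identically $0$ or is bounded by $2^{-m}$ with $m\to\infty$. Since $[0,1]$ is compact, $f$ is then automatically uniformly continuous. For the bound on mind changes I would describe a limit machine that outputs a name of $0$ by default and, as soon as the current approximation of the input $x$ is enclosed in the interior of some $I_{n,k}$, fixes this pair $(n,k)$, enumerates $W_n$ via $e_n$, revises its output to a monotonically produced (hence not further revised) name of $T_{n,k}(x)$ once at least $k$ distinct values have appeared in the enumeration, and revises back to a name of $0$ should a $(k+1)$-st distinct value ever appear. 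Checking the cases ($k=|W_n|$; $k<|W_n|$ or $n\notin\Fin$; $k>|W_n|$; $x$ never localized) one sees that the machine computes $f$ and revises the entire output at most twice, so $f$ is computable with finitely many mind changes, hence low by Corollary~\ref{cor:finite-low} and limit computable.

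Finally, the negative part transfers verbatim from the proof of Proposition~\ref{prop:cantor-real-counter}. Assuming $f$ computable relative to the halting problem, Corollary~\ref{cor:uniform-metric} yields a computable sequence $(f_k)$ of computable functions $f_k:[0,1]\to\IR$ with $||f_k-f||\to0$. Taking a computable sequence $(K_n)$ of rational closed intervals with $K_n$ containing exactly the $I_{n,k}$, $k\in\IN$, the sequence $(f_k(K_n))_{\langle k,n\rangle}$ is a computable sequence of computably compact sets by \cite{Pau16}, and since the maximum is computable on computably compact sets by \cite[Lemma~5.2.6]{Wei00}, $(\max f_k(K_n))_{k,n}$ is a computable double sequence converging in $k$ to $\max f(K_n)$, which equals $2^{-n}$ if $n\in\Fin$ and $0$ otherwise — here the pairwise disjointness of the tents' supports is what makes $\max f(K_n)$ collapse to the height of the single tent $T_{n,|W_n|}$ when $n\in\Fin$, exactly as in the Cantor-space computation. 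Hence $(\max f(K_n))_n$ would be limit computable, thus computable relative to the halting problem by Corollary~\ref{cor:sequences}, forcing $\Fin$ to be computable relative to the halting problem, contradicting its $\sO{2}$--completeness. The main obstacle is purely bookkeeping: laying out the intervals $I_{n,k}$ and verifying the mind-change algorithm at the interval endpoints and at inputs that are never localized into an $I_{n,k}$; once the supports of the tents are arranged to be pairwise disjoint, the rest of the argument is a straightforward transcription of Proposition~\ref{prop:cantor-real-counter}.
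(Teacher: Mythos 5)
Your proposal is correct and follows essentially the same route as the paper's proof: a sum of tent functions of height $2^{-n}$ indexed by $(n,|W_n|)$ for $n\in\Fin$, a finitely-many-mind-changes algorithm driven by counting distinct values in an enumeration of $W_n$, and the identical negative argument via $\max f(K_n)$, Corollary~\ref{cor:uniform-metric}, Corollary~\ref{cor:sequences} and the $\sO{2}$--completeness of $\Fin$. The only difference is cosmetic: the paper places the supports $I_{n,k}$ as intervals all anchored at $2^{-n-1}$ and shrinking as $k$ grows (so that $\Delta_{n,k}(x)$ eventually vanishes for each fixed $x$), whereas you use pairwise disjoint supports with gaps and first localize the input into one of them; both layouts support the same verification.
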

\begin{proof}
We use the computable ``triangle'' function 
\[\Delta:\IR\to\IR,x\mapsto\left\{\begin{array}{ll}
1-|x| & \mbox{if $-1\leq x\leq 1$}\\
0 & \mbox{otherwise}
\end{array}\right.\]
This function has the extreme values $f(-1)=f(1)=0$ and $f(0)=1$ and interpolates linearly between them otherwise.
Outside of the interval $(-1,1)$ the function $f$ is identically zero.
We define a computable double sequence $(\Delta_{n,k})_{n,k\in\IN}$ of triangle functions 
$\Delta_{n,k}:[0,1]\to\IR$ scaled to the interval $I_{n,k}:=(2^{-n-1},2^{-n-1}+2^{-n-k-1})$ by
\[\Delta_{n,k}(x):=\Delta(2^{n+k+2}(x-2^{-n-1})-1)\]
The function $\Delta_{n,k}$ is zero outside of $K_n:=[2^{-n-1},2^{-n}]$, in fact even outside of the interval
$I_{n,k}$. 
With growing $k$ the triangle of $\Delta_{n,k}$ 
is compressed further on the left hand side of $K_n$.
We define a continuous function $f:[0,1]\to\IR$ by
\[f(x):=\sum_{n\in\Fin}2^{-n}\Delta_{n,|W_n|}(x).\]
Similarly as in the proof of Proposition~\ref{prop:cantor-real-counter} one can show that $f$ is computable with finitely many mind changes
and not computable relative to the halting problem. For the latter part one considers the values
$s_n:=\max f(K_n)$. For the former part we proceed as follows with a given input $x\in[0,1]$:
\begin{enumerate}
\item As long as we cannot exclude that $x=2^{-n}$ for some $n\in\IN$ we produce the value $0$ as output.
\item As soon as we detect that $x\in(2^{-n-1},2^{-n})$ we consider the enumeration $e_n(0),e_n(1),...$
        of $W_n$.
\item Whenever we find a new number $k$ of distinct values in this enumeration, then we produce the value $2^{-n}\Delta_{n,k}(x)$ as output. 
\end{enumerate}
At any point $x$ at most a finite number of mind changes is required since either the value $k$ stabilizes or $\Delta_{n,k}(x)=0$
for all sufficiently large $k$.
\end{proof}

We note that Proposition~\ref{prop:unit-real-counter} cannot be strengthened such that the function $f$ is additionally continuously differentiable 
(see Corollary~\ref{cor:continuous-differentiable}). However, this can be achieved by using smooth versions of the triangle function $\Delta$
that are distributed all over the entire real line.

\begin{proposition}
\label{prop:real-real-counter}
There exists a function $f:\IR\to\IR$ that is infinitely often differentiable, computable with finitely many mind changes and hence low and limit computable, 
but that is not computable relative to the halting problem.
\end{proposition}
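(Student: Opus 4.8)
The plan is to adapt the construction of Proposition~\ref{prop:unit-real-counter}, but to place the individual ``features'' in pairwise disjoint bounded subintervals of $\IR$ — so that the only accumulation of features is at $+\infty\notin\IR$ — and to replace the triangle function by a genuine $C^\infty$ bump. First I would fix a computable $C^\infty$ function $\varphi:\IR\to\IR$ with $\varphi\geq0$, $\max\varphi=1$, $\supp\varphi=[-1,1]$, vanishing to infinite order at $\pm1$ (a rescaled copy of $x\mapsto\exp(-1/(1-x^{2}))$ on $(-1,1)$ and $0$ elsewhere). For $n,k\in\IN$ let $\varphi_{n,k}$ be the affine rescaling of $\varphi$ of height $1$ with support $I_{n,k}:=[\,n+\tfrac12+2^{-k-3},\,n+\tfrac12+2^{-k-2}\,]$, so that all $I_{n,k}$ for fixed $n$ lie inside $(n+\tfrac12,n+\tfrac34)$ and $I_{n,k}$ shrinks down to the left endpoint $n+\tfrac12$ as $k\to\infty$. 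Using a computable enumeration $e_{n}$ of $W_{n}$ as in Proposition~\ref{prop:unit-real-counter}, I would set
\[f(x):=\sum_{n\in\Fin}2^{-n}\,\varphi_{n,|W_{n}|}(x).\]
Since the intervals $(n+\tfrac12,n+\tfrac34)$ are pairwise disjoint, at most one summand is ever nonzero, so $f$ is well defined with $0\le f\le1$.

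For smoothness I would argue that $f$ is locally a single fixed smooth function: near any $x_{0}\notin\bigcup_{n}[n+\tfrac12,n+\tfrac34]$ we have $f\equiv0$; on $(n+\tfrac12,n+\tfrac34)$ we have $f\equiv0$ (if $n\notin\Fin$) or $f=2^{-n}\varphi_{n,|W_{n}|}$ (if $n\in\Fin$, where $|W_{n}|$ is then a fixed number), a fixed $C^{\infty}$ bump; and near the endpoints $n+\tfrac12,n+\tfrac34$ the relevant bump, if present, has support strictly inside $(n+\tfrac12,n+\tfrac34)$, so $f$ vanishes identically there. Hence $f\in C^{\infty}(\IR)$ and in particular $f$ is continuous. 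The finite-mind-change algorithm is that of Proposition~\ref{prop:unit-real-counter}: on a name for $x$, output $0$ by default until the name locates $x$ inside some $(n+\tfrac12,n+\tfrac34)$, then enumerate $W_{n}$ and, letting $k$ be the number of distinct values produced so far, output $2^{-n}\varphi_{n,k}(x)$. If $n\in\Fin$ this stabilises at $2^{-n}\varphi_{n,|W_{n}|}(x)=f(x)$; if $n\notin\Fin$ then, $x$ being fixed and the $I_{n,k}$ shrinking to $n+\tfrac12$, one has $\varphi_{n,k}(x)=0$ for all large $k$, so the output stabilises at $0=f(x)$. Only finitely many revisions ever occur, so $f$ is computable with finitely many mind changes, hence low and limit computable (Corollary~\ref{cor:finite-low}).

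For the negative part I would transport the ``$\max$'' argument of Proposition~\ref{prop:cantor-real-counter} to the non-compact domain $\IR$ using the jump calculus of Section~\ref{sec:represented-spaces}. Put $K_{n}:=[\,n+\tfrac12,\,n+\tfrac34\,]$; then $(K_{n})_{n}$ is a computable sequence of computably compact sets and $s_{n}:=\max f(K_{n})$ equals $2^{-n}$ if $n\in\Fin$ and $0$ otherwise. The map $(g,n)\mapsto\max g(K_{n})$ is computable of type $\CC(\IR,\IR)\times\IN\to\IR$ (the image of a computably compact set under a continuous map is computably compact \cite{Pau16}, and $\max$ is computable on computably compact subsets of $\IR$ by \cite[Lemma~5.2.6]{Wei00}), hence by Proposition~\ref{prop:jump-endofunctor}, Theorem~\ref{thm:jumps-products-exponentials} and computability of $\id:\IN\to\IN'$ also of type $\CC(\IR,\IR)'\times\IN\to\IR'$. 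Now suppose $f$ were computable relative to the halting problem. Then $f$ is limit computable as a point of $\CC(\IR,\IR)$ by Corollary~\ref{cor:limit-function-points}, i.e.\ a computable point of $\CC(\IR,\IR)'$, so $n\mapsto s_{n}$ is a computable point of $\CC(\IN,\IR')\equiv\CC(\IN,\IR)'$ (Corollary~\ref{cor:sequences}), i.e.\ $(s_{n})_{n}$ is a limit computable, hence computable-relative-to-$0'$, sequence of reals (Corollary~\ref{cor:sequences}). Since $s_{n}\in\{0,2^{-n}\}$, a rational approximation of $s_{n}$ to precision $2^{-n-2}$ obtained with the oracle $0'$ decides whether $n\in\Fin$; thus $\Fin$ would be computable relative to the halting problem, which it is not. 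Hence $f$ is not computable relative to the halting problem, completing the argument.

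The delicate point is the tension between the two demands on $f$: the supports $I_{n,k}$ must shrink — so that the mind-change computation terminates and so that $\max f(K_{n})$ faithfully witnesses $n\in\Fin$ — while staying confined to disjoint bounded pieces that accumulate only at $+\infty$, which is exactly what forces $f$ to be locally a single fixed bump and hence $C^{\infty}$; the bookkeeping around the accumulation endpoints $n+\tfrac12$ is where one has to be careful. Adapting the compactness-based argument of Proposition~\ref{prop:cantor-real-counter} to $\IR$ is the only other subtlety, and the machinery of Section~\ref{sec:represented-spaces} reduces it to routine manipulation.
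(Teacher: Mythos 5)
Your construction is essentially the paper's own proof: a locally finite sum of genuine $C^\infty$ bumps indexed by $n\in\Fin$, placed in pairwise disjoint intervals marching off to $+\infty$ so that the accumulation of features (which blocks smoothness in Proposition~\ref{prop:unit-real-counter}) is pushed out of $\IR$, with the same finite-mind-change algorithm and the same $s_n=\max f(K_n)$ argument reducing $\Fin$ to the halting problem. The only differences are cosmetic (placement of the supports, keeping the $2^{-n}$ height factor), except that your negative part is actually slightly more careful than the paper's terse ``similarly as before'': since $\IR$ is not compact, Corollary~\ref{cor:uniform-metric} does not apply directly, and your route through Proposition~\ref{prop:jump-endofunctor}, Theorem~\ref{thm:jumps-products-exponentials} and Corollary~\ref{cor:sequences} is the right way to transport the $\max$ argument to the non-compact domain.
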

\begin{proof}
We use the computable ``bump'' function $\Delta:\IR\to\IR$
\[\Delta(x):=\left\{\begin{array}{ll}
e^{\frac{1}{1-x^2}} & \mbox{if $|x|<1$}\\
0                       & \mbox{otherwise}
\end{array}\right.,\]
which is infinitely often continuously differentiable, computable and zero outside of $(-1,1)$.
We define a computable double sequence $(\Delta_{n,k})_{n,k\in\IN}$ of bump functions 
$\Delta_{n,k}:\IR\to\IR$ scaled to the interval $I_{n,k}:=(n,n+2^{-k-1})$ by
\[\Delta_{n,k}(x):=\Delta(2^{k+2}(x-n)-1)\]
The function $\Delta_{n,k}$ is zero outside of $K_n:=[n,n+1]$, in fact even outside of the interval
$I_{n,k}$. 
With growing $k$ the triangle of $\Delta_{n,k}$ 
is compressed further on the left hand side of $K_n$.
We define $f:\IR\to\IR$ by
\[f(x):=\sum_{n\in\Fin}\Delta_{n,|W_n|}(x).\]
Similarly as in the proof of Proposition~\ref{prop:unit-real-counter} one can show that $f$ is computable with finitely many mind changes
and not computable relative to the halting problem. Additionally, $f$ is infinitely often differentiable, since the $\Delta_{n,k}$ are so.
\end{proof}

We can also produce a similar counterexample of type $f:2^\IN\to\IS$, where 
$\IS$ denotes \emph{Sierpi\'nski space} $\IS=\{0,1\}$ that is represented by $\delta_\IS(p)=0:\iff p=\widehat{0}$.

\begin{proposition}
\label{prop:cantor-sierp-counter}
There exists a continuous function $f:2^\IN\to\IS$ that is computable with finitely many mind changes and hence low and limit computable, but that is not computable relative to the halting problem.
\end{proposition}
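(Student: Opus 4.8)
The plan is to adapt the construction of Proposition~\ref{prop:cantor-real-counter}, but to spread the coding of $\Fin$ over infinitely many pairwise disjoint cylinders, so that the open set $f^{-1}(\{1\})$ can no longer be enumerated with the halting problem, while keeping $f$ computable with finitely many mind changes. For a finite binary word $u$ write $[u]:=\{p\in 2^\IN:u\prefix p\}$, and for $n\in\IN$ put $C_n:=[0^n1]$; the $C_n$ are pairwise disjoint and cover $2^\IN\setminus\{\widehat0\}$. Fixing a computable enumeration of each $W_n$, I would define $f:2^\IN\to\IS$ by $f(\widehat0):=0$ and, for $p=0^n1r\in C_n$,
\[
f(0^n1r):=\begin{cases}0&\text{if }n\in\Fin\text{ and }r=1^{|W_n|}\widehat0,\\1&\text{otherwise.}\end{cases}
\]
Continuity is then immediate: $f^{-1}(\{0\})=\{\widehat0\}\cup\{0^n1\,1^{|W_n|}\widehat0:n\in\Fin\}$ is a union of singletons whose only accumulation point $\widehat0$ belongs to the set, hence it is closed and $f^{-1}(\{1\})$ is open.

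Next I would give a realizer that never makes more than three mind changes. The machine maintains a current guess $g\in\{0,1\}$, always writing $g$ to output cell $0$ and $0$ to all other cells, and starts with $g=0$ (correct if the input is $\widehat0$). Once the first $1$ has been read, say the input is $0^n1r$, it sets $g:=1$ and begins reading $r$ while enumerating $W_n$. As long as $r$ has so far been read as a block of $1$s not yet terminated by a $0$ it keeps $g=1$; once a prefix $1^a0$ of $r$ has appeared the threshold $a$ is \emph{fixed}, and the machine sets $g:=0$ exactly while the number of distinct elements of $W_n$ enumerated so far equals $a$ \emph{and} the part of $r$ after that $0$ has consisted only of zeros, and sets $g:=1$ again as soon as that count exceeds $a$ or a $1$ appears in that part of $r$. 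Since the count of distinct elements of $W_n$ is nondecreasing and $a$ is fixed, and the suffix of $r$ turns from ``clean'' to ``dirty'' at most once, $g$ is overwritten at most three times on any input, and its eventual value is $f$ of the input. Hence $f$ is computable with finitely many mind changes, and therefore low and limit computable by Corollary~\ref{cor:finite-low}.

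Finally I would prove that $f$ is not computable relative to the halting problem, using the identity
\[
C_n\In f^{-1}(\{1\})\iff n\in\Inf,
\]
which holds because for $n\in\Fin$ the point $0^n1\,1^{|W_n|}\widehat0\in C_n$ is mapped to $0$, while for $n\in\Inf$ no tail $r$ satisfies the first clause. If $f$ were computable relative to $0'$ it would have a realizer $G\langle 0',\cdot\rangle$ with $G$ computable, so $f^{-1}(\{1\})$ would be a union of cylinders whose index set is c.e.\ relative to $0'$; by compactness of $2^\IN$ this would make $\{u:[u]\In f^{-1}(\{1\})\}$ a $\sO{2}$ set of words, and instantiating at the words $0^n1$ together with the identity above would make $\Inf$ a $\sO{2}$ set, i.e.\ c.e.\ relative to the halting problem, contradicting that $\Inf$ is $\pO{2}$--complete.

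The delicate point is the realizer: one must ensure that the guessing strategy stabilizes after finitely many overwrites on \emph{every} input, in particular on inputs in a cylinder $C_n$ with $n\in\Inf$ and on inputs $0^n1\,1^a\widehat0$ with $a\neq|W_n|$. This is exactly why the ``bad'' tail encodes $|W_n|$ in unary: once the finite prefix $1^a0$ of $r$ has been read, the test against $W_n$ becomes the $\dO{2}$ condition ``$W_n$ has exactly $a$ elements'', which is decidable with finitely many mind changes --- in contrast to the naive choice of $r=\widehat0$ as the bad tail, for which the realizer would have to distinguish $\Fin$ from $\Inf$ on the fly and hence flip unboundedly often.
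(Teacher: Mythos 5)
Your proof is correct and follows the same overall strategy as the paper: code the $\sO{2}$--complete set $\Fin$ into the cylinders $0^n12^\IN$ via a unary cardinality test against an enumeration of $W_n$, observe that the monotonicity of the count of enumerated elements bounds the number of mind changes, and derive the contradiction from $C_n\In f^{-1}(\{1\})\iff n\in\Inf$ together with the fact that $\Inf$ is not $\sO{2}$. The two genuine (if minor) differences are these. First, your ``bad'' set inside $C_n$ is the singleton $\{0^n1\,1^{|W_n|}\widehat{0}\}$ rather than the paper's clopen cylinder $0^n1^{|W_n|+1}0\,2^\IN$; this costs you the extra ``cleanliness'' test on the tail $s$ in the realizer (still at most three mind changes, so nothing is lost), whereas the paper's choice makes the preimage of $0$ a union of clopen sets plus $\{\widehat{0}\}$ and the algorithm's output depend only on the prefix read. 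Second, and more substantively, your lower bound is a from-scratch argument: you apply the use principle to a $0'$--realizer to write $f^{-1}(\{1\})$ as a union of cylinders with $0'$--c.e.\ index set, then use compactness of $[0^n1]$ to conclude that $\{u:[u]\In f^{-1}(\{1\})\}$ is $\sO{2}$. The paper instead routes this through its own machinery --- the computable isomorphism $\OO(2^\IN)\cong\CC(2^\IN,\IS)$, the computable maps $\forall_{K_n}$, and Corollary~\ref{cor:sequences} --- which is exactly the compactness/use argument packaged into represented-space language. Your version is more elementary and self-contained; the paper's version is shorter given the framework and illustrates how the earlier results are meant to be applied. Both are sound.
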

\begin{proof}
We define $f:2^\IN\to\IS$ by
\[f(p):=\left\{\begin{array}{ll}
0 & \mbox{if $p=\widehat{0}$}\\
0 & \mbox{if $0^n1^{|W_n|+1}0\prefix p$ for some $n\in\Fin$}\\
1 & \mbox{otherwise}
\end{array}\right..\]
The sets $A_n:=0^n1^{|W_n|+1}02^\IN$ are clopen for every $n\in\Fin$ and 
the set $A:=\{\widehat{0}\}\cup\bigcup_{n\in\Fin}A_n$ is closed.
Hence $f$ is the characteristic function $\chi_U$ of the open set $U=2^\IN\setminus A$ and hence continuous.
The following algorithm shows that $f$ is computable with finitely many mind changes:
\begin{enumerate}
\item We start producing the default output $0$ as long as no other value is determined in the next step.
\item As soon as a prefix of the input $p$ of form $0^n1^{k+1}0^{m+1}$ is known, we check if there are exactly $k$ 
        different values among the numbers 
        $e_n(0),...,e_n(m)$. If so, then the output is replaced by $0$,
      otherwise by $1$.
\end{enumerate}
This algorithm describes a computation with finitely many mind changes, since for any fixed input $p$,
the output value changes at most three times.
In particular, $f$ is limit computable.

Let us assume that $f=\chi_U$ is also computable relative to the halting problem. 
We note that the map $\OO(2^\IN)\to\CC(2^\IN,\IS),V\mapsto\chi_V$ is a computable isomorphism,
and hence $U$ is a computable point in $\OO(2^\IN)'$.
The map $\forall_{K}:\OO(2^\IN)\to\IS$ with $\forall_K(V)=1\iff K\In V$ is computable for every
computably compact set $K\In2^\IN$, and the map $K\mapsto\forall_K$ is a computable isomorphism 
between the space of compact subsets of $2^\IN$ and $\CC(\OO(2^\IN),\IS)$~\cite{Pau16}. 
Since $(K_n)_{n\in\IN}$ with $K_n:=0^n12^\IN$ is a computable sequence
of computably compact sets, we obtain that $\forall_{K_n}$ is computable uniformly in $n$.
Hence $n\mapsto\forall_{K_n}(U)$ is computable relatively to the halting problem by Corollary~\ref{cor:sequences}.
This is a contradiction since
\[\forall_{K_n}(U)=1\iff 0^n12^\IN\In U\iff n\in\Inf\]
and $\Inf$ is not c.e.\ relative to the halting problem.
\end{proof}

Since a set $U\In2^\IN$ is c.e.\ open (relative to some oracle $q$) if and only if $\chi_U:2^\IN\to\IS$ is computable (relative to $q$) and $F\In2^\IN$
is a \emph{computable $\sO{2}$--set} in the Borel hierarchy (or, equivalently, a \emph{computable $F_\sigma$--set}) if and only 
if $\chi_F:2^\IN\to\IS'$ is computable \cite[Proposition~26]{PdB13},
the previous proposition can also be rephrased in the following way.

\begin{corollary}
\label{cor:open-set}
There is a computable $\sO{2}$--set $U\In2^\IN$ in the Borel hierarchy that is open, but not c.e.\ open relative to the halting problem.
\end{corollary}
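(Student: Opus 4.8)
The plan is to obtain the corollary as a direct restatement of Proposition~\ref{prop:cantor-sierp-counter}, using the dictionary recalled just before the corollary: a set $U\In2^\IN$ is c.e.\ open relative to an oracle $q$ if and only if its characteristic function $\chi_U:2^\IN\to\IS$ is computable relative to $q$, and $U$ is a computable $\sO{2}$--set in the Borel hierarchy if and only if $\chi_U:2^\IN\to\IS'$ is computable \cite[Proposition~26]{PdB13}. So there is essentially nothing new to prove; the point is to set up the translation carefully.

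First I would take the function $f:2^\IN\to\IS$ constructed in the proof of Proposition~\ref{prop:cantor-sierp-counter} and set $U:=f^{-1}\{1\}$, so that $f=\chi_U$; since in that proof $U=2^\IN\setminus A$ with $A$ closed, $U$ is open. Proposition~\ref{prop:cantor-sierp-counter} shows that $f=\chi_U$ is limit computable, so by Theorem~\ref{thm:limit-computability} (equivalence of (1) and (2), applied with $X=2^\IN$ and $Y=\IS$) the map $\chi_U:2^\IN\to\IS'$ is computable, which by the dictionary means precisely that $U$ is a computable $\sO{2}$--set in the Borel hierarchy. On the other hand, Proposition~\ref{prop:cantor-sierp-counter} also shows that $f=\chi_U$ is not computable relative to the halting problem, so by the first half of the dictionary (with $q=0'$) the set $U$ is not c.e.\ open relative to $0'$. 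Together these establish the corollary.

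I do not expect a genuine obstacle here: all of the combinatorics — encoding the $\sO{2}$--complete set $\Fin$ (equivalently $\Inf$) into $U$ so that $n\in\Inf$ can be read off from $\forall_{K_n}(U)$, and the verification that $\chi_U$ is computable with finitely many mind changes — has already been carried out in the proof of Proposition~\ref{prop:cantor-sierp-counter}. The only subtlety worth flagging is the meaning of ``computable $\sO{2}$--set'': this is to be taken, as fixed in the paragraph preceding the statement, via computability of the $\IS'$--valued characteristic function (equivalently, as a computable $F_\sigma$--complement), and under that convention the argument above is a one-line translation of the previous proposition.
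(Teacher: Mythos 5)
Your proposal is correct and is essentially the paper's own argument: the corollary is obtained by translating Proposition~\ref{prop:cantor-sierp-counter} through the dictionary between subsets of $2^\IN$ and their $\IS$-- respectively $\IS'$--valued characteristic functions, with Theorem~\ref{thm:limit-computability} supplying the (implicit in the paper) equivalence between limit computability of $\chi_U:2^\IN\to\IS$ and computability of $\chi_U:2^\IN\to\IS'$. Nothing is missing.
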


A similar result for $\IR^n$ was proved by Ziegler~\cite[Theorem~4.4.~a)]{Zie07c}.
There is even a much stronger counter example for $2^\IN$.
There is a $\pO{2}$--set $A\In2^\IN$ that is a singleton (its only member being the $\omega$--jump of the empty set)
such that this set is closed but not a $\pO{1}$--set relative to $0^{(n)}$ for any $n\in\IN$ (see \cite[Proposition~1.8.62 and Exercise~1.8.67]{Nie09}, \cite[\S~15.1 Theorem~XII]{Rog87} and \cite{KT55}).

\begin{proposition}[Kuzn\'ecov and Traht\'enbrot 1955]
There is a $\sO{2}$--set $A\In2^\IN$ with a singleton complement that is not c.e.\ open relative to $0^{(n)}$ for any $n\in\IN$.
\end{proposition}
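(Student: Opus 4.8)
The plan is to obtain this statement as the complementary reformulation of the $\pO{2}$--singleton counterexample recalled immediately above. Write $z:=0^{(\omega)}$ for the $\omega$--jump of the empty set, coded as an element of $2^\IN$ by $z\langle n,k\rangle:=0^{(n)}(k)$, and let $B:=\{z\}$ be the associated singleton, which is closed but, by the cited sources, not a $\pO{1}$--set relative to $0^{(n)}$ for any $n\in\IN$. I would take $A:=2^\IN\setminus B$; by construction $A$ has the singleton complement $B$. That $A$ is a $\sO{2}$--set amounts to $B$ being a $\pO{2}$--set, which in turn means that ``$p=z$'' is expressible by a $\pO{2}$ formula in the variable $p\in2^\IN$: its only nontrivial clause is ``$p^{[n+1]}=(p^{[n]})'$ for all $n$'', where $p^{[n]}$ is the $n$--th column of $p$, and for fixed $n$ this is a $\forall e$--quantification over the biconditional ``$p^{[n+1]}(e)=1\iff e\in(p^{[n]})'$'', whose $\Rightarrow$--direction is $\sO{1}$ in $p$ and whose $\Leftarrow$--direction is $\pO{1}$ in $p$; so the clause, and hence ``$p=z$'', is $\pO{2}$ in $p$. (This is the classical observation going back to Kuzn\'ecov and Traht\'enbrot, so in the write-up I would simply cite it.)

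The second step is the routine equivalence: for any oracle $w\in\IN^\IN$ a set $U\In2^\IN$ is c.e.\ open relative to $w$ if and only if $2^\IN\setminus U$ is a $\pO{1}$--set relative to $w$, since both sides say that $2^\IN\setminus U$ is the set of infinite paths through some $w$--computable tree. Applying this with $U=A$ and $w=0^{(n)}$, the statement ``$A$ is c.e.\ open relative to $0^{(n)}$'' turns into ``$B=\{0^{(\omega)}\}$ is a $\pO{1}$--set relative to $0^{(n)}$'', which fails for every $n\in\IN$ by the statement recalled above. Hence $A$ is not c.e.\ open relative to $0^{(n)}$ for any $n$; in particular $A$ is a $\sO{2}$--set that is open but (taking $n=0$) not even c.e.\ open, which strengthens Corollary~\ref{cor:open-set}.

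If one prefers to avoid appealing to the literature for the failure of $\pO{1}$--ness, the fact to re-prove is that a singleton $\{x\}\In2^\IN$ that is a $\pO{1}$--set relative to $w$ satisfies $x\leqT w$: then $2^\IN\setminus\{x\}=\bigcup_{\sigma\in W}[\sigma]$ for some $w$--c.e.\ set of words $W$, and from $w$ one recovers $x|_m$ by enumerating $W$ and waiting until $2^m-1$ of the $2^m$ words of length $m$ have their cylinder covered by finitely many members of $W$ enumerated so far (each such covering is detectable by compactness of $[\sigma]$ in $2^\IN$), the surviving word then being $x|_m$. Combined with $0^{(n)}\lT 0^{(n+1)}\leqT 0^{(\omega)}$ this gives $0^{(\omega)}\nleqT 0^{(n)}$, and hence the claim. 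The only genuinely delicate ingredient is the quantifier count showing that $0^{(\omega)}$ is a $\pO{2}$ singleton; the rest is bookkeeping with complements and relativized definitions.
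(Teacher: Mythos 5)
Your proposal is correct and follows the same route the paper takes: the paper gives no explicit proof but derives the proposition by complementing the cited fact that $\{0^{(\omega)}\}$ is a $\pO{2}$ singleton that is closed but not $\pO{1}$ relative to any $0^{(n)}$, which is exactly your argument. You additionally fill in the details the paper delegates to the references (the quantifier count showing $\{0^{(\omega)}\}$ is $\pO{2}$ and the compactness argument that a $\pO{1}(w)$ singleton in $2^\IN$ has its member computable from $w$), and both are carried out correctly.
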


Further results along these lines can be found in~\cite{Cik75}.
If we re-translate this back to functions, then we obtain the following corollary.

\begin{corollary}
\label{cor:cantor-sierp-n-jump}
There is a continuous and limit computable $f:2^\IN\to\IS$ that is not computable relative to $0^{(n)}$ for any $n\in\IN$.
\end{corollary}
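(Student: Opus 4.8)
The plan is to re-translate the Kuzn\'ecov--Traht\'enbrot proposition into a statement about functions, exactly as Corollary~\ref{cor:open-set} re-translates Proposition~\ref{prop:cantor-sierp-counter}. Let $A\In2^\IN$ be the $\sO{2}$--set with singleton complement supplied by the preceding proposition, and set $f:=\chi_A:2^\IN\to\IS$. Since $2^\IN\setminus A$ is a singleton, it is closed, so $A$ is open and hence $f=\chi_A$ is continuous.

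First I would show that $f$ is limit computable. The set $A$ is a $\sO{2}$--set in the effective Borel hierarchy, so by \cite[Proposition~26]{PdB13} (recalled just above Corollary~\ref{cor:open-set}) the function $\chi_A:2^\IN\to\IS'$ is computable. Reading this as a statement about the problem $f:\In2^\IN\to\IS$ and applying the equivalence (1)$\iff$(2) of Theorem~\ref{thm:limit-computability} with $X=2^\IN$ and $Y=\IS$, so that $Y'=\IS'$, we conclude that $f:2^\IN\to\IS$ is limit computable. Next, for the negative part, suppose towards a contradiction that $f$ is computable relative to $0^{(n)}$ for some $n\in\IN$. By the correspondence between c.e.\ open subsets of $2^\IN$ and computable $\IS$--valued functions, which holds relative to an arbitrary oracle (again recalled above Corollary~\ref{cor:open-set}: $U$ is c.e.\ open relative to $q$ if and only if $\chi_U$ is computable relative to $q$), this would make $A$ c.e.\ open relative to $0^{(n)}$. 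But the Kuzn\'ecov--Traht\'enbrot proposition asserts that $A$ is not c.e.\ open relative to $0^{(n)}$ for any $n\in\IN$, a contradiction. Hence $f$ is continuous, limit computable, and not computable relative to $0^{(n)}$ for any $n$.

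I do not expect a real obstacle here: all the content sits in the cited proposition, and the corollary is merely its dictionary translation through the already-established equivalences (computable $\sO{2}$--set versus computable $\IS'$--valued map, and limit computability versus computability into the jump by Theorem~\ref{thm:limit-computability}). The only point needing a moment of care is to invoke the $\IS$--valued-function/c.e.-open dictionary in its relativised form for each oracle $0^{(n)}$, which is immediate since the unrelativised statement is asserted to hold relative to an arbitrary oracle $q$.
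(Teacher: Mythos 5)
Your proof is correct and follows exactly the route the paper intends: the paper's entire ``proof'' is the remark that the Kuzn\'ecov--Traht\'enbrot set re-translates into a function via the c.e.-open/$\IS$-valued and $\sO{2}$/$\IS'$-valued dictionaries stated before Corollary~\ref{cor:open-set}, which is precisely the translation you carry out, including the correct relativisation to each $0^{(n)}$ and the observation that a singleton complement makes $A$ open and hence $\chi_A$ continuous.
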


This also shows that Corollary~\ref{cor:limit-uniform-continuity} cannot be generalized to arbitrary spaces $Y$
and that the function $f$ in Corollary~\ref{cor:cantor-sierp-n-jump} cannot have a realizer $F:2^\IN\to\IN^\IN$
that is simultaneously continuous and limit computable, even though it has separate realizers with each of these
properties individually. 

There are some classes of functions for which the condition on the modulus of continuity in Theorem~\ref{thm:limit-halting} is
automatically satisfied. The first example is the class of H\"older continuous functions.
Let $(X,d_X)$ and $(Y,d_Y)$ be metric spaces. Then $f:X\to Y$ is called \emph{H\"older continuous} with constant $L>0$ and exponent $\alpha\in(0,1]$ if
\[d_Y(f(x),f(y))\leq L d_X(x,y)^\alpha\]
holds for all $x,y\in X$. 
A function is called \emph{Lipschitz continuous} with constant $L>0$ if it is
H\"older continuous with constant $L$ and exponent $\alpha=1$.
Since every H\"older continuous function automatically has a computable constant $L$ (perhaps slightly larger than necessary) and a computable exponent $\alpha$ (perhaps slightly smaller than necessary and correct at least for small distances $d_X(x,y)<1$),
it follows that it automatically has a computable global modulus of continuity. 
Hence we obtain the following corollary of Theorem~\ref{thm:limit-halting}.

\begin{corollary}[H\"older continuity]
\label{cor:Hoelder}
Let $X$ and $Y$ be computable metric spaces, and let $f:X\to Y$ be H\"older continuous.
Then $f$ is limit computable if and only if $f$ is computable relative to the halting problem.
\end{corollary}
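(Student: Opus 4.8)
The plan is to derive both implications directly from Theorem~\ref{thm:limit-halting}. The ``if'' direction is immediate: every function that is computable relative to the halting problem is limit computable, as recalled at the beginning of this section (and also as a special case of Corollary~\ref{cor:limit-halting}). So all the work goes into the ``only if'' direction.

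So I would suppose that $f:X\to Y$ is limit computable and H\"older continuous, say $d_Y(f(x),f(y))\le L\,d_X(x,y)^\alpha$ for all $x,y$, with $L>0$ and $\alpha\in(0,1]$. First, H\"older continuity trivially implies continuity. The remaining point -- and essentially the only thing that needs an argument -- is that $f$ admits a \emph{computable} uniform modulus of continuity. Here I would argue as follows. Since a modulus of continuity probes only distances $d_X(x,y)<1$, and since $t^{\widetilde\alpha}\ge t^{\alpha}$ for $0<t<1$ whenever $\widetilde\alpha\le\alpha$, the H\"older inequality on that range of distances continues to hold with $L$ replaced by any rational $\widetilde L\ge\max(L,1)$ and $\alpha$ replaced by any rational $\widetilde\alpha\in(0,\alpha]$. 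Fixing such rationals once and for all is a legitimate non-uniform choice -- they merely have to exist -- and then
\[m(n):=\left\lceil\frac{n+\lceil\log_2\widetilde L\rceil+1}{\widetilde\alpha}\right\rceil\]
defines a computable function $m:\IN\to\IN$ for which a one-line estimate shows that $d_X(x,y)<2^{-m(n)}$ forces $d_Y(f(x),f(y))<2^{-n}$; that is, $m$ is a uniform modulus of continuity of $f$.

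Consequently the constant map $x\mapsto m$ is a computable selector of the global modulus of continuity $M$ of $f$ from Proposition~\ref{prop:modulus}, so in particular $M$ is computable relative to the halting problem, and continuity of $f$ has already been noted. Theorem~\ref{thm:limit-halting} then yields that $f$ is computable relative to the halting problem, which completes the equivalence. The one step I expect to require care is precisely the passage from the abstract H\"older data $(L,\alpha)$ to rational surrogates that remain valid on the scale of distances a modulus actually examines; once that bookkeeping is in place, the statement falls out of Theorem~\ref{thm:limit-halting} with no further effort.
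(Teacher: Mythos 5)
Your proposal is correct and follows essentially the same route as the paper: the paper also observes that a H\"older continuous function automatically admits a computable global modulus of continuity (by passing to a rational constant slightly larger than $L$ and a rational exponent slightly smaller than $\alpha$, valid for distances below $1$) and then invokes Theorem~\ref{thm:limit-halting}. You have merely made the bookkeeping for the modulus explicit, which is a faithful expansion of the paper's one-paragraph argument.
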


Restricted to the set of H\"older continuous functions with a fixed Lipschitz constant
$L$ and a fixed exponent $\alpha$ the map in Corollary~\ref{cor:limit-halting} is even a computable 
isomorphism, as the proof of ``(2)$\TO$(1)'' of Theorem~\ref{thm:limit-halting} is uniform.
We note that even restricted to Lipschitz continuous functions the notions of a low function
and a function that is computable relative to a low oracle are incomparable by Proposition~\ref{prop:low-relative-low}.

In the table in Figure~\ref{tab:limit-oracle} we summarize which oracle classes are sufficient
to compute a limit computable function $f:X\to Y$ on complete computable metric spaces
that satisfies extra continuity assumptions.

\begin{figure}[htb]
\begin{tabular}{l|c}
$f$ limit computable and & oracle class\\\hline
&\\[-0.3cm]
continuous & $\dI{1}$\\[0.1cm]
uniformly continuous & $\sO{2}$\\[0.1cm]
Lipschitz or H\"older continuous & $\sO{1}$
\end{tabular}
\caption{Oracle classes sufficient to compute $f:X\to Y$.}
\label{tab:limit-oracle}
\end{figure}

Since every continuously differentiable function $f:[0,1]\to\IR$ is Lipschitz continuous with Lipschitz constant $\max f'[0,1]$,
we obtain the following conclusion (here it is important that $[0,1]$ is compact, and the conclusion does not hold 
for functions of type $f:\IR\to\IR$ by Proposition~\ref{prop:real-real-counter}).

\begin{corollary}[Continuously differentiable functions]
\label{cor:continuous-differentiable}
Let $f:[0,1]\to\IR$ be continuously differentiable. Then $f$ is limit computable if and only if $f$ is computable relative to the halting problem.
\end{corollary}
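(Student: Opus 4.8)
The plan is to reduce the statement to the Hölder continuous case already settled in Corollary~\ref{cor:Hoelder}. First I would dispose of the ``if'' direction, which needs nothing specific to differentiability: every function that is computable relative to the halting problem is in particular limit computable, as recorded uniformly in Corollary~\ref{cor:limit-halting}. So all the content lies in the ``only if'' direction.

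For the ``only if'' direction, suppose $f:[0,1]\to\IR$ is continuously differentiable and limit computable. Since $f'$ is continuous on the compact interval $[0,1]$, it is bounded, so $L:=\max_{x\in[0,1]}|f'(x)|<\infty$. By the mean value theorem, for all $x,y\in[0,1]$ there is a $\xi$ between $x$ and $y$ with $f(x)-f(y)=f'(\xi)(x-y)$, hence $|f(x)-f(y)|\le L\,|x-y|$. Thus $f$ is Lipschitz continuous, i.e.\ Hölder continuous with exponent $1$ and constant $L$. Since $[0,1]$ and $\IR$ are computable metric spaces, Corollary~\ref{cor:Hoelder} applies directly and yields that the limit computable function $f$ is computable relative to the halting problem. (If one wanted to avoid citing Corollary~\ref{cor:Hoelder} one could instead note that a Lipschitz function admits an obvious computable global modulus of continuity and invoke Theorem~\ref{thm:limit-halting}; but Corollary~\ref{cor:Hoelder} already packages exactly this.)

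I do not expect any genuine obstacle: the argument is just the mean value theorem feeding into Corollary~\ref{cor:Hoelder}. The one point worth flagging in the write-up is where compactness of the domain is used: for a $C^1$ function on a non-compact domain such as $\IR$ the derivative need not be bounded, there is no global Lipschitz constant, and indeed Proposition~\ref{prop:real-real-counter} shows the conclusion is false for $f:\IR\to\IR$. So the proof genuinely relies on $[0,1]$ being compact, and beyond that is entirely routine.
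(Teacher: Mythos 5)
Your proof is correct and matches the paper's argument exactly: the paper derives the corollary from the remark that a $C^1$ function on the compact interval $[0,1]$ is Lipschitz with constant $\max |f'|[0,1]$, and then invokes Corollary~\ref{cor:Hoelder}, just as you do. Your added remarks on the ``if'' direction and on the failure for non-compact domains (Proposition~\ref{prop:real-real-counter}) also mirror the paper's own commentary.
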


We can also draw some conclusions on linear operators.
We recall that a computable normed space is just a separable normed
space such that the induced metric space is computable and makes the algebraic operations computable.
By the Borel graph theorem of Schwartz~\cite{Sch66b} (see also \cite[Corollary~II.10.4]{AT05}) every Borel measurable
linear operator $T:X\to Y$ from a Banach space $X$ to a normed space
$Y$ is continuous, i.e., bounded.
Using Theorem~\ref{thm:Borel-continuity} we obtain the following
conclusion on effectively Borel measurable operators.

\begin{corollary}[Borel measurable linear functions]
\label{cor:linear-Borel}
Let $X$ be a computable Banach space, let $Y$ be computable normed spaces, and let $T:X\to Y$ be linear. Then $T$ is effectively Borel measurable if and only if
it is computable relative to a hyperarithmetical oracle (i.e., a $\Delta^1_1$ oracle). 
\end{corollary}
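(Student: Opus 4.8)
The plan is to obtain the corollary as the conjunction of the Borel graph theorem of Schwartz and the continuity characterization already established in Theorem~\ref{thm:Borel-continuity}; for linear maps the only thing that Theorem~\ref{thm:Borel-continuity} is missing, namely continuity, comes for free from linearity and Borel measurability.

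First I would dispose of the easy implication. Suppose $T:X\to Y$ is computable relative to a hyperarithmetical oracle $p\in\Delta^1_1$. Then for each $n$ the preimage $T^{-1}(U_n)$ of the $n$--th c.e.\ open subset $U_n$ of $Y$ is $\Sigma^{0,p}_1$, uniformly in $n$; since $p\in\Delta^1_1$, such a set is $\Delta^1_1$ and a $\Delta^1_1$--code for it can be produced uniformly in $n$. Hence $T$ is effectively Borel measurable (in the sense of \cite{Bra05} used throughout this article).

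For the substantive direction, assume $T$ is effectively Borel measurable. In particular $T$ is Borel measurable in the ordinary topological sense. Since $X$ is a computable Banach space, $Y$ a computable normed space, and $T:X\to Y$ is linear, the Borel graph theorem of Schwartz~\cite{Sch66b} (see also \cite[Corollary~II.10.4]{AT05}) applies and shows that $T$ is continuous, i.e., bounded. Now $X$, being a computable Banach space, is a complete computable metric space, $Y$ is a computable metric space, and the whole space $A:=X$ is trivially an effective $\Sigma^1_1$--subset of $X$. Thus $T:X\to Y$ is an effectively Borel measurable \emph{and continuous} function defined on an effective $\Sigma^1_1$--set, and Theorem~\ref{thm:Borel-continuity} yields that $T$ is computable relative to some hyperarithmetical (i.e., $\Delta^1_1$) oracle.

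There is no real obstacle here beyond correctly lining up the hypotheses: one has to note that the effective notion of Borel measurability in particular entails plain topological Borel measurability, which is what Schwartz's theorem consumes, and that $X$ being Banach supplies the completeness required by Theorem~\ref{thm:Borel-continuity} while $A=X$ being effective $\Sigma^1_1$ is immediate. The genuine content of the statement is already contained in Theorem~\ref{thm:Borel-continuity}; the role of linearity is solely to convert ``Borel measurable'' into ``Borel measurable and continuous'' via the Borel graph theorem.
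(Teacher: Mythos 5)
Your proof is correct and follows exactly the route the paper takes: the Borel graph theorem of Schwartz upgrades ``effectively Borel measurable and linear'' to ``continuous'', and then Theorem~\ref{thm:Borel-continuity} (with $A=X$ and completeness supplied by the Banach hypothesis) gives computability relative to a hyperarithmetical oracle. Your explicit treatment of the easy converse direction is a welcome addition that the paper leaves implicit.
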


Every linear bounded operator is automatically Lipschitz continuous. 
In the special case of limit computability Corollary~\ref{cor:Hoelder} yields the following.

\begin{corollary}[Limit computable linear functions]
\label{cor:linear}
Let $X$ be a computable Banach space, $Y$ be computable normed spaces, and let $T:X\to Y$ be linear. 
Then $T$ is limit computable if and only if $T$ is computable relative to the halting problem.
\end{corollary}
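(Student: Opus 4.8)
The plan is to deduce this from Corollary~\ref{cor:Hoelder} by first upgrading ``limit computable'' to ``limit computable and Lipschitz continuous'' for linear maps. Granting that, the ``only if'' direction follows at once from Corollary~\ref{cor:Hoelder} applied with exponent $\alpha=1$, and the ``if'' direction needs nothing beyond the general observation recorded at the start of this section (cf.\ Corollary~\ref{cor:limit-halting}) that every function computable relative to the halting problem is limit computable.

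Concretely, suppose $T:X\to Y$ is limit computable. First I would invoke Corollary~\ref{cor:Borel} to conclude that $T$ is effectively $\SO{2}$--measurable, and hence in particular Borel measurable as a map between the underlying metric spaces. Since $X$ is a computable Banach space (so in particular complete) and $Y$ a normed space, the Borel graph theorem of Schwartz~\cite{Sch66b}, exactly as used in the argument underlying Corollary~\ref{cor:linear-Borel}, applies and yields that $T$ is continuous, i.e., bounded. A bounded linear operator is Lipschitz continuous, since $||Tx-Ty||=||T(x-y)||\leq ||T||\cdot||x-y||$ for all $x,y\in X$, so $L:=||T||$ serves as a Lipschitz constant. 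Now $T$ is both limit computable and Lipschitz continuous, so Corollary~\ref{cor:Hoelder} gives that $T$ is computable relative to the halting problem. Conversely, any $T$ computable relative to the halting problem is limit computable, with no linearity needed.

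The only point requiring care is the appeal to the Borel graph theorem: it is essential here that $X$ is \emph{complete} (just as completeness is essential in Theorem~\ref{thm:Borel-continuity}), and one should check that the hypotheses of the cited form of the theorem match the present setting, namely a Banach domain and a normed codomain. No effectivity enters at that step -- plain Borel measurability of $T$ suffices -- and the passage from boundedness to a Lipschitz modulus is the elementary estimate above, so the remainder of the proof is routine. I do not expect any genuine obstacle beyond bookkeeping of hypotheses.
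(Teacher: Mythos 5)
Your proposal is correct and follows exactly the route the paper intends: limit computable implies Borel measurable (Corollary~\ref{cor:Borel}), hence bounded by Schwartz's Borel graph theorem since $X$ is Banach, hence Lipschitz, hence computable relative to the halting problem by Corollary~\ref{cor:Hoelder}, with the converse being immediate. The only detail worth noting is that the Lipschitz constant $\|T\|$ need not be computable, but as the paper observes before Corollary~\ref{cor:Hoelder} one may always pass to a slightly larger computable constant, so nothing is lost.
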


\section{Applications}
\label{sec:applications}

In this section we discuss a number of simple applications of the tools that we have
developed in the previous sections. Some of the results that we derive are known,
but our techniques provide very simple proofs. Other results are new.

We start with considering distance functions.
For every subset $A\In X$ of a metric space $(X,d)$ we define the \emph{distance function} $d_A:X\to\IR,x\mapsto\inf_{y\in A}d(x,y)$.
Every distance function $d_A$ satisfies $|d_A(x)-d_A(y)|\leq d(x,y)$ for all $x,y\in X$, which means that every distance function
is Lipschitz continuous with Lipschitz constant $1$. 

\begin{corollary}[Distance functions]
\label{cor:distance}
Let $X$ be a computable metric space and $A\In X$.
The distance function $d_A:X\to\IR$ is limit computable if and only if it is computable relative to the halting problem.
\end{corollary}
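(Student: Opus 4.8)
The plan is to obtain this as an immediate instance of the Hölder continuity corollary (Corollary~\ref{cor:Hoelder}). First I would recall the elementary estimate noted just before the statement: for every $x,y\in X$ one has $|d_A(x)-d_A(y)|\le d(x,y)$, so $d_A\colon X\to\IR$ is Lipschitz continuous with Lipschitz constant $1$, and in particular Hölder continuous (with exponent $\alpha=1$ and constant $L=1$). Since $\IR$ is a computable metric space, Corollary~\ref{cor:Hoelder} applies verbatim with $Y=\IR$ and $f=d_A$, and it yields exactly the claimed equivalence between limit computability and computability relative to the halting problem.

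The only detail requiring a word of care is that $d_A$ be a genuine total real-valued function; this holds as soon as $A\ne\emptyset$ (if $A=\emptyset$ the ``function'' $d_A$ is identically $+\infty$ and the statement is vacuous, so that case is harmless to exclude). No hypothesis on the structure of $A$ --- openness, closedness, computability, or the like --- is needed, since the underlying Corollary~\ref{cor:Hoelder} (and, behind it, Theorem~\ref{thm:limit-halting}) only exploits a computable global modulus of continuity, and $d_A$ carries the computable modulus $m(n)=n$ regardless of $A$. I would therefore present the argument as a two-line proof, delegating all the substance to Corollary~\ref{cor:Hoelder}; there is no genuine obstacle here, as the present statement is merely the instantiation of that corollary at the most basic example of a $1$--Lipschitz function.
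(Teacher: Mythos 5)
Your proof is correct and matches the paper's intended argument exactly: the paper derives this corollary from the $1$--Lipschitz estimate $|d_A(x)-d_A(y)|\leq d(x,y)$ stated immediately before it, together with Corollary~\ref{cor:Hoelder}. The side remark about excluding $A=\emptyset$ is a reasonable (if pedantic) addition that the paper leaves implicit.
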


We recall that a problem $f:X\to\IR$ is called \emph{lower semi-computable} if it is computable as a function $f:X\to\IR_<$
with the real numbers $\IR_<$ equipped with the lower Dedekind cut representation, and likewise $f$ is called \emph{upper semi-computable}
if it is computable as a function $f:X\to\IR_>$, where the real numbers $\IR_>$ are equipped with the upper Dedekind cut representation.
Since it is clear that $\id:\IR_<\to\IR$ and $\id:\IR_>\to\IR$ are limit computable~\cite[Proposition~3.7]{BG11a}, we obtain the following conclusion.

\begin{corollary}[Semi-computable functions]
\label{cor:semi-computable}
Every lower or upper semi-computable function $f:X\to\IR$ is limit computable.
\end{corollary}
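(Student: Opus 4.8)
The plan is to read off the statement directly from the definitions and the composition calculus already established. First I would recall that, by definition, a function $f:X\to\IR$ is lower semi-computable exactly when $f:X\to\IR_<$ is computable, and upper semi-computable exactly when $f:X\to\IR_>$ is computable. So in either case we have a computable problem whose codomain is $\IR$ equipped with a \emph{different} representation than the Cauchy representation, and the only issue is to convert that representation to the standard one without leaving the limit computable class.

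The key ingredient is the cited fact \cite[Proposition~3.7]{BG11a} that $\id:\IR_<\to\IR$ and $\id:\IR_>\to\IR$ are limit computable. I would then write $f:X\to\IR$ as the composition $\id\circ f$, where $f$ is taken with codomain $\IR_<$ (resp.\ $\IR_>$) and is computable, and $\id:\IR_<\to\IR$ (resp.\ $\IR_>\to\IR$) is limit computable. Since limit computable maps composed with computable maps are limit computable (Proposition~\ref{prop:composition-limit}), and this closure property transfers verbatim to problems on represented spaces because $F\vdash f$ and $G\vdash g$ imply $F\circ G\vdash f\circ g$, it follows that $f:X\to\IR$ is limit computable.

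There is essentially no obstacle here: the corollary is a two-line consequence of the composition lemma together with the quoted statement about $\id:\IR_<\to\IR$ and $\id:\IR_>\to\IR$. The only point worth stating explicitly is that the composition closure used for functions on $\IN^\IN$ in Proposition~\ref{prop:composition-limit} lifts to the setting of represented spaces, as noted after Definition~\ref{def:computable-realizer}; once that is in place the argument is immediate and uniform in $f$.
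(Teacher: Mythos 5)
Your proposal is correct and matches the paper's argument exactly: the paper justifies the corollary by the same observation that $\id:\IR_<\to\IR$ and $\id:\IR_>\to\IR$ are limit computable, combined with closure of limit computability under composition with computable maps (lifted to represented spaces via realizers). Nothing is missing.
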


By $f^{\circ n}$ we denote the $n$--fold composition of $f$ with itself, i.e., $f^{\circ 0}=\id$, $f^{\circ 1}=f$, $f^{\circ 2}=f\circ f$, etc.
It is easy to see that the \emph{Mandelbrot set} $M:=\{c\in\IC:(\forall n\in\IN)\;|f^{\circ n}_c(0)|\leq2\}$, which is defined using
the iteration function $f_c:\IC\to\IC,z\mapsto z^2+c$, is co-c.e.\ closed (see \cite[Exercise~5.1.32~(c)]{Wei00} and \cite{Her05}), 
and hence its distance function $d_M:\IC\to\IR$ is lower semi-computable \cite[Corollary~3.14~(2)]{BP03}.
As a corollary of Theorem~\ref{thm:limit-halting} and Corollaries~\ref{cor:distance} and \ref{cor:semi-computable} we obtain the following.

\begin{corollary}[Mandelbrot set]
\label{cor:Mandelbrot}
The distance function $d_M:\IC\to\IR$ of the Mandelbrot set $M\In\IC$ is computable relative to the halting problem.
\end{corollary}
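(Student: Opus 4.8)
The plan is to chain together three ingredients that are already in place, so that the argument is essentially a bookkeeping exercise. First I would record the structural facts about the target object: $d_M$ is the distance function $d_A$ for $A=M$, and every distance function on a metric space satisfies $|d_A(x)-d_A(y)|\leq d(x,y)$, so $d_M$ is Lipschitz continuous with Lipschitz constant $1$; moreover $\IC$ is a computable metric space. Consequently $d_M$ lies squarely in the scope of Corollary~\ref{cor:distance}, which asserts that a distance function $d_A:X\to\IR$ on a computable metric space is limit computable if and only if it is computable relative to the halting problem (this equivalence in turn resting on Theorem~\ref{thm:limit-halting}, with the Lipschitz modulus of continuity being trivially computable).

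Second, I would verify that $d_M$ is limit computable. For this I would invoke the known fact that $M=\{c\in\IC:(\forall n)\;|f_c^{\circ n}(0)|\leq 2\}$ is co-c.e.\ closed, obtained as an effective countable intersection of the computable closed conditions $\{c:|f_c^{\circ n}(0)|\leq 2\}$ (see \cite[Exercise~5.1.32(c)]{Wei00} and \cite{Her05}). By the result of Brattka and Presser \cite[Corollary~3.14(2)]{BP03}, the distance function of a co-c.e.\ closed subset of a computable metric space is lower semi-computable, i.e.\ $d_M:\IC\to\IR_<$ is computable. Then Corollary~\ref{cor:semi-computable}, which says that every lower (or upper) semi-computable real-valued function is limit computable, immediately yields that $d_M:\IC\to\IR$ is limit computable.

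Finally, I would combine the two steps: $d_M$ is a limit computable distance function, so by Corollary~\ref{cor:distance} it is computable relative to the halting problem, which is the claim. I do not expect any genuine obstacle here; the only substantive inputs are the co-c.e.\ closedness of $M$ and the lower semicomputability of distance functions of co-c.e.\ closed sets, both of which are cited, and the rest is a direct application of Theorem~\ref{thm:limit-halting} together with Corollaries~\ref{cor:distance} and \ref{cor:semi-computable}.
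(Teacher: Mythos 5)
Your proposal is correct and follows exactly the paper's own route: establish that $M$ is co-c.e.\ closed, deduce via \cite[Corollary~3.14~(2)]{BP03} that $d_M$ is lower semi-computable and hence limit computable by Corollary~\ref{cor:semi-computable}, and then apply Corollary~\ref{cor:distance} (which rests on Theorem~\ref{thm:limit-halting} via the Lipschitz bound $|d_A(x)-d_A(y)|\leq d(x,y)$) to conclude computability relative to the halting problem. No gaps; this is the intended argument.
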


Another application concerns the field of limit computable real numbers. By $(\IR^{(n)})_{\rm c}$ we denote the set of points that are computable in $\IR^{(n)}$, i.e., computable with respect to the $n$--th jump of the Cauchy representation.
We recall that a subfield of the reals is called \emph{real algebraically closed} if the real-valued zeros of all non-constant polynomials
with coefficients from the subfield are again in the subfield. 

\begin{proposition}[Field of real numbers]
The real numbers in $(\IR^{(n)})_{\rm c}$ form a real algebraically closed subfield
of $\IR$ for every $n\in\IN$.
\end{proposition}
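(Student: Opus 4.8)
The plan is to reduce the statement to a claim about a single fixed oracle and then establish that claim by an elementary root‑finding argument. It is a standard consequence of (iterated) Shoenfield's limit lemma (Corollary~\ref{cor:shoenfield}) — or of Corollary~\ref{cor:higher-jumps} applied to constant maps $\IN^\IN\to\IR$, since a point of a represented space is computable exactly when it is the value of a computable constant function — that a real $x$ lies in $(\IR^{(n)})_{\rm c}$ if and only if $x$ is computable relative to $0^{(n)}$. So it suffices to prove: for every oracle $A$ (and only $A=0^{(n)}$ is needed), the set $\IR_A$ of reals computable relative to $A$ is a real closed subfield of $\IR$. That $\IR_A$ is a subfield is immediate: addition, multiplication and negation are computable on $\IR$, hence computable relative to $A$, and pairing of names is computable, so $\IR_A$ is closed under $+,\cdot,-$ and contains $0,1$; and since $\Inv:\In\IR\to\IR$ with $\dom(\Inv)=\IR\setminus\{0\}$ is computable, hence computable relative to $A$, the set $\IR_A$ is closed under inverses of nonzero elements. (Alternatively one runs this through Corollary~\ref{cor:higher-jumps} and Theorem~\ref{thm:jumps-products-exponentials}~(1) to see that $+,\cdot,-$ and $\Inv$ are computable as operations on $\IR^{(n)}$ and hence preserve its computable points.)

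The substantive part is real closedness. Let $P=\sum_{i=0}^d c_iX^i$ be a polynomial with coefficients computable relative to $A$ and having a real zero $\xi$; then $P$ is non‑constant, $P\not\equiv 0$, and $\xi$ is isolated. I would prove that $\xi$ is computable relative to $A$ by induction on the formal degree $d$ (the length of the coefficient tuple), the case $d=0$ being vacuous and the hypotheses ``$P\not\equiv 0$ with a real zero'' being maintained along the recursion. Fix rationals $a<\xi<b$ with $\xi$ the only zero of $P$ in $[a,b]$. If $P(a)$ and $P(b)$ have opposite signs, $P$ changes sign exactly once on $(a,b)$, and I would locate $\xi$ by a relativized trisection: keeping a rational interval $[a_k,b_k]\ni\xi$ on whose endpoints $P$ has been verified to have opposite signs (possible because evaluation of $P$ at a rational point is computable relative to $A$), evaluate $P$ in parallel at the two interior trisection points $\tfrac{2a_k+b_k}{3}$ and $\tfrac{a_k+2b_k}{3}$; at most one of them equals $\xi$, so at one of them $P$ is nonzero and one of the two parallel evaluations eventually reports a definite sign, which — $\xi$ being the unique zero in $(a_k,b_k)$ — identifies a proper subinterval still containing $\xi$. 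This gives an $A$‑computable Cauchy sequence for $\xi$. If instead $P(a)$ and $P(b)$ have the same sign, then $\xi$ is a strict local extremum of $P$ with value $0$, so $P'(\xi)=0$; moreover $P'\not\equiv 0$ (a polynomial with vanishing derivative is constant, and $P$ is not), $P'$ has strictly smaller formal degree, and its coefficients $ic_i$ are computable relative to $A$, so the induction hypothesis applies to $P'$ and its zero $\xi$. Which case occurs, and the interval $[a,b]$, are used non‑uniformly, which is harmless since only the existence of an $A$‑computable approximation of the particular zero $\xi$ is required.

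The main obstacle is exactly this dichotomy together with the trisection: a plain bisection would force a decision whether a trial rational equals $\xi$, equivalently whether $P$ vanishes there, which is impossible from coefficients known only relative to $A$; running two trisection points in parallel sidesteps this, since they cannot both equal $\xi$. One also has to be mildly careful that the inductive quantity genuinely decreases, which is why I would induct on the formal degree (length of the coefficient list) rather than the true degree, and why the hypothesis that $\xi$ is a zero of a non‑constant $P$ is exactly what makes $P'\not\equiv 0$ at the recursive step.
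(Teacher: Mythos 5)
Your proof is correct, but it takes a genuinely different route from the paper's. The paper never relativizes to an oracle: it establishes the case $n=0$ by citing known computable operations (field arithmetic, the coefficients-to-function map $\IR^{d+1}\to\CC[a,b]$, and the computable zero-finding operator $Z_{[a,b]}$ for functions with exactly one zero, \cite[Corollary~6.3.5]{Wei00}), and then pushes all of these through the jump endofunctor (Proposition~\ref{prop:jump-endofunctor}) so that the same operations are computable when every space is replaced by its jump, which transfers the field and real-closedness properties to $(\IR^{(n)})_{\rm c}$ for every $n$ at once. You instead first identify $(\IR^{(n)})_{\rm c}$ with the reals computable relative to $0^{(n)}$ --- a correct reduction, via iterated Shoenfield (Corollary~\ref{cor:shoenfield}) or via Corollary~\ref{cor:higher-jumps} applied to constant maps --- and then give a self-contained relativized root-finding argument (the classical parallel-trisection of Rice and Grzegorczyk~\cite{Ric54,Grz55}, plus induction on the formal degree through the derivative in the local-extremum case), which in effect reproves the relativized content of \cite[Corollary~6.3.5]{Wei00} rather than quoting it. Both the dichotomy (sign change at the endpoints versus $P'(\xi)=0$) and the parallel evaluation at the two trisection points are handled correctly, and the non-uniform use of the isolating interval and of the case distinction is harmless for the pointwise statement being proved. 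What each approach buys: the paper's argument is shorter, showcases the endofunctor machinery that this section is meant to illustrate, and works verbatim for any transparent surjective jump in place of $\lim$; yours is more elementary on the root-finding side and applies to the reals computable relative to an arbitrary oracle $A$, not only to $A=0^{(n)}$, at the cost of needing the (standard but not free) identification of the computable points of $\IR^{(n)}$ with the $0^{(n)}$-computable reals, which the paper's proof avoids entirely.
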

\begin{proof}
We note the following facts for $n\in\IN$ and $a,b\in\IQ$ with $a<b$ (see \cite{Wei00}):
\begin{enumerate}
\item The algebraic operations $+,-,\cdot:\IR\times\IR\to\IR$ are computable.
\item Division $\div:\In\IR\times\IR\to\IR$ is computable.
\item The constants $0,1$ are computable real numbers.
\item The map $p:\IR^{n+1}\to\CC[a,b],(a_0,...,a_n)\mapsto(x\mapsto\sum_{i=0}^na_ix^i)$ is computable.
\item There is a computable $Z_{[a,b]}:\In\CC[a,b]\to\IR$ 
         such that $f(Z_{[a,b]}(f))=0$ for every continuous $f:[a,b]\to\IR$ that has exactly one zero (see \cite[Corollary~6.3.5]{Wei00}, where this was proved for $[a,b]=[0,1]$; it is straightforward to generalize the proof).
\end{enumerate}
It follows from (1)--(3) that $\IR_{\rm c}$ is a computable subfield of $\IR$ and from (4) and (5) that this field is real algebraically closed
as the zeros of all non-constant polynomials are isolated and can be computed if the coefficients are all computable. 
If we apply Proposition~\ref{prop:jump-endofunctor}, then we obtain that all the operations listed in (1)--(5) are also computable
when all spaces are replaced by their jumps. 
This implies that $(\IR')_{\rm c}$ is also a real algebraically closed subfield of the real numbers. 
Inductively, this property can be transferred to higher jumps.
\end{proof}

Zheng and Weihrauch~\cite[Proposition~7.6]{ZW01} also proved that $(\IR^{(n)})_{\rm c}$ is a subfield of the reals.
The fact that $\IR_{\rm c}$ is a real algebraically closed field was proved by Rice~\cite{Ric54} and Grzegorczyk~\cite{Grz55}.
Freund and Staiger~\cite{FS96} proved that $(\IR')_{\rm c}$ is a real algebraically closed field.

Finally, we want to discuss some simple applications of our methods to differentiable functions.
By $f':[0,1]\to\IR$ we denote the \emph{derivative} of a differentiable function $f:[0,1]\to\IR$. 
With very little effort we obtain the following result.

\begin{proposition}[Operator of differentiation]
\label{prop:differentiation-operator}
The following operation is computable:
$d:\In\CC([0,1],\IR)\to\CC([0,1],\IR'),f\mapsto f'$ with
$\dom(d):=\{f:[0,1]\to\IR:f$ differentiable$\}$.
\end{proposition}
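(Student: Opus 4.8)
The plan is to realise $f'$ as the pointwise limit of a \emph{computable} sequence of continuous functions, and then to absorb that limit into the jump $\IR'$ on the output side, where, by Corollary~\ref{cor:naive}, it turns into an honest computable operation. The one conceptual point to keep in mind — and the reason the codomain is $\CC([0,1],\IR')$ rather than $\CC([0,1],\IR)$ — is that a derivative need not be continuous as a map into $\IR$ (it is only of Baire class $1$), but it is continuous, indeed computable from $f$, as a map into the jump $\IR'$.

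First I would set up the difference quotients. For $n\in\IN$ put $y_n(x):=\min\{x,\,1-2^{-n}\}$ and
\[ g_n(x):=2^n\bigl(f(y_n(x)+2^{-n})-f(y_n(x))\bigr) \]
for $x\in[0,1]$. Since $\min$, the translation $x\mapsto x+2^{-n}$, subtraction and multiplication by the rational constant $2^n$ are all computable on $\IR$ uniformly in $n$, and $f$ is given as a point of $\CC([0,1],\IR)$ (so that $f$ may be evaluated at computably obtained arguments), the assignment $f\mapsto(g_n)_{n\in\IN}$ is computable as a map $\CC([0,1],\IR)\to\CC([0,1],\IR)^\IN$; each $g_n$ is total and continuous, uniformly in $n$. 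The truncation by $1-2^{-n}$ is only there to keep the argument $y_n(x)+2^{-n}$ inside $[0,1]$. For fixed $x<1$ one has $y_n(x)=x$ for all large $n$, so $g_n(x)$ is eventually the forward difference quotient at $x$ with step $2^{-n}$; and $g_n(1)=2^n(f(1)-f(1-2^{-n}))$ is the backward difference quotient. Hence, whenever $f$ is differentiable, $\lim_{n\to\infty}g_n(x)=f'(x)$ for every $x\in[0,1]$.

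Next I would move the limit inside the function space. Via the computable isomorphisms of Fact~\ref{fact:function-space} (sequences as continuous functions, currying and uncurrying, swapping the factors of a product), $\CC([0,1],\IR)^\IN$ is computably isomorphic to $\CC([0,1],\IR^\IN)$, so from $(g_n)_n$ we compute the function $h\in\CC([0,1],\IR^\IN)$ with $h(x)=(g_n(x))_{n\in\IN}$; by the previous paragraph $h(x)\in\dom(\lim_\IR)$ for all $x$ when $f$ is differentiable. Now the crucial ingredient: by Corollary~\ref{cor:naive} the equivalence $\lim_\IR\circ\delta_\IR^\IN\equiv\delta_\IR'$ holds, which says precisely that $\lim_\IR:\In\IR^\IN\to\IR'$ is computable. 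Post-composition with a fixed computable map is a computable operation on function spaces (evaluate, then curry, using Fact~\ref{fact:function-space}), so $h\mapsto\lim_\IR\circ h$ is computable, and $(\lim_\IR\circ h)(x)=\lim_n g_n(x)=f'(x)$, i.e.\ $\lim_\IR\circ h=f'$. Composing the computable maps of the last two paragraphs produces, from a name of a differentiable $f$, a name of $f'$ as a point of $\CC([0,1],\IR')$; in particular this also shows $f':[0,1]\to\IR'$ is continuous, so the assertion that $d$ takes values in $\CC([0,1],\IR')$ is part of the conclusion.

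The step I expect to carry the real content is the passage from ``$(g_n)_n$ computable with $g_n(x)\to f'(x)$ pointwise'' to ``$f'$ computable into $\IR'$'': a naive diagonalisation of Cauchy names of the $g_n(x)$ need not converge, and it is exactly the computability of $\lim_\IR:\IR^\IN\to\IR'$ (Corollary~\ref{cor:naive}) that packages the correct diagonal argument. Everything else is routine: the bookkeeping with difference quotients near the endpoint $x=1$ (handled by the $\min$-truncation) and the function-space manipulations of Fact~\ref{fact:function-space}. Note that no compactness of $[0,1]$ is needed, since all of the operations used are computable on $\IR$ itself.
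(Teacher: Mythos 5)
Your proof is correct and follows essentially the same route as the paper's: write $f'$ as the pointwise limit of a computable sequence of difference quotients and absorb that limit into the jump on the codomain via evaluation and currying. The only (immaterial) differences are cosmetic: the paper uses the symmetric straddling quotient $\bigl(f(x+(1-x)2^{-n})-f(x-x2^{-n})\bigr)/2^{-n}$, whose arguments automatically stay in $[0,1]$ without your $\min$-truncation, and it cites Theorem~\ref{thm:metric-limit-normal-form} where you cite Corollary~\ref{cor:naive}, which packages the same fact that $\lim_\IR:\In\IR^\IN\to\IR'$ is computable.
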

\begin{proof}
If $f:[0,1]\to\IR$ is differentiable, then
\[f'(x)=\lim_{n\to\infty}\frac{f(x+(1-x)2^{-n})-f(x-x2^{-n})}{2^{-n}}\]
Using evaluation, currying and Theorem~\ref{thm:metric-limit-normal-form} we obtain that $d$ is computable.
\end{proof}

Using Theorem~\ref{thm:characterization-genericity-metric} we obtain the following result
of Kuyper and Terwijn~\cite[Theorem~4.3]{KT14}.

\begin{corollary}[Kuyper and Terwijn 2014]
\label{cor:derivatives}
The derivative $f':[0,1]\to\IR$ of every differentiable computable function $f:[0,1]\to\IR$
is limit computable and continuous at all $1$--generic points $x\in[0,1]$.
\end{corollary}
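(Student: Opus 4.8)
The plan is to combine the computability of the differentiation operator (Proposition~\ref{prop:differentiation-operator}) with the characterization of $1$--generic points from Theorem~\ref{thm:characterization-genericity-metric}. First I would observe that a computable $f:[0,1]\to\IR$ is, by definition, a computable point in $\CC([0,1],\IR)$, and that $f$ lies in the domain of the differentiation operator $d$ precisely because it is differentiable. Since $d:\In\CC([0,1],\IR)\to\CC([0,1],\IR')$ is computable, $f'=d(f)$ is therefore a computable point in $\CC([0,1],\IR')$.

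Next I would unfold what this means for $f'$ viewed as a function. Using that $\ev:\CC([0,1],\IR')\times[0,1]\to\IR'$ is computable (Fact~\ref{fact:function-space}), the fact that $f'$ is a computable point of $\CC([0,1],\IR')$ yields that $f':[0,1]\to\IR'$ is a computable problem. By Theorem~\ref{thm:limit-computability} (in the form ``(1)$\iff$(2)'', applied with $X=[0,1]$ and $Y=\IR$), a problem is limit computable if and only if it is computable into the jump of its codomain; hence $f':[0,1]\to\IR$ is limit computable. Moreover $f'$ is total on $[0,1]$, since $f$ is differentiable there.

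Finally I would invoke the ``only if'' direction of Theorem~\ref{thm:characterization-genericity-metric}: at every $1$--generic point $x\in[0,1]$, every limit computable function into a computable metric space that has $x$ in its domain is continuous at $x$. Applying this to the total limit computable function $f':[0,1]\to\IR$ gives continuity of $f'$ at every $1$--generic point, which completes the proof.

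I do not expect any serious obstacle: the only point requiring care is the bookkeeping among the three closely related objects $\CC([0,1],\IR)'$, $\CC([0,1],\IR')$, and ``$f'$ limit computable as a function'', together with making sure the codomain jump $\IR'$ is cashed out via Theorem~\ref{thm:limit-computability} rather than via the function-space jump. Everything else is a direct citation of results already established in Sections~\ref{sec:represented-spaces} and~\ref{sec:metric}.
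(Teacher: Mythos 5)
Your proof is correct and follows essentially the same route as the paper, which derives the corollary directly from Proposition~\ref{prop:differentiation-operator} (computability of $d$ into $\CC([0,1],\IR')$) combined with Theorem~\ref{thm:limit-computability} and the ``only if'' direction of Theorem~\ref{thm:characterization-genericity-metric}. The bookkeeping you flag between $\CC([0,1],\IR')$, $\CC([0,1],\IR)'$ and limit computability of $f'$ as a function is handled correctly.
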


In fact, Kuyper and Terwijn obtained an even stronger result that characterizes $1$--generics in terms
of derivatives~\cite[Theorem~5.2]{KT14}. Our point here is not to strengthen these results, but
to illustrate the applicability of the tools that we have provided in this article. 

If $f:[0,1]\to\IR$ is even continuously differentiable then the limit in the proof of Proposition~\ref{prop:differentiation-operator}
can be seen to be a uniform limit. This leads to the following result of von Stein~\cite{Ste89}.
By $\CC^1$ we denote the \emph{set of continuously differentiable} functions $f:[0,1]\to\IR$.

\begin{proposition}[von Stein 1989]
\label{prop:continuous-differentiation-operator}
$d|_{\CC^1}:\In\CC([0,1],\IR)\to\CC([0,1],\IR)',f\mapsto f'$ is computable.
\end{proposition}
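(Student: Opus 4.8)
The plan is to recycle the symmetric difference quotients from the proof of Proposition~\ref{prop:differentiation-operator} and to notice that, once one restricts to $\CC^1$, these converge \emph{uniformly} and not merely pointwise. Write $Y:=\CC([0,1],\IR)$, which is a computable metric space because $[0,1]$ is computably compact, and whose Cauchy representation is computably equivalent to the function space representation $\delta_{\CC([0,1],\IR)}$. For $f\in Y$ and $n\in\IN$ set
$$g_n:[0,1]\to\IR,\qquad g_n(x):=\frac{f(x+(1-x)2^{-n})-f(x-x2^{-n})}{2^{-n}},$$
which is well defined since $x-x2^{-n}$ and $x+(1-x)2^{-n}$ always lie in $[0,1]$. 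First I would check that $f\mapsto(g_n)_{n\in\IN}$ is computable as a map $\CC([0,1],\IR)\to Y^\IN$: this is exactly the computation already performed in Proposition~\ref{prop:differentiation-operator}, since $x\mapsto(x-x2^{-n},\,x+(1-x)2^{-n})$ is computable uniformly in $n$, composing with $f$ via evaluation, then with subtraction and division by $2^{-n}$, is computable, and currying (Fact~\ref{fact:function-space}) produces a name of $g_n$ in $Y$, uniformly in $n$.

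The new ingredient is the convergence estimate, and this is the point where continuous differentiability is used. For $f\in\CC^1$, the two evaluation points differ by exactly $2^{-n}$ and straddle $x$, so the mean value theorem gives $g_n(x)=f'(\xi_n(x))$ for some $\xi_n(x)$ with $|\xi_n(x)-x|<2^{-n}$. Hence $\|g_n-f'\|_\infty\le\omega(2^{-n})$, where $\omega$ is a modulus of uniform continuity of $f'$ on the compact interval $[0,1]$; since $f'$ is continuous, $\omega(2^{-n})\to 0$, so $(g_n)_{n\in\IN}$ converges to $f'$ in the metric of $Y$. Thus $\lim_Y\circ g\prefix d|_{\CC^1}$ for the computable total $g:f\mapsto(g_n)_{n\in\IN}$ just described, and Theorem~\ref{thm:metric-limit-normal-form} yields that $d|_{\CC^1}:\In\CC([0,1],\IR)\to Y$ is limit computable. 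Applying the equivalence (1)$\iff$(2) of Theorem~\ref{thm:limit-computability} (with this $Y$) upgrades this to computability of $d|_{\CC^1}:\In\CC([0,1],\IR)\to\CC([0,1],\IR)'$, which is the claim; alternatively one can read the name of $f'$ in $\CC([0,1],\IR)'$ off the sequence $(g_n)_{n\in\IN}$ directly via Corollary~\ref{cor:naive}.

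The only substantive difference from Proposition~\ref{prop:differentiation-operator} is the \emph{uniformity} of the convergence, and without uniform continuity of $f'$ the difference quotients need not converge uniformly, so the output would only be controllable as a point of $\CC([0,1],\IR')$ rather than of $\CC([0,1],\IR)'$. I therefore expect the main (and minor) obstacle to be nothing more than making the mean-value-theorem estimate and the passage to the uniform modulus of $f'$ precise; the computability bookkeeping is routine and parallels the earlier proposition essentially verbatim.
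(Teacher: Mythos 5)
Your proof is correct and follows essentially the same route as the paper: compute the sequence of symmetric difference quotients, show it converges uniformly to $f'$, and invoke Theorem~\ref{thm:metric-limit-normal-form}. The only (cosmetic) difference is that you obtain the uniform estimate via the mean value theorem and a modulus of uniform continuity of $f'$, whereas the paper packages the same fact as uniform continuity of the auxiliary two-variable function $F(x,y)=f'(x)-\frac{f(x)-f(y)}{x-y}$ vanishing on the diagonal.
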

\begin{proof}
Let $f:[0,1]\to\IR$ be continuously differentiable. Then $F:[0,1]^2\to\IR$, defined by
\[F(x,y):=\left\{\begin{array}{ll}
f'(x)-\frac{f(x)-f(y)}{x-y} & \mbox{if $x\not=y$}\\
0 & \mbox{otherwise}
\end{array}\right.\]
is continuous, and since $[0,1]^2$ is compact, $F$ is even uniformly continuous.
Let $\varepsilon>0$. Since $F$ vanishes on the diagonal,
there exists $\delta>0$ such that $|x-y|<\delta\TO|F(x,y)|<\varepsilon$ for all $x,y\in[0,1]^2$.
The values $x_n:=x+(1-x)2^{-n}$ and $y_n:=x-x2^{-n}$ satisfy $|x_n-y_n|=2^{-n}$.
Hence there is a $k\in\IN$ such that $|x_n-y_n|<\delta$ for all $n\geq k$.
Altogether, this shows that $f_n:[0,1]\to\IR$ with
\[f_n(x)=\frac{f(x+(1-x)2^{-n})-f(x-x2^{-n})}{2^{-n}}.\]
satisfies $||f_n-f'||<\varepsilon$ for $n\geq k$.
Hence, we have the uniform limit $\lim_{n\to\infty}f_n=f'$.
The sequence $(f_n)_{n\in\IN}$ in $\CC([0,1],\IR)$ can be computed from $f$ using evaluation and currying.
It follows that $d|_{\CC^1}$ is computable in the given way by Theorem~\ref{thm:metric-limit-normal-form}.
\end{proof}

This result can also be phrased such that $d|_{\CC^1}:\In\CC([0,1],\IR)\to\CC([0,1],\IR),f\mapsto f'$ is limit computable. 
In fact, von Stein even proved that $d|_{\CC^1}$ with this type is Weihrauch equivalent to $\lim$, which yields for instance
Myhill's result~\cite{Myh71} that there is a continuously differentiable computable function $f:[0,1]\to\IR$ with
a non-computable derivative $f':[0,1]\to\IR$ (see \cite{Bra99}). 
Here we rather aim for the following corollary that was originally proved by Ho (announced in \cite[Theorem~1.3]{Ho96} and 
proved in \cite[Theorem 19]{Ho99}).

\begin{corollary}[Ho 1999]
\label{cor:continuous-derivatives}
The derivative $f':[0,1]\to\IR$ of every continuously differentiable computable function $f:[0,1]\to\IR$
is computable relative to the halting problem.
\end{corollary}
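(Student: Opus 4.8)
The plan is to read this off directly from von Stein's result (Proposition~\ref{prop:continuous-differentiation-operator}) together with the dictionary between limit computable points of a function space and functions computable relative to the halting problem (Corollary~\ref{cor:limit-function-points}). Essentially all the analytic work — showing that the difference quotients $f_n(x)=\frac{f(x+(1-x)2^{-n})-f(x-x2^{-n})}{2^{-n}}$ converge \emph{uniformly} to $f'$ when $f\in\CC^1$ — has already been carried out there, so what remains is bookkeeping in the category of represented spaces.

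First I would note that a computable function $f:[0,1]\to\IR$ is in particular a computable point of the represented space $\CC([0,1],\IR)$: any computable realizer of $f$ furnishes a computable $\delta_{\CC([0,1],\IR)}$--name of $f$. Moreover, if $f$ is continuously differentiable then $f\in\dom(d|_{\CC^1})$ and $f'$ is continuous, so $f'\in\CC([0,1],\IR)$. Applying the computable map $d|_{\CC^1}:\In\CC([0,1],\IR)\to\CC([0,1],\IR)'$ of Proposition~\ref{prop:continuous-differentiation-operator} to the computable point $f$ then shows that $f'=d|_{\CC^1}(f)$ is a computable point of $\CC([0,1],\IR)'$. By the definition of the jump of a represented space ($\delta_{X'}=\delta_X\circ\lim$), a computable point of $\CC([0,1],\IR)'$ is precisely a point of $\CC([0,1],\IR)$ possessing a limit computable name, i.e.\ a limit computable point; equivalently one may invoke Shoenfield's limit lemma (Corollary~\ref{cor:shoenfield}) at this step.

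Finally I would apply Corollary~\ref{cor:limit-function-points}: for $f'\in\CC([0,1],\IR)$, being limit computable as a point of $\CC([0,1],\IR)$ is equivalent to $f':[0,1]\to\IR$ being computable relative to the halting problem as a function. This yields the claim. One could try to avoid the function-space language by arguing that $f'$ is limit computable as a function and then, since $[0,1]$ is compact and $f'$ continuous, appealing to Corollary~\ref{cor:limit-uniform-continuity}; but the uniform-continuity version only delivers $0''$, and one would additionally need to control a modulus of continuity of $f'$ (which is not obviously computable from $f$). It is exactly von Stein's \emph{uniform} convergence that upgrades $0''$ to $0'$, so the cleanest route keeps track of that uniform limit explicitly via $\CC([0,1],\IR)'$.

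There is no real obstacle here: the substantive content is already in Proposition~\ref{prop:continuous-differentiation-operator}, and Corollary~\ref{cor:limit-function-points} is the translation lemma that converts "computable point of the jumped function space" into "function computable relative to the halting problem". The only points requiring a little care are using Corollary~\ref{cor:limit-function-points} in the correct direction and observing that $f\in\CC^1$ guarantees $f'$ is continuous, so that $f'$ is a bona fide point of $\CC([0,1],\IR)$ to which that corollary applies.
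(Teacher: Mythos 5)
Your derivation is correct and is exactly the route the paper intends: the corollary is stated immediately after Proposition~\ref{prop:continuous-differentiation-operator} precisely so that one applies that computable operator to the computable point $f\in\CC([0,1],\IR)$, obtains $f'$ as a computable point of $\CC([0,1],\IR)'$ (equivalently a limit computable point of $\CC([0,1],\IR)$), and then translates via Corollary~\ref{cor:limit-function-points}. Your side remarks --- that $f'$ is continuous so it genuinely lies in $\CC([0,1],\IR)$, and that the non-uniform route through Corollary~\ref{cor:limit-uniform-continuity} would only yield $0''$ --- correctly identify why the uniform convergence in von Stein's result is the essential ingredient.
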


With the following proposition we generalize the observation \cite[Corollary~9.10~(2)]{BHK16} to
all computable metric spaces. 

\begin{proposition}[C.e.\ comeager sets]
\label{prop:ce-comeager}
Let $X$ be a computable metric space and let $(A_n)_{n\in\IN}$ be a computable sequence
of co-c.e.\ closed subsets $A_n\In X$ that are all nowhere dense. Then
$A:=X\setminus\bigcup_{n\in\IN}A_n$ is a comeager set that contains all
$1$--generic points $x\in X$.
\end{proposition}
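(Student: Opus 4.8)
The plan is to reduce everything to Lemma~\ref{lem:genericity} by showing that each $A_n$ is, up to re-indexing, the boundary of a c.e.\ open set. First I would unfold the hypotheses. Since $A_n$ is co-c.e.\ closed, its complement $X\sm A_n$ is c.e.\ open, and since $(A_n)_{n\in\IN}$ is a computable sequence there is a computable function $f:\IN\to\IN$ with $X\sm A_n=U_{f(n)}$ for all $n\in\IN$, where $(U_i)_{i\in\IN}$ is the numbering of c.e.\ open subsets of $X$ fixed before Definition~\ref{def:limit-jump}. Next I would use the remaining hypothesis: for a \emph{closed} set $A_n$, being nowhere dense is equivalent to having empty interior, which in turn is equivalent to $U_{f(n)}=X\sm A_n$ being dense in $X$. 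From density of the open set $U_{f(n)}$ we get $\overline{U_{f(n)}}=X$, hence
\[\partial U_{f(n)}=\overline{U_{f(n)}}\sm U_{f(n)}=X\sm U_{f(n)}=A_n.\]
Thus each $A_n$ is literally $\partial U_{f(n)}$, so $\bigcup_{n\in\IN}A_n\In\bigcup_{i=0}^\infty\partial U_i$.

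With this in hand both claims follow quickly. For comeagerness I would note $A=X\sm\bigcup_{n\in\IN}A_n=\bigcap_{n\in\IN}(X\sm A_n)=\bigcap_{n\in\IN}U_{f(n)}$, a countable intersection of dense open sets, hence a comeager $G_\delta$--set (this part uses nothing effective). For the second claim, let $x\in X$ be $1$--generic. By Lemma~\ref{lem:genericity}, $x\in X\sm\bigcup_{i=0}^\infty\partial U_i$, so in particular $x\notin\partial U_{f(n)}=A_n$ for every $n\in\IN$, i.e.\ $x\in X\sm\bigcup_{n\in\IN}A_n=A$. Hence $A$ contains all $1$--generic points.

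I do not anticipate a genuine obstacle; the only point requiring a little care is the purely topological bookkeeping in the identification $A_n=\partial U_{f(n)}$, namely using ``closed and nowhere dense'' precisely as ``the complementary open set is dense'' (and not mistaking it for some stronger effective density). The computability hypotheses are used only to supply, for each $n$, a single index $i=f(n)$ placing $A_n$ among the sets $\partial U_i$; beyond that they play no role in the argument.
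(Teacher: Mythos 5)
Your proof is correct and follows essentially the same route as the paper's: both identify each $A_n$ (closed, nowhere dense, co-c.e.) with the boundary of a c.e.\ open set --- the paper via $A_n=\partial A_n=\partial A_n^{\rm c}$, you via $A_n=\partial U_{f(n)}$ for a dense $U_{f(n)}$ --- and then conclude from Lemma~\ref{lem:genericity} that every $1$--generic point avoids $\bigcup_n A_n$. Your write-up is somewhat more explicit about the topological bookkeeping and the comeagerness step, but there is no substantive difference.
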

\begin{proof}
If the $A_n\In X$ are nowhere dense (have non-empty interior) then $A_n=\partial A_n=\partial A_n^{\rm c}$. Hence 
$X\setminus\bigcup_{n\in\IN}\partial U_n\In X\setminus\bigcup_{n\in\IN}\partial A_n^{\rm c}=A$.
\end{proof}

We can also consider $1$--generic points in the space $\CC[0,1]$ of continuous functions.
In \cite{Bra01a} it was proved that the set of somewhere differentiable functions is included in
 $\bigcup_{n=0}^\infty D_n$, where $(D_n)_{n\in\IN}$ is a computable sequence of nowhere dense co-c.e.\ 
subsets $D_n\In\CC[0,1]$, defined by
$D_n:=\left\{f\in\CC[0,1]:(\exists t\in[0,1])(\forall h\in\IR\setminus\{0\})\left|\frac{f(t+h)-f(t)}{h}\right|\leq n\right\}$.
Hence, we obtain the following observation.

\begin{corollary}[Nowhere differentiability of $1$--generic functions]
\label{cor:nowhere-differentiable}
Every continuous function $f:[0,1]\to\IR$ that is $1$--generic as a point in $\CC[0,1]$
is nowhere differentiable.
\end{corollary}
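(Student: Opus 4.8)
The plan is to combine the structural result of~\cite{Bra01a} with Proposition~\ref{prop:ce-comeager}. First I would note that $\CC[0,1]=\CC([0,1],\IR)$ is a complete computable metric space, since $[0,1]$ is computably compact (as discussed in Section~\ref{sec:metric}); hence both the notion of a $1$--generic point in $\CC[0,1]$ and the hypotheses of Proposition~\ref{prop:ce-comeager} make sense for the choice $X=\CC[0,1]$.

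By~\cite{Bra01a} there is a computable sequence $(D_n)_{n\in\IN}$ of nowhere dense, co-c.e.\ closed subsets $D_n\In\CC[0,1]$, as recalled just above, such that every $f\in\CC[0,1]$ that is differentiable at some point of $[0,1]$ belongs to $\bigcup_{n\in\IN}D_n$. Applying Proposition~\ref{prop:ce-comeager} with $A_n=D_n$ then shows that every $1$--generic point $f$ of $\CC[0,1]$ lies in $\CC[0,1]\setminus\bigcup_{n\in\IN}D_n$, and is therefore not differentiable at any point of $[0,1]$, i.e.\ it is nowhere differentiable. That is the whole argument.

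There is no genuinely hard step: the corollary is a direct translation, through Lemma~\ref{lem:genericity} and Proposition~\ref{prop:ce-comeager}, of the classical Baire-category fact that a comeager set of continuous functions is nowhere differentiable into the effective language of $1$--genericity. The only point deserving a line of care is checking that the $D_n$ satisfy all hypotheses of Proposition~\ref{prop:ce-comeager} — nowhere density, co-c.e.\ closedness, and computability of the sequence $n\mapsto D_n$ — but this is precisely what~\cite{Bra01a} establishes, so no further work is required.
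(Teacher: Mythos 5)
Your argument is correct and is exactly the paper's proof: the paper derives the corollary by combining the cited result of~\cite{Bra01a} (that the somewhere differentiable functions are contained in the union of the computable sequence $(D_n)_{n\in\IN}$ of nowhere dense co-c.e.\ closed sets) with Proposition~\ref{prop:ce-comeager} applied to $X=\CC[0,1]$. Nothing is missing.
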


The mere property that every $1$--generic continuous $f:[0,1]\to\IR$ is not differentiable could also be deduced from 
the fact that the operator of differentiation $d$ is not continuous at any point $f$.

These examples just serve as illustrations that a careful analysis of the notions of limit computability and computability
with respect to the halting problem is useful for applications in analysis. In particular the concepts based on the 
Galois connection between limit and Turing jumps yield
very transparent and simple proofs.

\bibliographystyle{plain}
\bibliography{C:/Users/\user/Dropbox/Bibliography/lit}

\section*{Acknowledgments}

The author would like to thank Alex Simpson for asking a question during the Logic Colloquium 2007 in Wroc{\l}aw that
inspired the study of the Galois connection between Turing jumps and limits.
The author also acknowledges invitations by Christine Gassner and Alberto Marcone to Hiddensee and Udine, respectively, in 2017,
where the results of section~\ref{sec:computability-halting-problem} were worked out.
Last not least, a discussion with Russell Miller in Dagstuhl 2017 motivated the author to include the examples on differentiable
functions, and a discussion with Arno Pauly in Oberwolfach 2018 has inspired the author to include Theorem~\ref{thm:jump-metric}.
The author is also grateful for helpful comments by two anonymous referees that helped to improve the final
version of this article.

\end{document}